\def\s{\sigma}
\def\l{\lambda}
\def\b{\begin{equation}}
\def\e{\end{equation}}
\def\la{\label}
\newtheorem{prop}{Proposition}[section]
\newtheorem{theorem}{Theorem}[section]
\newtheorem{lemma}{Lemma}[section]
\newtheorem{cor}{Corollary}[section]
\newtheorem{rem}{Remark}[section]
\newcommand{\bu}{\mathbf{u}}
\def\s{\sigma}
\def\l{\lambda}
\def\p{\partial}
\def\b{\begin{equation}}
\def\e{\end{equation}}
\def\x{\xi}
\def\la{\label}
\def\non{\nonumber}
\def\m{\mu}
\title{Analytical and number-theoretical properties of the two-dimensional sigma function}
\author{Takanori Ayano\footnote{Osaka City University, Advanced Mathematical Institute, Osaka, Japan. \newline \hspace{3ex} Email: ayano@sci.osaka-cu.ac.jp} \hspace{1ex} and \hspace{1ex} Victor M. Buchstaber\footnote{Steklov Mathematical Institute of Russian Academy of Sciences, Moscow, Russia. \newline \hspace{3ex} Email: buchstab@mi-ras.ru
\newline \hspace{3ex} Key words: Abelian functions, two-dimensional sigma functions, Hurwitz integrality, generalized Bernoulli-Hurwitz number,
heat equation in a nonholonomic frame. \newline \hspace{3ex} MSC classes: 14K25, 14H40, 14H42.}}
\date{}
\begin{document}
\maketitle

\maketitle
\vspace{0.5cm}
\rightline{\em Dedicated to the outstanding mathematician }
\rightline{\em  Academician Vladimir Petrovich Platonov }
\rightline{\em  in connection with his anniversary.}
\bigskip

\begin{abstract}
This survey is devoted to the classical and modern problems related to the entire function ${\sigma(\bu;\lambda)}$,
defined by a family of nonsingular algebraic curves of genus~$2$, where $\bu = (u_1,u_3)$ and
$\lambda = (\lambda_4, \lambda_6,\lambda_8,\lambda_{10})$.
It is an analogue of the Weierstrass sigma function $\sigma(u;g_2,g_3)$ of a family of elliptic curves. Logarithmic derivatives
of order 2 and higher of the function ${\sigma(\bu;\lambda)}$ generate fields of hyperelliptic functions of $\bu = (u_1,u_3)$
on the Jacobians of curves with a fixed parameter vector $\lambda$.
We consider three Hurwitz series $\sigma(\bu;\lambda)=\sum_{m,n\ge0}a_{m,n}(\lambda)\frac{u_1^mu_3^n}{m!n!}$, $\sigma(\bu;\lambda) =
\sum_{k\ge 0}\xi_k(u_1;\lambda)\frac{u_3^k}{k!}$
and $\sigma(\bu;\lambda) = \sum_{k\ge 0}\mu_k(u_3;\lambda)\frac{u_1^k}{k!}$.
The survey is devoted to the number-theoretic properties of the functions $a_{m,n}(\lambda)$, $\xi_k(u_1;\lambda)$ and $\mu_k(u_3;\lambda)$.
It includes the latest results, which proofs use the fundamental fact that the function ${\sigma (\bu;\lambda)}$ is determined
by the system of four heat equations in a nonholonomic frame of six-dimensional space.
\end{abstract}

\section{Introduction}
Deep results on the number-theoretic properties of fields of hyperelliptic functions were obtained in the papers of V.P. Platonov,
where he gave answers to long-standing questions.  The fields of meromorphic functions on the Jacobian of curves of genus 2 occupy one of the main places
in these papers (see \cite{Plat-12}, \cite{Plat-14} and \cite{Plat-18}). Abelian functions, including meromorphic functions on the Jacobians
of algebraic curves, were a central topic of the 19th century mathematics. In this review, we mainly discuss the results obtained
due to a new direction in the study of fields of Abelian functions. This direction arose in the mid-seventies of the last century
in response to the discovery that Abelian functions provide a solution to a number of challenging problems of modern theoretical
and mathematical physics.
The elliptic sigma function, which was defined and investigated by Weierstrass, is important in many fields in mathematics and physics.
This function is closely related to the theory of the elliptic curves.
In \cite{Klein1} and \cite{Klein2}, F. Klein posed the problem of the construction of multi-dimensional sigma functions associated with the hyperelliptic curves.
He obtained important results in this direction. Many years later,  F. Klein wrote a paper and a survey in which he acknowledged that the theory of his sigma functions is still far from complete (see \cite{Klein3} and \cite{Klein4}).
The theory of the hyperelliptic sigma functions was developed by H. F. Baker in \cite{Baker1}, \cite{Baker2}, \cite{Baker3}, and \cite{Baker4}.
Recently, by a series of work of V. M. Buchstaber, V. Z. Enolskii, and D. V. Leykin, the theory of the hyperelliptic sigma functions was developed significantly and
they were generalized to the large family of algebraic curves called $(n,s)$ curves, which include the hyperelliptic curves as special cases
(see \cite{BEL-97-1}, \cite{BEL-97-2}, \cite{BEL-99-R}, \cite{BEL-2012}, \cite{BEL-2018}).
After the publications of Buchstaber, Enolskii, and Leykin, many papers appeared on the theory and applications of multi-dimensional sigma functions.
Our survey is devoted to the sigma functions of curves of genus 2.
The focus of our attention is the number-theoretic aspects of the results on these functions.
Throughout the present survey, we denote the sets of positive integers, integers, rational and complex numbers by $\mathbb{N}, \mathbb{Z}, \mathbb{Q}$, and $\mathbb{C}$, respectively.

\vspace{1ex}

Let $V$ be a hyperelliptic curve of genus $g$ defined by
\begin{equation}\label{Vg}
y^2=x^{2g+1}+\l_4x^{2g-1}+\l_6x^{2g-2}+\cdots+\l_{4g}x+\l_{4g+2},\;\;\;\;\l_i\in\mathbb{C}.
\end{equation}
The sigma function $\sigma(\bu;\lambda)$, where $\bu = (u_1,u_3,\dots,u_{2g-1})$ and $\lambda = (\lambda_4,\dots,\lambda_{4g+2})$, associated with $V$,
is an entire function in $\bu \in\mathbb{C}^g$.
It is shown that the coefficients of the power series expansion of $\sigma({\bf u})$ around ${\bf u}={\bf 0}$ are
polynomials of the coefficients $\l_4,\dots,\l_{4g+2}$ over the rationals (\cite{BEL-99-R}, \cite{BEL-2012}, \cite{BEL-2018}, \cite{N-2010}).
Let $R$ be an integral domain with characteristic $0$, $u_1,u_3,\dots,u_{2g-1}$ be indeterminates, and
\[R\langle\langle u_1,u_3\dots,u_{2g-1}\rangle\rangle=\left\{\sum_{i_1,i_3,\dots,i_{2g-1}\ge0}a_{i_1,i_3,\dots,i_{2g-1}}\frac{u_1^{i_1}u_3^{i_3}\cdots u_{2g-1}^{i_{2g-1}}}{i_1!i_3!\cdots i_{2g-1}!}\;\middle|\;a_{i_1,i_3,\dots,i_{2g-1}}\in R\right\}.\]
If a power series belongs to $R\langle\langle u_1,u_3\dots,u_{2g-1}\rangle\rangle$, then it is said to be {\it Hurwitz integral} over $R$.
In \cite{O-2018}, Y. \^Onishi proved that the power series expansion of $\sigma({\bf u})$ around ${\bf u}={\bf 0}$ is Hurwitz integral over the ring $\mathbb{Z}[\l_4,\dots,\l_{4g+2}]$ by using the
expression of the sigma function in terms of the tau function of KP-hierarchy given in \cite{N-2010-2}.
In \cite{Ouniversal}, in the case of $g=1$, the Hurwitz integrality of the sigma function is proved in a different way from \cite{O-2018} and
relationships with number theory are discussed.
In \cite{Bunkova1}, in the case of $g=1$, it is conjectured that the power series expansion of the sigma function is Hurwitz integral over $\mathbb{Z}[2\l_4,24\l_6]$.
The focus of our survey is on the above fundamental fact, i.e., the power series expansion of the sigma function around the origin is Hurwitz integral over $\mathbb{Z}[\l_4,\dots,\l_{4g+2}]$.
In this survey, we will discuss in detail expansions of the sigma functions of curves of genus $1$ and $2$, including the \^Onishi's proof for Hurwitz integrality (see Sections 2.2 and 2.3).

\vspace{1ex}

Weierstrass \cite{Weir} showed that the elliptic sigma function $\sigma(u;\l_4,\l_6)$ satisfies the following system of equations
\begin{eqnarray*}
4\l_4\s_{\l_4}+6\l_6\s_{\l_6}-u\s_u+\s=0,\\
6\l_6\s_{\l_4}-\frac{4}{3}\l_4^2\s_{\l_6}-\frac{1}{2}\s_{uu}+\frac{1}{6}\l_4u^2\s=0.
\end{eqnarray*}
The second equation of this system is the heat equation or, equivalently, the Schr\"odinger equation of type 
$\ell_2\sigma = \frac{1}{2}H_2 \sigma$, where $\ell_2=6\l_6\frac{\p}{\p\l_4}-\frac{4}{3}\l_4^2\frac{\p}{\p\l_6}$ and $H_2 = \frac{\partial^2}{\partial u^2}-\frac{1}{3}\lambda_4 u^2$.
Weierstrass gave recurrence relations of the coefficients of series expansion of the elliptic sigma function.
Buchstaber and Leykin succeeded in generalizing the theory of the heat equations to the sigma functions of higher genus
curves (\cite{BL-2004}, \cite{BL-2005}, and \cite{BL-2008}).
In \cite{O5}, the detailed proof of their theory is given. In \cite{BL-2005} and \cite{O5}, the
recurrence relations of the coefficients of series expansion of the two-dimensional sigma function are given based on the heat equations.
In \cite{EO2019}, the theory of the heat equations is constructed for the elliptic curves defined by the most general Weierstrass equation.
In \cite{Domrin}, for $g=1,2$, it is shown that the holomorphic solution of the heat equations around $({\bf u}_0,{\bf 0})\in\mathbb{C}^{3g}$ for some ${\bf u}_0\in\mathbb{C}^g$ is the sigma function up to a multiplicative constant.
We consider the case of $g=2$. For $\l=(\l_4,\l_6,\l_8,\l_{10})$,
we set
\[
\sigma(\bu;\l) = \sum_{k\ge 0}\xi_k(u_1;\l)\frac{u_3^k}{k!}, \qquad \sigma(\bu;\l) = \sum_{k\ge 0}\mu_k(u_3;\l)\frac{u_1^k}{k!}.
\]
In Section 3, we will derive the differential equations satisfied by $\xi_k$ and $\mu_k$ from the heat equations.
From these results, we will prove that two-dimensional sigma function is Hurwitz integral over $\mathbb{Z}[\l_4,\l_6,\l_8,2\l_{10}]$ (Corollary \ref{sigmalambda10hurwitz}).

\vspace{1ex}

For $(x,y)\in V$, let
\[du_1=-\frac{x}{2y}dx,\;\;\;\;du_3=-\frac{1}{2y}dx,\]
which are a basis of the vector space of holomorphic one forms on $V$.
We have two ultra-elliptic integrals $\int_{\infty}^{P}du_1$ and $\int_{\infty}^{P}du_3$ obtained with the help
of two holomorphic differentials $du_1$ and $du_3$.
We take a point $P_*\in V$ and an open neighborhood $U_*$ of this point $P_*$ such that
$U_*$ is homeomorphic to an open disk in $\mathbb{C}$.
Let us fix a path $\gamma_*$ on the curve $V$ from $\infty$ to the point $P_*$.
We consider the holomorphic mappings defined by the ultra-elliptic integrals
\[
I_3\;:\;U_*\to\mathbb{C},\;\;\;P=(x,y)\mapsto\int_{\infty}^{P}du_3,
\]
\[
I_1\;:\;U_*\to\mathbb{C},\;\;\;P=(x,y)\mapsto\int_{\infty}^{P}du_1,
\]
where as the path of integration we choose the composition of the fixed path $\gamma_*$ from $\infty$ to the point $P_*$
and some path in the neighborhood $U_*$ from $P_*$ to the point $P$.
When we consider the map $I_3$, we assume $P_*\neq\infty$.
When we consider the map $I_1$, we assume $P_*\neq(0,\pm\sqrt{\l_{10}})$.
If $U_*$ is sufficiently small, then the maps $I_1$ and $I_3$ are biholomorphisms.
In \cite{AB2019}, the inversion problems of the maps $I_1$ and $I_3$ are considered.
In Section 4, we will summarize the results of \cite{AB2019}.
Proposition \ref{ultrareccu} in the present survey gives the recurrence formula of the coefficients of series expansion
of the solution of the inversion problem with respect to $I_1$ in the case of $P_*=\infty$.
This result is not included in \cite{AB2019}.

The classical Bernoulli numbers $B_n$ are defined by the generating function
\begin{equation}\label{Ber-1}
\frac{u}{e^u-1}=\sum_{n=0}^{\infty}B_n\frac{u^n}{n!}.
\end{equation}
Bernoulli numbers $B_n$ are important in many areas of mathematics, including number theory and algebraic topology.
Many beautiful properties for the Bernoulli numbers $B_n$ are known.
For example, the von Staudt-Clausen theorem states
\[B_{2n}+\sum_{(p-1)|2n}\frac{1}{p}\in\mathbb{Z},\]
where the summation is over all primes $p$ such that $p-1$ divides $2n$.
Let $a\ge1$ be an integer, $p$ be a prime, and $m,n$ be even positive integers such that $m,n\ge a+1$, $m$ and $n$ are not divisible by $p-1$, and
$n\equiv m$ mod $(p-1)p^{a-1}$.
Then the Kummer's congruence states
\[\frac{B_{n}}{n}\equiv\frac{B_{m}}{m}\;\;\mbox{mod}\;p^a.\]

Let us introduce the universal logarithmic series
\begin{equation}
\alpha(u)=u+\sum_{n\geqslant 1}\alpha_n\frac{u^{n+1}}{n+1}\label{uzexample}
\end{equation}
over the grading ring $A=\mathbb{Z}[\alpha_1,\alpha_2,\ldots],\, \deg \alpha_n = -2n$, and  the universal exponential series
\begin{equation}\label{exp}
\beta(t)=t+\sum_{n\geqslant 1}\beta_n\frac{t^{n+1}}{(n+1)!}
\end{equation}
over the grading ring $B=\mathbb{Z}[\beta_1,\beta_2,\ldots],\, \deg \beta_n = -2n$.
Set $\deg u =\deg t =2$. Then $\alpha(u)$ and $\beta(t)$ are homogeneous series of degree 2.
Imposing condition $\alpha(\beta(t))=t$ that equivalent to condition $\beta(\alpha(u))=u$, we obtain
an isomorphism of rings preserving grading
\[
\xi \colon \hat{A}=\mathbb{Z}[\hat\alpha_1,\hat\alpha_2,\ldots] \longrightarrow \hat{B}=\mathbb{Z}[\hat\beta_1,\hat\beta_2,\ldots],
\]
where $\hat\alpha_n=\frac{\alpha_n}{n+1}$ and $\hat\beta_n=\frac{\beta_n}{(n+1)!}$.
Thus, we obtain the polynomials
\[
\hat\beta_n = \hat\beta_n(\hat\alpha_1,\ldots,\hat\alpha_n) \in \hat{A},\, n=1,2,\ldots ,
\]
which coefficients are integers satisfying the relations
\[
\beta(\alpha(u)+\alpha(v))\in\mathcal{A}[[u,v]],
\]
where $\mathcal{A}=\mathbb{Z}[a_{1},a_{2},\ldots]\subset\hat{A}$ and
\[
\alpha_n \in \mathcal{A}, \qquad \beta_n = (n+1)!\ \hat\beta_n \left( \frac{\alpha_1}{2},\ldots,\frac{\alpha_n}{n+1} \right)\in \mathcal{A}.
\]
These relations play an important role in describing the coefficient ring of the universal formal group (see \cite{Lazard-55-Sur}, \cite{B-Ust-15})
and in the algebraic-topological applications of the formal group in the theory of complex cobordisms (see \cite{Novikov-67}, \cite{Quillen-69-F}).

The polynomials $B_n=B_n(\hat\alpha_1,\ldots,\hat\alpha_n) \in \hat{A}$ which generating series is given
by the Hurwitz exponential series over the ring $\hat{A}$
\begin{equation}\label{Hur-1}
\sum_{n\ge 0}B_n\frac{t^{n}}{n!} = \frac{t}{\beta(t)}
\end{equation}
are called {\it universal Bernoulli numbers}. For example, 
\[
B_1=\hat\alpha_1,\quad B_2=2(\hat\alpha_2-\hat\alpha_1^2),\quad B_3=3!(\hat\alpha_3-3\hat\alpha_1 \hat\alpha_2 + 2\hat\alpha_1^3).
\]
The classic Bernoulli numbers are obtained by substituting $\alpha_n =(-1)^n$. 
Substituting $\hat\alpha_1 =-\frac{1}{2},\, \hat\alpha_2 =\frac{1}{3},\, \hat\alpha_3 =-\frac{1}{4}$, we obtain numbers $B_1=-\frac{1}{2},\, B_2=\frac{1}{6},\, B_3=0$.

Classical Bernoulli numbers entered into algebraic geometry and algebraic topology due to the fact that the generating series (\ref{Ber-1})
defines the Hirzebruch genus, which associates to any smooth complex manifold  an integer equal to the Todd genus of this manifold (see \cite{Hirz-66}).
The generating series (\ref{Hur-1}) of universal Bernoulli numbers defines the universal Todd genus, which associates to any
smooth complex manifold an integer polynomial (see details in \cite{B-70-Char}).
In \cite{Clarke}, F. Clarke generalized the von Staudt-Clausen theorem for the classical Bernoulli numbers to the universal Bernoulli numbers.
The Kummer's congruence for the classical Bernoulli numbers was generalized to the universal Bernoulli numbers (\cite{Ad1}, \cite{Ad2}, \cite{Ad3}, \cite{O}).

For a hyperelliptic curve of genus $g$ defined by equation (\ref{Vg}), in a neighborhood of point $(\infty,\infty)$, we can choose
a local coordinate $u$ such that the functions $x(u)$ and $y(u)$ can be expanded around $u=0$ as
\[x(u)=\frac{1}{u^2}+\frac{c_{-1}}{u}+\sum_{n=2}^{\infty}\frac{C_n}{n}\frac{u^{n-2}}{(n-2)!},\]
\[y(u)=\frac{1}{u^{2g+1}}+\frac{d_{-2g}}{u^{2g}}+\cdots+\frac{d_{-1}}{u}+\sum_{n=2g+1}^{\infty}\frac{D_n}{n}\frac{u^{n-2g-1}}{(n-2g-1)!}.\]
Then $C_n$ and $D_n$ are called {\it generalized Bernoulli-Hurwitz numbers}.
In \cite{O}, the von Staudt-Clausen theorem and the Kummer's congruence for the classical Bernoulli numbers are extended to the generalized Bernoulli-Hurwitz numbers
in the case of the curves $y^2=x^{2g+1}-1$ and $y^2=x^{2g+1}-x$. 
We will extend the methods of \cite{O} to the curve $y^2=x^5+\l_4x^3+\l_6x^2+\l_8x+\l_{10}$ and
show some number-theoretical properties for the generalized Bernoulli-Hurwitz numbers associated with this curve (Theorem \ref{theoremcd}).
These results will give the precise information on the series expansion of the solution of the inversion problem of the ultra-elliptic integrals.

\section{Preliminaries}

\subsection{The sigma function}

For a positive integer $g$, we set
\[
\Delta=\{(\l_4,\l_6,\dots,\l_{4g+2})\in\mathbb{C}^{2g}\;|\;\mbox{$f_g(x)$\;has a multiple root}\},
\]
where
\[
f_g(x)=x^{2g+1}+\l_4x^{2g-1}+\l_6x^{2g-2}+\cdots+\l_{4g}x+\l_{4g+2},
\]
and $\mathcal{B}=\mathbb{C}^{2g}\setminus\Delta$.
We consider the non-singular hyperelliptic curve of genus $g$
\begin{equation}
V=\{(x,y)\in\mathbb{C}^2\;|\;y^2=f_g(x)\},\label{k}
\end{equation}
where $(\l_4,\l_6,\dots,\l_{4g+2})\in \mathcal{B}$.
In this paragraph we recall the definition of the sigma-function for the curve $V$ (see \cite{BEL-2012}) and give facts about it which will be used later on.
For $(x,y)\in V$, let
\[du_{2i-1}=-\frac{x^{g-i}}{2y}dx,\;\;\;1\le i\le g,\]
which are a basis of the vector space of holomorphic one forms on $V$, and $d{\bf u}={}^t(du_1,du_3,\dots,du_{2g-1})$.
Further, let
\begin{equation}
dr_{2i-1}=\frac{1}{2y}\sum_{k=g-i+1}^{g+i-1}(-1)^{g+i-k}(k+i-g)\l_{2g+2i-2k-2}x^kdx,\;\;\;1\le i\le g,\label{dr}
\end{equation}
which are meromorphic one forms on $V$ with a pole only at $\infty$.
In (\ref{dr}) we set $\l_0=1$ and $\l_2=0$.
For $g=1$, we have
\[du_1=-\frac{1}{2y}dx,\;\;dr_1=-\frac{x}{2y}dx,\]
for $g=2$, we have
\[
du_1=-\frac{x}{2y}dx,\;\;du_3=-\frac{1}{2y}dx,\;\;dr_1=-\frac{x^2}{2y}dx,\;\;dr_3=\frac{-\l_4x-3x^3}{2y}dx.
\]

Let $\{\alpha_i,\beta_i\}_{i=1}^g$ be a canonical basis in the one-dimensional homology group of the curve $V$.
We define the matrices of periods by
\[2\omega_1=\left(\int_{\alpha_j}du_{i}\right), \;2\omega_2=\left(\int_{\beta_j}du_{i}\right),\;-2\eta_1=\left(\int_{\alpha_j}dr_i\right), \;-2\eta_2=\left(\int_{\beta_j}dr_i\right).\]
The matrix of normalized periods has the form $\tau=\omega_1^{-1}\omega_2$.
Let $\delta=\tau\delta'+\delta'',\;\delta',\delta''\in\mathbb{R}^g,$ be the vectors of Riemann's constants with respect to the choice $(\{\alpha_i,\beta_i\},\infty)$
and $\delta:={}^t({}^t\delta',{}^t\delta'')$.
Then we have $\delta'={}^t(\frac{1}{2},\dots,\frac{1}{2})$ and $\delta''={}^t(\frac{g}{2},\frac{g-1}{2},\dots,\frac{1}{2})$.
If $g$ is even, we define $A_0=(2g-1,2g-5,\dots,7,3)$ and $c_0$ by the sign of the permutation
\[c_0=\mbox{sgn}\left(\begin{matrix}0&2&\cdots & g-4&g-2 & g-1& g-3 &\cdots &3&1\\g-1 & g-2 &\cdots &\cdots &\cdots &\cdots &\cdots &\cdots &1&0\end{matrix}\right).\]
If $g$ is odd, we define $A_0=(2g-1,2g-5,\dots,5,1)$ and $c_0$ by the sign of the permutation
\[c_0=\mbox{sgn}\left(\begin{matrix}0&2&\cdots & g-3&g-1 & g-2& g-4 &\cdots &3&1\\g-1 & g-2 &\cdots &\cdots &\cdots &\cdots &\cdots &\cdots &1&0\end{matrix}\right).\]
We consider the Riemann's theta-function with the characteristic $\delta$, which is defined by
\[\theta[\delta]({\bf u},\tau)=\sum_{n\in\mathbb{Z}^g}\exp\{\pi\sqrt{-1}\;{}^t(n+\delta')\tau(n+\delta')+2\pi\sqrt{-1}\;{}^t(n+\delta')({\bf u}+\delta'')\},\]
where ${\bf u}={}^t(u_1,u_3,\dots,u_{2g-1})\in\mathbb{C}^g$.
We set $\partial_{u_i}=\partial/\partial u_i$.
For a non-empty subset $I=\{i_1,\dots,i_k\}\subset \{1,3,\dots,2g-1\}$, we set
\[\partial_I=\partial_{u_{i_1}}\cdots\partial_{u_{i_k}}.\]
It is known that $\partial_{A_0}\theta[\delta]\bigl({\bf 0},\tau)\neq0$ (\cite{N-2016}).
The sigma-function $\sigma({\bf u})$ is defined by (cf. \cite{BEL-2012}, \cite{N-2016})
\[\sigma({\bf u})=\exp\left(\frac{1}{2}{}^t{\bf u}\eta_1\omega_1^{-1}{\bf u}\right)\frac{\theta[\delta]\bigl((2\omega_1)^{-1}{\bf u},\tau)}{c_0\partial_{A_0}\theta[\delta]\bigl({\bf 0},\tau)},\]
which is an entire function on $\mathbb{C}^g$.
We set $\wp_{i,j}=-\partial_{u_i}\partial_{u_j}\log\sigma$, $\sigma_i=\partial_{u_i}\sigma$, and $\sigma_{i,j}=\partial_{u_i}\partial_{u_j}\sigma$.
We define the period lattice $\Lambda=\{2\omega_1m_1+2\omega_2m_2\;|\;m_1,m_2\in\mathbb{Z}^g\}$
and set $W=\{{\bf u}\in\mathbb{C}^g\;|\;\sigma({\bf u})=0\}$.

\begin{prop} {\rm (\cite{BEL-2012} Theorem 1.1 and \cite{N-2010} p.193)}\label{period}
For $m_1,m_2\in\mathbb{Z}^g$, let $\Omega=2\omega_1m_1+2\omega_2m_2$, and let
\[
A=(-1)^{2({}^t\delta'm_1-{}^t\delta''m_2)+{}^tm_1m_2}\exp({}^t(2\eta_1m_1+
2\eta_2m_2)({\bf u}+\omega_1m_1+\omega_2m_2)).
\]
Then

{\rm (i)} $\sigma({\bf u}+\Omega)=A\sigma({\bf u})$, where ${\bf u}\in\mathbb{C}^g$.

{\rm (ii)} $\sigma_i({\bf u}+\Omega)=A\sigma_i({\bf u}),\;\;i=1,3,\dots,2g-1$, where ${\bf u}\in W$.
\end{prop}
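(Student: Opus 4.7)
The plan is to reduce (i) to the standard quasi-periodicity of the Riemann theta function with characteristic $\delta=\tau\delta'+\delta''$, and then obtain (ii) by differentiation, using the vanishing of $\sigma$ on $W$ to kill the error term. Starting from
\[\sigma(\bu)=\exp\!\left(\tfrac{1}{2}{}^t\bu\,\eta_1\omega_1^{-1}\bu\right)\frac{\theta[\delta]((2\omega_1)^{-1}\bu,\tau)}{c_0\,\partial_{A_0}\theta[\delta]({\bf 0},\tau)},\]
the shift $\bu\mapsto\bu+\Omega=\bu+2\omega_1 m_1+2\omega_2 m_2$ translates, inside the theta argument, to $(2\omega_1)^{-1}\bu\mapsto(2\omega_1)^{-1}\bu+m_1+\tau m_2$. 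The first step is to compute how the Gaussian prefactor transforms; this produces an explicit factor that is polynomial of degree at most two in $\bu,\omega_1 m_1,\omega_2 m_2$ and involves $\eta_1\omega_1^{-1}$.

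Next I would invoke the standard transformation law of $\theta[\delta]$ under integer shifts in $\mathbb{Z}^g+\tau\mathbb{Z}^g$: the half-integer characteristic $\delta$ produces exactly the sign $(-1)^{2({}^t\delta' m_1-{}^t\delta'' m_2)+{}^tm_1 m_2}$, while the holomorphic part of the quasi-periodicity contributes $\exp(-\pi\sqrt{-1}\,{}^tm_2\tau m_2-2\pi\sqrt{-1}\,{}^tm_2(2\omega_1)^{-1}\bu)$. Adding this to the contribution from the prefactor leaves a quadratic expression involving $\tau$ whose simplification is the crux of the proof: one invokes the Legendre relation $\eta_1{}^t\omega_2-\omega_1{}^t\eta_2=\tfrac{\pi\sqrt{-1}}{2}I_g$, together with the symmetry of $\eta_1\omega_1^{-1}$ (and of $\eta_2\omega_2^{-1}$), to eliminate $\tau$ in favour of $\eta_1,\eta_2$. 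After collecting, the exponent rearranges to ${}^t(2\eta_1 m_1+2\eta_2 m_2)(\bu+\omega_1 m_1+\omega_2 m_2)$, which together with the sign above is exactly the factor $A$, proving (i).

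For (ii), I would differentiate the identity $\sigma(\bu+\Omega)=A(\bu)\sigma(\bu)$ with respect to $u_i$. Since $A(\bu)$ depends on $\bu$ only through the linear factor $\exp({}^t(2\eta_1 m_1+2\eta_2 m_2)\bu)$, we obtain
\[\sigma_i(\bu+\Omega)=A(\bu)\,\sigma_i(\bu)+(2\eta_1 m_1+2\eta_2 m_2)_i\,A(\bu)\,\sigma(\bu),\]
where $(\cdot)_i$ denotes the component of index $i\in\{1,3,\dots,2g-1\}$. Restricting to $\bu\in W$, where $\sigma(\bu)=0$, annihilates the second term and yields $\sigma_i(\bu+\Omega)=A\,\sigma_i(\bu)$.

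The main technical obstacle is the sign bookkeeping in the theta quasi-periodicity combined with the half-integer characteristic, and organizing the combination of the prefactor's quadratic term and the theta transformation's quadratic term so that the Legendre relation cleanly produces exactly the claimed exponent. The differentiation step for (ii) is then essentially immediate.
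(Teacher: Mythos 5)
The paper itself gives no proof of this proposition: it is quoted from \cite{BEL-2012} (Theorem 1.1) and \cite{N-2010}, so there is nothing internal to compare against. Your outline is the standard argument from those sources and is correct in structure: (i) follows by combining the transformation of the Gaussian prefactor $\exp(\tfrac12{}^t\bu\eta_1\omega_1^{-1}\bu)$ under $\bu\mapsto\bu+\Omega$ with the quasi-periodicity of $\theta[\delta]$ under $z\mapsto z+m_1+\tau m_2$, and then eliminating $\tau$ via the generalized Legendre relation together with the symmetry of $\eta_1\omega_1^{-1}$ and $\tau$; (ii) is immediate by differentiating $\sigma(\bu+\Omega)=A(\bu)\sigma(\bu)$ and using $\sigma|_W=0$, exactly as you say. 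One bookkeeping point to fix when you write it out: the characteristic alone contributes only $(-1)^{2({}^t\delta'm_1-{}^t\delta''m_2)}$ (the theta law is $\theta[\delta](z+m_1+\tau m_2)=e^{2\pi\sqrt{-1}({}^t\delta'm_1-{}^t\delta''m_2)}e^{-\pi\sqrt{-1}\,{}^tm_2\tau m_2-2\pi\sqrt{-1}\,{}^tm_2 z}\theta[\delta](z)$, with no ${}^tm_1m_2$ term, since $e^{-2\pi\sqrt{-1}\,{}^tm_2m_1}=1$); the remaining factor $(-1)^{{}^tm_1m_2}$ arises from the constant $\tfrac{\pi\sqrt{-1}}{2}$ in the Legendre relation when the cross terms ${}^tm_1(\cdots)m_2$ in the exponent are rearranged into ${}^t(2\eta_1m_1+2\eta_2m_2)(\omega_1m_1+\omega_2m_2)$. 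So the sign you attribute to the theta transformation is really produced by the step you correctly identify as the crux; the proof is complete only once that quadratic-form computation is actually carried through, but there is no obstruction to doing so.
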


Proposition \ref{period} (i) implies that ${\bf u}+\Omega\in W$ for any ${\bf u}\in W$ and
$\Omega\in\Lambda$. The surface
\[
(\sigma):=\{{\bf u}\in\mathbb{C}^g/\Lambda\;|\;\sigma({\bf u})=0\}
\]
is called the {\it sigma divisor}.
We set $\deg u_{2k-1}=-(2k-1)$ and $\deg \l_{2i}=2i$, where $1\le k\le g$ and $2\le i\le 2g+1$.
A sequence of non-negative integers $\mu=(\mu_1,\mu_2,\dots,\mu_l)$ such that $\mu_1\ge\mu_2\ge\cdots\ge\mu_l$ is called a {\it partition}.
Let $S_{\mu_g}({\bf u})$ be the Schur function associated with the partition $\mu_g=(g,g-1,\dots,1)$ and set $|\mu_g|=g+(g-1)+\cdots+1$ (cf. \cite{N-2010} Section 4).

\begin{theorem} {\rm (\cite{BEL-99-R} Theorem 6.3, \cite{BEL-2012} Theorem 7.7, \cite{BEL-2018}, \cite{N-2010} Theorem 3, \cite{N-2016} Theorem 13)} \label{rationallim}
The sigma function $\sigma({\bf u})$ does not depend on the choice of $\{\alpha_i,\beta_i\}_{i=1}^g$ and has the series expansion of the form
\begin{equation}
\sigma({\bf u})=S_{\mu_g}({\bf u})+\sum_{i_1+3i_3+\cdots+(2g-1)i_{2g-1}>|\mu_g|}\alpha_{i_1,i_3,\dots,i_{2g-1}}^{(g)}u_1^{i_1}u_3^{i_3}\cdots u_{2g-1}^{i_{2g-1}},\label{schurex}
\end{equation}
where the coefficient $\alpha_{i_1,i_3,\dots,i_{2g-1}}^{(g)}$ is a homogeneous polynomial in $\mathbb{Q}[\l_4,\l_6,\dots,\l_{4g+2}]$
of degree $i_1+3i_3+\cdots+(2g-1)i_{2g-1}-|\mu_g|$ if $\alpha_{i_1,i_3,\dots,i_{2g-1}}^{(g)}\neq0$.
\end{theorem}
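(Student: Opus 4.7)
The statement bundles three claims: (a) $\sigma({\bf u})$ is invariant under the choice of symplectic basis $\{\alpha_i,\beta_i\}$; (b) the leading term of its Taylor expansion at ${\bf 0}$ is the Schur polynomial $S_{\mu_g}({\bf u})$; and (c) all higher coefficients lie in $\mathbb{Q}[\lambda_4,\dots,\lambda_{4g+2}]$ and are weighted-homogeneous of the stated degree. The plan is to treat the three claims in turn.

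For (a), I would appeal to the transformation law of the Riemann theta function under $Sp(2g,\mathbb{Z})$. A symplectic change of basis $\gamma$ transforms $(\omega_1,\omega_2)$ and $(\eta_1,\eta_2)$ in a coupled way via the bilinear Weierstrass relation, and shifts the characteristic $\delta$ by a half-integer vector depending on $\gamma$. The exponent $\tfrac{1}{2}\,{}^t{\bf u}\eta_1\omega_1^{-1}{\bf u}$ changes by exactly the ${\bf u}$-quadratic piece produced by the theta transformation formula, so the product $\exp(\tfrac{1}{2}\,{}^t{\bf u}\eta_1\omega_1^{-1}{\bf u})\,\theta[\delta]((2\omega_1)^{-1}{\bf u},\tau)$ is multiplied only by a ${\bf u}$-independent scalar $\chi(\gamma)$. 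The sign $c_0$ in the denominator is designed via a permutation of the Weierstrass gap sequence so that $c_0\,\partial_{A_0}\theta[\delta]({\bf 0},\tau)$ picks up the same scalar $\chi(\gamma)$, leaving the quotient invariant.

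For (b) and (c), I would use Nakayashiki's representation of $\sigma({\bf u})$ as a specialization of a KP tau function. In the $(n,s)$-curve framework with $(n,s)=(2,2g+1)$, the sigma function is written via the Sato-Segal-Wilson determinant formula for the tau function associated with the Sato point $H^0(V\setminus\{\infty\},\mathcal{O}_V)$; the expansion coefficients are built from the Laurent expansions of $x(u)$ and $y(u)$ at $\infty$, which lie in $\mathbb{Q}[\lambda_4,\dots,\lambda_{4g+2}]$, so rationality and polynomiality in $\lambda$ are immediate. To identify the leading term I would take the rational limit $\lambda\to 0$, in which $V$ degenerates to $y^2=x^{2g+1}$, whose Sato point is the span of $\{u^{-k}:k\in\langle 2,2g+1\rangle\}$; Sato's theorem identifies the corresponding tau function with $S_{\mu_g}({\bf u})$, where $\mu_g=(g,g-1,\dots,1)$ encodes the Weierstrass gap sequence. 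Weighted homogeneity then follows from the $\mathbb{C}^\ast$-action $(x,y,\lambda_{2i})\mapsto (t^{2}x,t^{2g+1}y,t^{2i}\lambda_{2i})$ preserving equation (\ref{Vg}): under this action $du_{2i-1}\mapsto t^{-(2i-1)}\,du_{2i-1}$, so $u_{2i-1}$ carries weight $-(2i-1)$, and inspection of the definition of $\sigma$ shows it is homogeneous of weight $-|\mu_g|$. Combined with polynomiality in $\lambda$, this forces each $\alpha^{(g)}_{i_1,\dots,i_{2g-1}}$ to be $\lambda$-homogeneous of degree $i_1+3i_3+\cdots+(2g-1)i_{2g-1}-|\mu_g|$.

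The main obstacle is the identification of the rational-limit tau function with precisely $S_{\mu_g}({\bf u})$. The modular invariance (a), as well as the polynomiality and homogeneity in (c), are structural consequences of the theta transformation law, the Sato determinant formula, and the scaling action, respectively. What is delicate is the Plücker-coordinate computation at the monomial curve $y^2=x^{2g+1}$, which requires the full $(n,s)$-curve machinery developed in the Buchstaber-Enolskii-Leykin series and refined by Nakayashiki; an alternative route via the Buchstaber-Leykin system of heat equations would still leave the Schur polynomial to be verified independently as the correct initial condition at the monomial limit.
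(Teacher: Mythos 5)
The survey states Theorem \ref{rationallim} without proof, citing \cite{BEL-99-R}, \cite{N-2010} and \cite{N-2016}, and your outline --- the theta transformation law for independence of the homology basis, the Sato tau-function representation for $\lambda$-polynomiality, the rational limit to $y^2=x^{2g+1}$ for the Schur leading term, and the weighted $\mathbb{C}^*$-action for homogeneity --- is precisely the route taken in those references and in the tau-function machinery the survey itself recapitulates in Sections 2.2--2.3 (Theorem \ref{tau}). You also correctly isolate the genuinely delicate step, namely the Pl\"ucker-coordinate identification of the monomial curve's tau function with $S_{\mu_g}$, which is the content of the rational-limit theorem of \cite{BEL-99-R}.
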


\vspace{1ex}

For $g=1$, the sigma function $\sigma(u)$ is an entire odd function on $\mathbb{C}$ and it is given by the series
\[
\sigma(u)=u+\sum_{i\ge4}\alpha_i^{(1)}u^{i+1},
\]
where the coefficient $\alpha_i^{(1)}$ is a homogeneous polynomial in $\mathbb{Q}[\l_4,\l_6]$
of degree $i$ if $\alpha_i^{(1)}\neq0$.
For $g=2$, the sigma function $\sigma({\bf u})=\sigma(u_1,u_3)$ is an entire odd function on $\mathbb{C}^2$ and it is given by the series
\begin{equation}
\sigma(u_1,u_3)=\frac{1}{3}u_1^3-u_3+\sum_{i+3j\ge7}\alpha_{i,j}^{(2)}u_1^iu_3^j,\label{genus2expansionfirstterm15}
\end{equation}
where the coefficient $\alpha_{i,j}^{(2)}$ is a homogeneous polynomial in $\mathbb{Q}[\l_4,\l_6,\l_8,\l_{10}]$
of degree $i+3j-3$ if $\alpha_{i,j}^{(2)}\neq0$.

\vspace{2ex}

\begin{theorem}\label{chara}(\cite{B-2016, BEL-97-1, BL-2004, BL-2005, BL-2008, O5})

\noindent (i) For $g=1$, the sigma function $\s(u;\l_4,\l_6)$ satisfies the following system of equations:
\begin{eqnarray*}
4\l_4\s_{\l_4}+6\l_6\s_{\l_6}-u\s_u+\s=0,\\
6\l_6\s_{\l_4}-\frac{4}{3}\l_4^2\s_{\l_6}-\frac{1}{2}\s_{uu}+\frac{1}{6}\l_4u^2\s=0.
\end{eqnarray*}

\vspace{1ex}

\noindent (ii) For $g=2$, the sigma function $\s(u_1,u_3;\l_4,\l_6,\l_8,\l_{10})$ satisfies the following system of equations :
\[Q_i\s=0,\;\;\;\;\mbox{where}\;\;\;\;Q_i=\ell_i-H_i,\;\;i=0,2,4,6,\]
\[{}^t(\ell_0,\ell_2,\ell_4,\ell_6)=T\;{}^t(\partial_{\l_4},\partial_{\l_6},\partial_{\l_8},\partial_{\l_{10}}),\]
\[T=\left(\begin{matrix}4\l_4&6\l_6 & 8\l_8 & 10\l_{10} \\ 6\l_6 & 8\l_8-\frac{12}{5}\l_4^2 & 10\l_{10}-\frac{8}{5}\l_4\l_6 & -\frac{4}{5}\l_4\l_8 \\
8\l_8 & 10\l_{10}-\frac{8}{5}\l_4\l_6 & 4\l_4\l_8-\frac{12}{5}\l_6^2 & 6\l_4\l_{10}-\frac{6}{5}\l_6\l_8 \\ 10\l_{10} & -\frac{4}{5}\l_4\l_8 & 6\l_4\l_{10}-\frac{6}{5}\l_6\l_8 & 4\l_6\l_{10}-\frac{8}{5}\l_8^2
\end{matrix}\right),\]

\vspace{2ex}

$\displaystyle{H_0=u_1\partial_{u_1}+3u_3\p_{u_3}-3},$

\vspace{1ex}

$\displaystyle{H_2=\frac{1}{2}\p_{u_1}^2-\frac{4}{5}\l_4u_3\p_{u_1}+u_1\p_{u_3}-\frac{3}{10}\l_4u_1^2+\frac{1}{10}(15\l_8-4\l_4^2)u_3^2,}$

\vspace{1ex}

$\displaystyle{H_4=\p_{u_1}\p_{u_3}-\frac{6}{5}\l_6u_3\p_{u_1}+\l_4u_3\p_{u_3}-\frac{1}{5}\l_6u_1^2+\l_8u_1u_3+\frac{1}{10}(30\l_{10}-6\l_4\l_6)u_3^2-\l_4},$

\vspace{1ex}

$\displaystyle{H_6=\frac{1}{2}\p_{u_3}^2-\frac{3}{5}\l_8u_3\p_{u_1}-\frac{1}{10}\l_8u_1^2+2\l_{10}u_1u_3-\frac{3}{10}\l_8\l_4u_3^2-\frac{1}{2}\l_6.}$

\end{theorem}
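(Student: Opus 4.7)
The plan is to treat the homogeneity equations and the second-order heat-type equations by two different mechanisms. The first equation in (i) and the equation $Q_0\sigma=0$ in (ii) are just Euler's identity applied to the grading $\deg u_{2k-1}=-(2k-1)$, $\deg\lambda_{2i}=2i$. Indeed, Theorem~\ref{rationallim} shows that $\sigma(\bu;\lambda)$ is a graded-homogeneous power series of total degree $-|\mu_g|$ (equal to $-1$ for $g=1$ and $-3$ for $g=2$), so that
\[
\sum_{i}2i\,\lambda_{2i}\,\p_{\lambda_{2i}}\sigma \;-\; \sum_{k}(2k-1)u_{2k-1}\,\p_{u_{2k-1}}\sigma \;=\; -|\mu_g|\,\sigma,
\]
which is precisely the stated homogeneity relation in each case.

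For the remaining equations I would follow the Buchstaber--Leykin strategy. Write $\sigma$ in terms of theta functions and periods as
\[
\sigma(\bu;\lambda) \;=\; \exp\!\left(\tfrac12\,{}^t\bu\,\kappa\,\bu\right)\frac{\theta[\delta]\bigl((2\omega_1)^{-1}\bu,\tau\bigr)}{c_0\,\p_{A_0}\theta[\delta](0,\tau)},\qquad \kappa=\eta_1\omega_1^{-1},
\]
so the entire $\lambda$-dependence enters through $\omega_a,\eta_a$ and $\tau=\omega_1^{-1}\omega_2$. The Gauss--Manin connection for the family $V$ provides linear expansions $\p_{\lambda_{2i}}du_{2k-1}$, $\p_{\lambda_{2i}}dr_{2k-1}\in\operatorname{span}\{du_{2j-1},dr_{2j-1}\}+\text{(exact)}$; integrating against the homology basis gives explicit linear relations $\p_{\lambda_{2i}}(\omega_a,\eta_a)=L_{2i}\cdot(\omega_a,\eta_a)$. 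Combined with the classical theta heat equation $2\pi\sqrt{-1}\,\p_{\tau_{jk}}\theta = (1+\delta_{jk})^{-1}\p_{u_j}\p_{u_k}\theta$ and with the derivatives of the quadratic prefactor $\exp(\tfrac12\,{}^t\bu\kappa\bu)$, these relations express each $\p_{\lambda_{2i}}\sigma$ as a second-order $\bu$-operator applied to $\sigma$ with $\lambda$-dependent coefficients. Taking the $\mathbb{Q}[\lambda]$-linear combinations listed as the rows of $T$ produces the operators $\ell_i$, and the output is forced to be the stated $H_i$.

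The main obstacle is identifying the precise matrix $T$ and the potentials in the $H_i$. This splits into three mechanical but delicate sub-steps: first, reducing each $\p_{\lambda_{2i}}du_{2k-1}$ and $\p_{\lambda_{2i}}dr_{2k-1}$ modulo exact forms in the basis $\{du_{2j-1},dr_{2j-1}\}$, using the explicit formula \eqref{dr} and the defining equation of $V$; second, invoking the generalized Legendre relation $\omega_1\,{}^t\eta_2-\omega_2\,{}^t\eta_1=\tfrac{\pi\sqrt{-1}}{2}I_g$ to cancel off-diagonal cross-terms coming from $\p_{\lambda_{2i}}\kappa$ and $\p_{\lambda_{2i}}\tau$; third, solving a small linear system to pin down which combinations of $\p_{\lambda_4},\dots,\p_{\lambda_{4g+2}}$ yield a second-order $\bu$-operator whose potential is at most quadratic in $\bu$. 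The rational coefficients such as $-\tfrac{12}{5},-\tfrac{8}{5},-\tfrac{4}{5}$ in $T$, and the numerical constants in $H_2,H_4,H_6$, are then forced by this linear algebra. In the genus-1 case the whole procedure collapses to the Weierstrass heat equation via a single Legendre relation.
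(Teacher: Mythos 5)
This theorem is not proved in the survey: it is quoted from the cited literature (\cite{BL-2004}, \cite{BL-2005}, \cite{BL-2008}, and the detailed proof in \cite{O5}), so there is no internal argument to compare against. Your treatment of the zeroth-order equations is correct and complete: the expansion in Theorem~\ref{rationallim} is graded-homogeneous of degree $-|\mu_g|$, and the Euler identity you write down immediately yields the first equation in (i) and $Q_0\sigma=0$ in (ii) (with $|\mu_1|=1$, $|\mu_2|=3$). Your outline for the remaining equations also matches the route actually taken in the cited works: Gauss--Manin variation of the periods, the theta heat equation, the generalized Legendre relation, and the quadratic prefactor $\exp(\frac12\,{}^t\bu\,\eta_1\omega_1^{-1}\bu)$.

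The gap is that the theorem's entire content beyond $Q_0$ is the explicit matrix $T$ and the explicit potentials in $H_2,H_4,H_6$ (and the second equation in (i)), and your proposal never produces them. The three ``mechanical but delicate sub-steps'' are exactly where the proof lives, and they are described rather than executed; asserting that the coefficients $-\frac{12}{5},-\frac{8}{5},-\frac{4}{5}$ and the constants $-\l_4$, $-\frac12\l_6$ ``are forced by this linear algebra'' is not a verification that these particular values satisfy $Q_i\sigma=0$. There is also a structural point you elide: the existence of $\mathbb{Q}[\l]$-linear combinations of the $\p_{\l_{2i}}$ that act on the periods without introducing denominators (and hence yield second-order operators with polynomial, degree-two potential) is not something one discovers by ``solving a small linear system''; in the Buchstaber--Leykin theory the rows of $T$ come from vector fields tangent to the discriminant of $f_2(x)$ (a polynomial Lie algebra with $\det T$ proportional to the discriminant), and this input is what guarantees the construction closes. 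Without either that structural argument or the explicit computation of the Gauss--Manin matrices $L_{2i}$, your proposal establishes the homogeneity equations but only gestures at the heat equations, which are the substance of the statement.
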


\subsection{Hurwitz integrality of the expansion of the elliptic sigma function}

In \cite{O-2018}, Hurwitz integrality of the expansion of the sigma functions is proved.
In this subsection, we will explain the proof of \cite{O-2018} for $g=1$.

\vspace{1ex}

In this subsection, we assume $g=1$. For simplicity, we denote $u_1$ and $du_1$ by $u$ and $du$, respectively.
For an integral domain $R$ with characteristic $0$ and a variable $u$, let
\[R\langle\langle u\rangle\rangle=\left\{\sum_{i=0}^{\infty}\alpha_{i}\frac{u^i}{i!}\;\middle|\;\alpha_{i}\in R\right\}.\]
For $n<0$ let $p_n(u)=0$ and for $n\ge0$ let
\[p_n(u)=\frac{u^n}{n!}.\]
For an arbitrary partition $\mu=(\mu_1,\mu_2,\dots,\mu_l)$, we define the Schur polynomial $s_{\mu}(u)$ by
\[s_{\mu}(u)=\det \left(p_{\mu_i-i+j}(u)\right)_{1\le i,j\le l}.\]

\begin{lemma}\la{schurelli}
We have $s_{\mu}(u)\in\mathbb{Z}\langle\langle u\rangle\rangle$.
\end{lemma}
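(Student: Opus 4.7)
The plan is to show that $s_\mu(u)$ equals the single monomial $f^\mu\cdot u^{|\mu|}/|\mu|!$, where $f^\mu$ denotes the number of standard Young tableaux of shape $\mu$ (and is therefore a non-negative integer).

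First I would expand the defining determinant as
\[
s_\mu(u) \;=\; \sum_{\sigma\in\mathfrak{S}_l} \sgn(\sigma)\, \prod_{i=1}^{l} p_{\mu_i-i+\sigma(i)}(u),
\]
and observe that each nonzero summand has total $u$-degree $\sum_i(\mu_i - i + \sigma(i)) = |\mu|$, because $\sum_i(\sigma(i)-i)=0$. Consequently $s_\mu(u) = c_\mu\, u^{|\mu|}$ for a single rational number $c_\mu$, and the Hurwitz integrality assertion $s_\mu(u)\in\mathbb{Z}\langle\langle u\rangle\rangle$ reduces to the one scalar statement $|\mu|!\,c_\mu\in\mathbb{Z}$.

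Multiplying the above sum through by $|\mu|!$ gives
\[
|\mu|!\,c_\mu \;=\; \det\!\left(\frac{|\mu|!}{(\mu_i-i+j)!}\right)_{1\le i,j\le l},
\]
and the key step is recognizing the right-hand side as the classical Frobenius determinantal formula for $f^\mu$. Granted this, $s_\mu(u) = f^\mu\cdot u^{|\mu|}/|\mu|!$, which manifestly lies in $\mathbb{Z}\langle\langle u\rangle\rangle$.

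The only nontrivial ingredient is the Frobenius identity $f^\mu = |\mu|!\det\bigl(1/(\mu_i-i+j)!\bigr)$, which is the one place a combinatorial input is needed. I would either cite it or give a short inductive proof on $|\mu|$: expanding the determinant along its last column produces exactly the branching-rule recursion $f^\mu = \sum_\nu f^\nu$, where $\nu$ ranges over partitions obtained from $\mu$ by deleting a single corner box, so the determinant satisfies the same recursion and initial conditions as $f^\mu$. Everything else in the argument is bookkeeping with factorials and signs, so this identification of the constant term with a tableau count is the only conceptual obstacle.
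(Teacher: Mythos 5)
Your proposal is correct, but it takes a genuinely different (and heavier) route than the paper. The paper's proof is a one-line ring-closure observation: $\frac{u^i}{i!}\cdot\frac{u^j}{j!}=\binom{i+j}{i}\frac{u^{i+j}}{(i+j)!}$ with $\binom{i+j}{i}\in\mathbb{Z}$, so $\mathbb{Z}\langle\langle u\rangle\rangle$ is a subring of $\mathbb{Q}[[u]]$ containing every $p_n(u)$, and the determinant, being an integer polynomial in the $p_n(u)$, lies in it. That argument transfers verbatim to the two-variable case (Lemma \ref{schur}), which is why the paper uses it. Your argument instead identifies $s_\mu(u)$ exactly: since every nonzero term of $\sum_{\sigma}\sgn(\sigma)\prod_i p_{\mu_i-i+\sigma(i)}(u)$ has degree $|\mu|$, one has $s_\mu(u)=c_\mu u^{|\mu|}$, and $|\mu|!\,c_\mu=\det\bigl(|\mu|!/(\mu_i-i+j)!\bigr)$ is the number $f^\mu$ of standard Young tableaux of shape $\mu$ by the classical Frobenius--MacMahon formula (equivalently, the exponential specialization of the Jacobi--Trudi identity). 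This buys more than integrality --- it pins down the single nonzero Hurwitz coefficient as a tableau count --- at the cost of importing a nontrivial combinatorial identity, and it does not generalize as directly to $s_\mu(u_1,u_3)$, where the series is no longer a monomial. One caveat on your fallback proof of the Frobenius identity: expanding the determinant along its last column does not obviously produce the branching recursion $f^\mu=\sum_\nu f^\nu$ (that expansion removes a whole row, not a corner box). The clean elementary route is to differentiate: since $p_n'=p_{n-1}$, differentiating the determinant row by row gives $\frac{d}{du}s_\mu(u)=\sum_\nu s_\nu(u)$ with $\nu$ running over $\mu$ minus a corner (the other terms vanish by repeated rows), whence $|\mu|\,c_\mu=\sum_\nu c_\nu$, which is the standard-Young-tableau recursion with the right initial condition. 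With that repair, or simply with a citation for the Frobenius formula, your argument is complete.
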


\begin{proof}
For $i,j\ge0$, we have
\[\frac{u^i}{i!}\frac{u^j}{j!}=\frac{(i+j)!}{i!j!}\frac{u^{i+j}}{(i+j)!}=\left(\begin{matrix}i+j\\i\end{matrix}\right)\frac{u^{i+j}}{(i+j)!}.\]
Since $\left(\begin{matrix}i+j\\i\end{matrix}\right)\in\mathbb{Z}$, we obtain the statement of the lemma.
\end{proof}

Let
\[t=\frac{x}{y},\;\;\;s=\frac{1}{x}.\]
Then $t$ is a local parameter of $V$ around $\infty$.
We have
\b
x=\frac{1}{s},\;\;\;y=\frac{1}{st}.\la{xyelliptic}
\e

Denote by $\mathbb{Z}_{\ge r}$ the set of integers that are not less than $r$.

\begin{lemma}\label{ex1elliptic}
We have the following expansion of $s$ in terms of $t$ around $t=0$
\[s=t^2\left(1+\sum_{i=4}^{\infty}\beta_it^i\right),\]
where $\beta_i$ is a homogeneous polynomial in $\mathbb{Z}[\l_4,\l_6]$ of degree $i$.
In particular, we have
\[s=t^2+\l_4t^6+\l_6t^8+\cdots.\]
\end{lemma}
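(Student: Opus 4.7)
The plan is to substitute $x = 1/s$ and $y = 1/(st)$ from (\ref{xyelliptic}) into the defining equation $y^2 = x^3 + \l_4 x + \l_6$ of $V$. After clearing denominators by multiplying through by $s^3$, this yields the implicit equation
\[
s = t^2 + \l_4\, t^2 s^2 + \l_6\, t^2 s^3.
\]
I would then recover $s$ as a formal power series in $t$ with coefficients in $\mathbb{Z}[\l_4,\l_6]$ by iteratively solving this fixed-point equation, and read off the homogeneity of the coefficients $\beta_i$ directly from the equation.

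To make the bookkeeping transparent, I would write $s = t^2(1+\eta)$, so that the implicit equation becomes
\[
\eta = \l_4\, t^4 (1+\eta)^2 + \l_6\, t^6 (1+\eta)^3.
\]
The right-hand side has the form $\l_4 t^4 P(\eta) + \l_6 t^6 Q(\eta)$ with $P,Q \in \mathbb{Z}[\eta]$, so that the congruence $\eta_1 \equiv \eta_2 \pmod{t^{n+1}}$ forces the two right-hand sides to agree modulo $t^{n+5}$. Starting from $\eta \equiv 0 \pmod{t}$, iterating the equation therefore produces, step by step, a unique formal power series $\eta \in \mathbb{Z}[\l_4,\l_6][[t]]$ of $t$-adic order at least $4$, which gives the desired shape $s = t^2\!\left(1 + \sum_{i\ge 4}\beta_i t^i\right)$ with $\beta_i \in \mathbb{Z}[\l_4,\l_6]$. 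The first two iterations already yield $\eta \equiv \l_4 t^4 + \l_6 t^6 \pmod{t^8}$, giving the explicit initial expansion $s = t^2 + \l_4 t^6 + \l_6 t^8 + \cdots$.

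For the homogeneity claim, I would assign weights $\deg t = -1$, $\deg \l_{2k} = 2k$, $\deg \eta = 0$; then every monomial appearing in the implicit equation has weighted degree $0$, and by uniqueness of the iterative solution each coefficient $\beta_i$ must be homogeneous in $\l_4,\l_6$ of weighted degree $i$. There is no real obstruction in this lemma: once the implicit equation is written down, everything reduces to a formal $t$-adic iteration with integrality and weights tracked automatically. The only point requiring mild care is phrasing the inductive order bound so that the iteration visibly converges to a unique series with integer-polynomial coefficients.
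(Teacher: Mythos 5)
Your proposal is correct and follows essentially the same route as the paper: both derive the implicit equation $s=t^2+\l_4 s^2t^2+\l_6 s^3t^2$ from the curve equation and solve it recursively in powers of $t$, with integrality and homogeneity read off from the recursion. The only difference is presentational — the paper writes the explicit coefficient recurrence $\beta_n=\l_4\sum\beta_{i_1}\beta_{i_2}+\l_6\sum\beta_{i_1}\beta_{i_2}\beta_{i_3}$, whereas you phrase the same computation as a $t$-adic fixed-point iteration.
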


\begin{proof}
By substituting (\ref{xyelliptic}) into $y^2=x^3+\l_4x+\l_6$, we have
\b
s=t^2+\l_4s^2t^2+\l_6s^3t^2.\la{seelliptic}
\e
The expansion of $s$ with respect to $t$ around $t=0$ takes the following form
\[s=t^2\sum_{i=0}^{\infty}\beta_it^i,\]
where $\beta_i\in\mathbb{C}$.
By substituting the above expansion into (\ref{seelliptic}), we have
\[\sum_{i=0}^{\infty}\beta_it^i=1+\l_4t^4\left(\sum_{i=0}^{\infty}\beta_it^i\right)^2+\l_6t^6\left(\sum_{i=0}^{\infty}\beta_it^i\right)^3.\]
By comparing the coefficients, we obtain $\beta_0=1,\beta_4=\l_4,\beta_6=\l_6$, and $\beta_n=0$ for $n=1,2,3,5$.
For $n\ge6$, we have
\begin{eqnarray*}
\beta_n&=&\l_4\sum_{(i_1,i_2)\in I_1}\beta_{i_1}\beta_{i_2}+\l_6\sum_{(i_1,i_2,i_3)\in I_2}\beta_{i_1}\beta_{i_2}\beta_{i_3},
\end{eqnarray*}
where $I_1=\{(i_1,i_2)\in\mathbb{Z}_{\ge0}^2\;|\;i_1+i_2=n-4\}$ and $I_2=\{(i_1,i_2,i_3)\in\mathbb{Z}_{\ge0}^3\;|\;i_1+i_2+i_3=n-6\}$.
Therefore we obtain the statement of the lemma.

\end{proof}

From Lemma \ref{ex1elliptic} and (\ref{xyelliptic}), we have the expansions
\begin{equation}
x=\frac{1}{t^2}\left(1+\sum_{i=4}^{\infty}a_it^i\right),\;\;\;y=\frac{1}{t^3}\left(1+\sum_{i=4}^{\infty}a_it^i\right),\label{xyexpansione}
\end{equation}
where $a_i$ is a homogeneous polynomial in $\mathbb{Z}[\l_4,\l_6]$ of degree $i$.
We enumerate the monomials $x^my^n$, where $m$ is a non-negative integer and $n=0,1$,  according as the order of a pole at $\infty$ and denote them by $\varphi_j$, $j\ge1$.
In particular we have $\varphi_1=1$.
We set $e_i=t^{i+1}$.
We expand $t\varphi_j$ around $\infty$ with respect to $t$. Let
\[t\varphi_j=\sum_i\xi_{i,j}e_i,\]
where $\xi_{i,j}\in\mathbb{Z}[\l_4,\l_6]$.
For a partition $\mu=(\mu_1,\mu_2,\dots,)$, we define
\[\xi_{\mu}=\det(\xi_{m_i,j})_{i,j\in\mathbb{N}},\]
where $m_i=\mu_i-i$ and the infinite determinant is well defined.
Then we have $\xi_{\mu}\in\mathbb{Z}[\l_4,\l_6]$.
We define the tau function $\tau(u)$ by
\[\tau(u)=\sum_{\mu}\xi_{\mu}s_{\mu}(u),\]
where the sum is over all partitions.
From Lemma \ref{schurelli}, we obtain the following proposition.

\begin{prop}\la{tauhe}
We have $\tau(u)\in\mathbb{Z}[\l_4,\l_6]\langle\langle u\rangle\rangle$.
\end{prop}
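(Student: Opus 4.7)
The proof plan is essentially to combine the two pieces of structural information already at hand and then to check that the infinite sum defining $\tau(u)$ is actually a well-defined Hurwitz series.

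First I would observe the key degree-counting fact about the one-variable Schur polynomial $s_\mu(u)$. Writing out the determinant $s_\mu(u)=\det(p_{\mu_i-i+j}(u))_{1\le i,j\le l}$ via the permutation expansion, each summand is $\pm\prod_{i=1}^l p_{\mu_i-i+\sigma(i)}(u)$, which contributes a monomial in $u$ of total degree $\sum_i(\mu_i-i+\sigma(i)) = |\mu|$ since $\sum_i i = \sum_i\sigma(i)$. Hence $s_\mu(u)$ is a homogeneous polynomial in $u$ of degree exactly $|\mu|=\sum_i\mu_i$, i.e., a scalar multiple of $u^{|\mu|}$.

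Next, combining this with Lemma \ref{schurelli}, we may write $s_\mu(u) = b_\mu\,u^{|\mu|}/|\mu|!$ with $b_\mu\in\mathbb{Z}$. Substituting into the definition,
\[
\tau(u) \;=\; \sum_{\mu}\xi_\mu\, s_\mu(u) \;=\; \sum_{k\ge 0}\Biggl(\sum_{|\mu|=k}\xi_\mu\, b_\mu\Biggr)\frac{u^{k}}{k!}.
\]
For every fixed $k$ there are only finitely many partitions $\mu$ with $|\mu|=k$, so the inner sum is a finite sum of elements of $\mathbb{Z}[\lambda_4,\lambda_6]$ (recall $\xi_\mu\in\mathbb{Z}[\lambda_4,\lambda_6]$ by construction, and $b_\mu\in\mathbb{Z}$). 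Hence each coefficient of $u^k/k!$ lies in $\mathbb{Z}[\lambda_4,\lambda_6]$, which is exactly the statement $\tau(u)\in\mathbb{Z}[\lambda_4,\lambda_6]\langle\langle u\rangle\rangle$.

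The only potential obstacle is that the definition of $\xi_\mu$ involves an infinite determinant, so one might worry about convergence or reordering issues; however this is essentially the Sato construction and the paper explicitly asserts that the infinite determinant $\xi_\mu=\det(\xi_{m_i,j})_{i,j\in\mathbb{N}}$ is well defined with $\xi_\mu\in\mathbb{Z}[\lambda_4,\lambda_6]$ (this is because $t\varphi_j$ has a principal part of bounded $t$-order and the matrix $(\xi_{-i,j})$ stabilizes to the identity for large indices). Once this is granted, the argument above is purely a matter of bookkeeping on degrees, and there is no genuine analytic difficulty.
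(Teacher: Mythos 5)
Your proof is correct and follows essentially the same route as the paper, which simply deduces the proposition from Lemma \ref{schurelli} together with $\xi_\mu\in\mathbb{Z}[\lambda_4,\lambda_6]$. Your additional observation that $s_\mu(u)$ is homogeneous of degree $|\mu|$, so that each coefficient of $u^k/k!$ receives contributions from only finitely many partitions, is a correct and worthwhile piece of bookkeeping that the paper leaves implicit.
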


\begin{lemma}\label{duexel}
The expansion of $du$ around $\infty$ with respect to $t$ takes the form
\[du=\left(\sum_{j=1}^{\infty}b_jt^{j-1}\right)dt,\]
where $b_1=1$ and $b_2=b_3=b_4=0$.
\end{lemma}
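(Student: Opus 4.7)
The plan is to combine the expansions of $x$ and $y$ given in (\ref{xyexpansione}) to compute $du = -\frac{1}{2y} dx$ directly as a Laurent series in $t$, and then read off the first four coefficients.

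First I would differentiate the expansion
\[
x = \frac{1}{t^2}\Bigl(1 + \sum_{i=4}^{\infty} a_i t^i\Bigr)
\]
with respect to $t$ to obtain
\[
\frac{dx}{dt} = \frac{1}{t^3}\Bigl(-2 + \sum_{i=4}^{\infty} (i-2)\, a_i t^i\Bigr).
\]
Note that the correction term under the sum starts at order $t^4$, so $\frac{dx}{dt} = -\frac{2}{t^3} + O(t)$.

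Next I would invert $y$. Since $y = t^{-3}(1 + \sum_{i\ge 4} a_i t^i)$, we get
\[
\frac{1}{y} = \frac{t^3}{1 + \sum_{i\ge 4} a_i t^i} = t^3\bigl(1 + O(t^4)\bigr),
\]
again with the correction starting at order $t^4$ beyond the leading $t^3$.

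Multiplying these and including the factor $-1/2$, the two $t^{\pm 3}$ factors cancel and I get
\[
du = -\frac{1}{2y}\,dx = \frac{1 - \tfrac{1}{2}\sum_{i\ge 4}(i-2)\,a_i t^i}{1 + \sum_{i\ge 4} a_i t^i}\, dt.
\]
The numerator equals $1 + O(t^4)$ and the denominator equals $1 + O(t^4)$, so the quotient, expanded geometrically, is $1 + O(t^4)$. Writing this quotient as $\sum_{j\ge 1} b_j t^{j-1}$, we read off $b_1 = 1$ and $b_2 = b_3 = b_4 = 0$, which is exactly the claim. There is no real obstacle here; the only thing to be careful about is tracking that both the correction in $dx/dt$ and the correction in $1/y$ start at $t^4$ (in the appropriate relative sense), which is guaranteed by the fact that $a_1 = a_2 = a_3 = 0$ in (\ref{xyexpansione}).
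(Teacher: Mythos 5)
Your proposal is correct and takes essentially the same route as the paper's proof: differentiate the expansion of $x$ from (\ref{xyexpansione}), divide by $-2y$, and observe that the resulting quotient has numerator and denominator both equal to $1+O(t^4)$, hence equals $1+O(t^4)$. The only cosmetic difference is that you separate the computation of $dx/dt$ and $1/y$ before multiplying, whereas the paper writes the quotient in one step; the content is identical.
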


\begin{proof}
From (\ref{xyexpansione}), we have
\[du=-\frac{dx}{2y}=\frac{1-\sum_{i=4}^{\infty}a_{i}\frac{i-2}{2}t^{i}}{1+\sum_{i=4}^{\infty}a_it^i}dt=\left(1+O(t^4)\right)dt.\]

\end{proof}

We take the algebraic bilinear form
\b
\omega(P,Q)=\frac{x_1x_2(x_1+x_2)+\l_4(x_1+x_2)+2y_1y_2+2\l_6}{4y_1y_2(x_1-x_2)^2}dx_1dx_2,\la{bie}
\e
where $P=(x_1,y_1),Q=(x_2,y_2)\in V$.
We can expand $\omega(P,Q)$ around $\infty\times\infty$ as
\b
\omega(P,Q)=\left(\frac{1}{(t_1-t_2)^2}+\sum_{i,j\ge1}q_{ij}t_1^{i-1}t_2^{j-1}\right)dt_1dt_2,\la{biexe}
\e
where $q_{ij}\in\mathbb{C}$ and $t_1,t_2$ are copies of the local parameter $t$.

\begin{lemma}\la{qe}
We have $q_{11}=0$.
\end{lemma}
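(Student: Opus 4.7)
The plan is to compute the Laurent expansion of $\omega(P,Q)$ in $t_1,t_2$ directly: $q_{11}$ is by definition the value at $(t_1,t_2)=(0,0)$ of the regular part $R(t_1,t_2):=\omega/(dt_1 dt_2)-(t_1-t_2)^{-2}$, which is analytic near the origin since $\omega$ has no singularity outside the diagonal. From Lemma \ref{ex1elliptic} together with $y=x/t$, one has $x=t^{-2}\epsilon(t)$ and $y=t^{-3}\epsilon(t)$ sharing the same factor $\epsilon(t)=1-\l_4 t^4-\l_6 t^6+O(t^8)$. Hence
\[
x_1-x_2 = -\frac{(t_1-t_2)(t_1+t_2)}{t_1^2 t_2^2}\bigl(1 + \l_4 t_1^2 t_2^2 + O(t^4)\bigr),
\]
and by Lemma \ref{duexel},
\[
\frac{(dx_1/dt_1)(dx_2/dt_2)}{4 y_1 y_2} = \frac{du_1}{dt_1}\cdot\frac{du_2}{dt_2} = \bigl(1+O(t_1^4)\bigr)\bigl(1+O(t_2^4)\bigr),
\]
so that $y_1 y_2$ cancels out of $\omega/(dt_1 dt_2)$.

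The spurious pole $(t_1+t_2)^{-2}$ arising from $(x_1-x_2)^{-2}$ must be cancelled inside $\omega$. This is clear \emph{a priori}: at $t_1+t_2=0$ one has $P=-Q$, not $P=Q$, so $\omega$ is regular there. I verify it directly by noting that at $t_1=-t_2$ one has $x_1=x_2$ and $y_1=-y_2$, so $N=2(x_1^3+\l_4 x_1+\l_6)-2y_1^2=0$, and using $2yy'(t)=(3x^2+\l_4)x'(t)$ one checks that $\partial_{t_1}N$ also vanishes there. Hence the full numerator $t_1^4 t_2^4 N$ of $\omega/(dt_1 dt_2)$ factors as $(t_1+t_2)^2 M(t_1,t_2)$ with $M$ analytic, and a short expansion gives $M = 1 - \l_4(t_1^4+t_2^4) + O(\text{degree}\ge 6)$.

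Combining with $\bigl(1+\l_4 t_1^2 t_2^2+\cdots\bigr)^{-2}=1-2\l_4 t_1^2 t_2^2+O(\text{higher})$ and the $(du_i/dt_i)$ factors, and invoking the algebraic identity
\[
2(t_1^4+t_2^4)-(t_1^2+t_2^2)^2=(t_1-t_2)^2(t_1+t_2)^2
\]
to turn the residual apparent pole into a genuine polynomial, I will obtain
\[
\frac{\omega}{dt_1 dt_2} = \frac{1}{(t_1-t_2)^2} + \l_4(t_1+t_2)^2 + O(\text{total degree}\ge 4),
\]
so the regular part $R$ starts at total degree $2$ and its constant term $q_{11}$ vanishes. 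The main obstacle is the bookkeeping of $O(t^4)$ corrections through the three factors $M$, $\Pi^{-2}$ (with $\Pi=1+\l_4 t_1^2t_2^2+\cdots$), and $(du_1/dt_1)(du_2/dt_2)$; the displayed identity is the place where the nontrivial cancellation producing $\l_4(t_1+t_2)^2$ happens.
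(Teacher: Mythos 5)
Your argument is correct, but it is organized quite differently from the paper's. The paper simply clears denominators --- it multiplies the defining identity for $\omega$ by $t_1^7t_2^7(t_1-t_2)^2$, substitutes the expansions of $x_i,y_i$, and reads off $q_{11}=0$ from the single coefficient of $t_1^0t_2^6$ (effectively: set $t_1=0$ and compare the $t_2^6$ terms); no factorization of the numerator and no analysis of the spurious pole at $t_1=-t_2$ is needed. You instead compute the regular part $R=\omega/(dt_1dt_2)-(t_1-t_2)^{-2}$ structurally: you peel off $du_1\,du_2$ so that $y_1y_2$ cancels, use the hyperelliptic involution ($x$ even, $y$ odd in $t$, together with $2yy'=(3x^2+\lambda_4)x'$) to show $(t_1+t_2)^2$ divides the numerator, and then track the degree-$4$ corrections. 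This costs more bookkeeping but buys more: it explains \emph{why} the apparent pole at $t_1+t_2=0$ is absent and yields $R=\lambda_4(t_1+t_2)^2+O(\deg 4)$, i.e.\ also $q_{12}=q_{21}=0$ and $q_{13}=q_{31}=\lambda_4$, consistent with what Lemma \ref{q} records in the genus-$2$ case. Two small points. First, for the lemma itself your computation terminates earlier than you suggest: every correction factor ($\epsilon$, $du_i/dt_i$, $\Pi^{-2}$, $M$) is $1+O(\deg 4)$, so $A:=(t_1-t_2)^2\,\omega/(dt_1dt_2)$ satisfies $A-1=O(\deg 4)$; since the degree-$2$ part of $A-1$ equals $q_{11}(t_1-t_2)^2$, this already gives $q_{11}=0$ without the degree-$4$ bookkeeping. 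Second, your final displayed formula does require one coefficient beyond what Lemma \ref{duexel} states, namely $du/dt=1+2\lambda_4t^4+O(t^6)$ (which follows from $x=t^{-2}(1-\lambda_4t^4-\cdots)$ and $du=-dx/(2y)$); with $b_5=2\lambda_4$ the three degree-$4$ contributions $-\lambda_4(t_1^4+t_2^4)$, $-2\lambda_4t_1^2t_2^2$ and $2\lambda_4(t_1^4+t_2^4)$ do combine via your identity into $\lambda_4(t_1-t_2)^2(t_1+t_2)^2$, so the claimed expansion is right --- but you should make that coefficient explicit if you keep the stronger statement.
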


\begin{proof}
From (\ref{bie}) and (\ref{biexe}), we have
\[\{x_1x_2(x_1+x_2)+\l_4(x_1+x_2)+2y_1y_2+2\l_6\}dx_1dx_2\]
\[=4y_1y_2(x_1-x_2)^2\left(\frac{1}{(t_1-t_2)^2}+\sum_{i,j\ge1}q_{ij}t_1^{i-1}t_2^{j-1}\right)dt_1dt_2.\]
By substituting the expansions of $x_1,x_2,y_1,y_2$ into the above equation and multiplying the both sides of this equation by $t_1^7t_2^7(t_1-t_2)^2$, we obtain
\[(t_1-t_2)^2(-2+2a_4t_1^4+\cdots)(-2+2a_4t_2^4+\cdots)\]
\[\times\left[f(t_1)f(t_2)\{t_2^2f(t_1)+t_1^2f(t_2)\}+\l_4t_1^2t_2^2\{t_2^2f(t_1)+t_1^2f(t_2)\}+2t_1t_2f(t_1)f(t_2)+2\l_6t_1^4t_2^4\right]\]
\[=4f(t_1)f(t_2)\{t_2^2f(t_1)-t_1^2f(t_2)\}^2\left\{1+q_{11}(t_1-t_2)^2+\sum_{i+j\ge3}\widetilde{q}_{ij}t_1^{i}t_2^{j}\right\},\]
where $f(t)$ is defined by $x=t^{-2}f(t)$ and $\widetilde{q}_{ij}\in\mathbb{C}$.
By comparing the coefficient of $t_2^6$ in the above equation, we obtain $q_{11}=0$.
\end{proof}

We define $c_i$ by the following relation
\[\sqrt{\frac{du}{dt}}=\exp\left(\sum_{i=1}^{\infty}\frac{c_i}{i}t^i\right).\]

\begin{lemma}\la{ce}
We have $c_1=0$.
\end{lemma}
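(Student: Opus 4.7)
The plan is to read off the leading expansion of $du/dt$ from Lemma \ref{duexel} and then compare coefficients of $t$ on both sides of the defining relation for the $c_i$.

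First I would note that by Lemma \ref{duexel} we have
\[
\frac{du}{dt} = \sum_{j=1}^{\infty} b_j t^{j-1} = 1 + b_2 t + b_3 t^2 + b_4 t^3 + O(t^4),
\]
and the lemma gives $b_1=1$ and $b_2=b_3=b_4=0$. Hence $du/dt = 1 + O(t^4)$, so its principal branch of square root also satisfies $\sqrt{du/dt} = 1 + O(t^4)$.

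Next I would take the logarithm of the defining identity $\sqrt{du/dt} = \exp\bigl(\sum_{i\ge 1} \frac{c_i}{i} t^i\bigr)$. The left-hand side is $\log(1 + O(t^4)) = O(t^4)$, while the right-hand side is $\sum_{i\ge 1} \frac{c_i}{i} t^i$. Matching the coefficient of $t^1$ gives $c_1 = 0$. (The same comparison at $t^2$ and $t^3$ would in fact give $c_2 = c_3 = 0$, but only $c_1 = 0$ is claimed here.)

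There is no real obstacle: the content of the lemma is just the statement that the expansion of $du = -dx/(2y)$ around $\infty$ has no $t^1$ correction to $dt$, which is already encoded in Lemma \ref{duexel}, and then the square root and logarithm preserve this vanishing. The only point to be careful about is to fix the branch of the square root so that $\sqrt{du/dt}|_{t=0} = 1$, which is consistent with the exponential form on the right-hand side.
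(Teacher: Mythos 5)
Your proposal is correct and follows essentially the same route as the paper: both deduce $\sqrt{du/dt}=1+O(t^4)$ from Lemma \ref{duexel} and then compare the coefficient of $t$ against the exponential side (the paper expands the exponential, you take the logarithm — a trivial difference). The remark about fixing the branch of the square root is a sensible, if minor, point of care.
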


\begin{proof}
From Lemma \ref{duexel}, we have the following expansion
\[\sqrt{\frac{du}{dt}}=1+O(t^4).\]
On the other hand, we have
\[\exp\left(\sum_{i=1}^{\infty}\frac{c_i}{i}t^i\right)=1+c_1t+O(t^2).\]
Thus we have $c_1=0$.
\end{proof}

\begin{theorem}(\cite{N-2010-2}, Theorem 4)\label{taue}
We have
\[\tau(u)=\exp\left(-c_1u+\frac{1}{2}q_{11}u^2\right)\s(b_1u).\]
\end{theorem}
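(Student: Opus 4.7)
The plan is to realize both sides of the claimed identity as two expressions for the same object---the KP-hierarchy tau function of the Sato--Segal--Wilson Grassmannian point determined by the pointed curve with local parameter $(V, \infty, t)$. The left-hand side is its Sato--Plücker expansion, the right-hand side its algebro-geometric (theta-function) expression, and the bulk of the work is in showing that these two expressions agree.

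First I would verify that $\tau(u) = \sum_\mu \xi_\mu s_\mu(u)$ really is the Sato--Plücker expansion of that tau function restricted to the first KP time $t_1 = u$, all other times set to zero. The family $\{t\varphi_j\}_{j\ge 1}$ spans the Segal--Wilson subspace $W \subset \mathbb{C}((t))$ attached to $(V,\infty,t)$; expanding in the basis $\{e_i = t^{i+1}\}$ produces the frame $(\xi_{i,j})$, and the infinite determinants $\xi_\mu$ are the Plücker coordinates of $W$, so Sato's formula yields exactly the Schur expansion $\sum_\mu \xi_\mu s_\mu(u)$. Second, I would apply the Segal--Wilson evaluation of this tau function: it equals an exponential prefactor times a Riemann theta function evaluated along the Abel--Jacobi image. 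The prefactor has a quadratic part $\tfrac{1}{2}\sum q_{ij} t_i t_j$ read off from the diagonal expansion (\ref{biexe}) of the bilinear form $\omega$, and a linear part $-\sum c_i t_i$ read off from the half-density $\sqrt{du/dt} = \exp(\sum c_i t^i/i)$; specializing to $t_1 = u$ collapses the prefactor to $\exp(-c_1 u + \tfrac{1}{2} q_{11} u^2)$. The theta factor evaluates along $\int_\infty^P du$, which near $\infty$ linearizes to $b_1 u$ by Lemma \ref{duexel}; combining with the $\exp(\tfrac{1}{2}{}^t{\bf u}\eta_1\omega_1^{-1}{\bf u})$ factor built into the definition of $\sigma$ identifies the theta factor with $\sigma(b_1 u)$. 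The overall normalization constant in the definition of $\sigma$ is precisely the one arranged so that the leading Schur terms of both sides coincide.

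The main obstacle is the Segal--Wilson identification itself: verifying that the quadratic form in the exponential prefactor is exactly $q_{11}$ from the expansion of $\omega$, and that the half-density correction $\sqrt{du/dt}$ contributes exactly $c_1$ at the linear level. Both facts are standard in Segal--Wilson theory but require careful translation between the infinite-dimensional Grassmannian picture and the local-geometric data of the curve---specifically, between the projection of the Baker--Akhiezer function onto the negative Laurent tail of $\mathbb{C}((t))$ and the diagonal expansion of the bilinear form. Once these identifications are accepted, the theorem is immediate from the definitions of $\tau$ and $\sigma$ together with Lemma \ref{duexel}.
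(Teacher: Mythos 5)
The survey does not prove Theorem \ref{taue} at all: it is imported from \cite{N-2010-2} (Theorem 4) and used as a black box to deduce (\ref{sigmae}). Your outline is a faithful reconstruction of the strategy of that reference --- identify $\tau(u)=\sum_\mu\xi_\mu s_\mu(u)$ as the Sato--Pl\"ucker expansion of the tau function of the Grassmannian point attached to $(V,\infty,t)$, restricted to $t_1=u$ with all higher times set to zero, and then compare with the algebro-geometric (theta-function) evaluation of that same tau function --- so the route is the correct one and is the same as the source's.

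As a self-contained proof, however, the proposal has a genuine gap, and you have located it yourself: the assertion that the tau function of $W$ equals $\exp(-\sum c_it_i+\frac{1}{2}\sum q_{ij}t_it_j)$ times a theta factor, with the $q_{ij}$ being exactly those of the expansion (\ref{biexe}) of the particular form (\ref{bie}) and the $c_i$ those of $\sqrt{du/dt}$, is not a routine ``standard fact'' to be waved at --- it is the entire content of the theorem. Two compatibility checks are invisible in your sketch and are where the actual work lies. First, a symmetric bidifferential with a double pole on the diagonal is determined only up to adding products of holomorphic differentials; the specific choice (\ref{bie}) is forced by the requirement that its regular part on the diagonal match the quadratic form $\frac{1}{2}\,{}^t\bu\,\eta_1\omega_1^{-1}\bu$ built into the definition of $\s$ through the second-kind differentials $dr_i$ --- with a different admissible $\omega$ the constants $q_{ij}$ change and the stated identity fails, so ``the quadratic part is read off from $\omega$'' begs precisely the question being answered. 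Second, your claim that the normalization $c_0\partial_{A_0}\theta[\delta]({\bf 0},\tau)$ is ``precisely the one arranged so that the leading Schur terms coincide'' is itself the nontrivial content of Theorem \ref{rationallim} and must be invoked explicitly, not treated as a bookkeeping convention. If one grants \cite{N-2010-2} for the central identification, the theorem is immediate --- but then one is citing the result, which is exactly what the survey does; to upgrade your outline to a proof you would need to carry out the Segal--Wilson computation (wave function, prime form, and the identity relating $\omega$ to $d_Pd_Q\log E(P,Q)$ plus the $\eta_1\omega_1^{-1}$ correction) rather than defer it.
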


From Theorem \ref{taue}, Lemma \ref{duexel}, Lemma \ref{qe}, and Lemma \ref{ce}, we have
\b
\s(u)=\tau(u).\la{sigmae}
\e

From Proposition \ref{tauhe} and (\ref{sigmae}), we obtain the following theorem.

\begin{theorem}(\cite{O-2018})
We have
\[\sigma(u)\in\mathbb{Z}[\l_4,\l_6]\langle\langle u\rangle\rangle.\]
\end{theorem}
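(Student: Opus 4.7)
The plan is to combine the two main preparatory results assembled in this subsection: Proposition \ref{tauhe}, which gives Hurwitz integrality of the auxiliary tau function $\tau(u)$ over $\mathbb{Z}[\lambda_4,\lambda_6]$, and Theorem \ref{taue}, Nakayashiki's representation of $\sigma$ as a tau function up to an exponential correction and a linear rescaling. Since the rescaling is trivial and the correction disappears, the theorem reduces to the identity $\sigma(u) = \tau(u)$ together with the already-established integrality of $\tau$.

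In more detail, I would first invoke Proposition \ref{tauhe}: the tau function $\tau(u) = \sum_\mu \xi_\mu s_\mu(u)$ lies in $\mathbb{Z}[\lambda_4,\lambda_6]\langle\langle u \rangle\rangle$. This is because the Schur polynomials $s_\mu(u)$ are Hurwitz integral over $\mathbb{Z}$ (Lemma \ref{schurelli}), and each coefficient $\xi_\mu$ is an infinite determinant whose entries $\xi_{i,j}$ lie in $\mathbb{Z}[\lambda_4,\lambda_6]$, thanks to the integer-polynomial expansions of $x$ and $y$ at infinity recorded in Lemma \ref{ex1elliptic} and (\ref{xyexpansione}).

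Next I would apply Theorem \ref{taue} to relate $\tau$ and $\sigma$:
$$\tau(u) = \exp\!\left(-c_1 u + \tfrac{1}{2} q_{11} u^2\right)\sigma(b_1 u).$$
The normalizations $b_1 = 1$ (Lemma \ref{duexel}), $q_{11} = 0$ (Lemma \ref{qe}), and $c_1 = 0$ (Lemma \ref{ce}) make the exponential prefactor trivial and the rescaling the identity, so $\sigma(u) = \tau(u)$, which is precisely (\ref{sigmae}). Combining this with the Hurwitz integrality of $\tau$ yields $\sigma(u) \in \mathbb{Z}[\lambda_4,\lambda_6]\langle\langle u \rangle\rangle$.

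The genuine work has all been done in the preceding material, in particular in Nakayashiki's Theorem \ref{taue} and in the recursion of Lemma \ref{ex1elliptic} that forces the coefficients $a_i$ to be integer polynomials in $\lambda_4,\lambda_6$. The theorem itself is therefore a short assembly. The only thing I really need to verify in this step is that the three normalizations $b_1 = 1$, $q_{11} = 0$, $c_1 = 0$ indeed hold for the chosen local parameter $t = x/y$, so that Hurwitz integrality of $\tau$ transfers to $\sigma$ without any need to argue separately that an exponential factor $\exp\!\left(-c_1 u + \tfrac{1}{2} q_{11} u^2\right)$ or a linear substitution $u \mapsto b_1 u$ preserves the Hurwitz property.
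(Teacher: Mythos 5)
Your proposal is correct and matches the paper's own argument exactly: the paper derives $\sigma(u)=\tau(u)$ (equation (\ref{sigmae})) from Theorem \ref{taue} together with Lemma \ref{duexel}, Lemma \ref{qe}, and Lemma \ref{ce}, and then combines this with Proposition \ref{tauhe} to conclude. Nothing is missing.
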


\subsection{Hurwitz integrality of the expansion of the two-dimensional sigma function}

In this subsection, we will explain the proof of \cite{O-2018} for Hurwitz integrality of the expansion of the sigma function for $g=2$.

\vspace{1ex}

In this subsection, we assume $g=2$.
For an integral domain $R$ with characteristic $0$ and variables $u_1,u_3$, let
\[R\langle\langle u_1,u_3\rangle\rangle=\left\{\sum_{i=0}^{\infty}\sum_{j=0}^{\infty}\alpha_{i,j}\frac{u_1^{i}u_3^{j}}{i!j!}\;\middle|\;\alpha_{i,j}\in R\right\}.\]
For $n<0$ let $p_n(u_1,u_3)=0$ and for $n\ge0$ let
\[p_n(u_1,u_3)=\sum\frac{u_1^iu_3^j}{i!j!},\]
where the summation is over all $(i,j)\in\mathbb{Z}_{\ge0}^2$ satisfying $i+3j=n$.
For an arbitrary partition $\mu=(\mu_1,\mu_2,\dots,\mu_l)$, we define the Schur polynomial $s_{\mu}(u_1,u_3)$ by
\[s_{\mu}(u_1,u_3)=\det \left(p_{\mu_i-i+j}(u_1,u_3)\right)_{1\le i,j\le l}.\]

\vspace{2ex}

\begin{lemma}\la{schur}
We have $s_{\mu}(u_1,u_3)\in\mathbb{Z}\langle\langle u_1,u_3\rangle\rangle$.
\end{lemma}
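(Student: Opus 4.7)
The plan is to adapt Lemma \ref{schurelli} to the two-variable setting. The key point is that $\mathbb{Z}\langle\langle u_1, u_3\rangle\rangle$ forms a ring under the usual addition and multiplication of formal power series: once this is established, the determinant defining $s_\mu(u_1,u_3)$ will automatically lie in $\mathbb{Z}\langle\langle u_1, u_3\rangle\rangle$ because each entry $p_{\mu_i-i+j}(u_1,u_3)$ visibly does, its Hurwitz coefficients being $0$ or $1$.

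The first step is to verify the multiplicative closure. For non-negative integers $a,b,c,d$, I would compute
\[
\frac{u_1^a u_3^b}{a!\,b!}\cdot\frac{u_1^c u_3^d}{c!\,d!} \;=\; \binom{a+c}{a}\binom{b+d}{b}\,\frac{u_1^{a+c} u_3^{b+d}}{(a+c)!\,(b+d)!},
\]
and note that the binomial coefficients are integers. Extending linearly, this shows that the Cauchy product of two elements of $\mathbb{Z}\langle\langle u_1, u_3\rangle\rangle$ again lies in $\mathbb{Z}\langle\langle u_1, u_3\rangle\rangle$; closure under addition is obvious from the definition.

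The second step is to observe that $p_n(u_1,u_3)$ is manifestly Hurwitz integral: in its expansion each nonzero Hurwitz coefficient equals $1$, corresponding to pairs $(i,j)\in\mathbb{Z}_{\ge 0}^2$ with $i+3j=n$. For $n<0$ the series is zero, hence trivially Hurwitz integral.

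Finally, since $s_\mu(u_1,u_3)=\det(p_{\mu_i-i+j}(u_1,u_3))_{1\le i,j\le l}$ is a signed sum of products of finitely many such $p_n$'s with coefficients $\pm 1$, the ring property from Step~1 together with Step~2 shows $s_\mu(u_1,u_3)\in\mathbb{Z}\langle\langle u_1, u_3\rangle\rangle$, which is the desired conclusion. I do not anticipate any real obstacle; the only thing that requires care is the bookkeeping in the multiplication formula, which uses the fact that the binomial coefficients factor separately in the $u_1$ and $u_3$ variables.
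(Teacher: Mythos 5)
Your proof is correct and follows essentially the same route as the paper: the paper's proof consists precisely of the binomial-coefficient computation $\frac{u_1^iu_3^j}{i!j!}\cdot\frac{u_1^ku_3^{\ell}}{k!\ell!}=\binom{i+k}{i}\binom{j+\ell}{j}\frac{u_1^{i+k}u_3^{j+\ell}}{(i+k)!(j+\ell)!}$ establishing that $\mathbb{Z}\langle\langle u_1,u_3\rangle\rangle$ is closed under multiplication, with the observations about the integrality of $p_n$ and the determinant being a signed sum of products left implicit. Your write-up just makes those remaining steps explicit.
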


\begin{proof}
For $i,j,k,\ell\ge0$, we have
\[\frac{u_1^iu_3^j}{i!j!}\frac{u_1^ku_3^{\ell}}{k!\ell!}=\frac{(i+k)!(j+\ell)!}{i!j!k!\ell!}\frac{u_1^{i+k}u_3^{j+\ell}}{(i+k)!(j+\ell)!}=\left(\begin{matrix}i+k\\i\end{matrix}\right)
\left(\begin{matrix}j+\ell\\j\end{matrix}\right)\frac{u_1^{i+k}u_3^{j+\ell}}{(i+k)!(j+\ell)!}.\]
Since $\left(\begin{matrix}i+k\\i\end{matrix}\right)\left(\begin{matrix}j+\ell\\j\end{matrix}\right)\in\mathbb{Z}$, we obtain the statement of the lemma.
\end{proof}

Let
\[t=\frac{x^2}{y},\;\;\;s=\frac{1}{x}.\]
Then $t$ is a local parameter of $V$ around $\infty$.
We have
\b
x=\frac{1}{s},\;\;\;y=\frac{1}{s^2t}.\la{xy}
\e

\begin{lemma}\label{ex1}
We have the following expansion of $s$ in terms of $t$ around $t=0$
\[s=t^2\left(1+\sum_{i=4}^{\infty}\gamma_it^i\right),\]
where $\gamma_i$ is a homogeneous polynomial in $\mathbb{Z}[\l_4,\l_6,\l_8,\l_{10}]$ of degree $i$.
In particular, we have
\[s=t^2+\l_4t^6+\l_6t^8+(2\l_4^2+\l_8)t^{10}+(5\l_4\l_6+\l_{10})t^{12}+\cdots.\]
\end{lemma}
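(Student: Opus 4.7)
The plan is to mirror the proof of Lemma \ref{ex1elliptic} almost verbatim, since the curve $y^2 = x^5+\lambda_4 x^3+\lambda_6 x^2+\lambda_8 x+\lambda_{10}$ differs from the elliptic case only in having more Weierstrass coefficients and a different local parameter.

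First I would substitute (\ref{xy}) into the defining equation of $V$. From $y^2 = x^5+\lambda_4 x^3+\lambda_6 x^2+\lambda_8 x+\lambda_{10}$ and $x=1/s$, $y=1/(s^2 t)$, clearing denominators (multiplying through by $s^5 t^2$) yields the analogue of (\ref{seelliptic}):
\[
s = t^2 + \lambda_4 s^2 t^2 + \lambda_6 s^3 t^2 + \lambda_8 s^4 t^2 + \lambda_{10} s^5 t^2.
\]
This is the implicit equation that determines $s$ as a formal power series in $t$ with $s(0)=0$ and leading term $t^2$.

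Next I would write the ansatz $s = t^2 \sum_{i\ge 0}\gamma_i t^i$ with $\gamma_i\in\mathbb{C}$, substitute it into the displayed equation, divide by $t^2$, and equate coefficients. The resulting identity is
\[
\sum_{i\ge 0}\gamma_i t^i = 1 + \lambda_4 t^4\Bigl(\sum \gamma_i t^i\Bigr)^2 + \lambda_6 t^6\Bigl(\sum \gamma_i t^i\Bigr)^3 + \lambda_8 t^8\Bigl(\sum \gamma_i t^i\Bigr)^4 + \lambda_{10} t^{10}\Bigl(\sum \gamma_i t^i\Bigr)^5.
\]
Comparing constant terms gives $\gamma_0=1$, and comparing the coefficients of $t,t^2,t^3$ gives $\gamma_1=\gamma_2=\gamma_3=0$. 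For $n\ge 4$ one obtains the explicit recurrence
\[
\gamma_n = \lambda_4 \!\!\sum_{i_1+i_2=n-4}\!\!\gamma_{i_1}\gamma_{i_2} + \lambda_6 \!\!\sum_{i_1+i_2+i_3=n-6}\!\!\gamma_{i_1}\gamma_{i_2}\gamma_{i_3} + \lambda_8 \!\!\sum_{i_1+\cdots+i_4=n-8}\!\!\gamma_{i_1}\cdots\gamma_{i_4} + \lambda_{10}\!\!\sum_{i_1+\cdots+i_5=n-10}\!\!\gamma_{i_1}\cdots\gamma_{i_5},
\]
where the convention is that a sum with negative upper index is empty. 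Since each $\gamma_{i_k}$ on the right has index strictly less than $n$, this is a genuine recursive definition, and the integrality as well as the homogeneity of $\gamma_n$ follow immediately by induction on $n$: assuming each $\gamma_i$ ($i<n$) lies in $\mathbb{Z}[\lambda_4,\lambda_6,\lambda_8,\lambda_{10}]$ and is homogeneous of degree $i$ (with $\deg\lambda_{2j}=2j$), each of the four contributions on the right is a homogeneous polynomial of degree $n$ with integer coefficients.

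Finally I would verify the explicitly stated values by running the recurrence through $n=10$: the cases $n=4,5,\ldots,10$ give $\gamma_4=\lambda_4$, $\gamma_5=0$, $\gamma_6=\lambda_6$, $\gamma_7=0$, $\gamma_8=2\lambda_4^2+\lambda_8$, $\gamma_9=0$, $\gamma_{10}=5\lambda_4\lambda_6+\lambda_{10}$, matching the formula displayed in the statement. There is no real obstacle here; the argument is purely a formal-series Newton iteration identical in structure to the $g=1$ case, and the only point requiring a little care is the bookkeeping of which of the four sums actually contribute for small $n$ (the $\lambda_6,\lambda_8,\lambda_{10}$ terms switch on only once $n\ge 6,8,10$ respectively).
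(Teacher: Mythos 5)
Your proposal is correct and follows essentially the same route as the paper's own proof: substitute $x=1/s$, $y=1/(s^2t)$ into the curve equation to get $s=t^2+\l_4s^2t^2+\l_6s^3t^2+\l_8s^4t^2+\l_{10}s^5t^2$, expand $s=t^2\sum\gamma_it^i$, and read off the recurrence, from which integrality, homogeneity, and the displayed initial terms follow by induction. No gaps.
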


\begin{proof}
By substituting (\ref{xy}) into $y^2=x^5+\l_4x^3+\l_6x^2+\l_8x+\l_{10}$, we have
\b
s=t^2+\l_4s^2t^2+\l_6s^3t^2+\l_8s^4t^2+\l_{10}s^5t^2.\la{se}
\e
The expansion of $s$ with respect to $t$ around $t=0$ takes the following form
\[s=t^2\sum_{i=0}^{\infty}\gamma_it^i,\]
where $\gamma_i\in\mathbb{C}$.
By substituting the above expansion into (\ref{se}), we have
\[\sum_{i=0}^{\infty}\gamma_it^i=1+\l_4t^4\left(\sum_{i=0}^{\infty}\gamma_it^i\right)^2+\l_6t^6\left(\sum_{i=0}^{\infty}\gamma_it^i\right)^3+\l_8t^8\left(\sum_{i=0}^{\infty}\gamma_it^i\right)^4+\l_{10}t^{10}\left(\sum_{i=0}^{\infty}\gamma_it^i\right)^5.\]
By comparing the coefficients, we obtain $\gamma_0=1,\gamma_4=\l_4,\gamma_6=\l_6,\gamma_8=2\l_4^2+\l_8,\gamma_{10}=5\l_4\l_6+\l_{10}$ and $\gamma_n=0$ for $n=1,2,3,5,7,9$.
For $n\ge10$, we have
\begin{eqnarray*}
\gamma_n&=&\l_4\sum_{(i_1,i_2)\in I_1}\gamma_{i_1}\gamma_{i_2}+\l_6\sum_{(i_1,i_2,i_3)\in I_2}\gamma_{i_1}\gamma_{i_2}\gamma_{i_3}+\l_8\sum_{(i_1,i_2,i_3,i_4)\in I_3}\gamma_{i_1}\gamma_{i_2}\gamma_{i_3}\gamma_{i_4} \\
&&+\l_{10}\sum_{(i_1,i_2,i_3,i_4,i_5)\in I_4}\gamma_{i_1}\gamma_{i_2}\gamma_{i_3}\gamma_{i_4}\gamma_{i_5},
\end{eqnarray*}
where $I_1=\{(i_1,i_2)\in\mathbb{Z}_{\ge0}^2\;|\;i_1+i_2=n-4\},\;I_2=\{(i_1,i_2,i_3)\in\mathbb{Z}_{\ge0}^3\;|\;i_1+i_2+i_3=n-6\},\;I_3=\{(i_1,i_2,i_3,i_4)\in\mathbb{Z}_{\ge0}^4\;|\;i_1+i_2+i_3+i_4=n-8\}$, and $I_4=\{(i_1,i_2,i_3,i_4,i_5)\in\mathbb{Z}_{\ge0}^5\;|\;i_1+i_2+i_3+i_4+i_5=n-10\}.$
Therefore we obtain the statement of the lemma.

\end{proof}

From Lemma \ref{ex1}, we have the expansions
\[x=\frac{1}{t^2}\left(1+\sum_{i=4}^{\infty}d_i^{(1)}t^i\right),\;\;\;y=\frac{1}{t^5}\left(1+\sum_{i=4}^{\infty}d_i^{(2)}t^i\right),\]
where $d_i^{(1)},d_i^{(2)}$ are homogeneous polynomials in $\mathbb{Z}[\l_4,\l_6,\l_8,\l_{10}]$ of degree $i$.
We enumerate the monomials $x^my^n$, where $m$ is a non-negative integer and $n=0,1$,  according as the order of a pole at $\infty$ and denote them by $\varphi_j$, $j\ge1$.
In particular we have $\varphi_1=1$.
We set $e_i=t^{i+1}$.
We expand $t^2\varphi_j$ around $\infty$ with respect to $t$. Let
\[t^2\varphi_j=\sum\xi_{i,j}e_i,\]
where $\xi_{i,j}\in\mathbb{Z}[\l_4,\l_6,\l_8,\l_{10}]$.
For a partition $\mu=(\mu_1,\mu_2,\dots,)$, we define
\[\xi_{\mu}=\det(\xi_{m_i,j})_{i,j\in\mathbb{N}},\]
where $m_i=\mu_i-i$ and the infinite determinant is well defined.
Then we have $\xi_{\mu}\in\mathbb{Z}[\l_4,\l_6,\l_8,\l_{10}]$.
We define the tau function $\tau(u_1,u_3)$ by
\[\tau(u_1,u_3)=\sum_{\mu}\xi_{\mu}s_{\mu}(u_1,u_3),\]
where the sum is over all partitions.
From Lemma \ref{schur}, we obtain the following proposition.

\begin{prop}\la{tauht}
We have $\tau(u_1,u_3)\in\mathbb{Z}[\l_4,\l_6,\l_8,\l_{10}]\langle\langle u_1,u_3\rangle\rangle$.
\end{prop}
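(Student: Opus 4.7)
The plan is to reduce the Hurwitz integrality of $\tau(u_1,u_3)$ to two ingredients already in hand: the Hurwitz integrality of each Schur polynomial $s_\mu(u_1,u_3)$ supplied by Lemma \ref{schur}, and the fact, recorded in the paragraph preceding the proposition, that each $\xi_\mu$ obtained from the infinite determinant belongs to $\mathbb{Z}[\l_4,\l_6,\l_8,\l_{10}]$. The delicate point is that the sum defining $\tau$ runs over all partitions, so a priori the coefficient of a fixed Hurwitz monomial $u_1^a u_3^b/(a!b!)$ could be an infinite $\mathbb{Z}$-linear combination of such polynomials in the $\l_i$'s and hence fail to be a polynomial. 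The heart of the argument is therefore to show that only finitely many partitions $\mu$ contribute to each such coefficient.

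First, I would put a grading on the indeterminates by declaring $\deg u_1=1$ and $\deg u_3=3$. In this grading, the building block $p_n(u_1,u_3)=\sum_{i+3j=n} u_1^i u_3^j/(i!j!)$ is by its very definition weighted-homogeneous of weight $n$. Second, I would apply the Leibniz expansion to $s_\mu(u_1,u_3)=\det(p_{\mu_i-i+j}(u_1,u_3))_{1\le i,j\le l}$: each term is a signed product of entries $p_{\mu_i-i+\sigma(i)}$ for some permutation $\sigma$, whose total weight equals
\[
\sum_{i=1}^{l}\bigl(\mu_i - i + \sigma(i)\bigr) \;=\; |\mu| - \sum_{i=1}^{l} i + \sum_{i=1}^{l}\sigma(i) \;=\; |\mu|.
\]
Hence $s_\mu(u_1,u_3)$ is weighted-homogeneous of weight $|\mu|$, and the only partitions that can contribute to the coefficient of $u_1^a u_3^b/(a!b!)$ in the Hurwitz expansion of $\tau(u_1,u_3)$ are those with $|\mu|=a+3b$.

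Since there are only finitely many partitions of any given integer, that coefficient is a finite $\mathbb{Z}$-linear combination of the $\xi_\mu$ and therefore lies in $\mathbb{Z}[\l_4,\l_6,\l_8,\l_{10}]$. Combining this with the Hurwitz integrality of $s_\mu(u_1,u_3)$ from Lemma \ref{schur} yields $\tau(u_1,u_3)\in\mathbb{Z}[\l_4,\l_6,\l_8,\l_{10}]\langle\langle u_1,u_3\rangle\rangle$, as required. I do not foresee a substantive obstacle: the argument is formal bookkeeping, and the only step requiring attention is the verification of weighted-homogeneity of $s_\mu$, which is essentially built into the definitions of $p_n$ and the determinant.
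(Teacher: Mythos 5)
Your proof is correct and follows the same route as the paper, which simply combines Lemma \ref{schur} with the fact that each $\xi_\mu$ lies in $\mathbb{Z}[\l_4,\l_6,\l_8,\l_{10}]$. The weighted-homogeneity argument you add, showing that only the finitely many partitions with $|\mu|=a+3b$ contribute to the coefficient of $u_1^au_3^b/(a!b!)$, is a detail the paper leaves implicit, and your verification of it is accurate.
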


The expansions of $du_i$ around $\infty$ with respect to $t$ take the following form
\[du_i=\sum_{j=1}^{\infty}b_{ij}t^{j-1}dt.\]

\begin{lemma}\la{b}
We have $b_{11}=1,b_{13}=0,b_{31}=0,b_{33}=1$.
\end{lemma}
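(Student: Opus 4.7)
The plan is to compute the leading terms of the expansions of $du_1 = -\frac{x}{2y}\,dx$ and $du_3 = -\frac{1}{2y}\,dx$ directly, using the expansions of $x$ and $y$ in terms of $t$ that were derived just before this lemma from Lemma \ref{ex1}. The key numerical input is that in the expansion $s = t^2(1 + \sum_{i\ge 4}\gamma_i t^i)$, the corrections to $s = t^2$ start only at order $t^6$; equivalently, $x = t^{-2}(1 + \sum_{i\ge 4}d_i^{(1)} t^i)$ and $y = t^{-5}(1 + \sum_{i\ge 4}d_i^{(2)}t^i)$, so that the deviations from the leading monomials are of order $t^4$ relative to the leading term.

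First I would compute
\[
\frac{x}{y} = t^{3}\bigl(1 + O(t^{4})\bigr), \qquad \frac{1}{y} = t^{5}\bigl(1 + O(t^{4})\bigr),
\]
which are both obtained by dividing the two expansions and using the fact that $(1 + O(t^4))^{-1} = 1 + O(t^4)$. Next I would differentiate $x$ to get
\[
\frac{dx}{dt} = -2t^{-3} + O(t),
\]
since the lowest-order correction to $x = t^{-2}$ is of order $t^{2}$, whose derivative is of order $t$.

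Then I would substitute these into the definitions of $du_1$ and $du_3$:
\[
du_1 = -\tfrac{1}{2}\cdot t^{3}\bigl(1 + O(t^{4})\bigr)\cdot\bigl(-2t^{-3} + O(t)\bigr)\,dt = \bigl(1 + O(t^{4})\bigr)\,dt,
\]
\[
du_3 = -\tfrac{1}{2}\cdot t^{5}\bigl(1 + O(t^{4})\bigr)\cdot\bigl(-2t^{-3} + O(t)\bigr)\,dt = \bigl(t^{2} + O(t^{6})\bigr)\,dt.
\]
Reading off the coefficients against the defining expansion $du_i = \sum_{j\ge 1} b_{ij}\,t^{j-1}\,dt$, the first expression gives $b_{11} = 1$ and $b_{12} = b_{13} = b_{14} = 0$ (in particular $b_{13} = 0$), while the second gives $b_{31} = b_{32} = 0$, $b_{33} = 1$, and $b_{34} = b_{35} = b_{36} = 0$.

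There is no real obstacle here; the statement is a direct leading-order calculation. The only subtle point worth emphasizing is why the $O(t^4)$ gap in the expansion of $s$ is enough: it ensures that the lowest corrections to $du_1$ past the constant term $dt$ sit at order $t^{4}\,dt$ (hence $b_{12}, b_{13}, b_{14}$ all vanish), and that the corrections to $du_3$ past $t^{2}\,dt$ sit at order $t^{6}\,dt$ (hence $b_{31} = b_{32} = 0$ as well, which is stronger than what the lemma asserts).
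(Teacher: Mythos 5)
Your computation is correct and is essentially identical to the paper's own proof, which likewise expands $du_1=-\frac{x}{2y}dx$ and $du_3=-\frac{1}{2y}dx$ using the expansions $x=t^{-2}(1+O(t^4))$, $y=t^{-5}(1+O(t^4))$ and $dx/dt=-2t^{-3}+O(t)$ to conclude $du_1=(1+O(t^4))dt$ and $du_3=(t^2+O(t^6))dt$. The only difference is cosmetic: you factor the calculation through $x/y$ and $1/y$ separately, while the paper writes it as a single quotient.
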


\begin{proof}
We have
\[du_1=-\frac{x}{2y}dx=-\frac{t^{-2}(1+\sum_{i=4}^{\infty}d_i^{(1)}t^i)}{2t^{-5}(1+\sum_{i=4}^{\infty}d_i^{(2)}t^i)}\left(-2t^{-3}+\sum_{i=4}^{\infty}(i-2)d_i^{(1)}t^{i-3}\right)=(1+O(t^4))dt.\]
Therefore we obtain $b_{11}=1$ and $b_{13}=0$.
We have
\[du_3=-\frac{dx}{2y}=-\frac{-2t^{-3}+\sum_{i=4}^{\infty}(i-2)d_i^{(1)}t^{i-3}}{2t^{-5}(1+\sum_{i=4}^{\infty}d_i^{(2)}t^i)}=(t^2+O(t^6))dt.\]
Therefore we obtain $b_{31}=0$ and $b_{33}=1$.
\end{proof}

We define $c_i$ by the following relation
\[
\sqrt{\frac{du_3}{dt}}=t \exp\left(\sum_{i=1}^{\infty}\frac{c_i}{i}t^i\right).
\]

\begin{lemma}\la{c}
We have $c_1=c_2=c_3=0$.
\end{lemma}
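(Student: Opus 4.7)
My plan is to deduce $c_1 = c_2 = c_3 = 0$ by matching the leading expansions on the two sides of the defining relation
\[
\sqrt{\frac{du_3}{dt}} = t\exp\!\left(\sum_{i=1}^{\infty}\frac{c_i}{i}t^i\right).
\]
The computation inside the proof of Lemma \ref{b} actually yields the stronger statement $du_3 = (t^2 + O(t^6))\,dt$, which is what I intend to use: from Lemma \ref{ex1} both $x$ and $y$ acquire only $O(t^4)$ multiplicative corrections relative to their leading behavior, so the quotient $-dx/(2y)$ becomes $t^2(1 + O(t^4))\,dt$. Consequently, choosing the square root with leading term $+t$, we obtain $\sqrt{du_3/dt} = t(1 + O(t^4))$.

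On the other hand, the right-hand side expands formally as
\[
t\exp\!\left(\sum_{i=1}^{\infty}\frac{c_i}{i}t^i\right) = t\left(1 + c_1 t + \frac{c_1^2 + c_2}{2}t^2 + \frac{c_1^3 + 3c_1c_2 + 2c_3}{6}t^3 + O(t^4)\right).
\]
Comparing the two expressions, the coefficients of $t^1$, $t^2$, $t^3$ in the bracketed factor on the right must all vanish. This yields $c_1 = 0$ from the $t$-coefficient, then $c_2 = 0$ from the $t^2$-coefficient (using $c_1 = 0$), and finally $c_3 = 0$ from the $t^3$-coefficient (using $c_1 = c_2 = 0$).

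The only point that requires attention is the strengthening from the published statement of Lemma \ref{b} (which records only $b_{31} = 0$, $b_{33} = 1$) to the assertion $du_3/dt = t^2 + O(t^6)$, namely the vanishing of $b_{32}, b_{34}, b_{35}$. As noted above, this is immediate from the explicit calculation already performed in the proof of Lemma \ref{b}; no further argument is needed. Thus I anticipate no real obstacle, and the proof reduces to this routine matching of formal series.
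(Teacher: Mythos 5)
Your proof is correct and follows essentially the same route as the paper: both establish $du_3/dt = t^2(1+O(t^4))$ from the expansions of $x$ and $y$ (as in the proof of Lemma \ref{b}), take the square root, and compare with the formal expansion of $t\exp\bigl(\sum_i \tfrac{c_i}{i}t^i\bigr)$ to conclude $c_1=c_2=c_3=0$.
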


\begin{proof}
We have the following expansion
\[
\frac{du_3}{dt}=t^2(1+O(t^4)).
\]
Therefore we have the following expansion
\[
\sqrt{\frac{du_3}{dt}}=t(1+O(t^4)).
\]
On the other hand, we have
\[\exp\left(\sum_{i=1}^{\infty}\frac{c_i}{i}t^i\right)=1+c_1t+\left(\frac{c_2}{2}+\frac{c_1^2}{2}\right)t^2+\left(\frac{c_3}{3}+\frac{c_1c_2}{2}+\frac{c_1^3}{6}\right)t^3+\cdots.\]
Thus we have $c_1=c_2=c_3=0$.
\end{proof}

We take the algebraic bilinear form
\b
\omega(P,Q)=\frac{x_1^2x_2^2(x_1+x_2)+\l_4x_1x_2(x_1+x_2)+2\l_6x_1x_2+\l_8(x_1+x_2)+2y_1y_2+2\l_{10}}{4y_1y_2(x_1-x_2)^2}dx_1dx_2,\la{bi}
\e
where $P=(x_1,y_1),Q=(x_2,y_2)\in V$ (\cite{BEL-2012}, p.217).
We can expand $\omega(P,Q)$ around $\infty\times\infty$ as
\b
\omega(P,Q)=\left(\frac{1}{(t_1-t_2)^2}+\sum_{i,j\ge1}q_{ij}t_1^{i-1}t_2^{j-1}\right)dt_1dt_2, \la{biex}
\e
where $q_{ij}\in\mathbb{C}$ and $t_1,t_2$ are copies of the local parameter $t$.

\begin{lemma}\la{q}
We have $q_{11}=0,q_{13}=q_{31}=\l_4,q_{51}=q_{15}=2\l_6,q_{33}=3\l_6$.
\end{lemma}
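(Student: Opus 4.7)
The plan is to follow the strategy of Lemma~\ref{qe}: substitute the explicit Laurent expansions of $x_i$ and $y_i$ in powers of $t_i$ from Lemma~\ref{ex1} into the formula (\ref{bi}), equate with the expansion (\ref{biex}), and read off the coefficients $q_{ij}$. Two observations reduce the bookkeeping to a minimum.

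First, assign the gradings $\deg\l_{2k}=2k$ and $\deg t=-1$, so that both (\ref{bi}) and (\ref{biex}), viewed as bidifferentials, are homogeneous of degree $0$; then $\deg q_{ij}=i+j$. Since no monomial in $\l_4,\l_6,\l_8,\l_{10}$ has degree $2$, this forces $q_{11}=0$. Since the only monomial of degree $4$ is $\l_4$, we must have $q_{13}=c_{13}\l_4$ with $c_{13}\in\mathbb{Q}$; similarly $q_{15}=c_{15}\l_6$ and $q_{33}=c_{33}\l_6$. The symmetry $\omega(P,Q)=\omega(Q,P)$ visible in (\ref{bi}) yields $q_{ij}=q_{ji}$. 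Thus only three rational constants remain to be determined.

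Second, setting all $\l_{2k}=0$ collapses (\ref{se}) to $s=t^2$, so $x=t^{-2}$ and $y=t^{-5}$. A short substitution into (\ref{bi}) gives
\[
\omega(P,Q)\big|_{\l=0}=\frac{dt_1\,dt_2}{(t_1-t_2)^2},
\]
so the desired $q_{ij}$ are determined by the terms first-order in $\l_4$ or $\l_6$. From Lemma~\ref{ex1}, $s=t^2+\l_4 t^6+\l_6 t^8+O(\l^2)$, whence
\[
x=t^{-2}-\l_4 t^2-\l_6 t^4+O(\l^2),\qquad y=t^{-5}-2\l_4 t^{-1}-2\l_6 t+O(\l^2),
\]
and $dx=x'(t)\,dt$ is obtained by differentiation. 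I would plug these into $N/D\cdot dx_1\,dx_2$ (with $N$ and $D$ the numerator and denominator in (\ref{bi})), retain only first-order terms in $\l$, subtract the principal part $dt_1\,dt_2/(t_1-t_2)^2$, and read off the coefficients of $t_1^{i-1}t_2^{j-1}\,dt_1\,dt_2$ for the required $(i,j)$.

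The main obstacle is disentangling the diagonal singularity $1/(t_1-t_2)^2$ from the analytic remainder. To control this safely I would multiply the identity (\ref{bi})=(\ref{biex}) (divided through by $dt_1\,dt_2$) by $(t_1-t_2)^2$ and by a power $(t_1t_2)^M$ large enough to clear all negative powers of $t_i$ coming from $x_i$, $y_i$, and $x'(t_i)$. This converts the problem into an honest polynomial identity in $t_1,t_2$ with coefficients in $\mathbb{Z}[\l_4,\l_6]/(\l^2)$, from which the claimed values $q_{11}=0$, $q_{13}=q_{31}=\l_4$, $q_{15}=q_{51}=2\l_6$, $q_{33}=3\l_6$ are extracted by matching coefficients of $t_1^a t_2^b$ with $a+b\le 6$, using the $\l=0$ verification as a consistency check on the leading $1$ on the right-hand side.
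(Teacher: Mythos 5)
Your proposal is correct and follows essentially the same route as the paper: substitute the local expansions of $x_i,y_i$ in $t_i$ into (\ref{bi}), clear the factor $(t_1-t_2)^2$ together with enough powers of $t_1t_2$, and match coefficients against (\ref{biex}). Your two preliminary reductions --- the grading $\deg q_{ij}=i+j$ (which gives $q_{11}=0$ for free and pins each remaining $q_{ij}$ down to a single rational multiple of $\lambda_4$ or $\lambda_6$) and the truncation to first order in $\lambda$ --- are valid and genuinely shorten the coefficient extraction that the paper carries out directly.
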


\begin{proof}
We define $f(t)$ by $s=t^2f(t)$. From Lemma \ref{ex1}, we have
\[f(t)=1+\l_4t^4+\l_6t^6+\cdots.\]
From (\ref{bi}) and (\ref{biex}), we obtain
\[AB-C=D\cdot(q_{11}+q_{31}t_1^2+q_{13}t_2^2+q_{51}t_1^4+q_{15}t_2^4+q_{33}t_1^2t_2^2+\cdots),\]
where
\begin{eqnarray}
A&=&t_1^2f(t_1)+t_2^2f(t_2)+\l_4t_1^2t_2^2f(t_1)f(t_2)(t_1^2f(t_1)+t_2^2f(t_2))+2\l_6t_1^4t_2^4f(t_1)^2f(t_2)^2 \non \\
&&+\l_8t_1^4t_2^4f(t_1)^2f(t_2)^2(t_1^2f(t_1)+t_2^2f(t_2))+2t_1t_2f(t_1)f(t_2)+2\l_{10}t_1^6t_2^6f(t_1)^3f(t_2)^3 \non \\
B&=&(1+3\l_4t_1^4+4\l_6t_1^6+\cdots)(1+3\l_4t_2^4+4\l_6t_2^6+\cdots) \non \\
C&=&f(t_1)f(t_2)(t_1+t_2)^2\{1+\l_4(t_1^4+t_1^2t_2^2+t_2^4)+\l_6(t_1^6+t_1^4t_2^2+t_1^2t_2^4+t_2^6)+\cdots\}^2 \non \\
D&=&f(t_1)f(t_2)(t_1^2-t_2^2)^2\{1+\l_4(t_1^4+t_1^2t_2^2+t_2^4)+\l_6(t_1^6+t_1^4t_2^2+t_1^2t_2^4+t_2^6)+\cdots\}^2. \non
\end{eqnarray}
By comparing the coefficient of $t_1^4$, we obtain $q_{11}=0$.
By comparing the coefficient of $t_1^6$, we obtain $q_{31}=q_{13}=\l_4$.
By comparing the coefficient of $t_1^8$, we obtain $q_{51}=q_{15}=2\l_6$.
By comparing the coefficient of $t_1^6t_2^2$, we obtain
\[q_{33}-2q_{51}=-\l_6.\]
Therefore we obtain $q_{33}=3\l_6$.
\end{proof}

\begin{theorem}(\cite{N-2010-2}, Theorem 4)\label{tau}
We have
\[\tau(u_1,u_3)=\exp\left(-c_1u_1-c_3u_3+\frac{1}{2}q_{11}u_1^2+\frac{1}{2}q_{33}u_3^2+q_{13}u_1u_3\right)\s(b_{11}u_1+b_{13}u_3,b_{31}u_1+b_{33}u_3).\]
\end{theorem}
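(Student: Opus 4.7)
The plan is to recognize $\tau(u_1,u_3)$ as the restriction of a full Sato--KP tau function $\tau_W(t_1,t_2,t_3,\ldots)$ to the affine slice $t_1=u_1$, $t_3=u_3$, $t_k=0$ for $k\notin\{1,3\}$. Here $W$ is the algebro-geometric point of the Sato Grassmannian obtained by expanding the ring of functions on $V$ that are regular away from $\infty$ (with basis $\varphi_j$) in the local parameter $t=x^2/y$. The matrix $(\xi_{i,j})$ defined in the text records precisely these expansions in the basis $e_i=t^{i+1}$, so the $\xi_\mu$ are the Pl\"ucker coordinates of $W$. Under the above slicing, the KP power sums reduce to the polynomials $p_n(u_1,u_3)=\sum_{i+3j=n}u_1^iu_3^j/(i!j!)$ defined in the text, and consequently the Sato series $\sum_\mu\xi_\mu s_\mu$ restricts exactly to $\sum_\mu\xi_\mu s_\mu(u_1,u_3)=\tau(u_1,u_3)$.

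The next step invokes the Krichever--Sato identification of algebro-geometric KP tau functions with theta functions on the Jacobian. Up to a multiplicative constant,
\[
\tau_W(\mathbf{t})\;=\;\exp\!\left(-\sum_{i\ge 1}c_i\,t_i+\tfrac12\sum_{i,j\ge 1}q_{ij}\,t_it_j\right)\theta\!\left(\textstyle\sum_j b_{1j}t_j,\;\sum_j b_{3j}t_j;\tau\right),
\]
with an implicit Riemann-constant shift in the theta argument. The quadratic coefficients $q_{ij}$ are exactly those read off from the expansion of the algebraic bilinear form $\omega(P,Q)$ near $\infty\times\infty$; the $c_i$ come from $\log(t^{-1}\sqrt{du_3/dt})=\sum_{i\ge 1}\frac{c_i}{i}t^i$; and the $b_{ij}$ are the local expansion coefficients of the normalized holomorphic differentials $du_{2k-1}$. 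Specializing to $t_1=u_1$, $t_3=u_3$, $t_k=0$ otherwise, and using $q_{13}=q_{31}$, extracts the prefactor
\[
\exp\!\left(-c_1u_1-c_3u_3+\tfrac12 q_{11}u_1^2+q_{13}u_1u_3+\tfrac12 q_{33}u_3^2\right)
\]
together with a theta function whose argument is $(b_{11}u_1+b_{13}u_3,\,b_{31}u_1+b_{33}u_3)$.

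Finally, invoke the definition of $\sigma$ recalled in the Preliminaries to rewrite that theta function in terms of $\sigma$. This substitution produces the quadratic factor $\exp(\tfrac12{}^t\mathbf{u}\,\eta_1\omega_1^{-1}\mathbf{u})$ and the normalization $c_0\partial_{A_0}\theta[\delta](\mathbf{0},\tau)$; the former is absorbed into the Riemann-shift of the theta argument, while the latter fixes the overall multiplicative constant, which can be checked from the top-degree Schur term $s_{\mu_2}(u_1,u_3)=\tfrac13u_1^3-u_3$ that appears in both sides (on $\sigma$'s side via Theorem~\ref{rationallim}, on $\tau$'s side via the definition as a Pl\"ucker-weighted sum of Schur polynomials).

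The hard part is the second step: establishing the Krichever--Sato formula with the specific quadratic form, linear shift, and logarithmic correction dictated by the algebraic data $\omega(P,Q)$, $\sqrt{du_3/dt}$ and the $du_{2k-1}$. This identification, carried out by Nakayashiki in \cite{N-2010-2}, rests on a careful analysis of the Baker--Akhiezer function at $\infty$ and an application of Riemann's bilinear relations to translate local expansion data at $\infty$ into Jacobian coordinates. Once it is in hand, the restriction to the $(t_1,t_3)$-slice and the conversion of theta to sigma are formal algebraic manipulations.
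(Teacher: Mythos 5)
The paper offers no proof of this statement---it is quoted directly from Nakayashiki \cite{N-2010-2}, Theorem~4---and your proposal is an accurate outline of that external argument which likewise defers the one substantive step (the Krichever--Sato expression of the algebro-geometric tau function with the specific $c_i$, $q_{ij}$, $b_{ij}$ normalization) to the same reference. In effect both treatments rest entirely on the cited theorem, and your surrounding bookkeeping (restricting to the slice $t_1=u_1$, $t_3=u_3$, $t_k=0$ otherwise; converting $\theta$ to $\sigma$; fixing the multiplicative constant via the leading Schur term $\tfrac{1}{3}u_1^3-u_3$) is correct.
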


From Theorem \ref{tau}, Lemma \ref{b}, Lemma \ref{c}, and Lemma \ref{q}, we have
\b
\s(u_1,u_3)=\exp\left(-3\l_6\frac{u_3^2}{2}-\l_4u_1u_3\right)\tau(u_1,u_3).\la{sigma}
\e

\vspace{1ex}

\begin{lemma}\la{lem1t}
For any non-negative integer $n$, we have $\varepsilon_n:=\displaystyle{\frac{(2n)!}{2^nn!}\in\mathbb{Z}}$.
\end{lemma}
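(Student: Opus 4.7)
The plan is to recognize $\varepsilon_n$ as the odd double factorial $(2n-1)!! = 1 \cdot 3 \cdot 5 \cdots (2n-1)$, which is manifestly a product of positive integers. First I would split $(2n)!$ according to the parity of its factors:
\[
(2n)! = \bigl(1 \cdot 3 \cdot 5 \cdots (2n-1)\bigr)\bigl(2 \cdot 4 \cdot 6 \cdots (2n)\bigr).
\]
The even part factors as $2 \cdot 4 \cdots (2n) = 2^n \cdot (1 \cdot 2 \cdots n) = 2^n \, n!$. Substituting this identity and dividing both sides by $2^n n!$ immediately yields
\[
\varepsilon_n = \frac{(2n)!}{2^n n!} = 1 \cdot 3 \cdot 5 \cdots (2n-1) \in \mathbb{Z},
\]
with the empty product interpretation $\varepsilon_0 = 1$ handling the base case.

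An equivalent route would be induction on $n$: the ratio $\varepsilon_{n+1}/\varepsilon_n = (2n+2)(2n+1)/(2(n+1)) = 2n+1$ is an integer, so starting from $\varepsilon_0 = 1$ the recursion $\varepsilon_{n+1} = (2n+1)\varepsilon_n$ keeps integrality at every step. Either approach is a one-line elementary calculation, and there is no genuine obstacle; the only point requiring thought is choosing the factorization of $(2n)!$ that isolates the factor $2^n n!$.
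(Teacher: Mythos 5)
Your proposal is correct. The second route you describe --- induction via $\varepsilon_{n+1} = \frac{(2n+2)(2n+1)}{2(n+1)}\varepsilon_n = (2n+1)\varepsilon_n$ starting from $\varepsilon_0=1$ --- is exactly the paper's proof; your first route, splitting $(2n)!$ into odd and even factors to identify $\varepsilon_n$ with the double factorial $1\cdot 3\cdots(2n-1)$, is a mild variant that yields the same recursion in closed form and is equally valid.
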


\begin{proof}
We have $\varepsilon_1=1\in\mathbb{Z}$.
Assume $\varepsilon_n\in\mathbb{Z}$. Then we have
\[\varepsilon_{n+1}=\frac{(2n+2)!}{2^{n+1}(n+1)!}=\frac{(2n+2)(2n+1)}{2(n+1)}\varepsilon_n=(2n+1)\varepsilon_n\in\mathbb{Z}.\]
By mathematical induction, we obtain the statement.
\end{proof}

\begin{lemma}\la{exph}
We have
\[\exp\left(-3\l_6\frac{u_3^2}{2}-\l_4u_1u_3\right)\in \mathbb{Z}[\l_4,\l_6,\l_8,\l_{10}]\langle\langle u_1,u_3\rangle\rangle.\]
\end{lemma}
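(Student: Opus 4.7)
The plan is to factor the exponential into two simpler pieces and verify Hurwitz integrality of each separately, then invoke closure under multiplication. Explicitly, I would write
\[
\exp\!\left(-\tfrac{3\l_6}{2}u_3^2 - \l_4 u_1 u_3\right) \;=\; \exp\!\left(-\tfrac{3\l_6}{2}u_3^2\right) \cdot \exp\!\left(-\l_4 u_1 u_3\right),
\]
which is legitimate because the two exponents commute as multiplication operators on formal power series. The first step is to observe that $R\langle\langle u_1,u_3\rangle\rangle$ is closed under multiplication: this is exactly the computation
\[
\frac{u_1^i u_3^j}{i!\,j!}\cdot\frac{u_1^k u_3^\ell}{k!\,\ell!} = \binom{i+k}{i}\binom{j+\ell}{j}\frac{u_1^{i+k}u_3^{j+\ell}}{(i+k)!\,(j+\ell)!}
\]
already used in the proof of Lemma~\ref{schur}, so it suffices to show each factor lies in $\mathbb{Z}[\l_4,\l_6,\l_8,\l_{10}]\langle\langle u_1,u_3\rangle\rangle$.

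For the mixed factor, the Taylor expansion gives $\exp(-\l_4 u_1 u_3)=\sum_{n\ge 0}\frac{(-\l_4)^n}{n!}u_1^n u_3^n$. Writing the summand as a Hurwitz term $\frac{u_1^n u_3^n}{n!\,n!}a_{n,n}$ forces $a_{n,n}=n!(-\l_4)^n$, which plainly lies in $\mathbb{Z}[\l_4]$, so this factor is Hurwitz integral.

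For the pure-$u_3$ factor, the Taylor expansion gives
\[
\exp\!\left(-\tfrac{3\l_6}{2}u_3^2\right) = \sum_{n\ge 0}\frac{(-3\l_6)^n}{n!\,2^n}u_3^{2n},
\]
so the Hurwitz coefficient of $u_3^{2n}$ is $\frac{(2n)!}{n!\,2^n}(-3\l_6)^n$. The key point is that $\frac{(2n)!}{n!\,2^n}\in\mathbb{Z}$, which is precisely Lemma~\ref{lem1t} stated just above. Therefore this factor is also Hurwitz integral in $\mathbb{Z}[\l_6]$, and multiplying the two factors finishes the proof.

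The main substantive step is the integrality of $\frac{(2n)!}{n!\,2^n}$, which is the reason Lemma~\ref{lem1t} was placed right before this lemma; no deeper obstacle appears, since the bilinearity of the exponent means the exponential decouples cleanly into a product of two one-variable exponentials whose Hurwitz coefficients can be read off in closed form.
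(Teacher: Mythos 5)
Your proposal is correct and is essentially the paper's argument in a lightly repackaged form: the paper expands $\frac{1}{n!}\bigl(-3\l_6\frac{u_3^2}{2}-\l_4u_1u_3\bigr)^n$ by the binomial theorem and checks the integrality of $\frac{(2k+\ell)!}{2^kk!}$ term by term, which is exactly what your factorization $e^{A+B}=e^Ae^B$ combined with closure of $R\langle\langle u_1,u_3\rangle\rangle$ under multiplication produces. Both routes rest on the same two ingredients, namely Lemma~\ref{lem1t} and the binomial-coefficient computation from Lemma~\ref{schur}, so there is nothing to add.
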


\begin{proof}
We have
\[\exp\left(-3\l_6\frac{u_3^2}{2}-\l_4u_1u_3\right)=\sum_{n=0}^{\infty}\frac{1}{n!}\left(-3\l_6\frac{u_3^2}{2}-\l_4u_1u_3\right)^n.\]
From Lemma \ref{lem1t}, for any $k,\ell\in\mathbb{Z}_{\ge0}$, we have
\begin{eqnarray*}
&&\frac{1}{(k+\ell)!}\left(\begin{matrix}k+\ell\\k\end{matrix}\right)\left(-3\l_6\frac{u_3^2}{2}\right)^k(-\l_4u_1u_3)^{\ell} \\
&=&(-3\l_6)^k(-\l_4)^{\ell}\frac{u_1^{\ell}}{\ell!}\frac{u_3^{2k+\ell}}{(2k+\ell)!}\frac{(2k+\ell)!}{2^kk!}\in \mathbb{Z}[\l_4,\l_6,\l_8,\l_{10}]\langle\langle u_1,u_3\rangle\rangle.
\end{eqnarray*}
Therefore we obtain the statement.
\end{proof}

\vspace{2ex}

From Proposition \ref{tauht}, Lemma \ref{exph}, and (\ref{sigma}), we obtain the following theorem.

\begin{theorem}(\cite{O-2018})\label{integralitygenus2}
We have
\[\sigma(u_1,u_3)\in\mathbb{Z}[\l_4,\l_6,\l_8,\l_{10}]\langle\langle u_1,u_3\rangle\rangle.\]
\end{theorem}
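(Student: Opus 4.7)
The plan is to combine the factorization identity (\ref{sigma}), which writes $\sigma(u_1,u_3)$ as the product of the exponential factor $\exp(-3\l_6 u_3^2/2 - \l_4 u_1 u_3)$ and the tau function $\tau(u_1,u_3)$, with the two Hurwitz integrality statements that have already been established: Proposition \ref{tauht} for $\tau$ and Lemma \ref{exph} for the exponential factor. Both factors sit in $\mathbb{Z}[\l_4,\l_6,\l_8,\l_{10}]\langle\langle u_1,u_3\rangle\rangle$, so the only remaining content is the closure of this ring under multiplication.

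First I would record the elementary fact that, for any commutative ring $R$ of characteristic $0$, the ring $R\langle\langle u_1,u_3\rangle\rangle$ is closed under products. Writing $f=\sum a_{ij}u_1^i u_3^j/(i!j!)$ and $g=\sum b_{kl}u_1^k u_3^l/(k!l!)$ with $a_{ij},b_{kl}\in R$, the Cauchy product has coefficient
\[
c_{m,n}=\sum_{i+k=m,\,j+l=n}\binom{m}{i}\binom{n}{j}\,a_{ij}b_{kl}\in R,
\]
because binomial coefficients are integers. This is exactly the same identity that underlies Lemma \ref{schur}, and it would be convenient to state it once as a lemma so that both uses (products of Schur polynomials, and products of Hurwitz series in general) can cite it.

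Next I would apply this closure to $R=\mathbb{Z}[\l_4,\l_6,\l_8,\l_{10}]$ with the two Hurwitz integral factors in question: the exponential factor provided by Lemma \ref{exph} and the tau function provided by Proposition \ref{tauht}. Their product therefore lies in $\mathbb{Z}[\l_4,\l_6,\l_8,\l_{10}]\langle\langle u_1,u_3\rangle\rangle$. By the factorization identity (\ref{sigma}) this product equals $\sigma(u_1,u_3)$, which yields the theorem.

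I do not anticipate any real obstacle, since all the substantive work has already been invested in (\ref{sigma}) (which required Lemmas \ref{b}, \ref{c}, \ref{q} and Theorem \ref{tau}), in Proposition \ref{tauht} (which required Lemmas \ref{ex1} and \ref{schur}), and in Lemma \ref{exph} (which required the integrality of $(2k+\ell)!/(2^k k!)$ from Lemma \ref{lem1t}). If anything is mildly delicate it is simply making sure that the two Hurwitz series being multiplied are absolutely convergent as formal objects in the correct ring, so that rearrangement in the Cauchy product is legitimate; but since we work purely at the level of formal power series over $R$ in the indeterminates $u_1,u_3$, this is automatic.
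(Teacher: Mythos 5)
Your proposal is correct and follows the paper's own argument exactly: the paper deduces the theorem from Proposition \ref{tauht}, Lemma \ref{exph}, and the factorization (\ref{sigma}), with the closure of $R\langle\langle u_1,u_3\rangle\rangle$ under products (via integrality of binomial coefficients, as in Lemma \ref{schur}) left implicit. Your only addition is to make that closure step explicit, which is a harmless and reasonable clarification.
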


\subsection{Universal Bernoulli  numbers}

In this subsection, we will describe the definition of the universal Bernoulli  numbers and their properties according to \cite{Clarke, O}.

\vspace{1ex}

Let $f_1,f_2,\dots$ be infinitely many indeterminates.
We consider the power series
\[u=u(z)=z+\sum_{n=1}^{\infty}f_n\frac{z^{n+1}}{n+1}\]
and its formal inverse series
\[z=z(u)=u-f_1\frac{u^2}{2!}+(3f_1^2-2f_2)\frac{u^3}{3!}+\cdots,\]
namely, the series such that $u(z(u))=u$.
Then we define $\hat{B}_n\in\mathbb{Q}[f_1,f_2,\dots]$ by
\[\frac{u}{z(u)}=\sum_{n=0}^{\infty}\hat{B}_n\frac{u^n}{n!}\]
and call them the {\it universal Bernoulli numbers}. We have $\hat{B}_0=1$.
If we set $\deg f_i=i$, then $\hat{B}_n$ is homogeneous of degree $n$ if $\hat{B}_n\neq0$.
Let $\mathcal{S}$ be the set of finite sequences $U=(U_1,U_2,\dots)$ of non-negative integers.
For $U=(U_1,U_2,\dots)\in\mathcal{S}$, we use the notations $U!=U_1!U_2!\cdots,\;\Lambda^U=2^{U_1}3^{U_2}4^{U_3}\cdots$, $f^U=f_1^{U_1}f_2^{U_2}\cdots$, $\gamma_U=\Lambda^UU!$,
$w(U)=\sum_jjU_j$, and  $d(U)=\sum_jU_j$.

\vspace{2ex}

\begin{prop}(\cite{O} Proposition 2.8)\label{on1}
We have the expression
\[
\frac{\hat{B}_n}{n}=\sum_{w(U)=n}\tau_Uf^U,
\]
where
\begin{equation}
\tau_U=(-1)^{d(U)-1}\frac{(w(U)+d(U)-2)!}{\gamma_U}. \label{be1}
\end{equation}
\end{prop}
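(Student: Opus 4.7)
The plan is to apply the Lagrange--B\"urmann inversion formula. Observe first that $u(z) = z\, g(z)$ with $g(z) = 1 + \sum_{n\ge 1}\frac{f_n}{n+1}z^n$, so substituting $z = z(u)$ in $u/z = g(z)$ gives $u/z(u) = g(z(u))$. Hence $\hat{B}_n/n! = [u^n]g(z(u))$, and writing $u = z/\phi(z)$ with $\phi = 1/g$ the Lagrange--B\"urmann formula yields, for $n \ge 1$,
\[
[u^n] H(z(u)) \;=\; \frac{1}{n}\,[z^{n-1}]\,H'(z)\,g(z)^{-n}.
\]

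Applying this with $H = g$ and using the identity $g'(z)g(z)^{-n} = -\frac{1}{n-1}\frac{d}{dz}g(z)^{-(n-1)}$ valid for $n \ge 2$, together with $[z^{n-1}]\frac{d}{dz}F(z) = n\,[z^n]F(z)$, I would obtain
\[
\frac{\hat{B}_n}{n} \;=\; (n-1)!\,[u^n]g(z(u)) \;=\; -(n-2)!\,[z^n]\,g(z)^{-(n-1)}.
\]
The case $n=1$ is verified directly: $[z^0]g'(z)g(z)^{-1} = g'(0) = f_1/2$ gives $\hat{B}_1/1 = f_1/2$, matching the sole partition $U = (1,0,\dots)$ for which $\tau_U = 0!/2 = 1/2$.

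Finally, I would expand $g(z)^{-(n-1)}$ using the generalized binomial series $(1+x)^{-m} = \sum_{k\ge 0}(-1)^k\binom{m+k-1}{k}x^k$ with $x = g(z) - 1 = \sum_{j\ge 1}\frac{f_j}{j+1}z^j$, followed by the multinomial theorem
\[
x^k \;=\; \sum_{d(U) = k}\frac{k!}{U!}\prod_{j}\!\left(\frac{f_j}{j+1}\right)^{\!U_j}\!z^{w(U)}.
\]
Extracting the coefficient of $z^n$ forces $w(U) = n$ and $k = d(U)$; using $\prod_j(j+1)^{U_j} = \Lambda^U$ and $\Lambda^U U! = \gamma_U$, this gives
\[
[z^n]g(z)^{-(n-1)} \;=\; \sum_{w(U) = n}(-1)^{d(U)}\,\frac{(n + d(U) - 2)!}{(n-2)!\,\gamma_U}\,f^U,
\]
and multiplying by $-(n-2)!$ recovers the claimed formula with $\tau_U = (-1)^{d(U)-1}\frac{(w(U)+d(U)-2)!}{\gamma_U}$.

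The main obstacle is the sign and factorial bookkeeping: one must verify that the minus sign from antidifferentiating $g^{-(n-1)}$ combines with $(-1)^{d(U)}$ from the binomial series to produce $(-1)^{d(U)-1}$, and that the product $\binom{n+d(U)-2}{d(U)}\cdot d(U)!\cdot(n-2)!$ collapses to the single factorial $(n+d(U)-2)!$. An alternative route bypassing the $n=1$ split would use the variant $[u^n] H(z(u)) = [z^n] H(z)\,u'(z)\,g(z)^{-(n+1)}$; however the antidifferentiation shortcut already gives the cleanest presentation, and the $n=1$ case is immediate.
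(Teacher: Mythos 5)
Your proof is correct: the identity $u/z(u)=g(z(u))$ with $g(z)=1+\sum_{j\ge1}\frac{f_j}{j+1}z^j$, the Lagrange--B\"urmann step giving $\frac{\hat{B}_n}{n}=-(n-2)!\,[z^n]g(z)^{-(n-1)}$ for $n\ge2$ (with $n=1$ checked directly), and the binomial/multinomial extraction all check out, e.g.\ for $n=2$ it reproduces $-\frac14 f_1^2+\frac13 f_2$. The survey states this proposition only by citation to \^Onishi's paper and gives no proof of its own; your Lagrange-inversion derivation is the standard route used in that source (and in Clarke's paper), so there is nothing to flag.
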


For a rational number $\alpha$, we denote by $\lfloor \alpha \rfloor$ the largest integer which does not exceed $\alpha$.
If $p$ is a prime and the $p$-part of given rational number $r$ is $p^e$, then we write $e=\mbox{ord}_pr$.
If $\tau$ is a polynomial (possibly in several variables) with rational coefficients, then we denote by $\mbox{ord}_p\tau$ the least number of $\mbox{ord}_pr$ for all the coefficients $r$ of $\tau$.
For a prime number $p$ and an integer $a$, let $a|_p=a/p^{ord_pa}$.
For positive integers $a,b$ and a prime number $p$, we have
\[\mbox{ord}_p(a+b)!-\mbox{ord}_pa!-\mbox{ord}_pb!=\mbox{ord}_p\frac{(a+b)!}{a!b!}.\]
Since $(a+b)!/(a!b!)\in\mathbb{Z}$, we have
\begin{equation}
\mbox{ord}_p(a+b)!\ge\mbox{ord}_pa!+\mbox{ord}_pb!.\label{ablemma}
\end{equation}
For a positive integer $a$ and a prime number $p$, the following formula is well known
\begin{equation}
\mbox{ord}_p(a!)=\sum_{\nu=1}^{\infty}\left\lfloor \frac{a}{p^{\nu}} \right\rfloor.\label{alemma}
\end{equation}
If positive integers $a$ and $b$ are relatively prime, we denote it by $a\perp b$.

\begin{lemma}(\cite{O} Proposition 3.11)\label{O}
Let $p$ be an odd prime and $U=(U_1,U_2,\dots,)$ be an element of $\mathcal{S}$.
If $p\ge5$, we assume $U_1=U_2=U_{p-1}=U_{2p-1}=0$ and $d(U)\neq0$. If $p=3$, we assume $U_1=U_2=U_5=U_8=0$ and $d(U)\neq0$.
Then $\tau_U$ defined in (\ref{be1}) satisfies
\[\mbox{ord}_{p}(\tau_U)\ge\left\lfloor \frac{w(U)+d(U)-2}{2p} \right\rfloor.\]
\end{lemma}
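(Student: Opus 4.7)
The plan is to estimate $\mbox{ord}_p(\tau_U)$ directly from the factorial expression (\ref{be1}) by applying Legendre's formula (\ref{alemma}) to the numerator and denominator separately. Setting $n := w(U)+d(U)-2$ and recalling $\gamma_U = \prod_j (j+1)^{U_j}\,U_j!$, one has
\[
\mbox{ord}_p(\tau_U) \;=\; \mbox{ord}_p(n!) \;-\; \sum_{j\geq 1} U_j\,\mbox{ord}_p(j+1) \;-\; \sum_{j\geq 1} \mbox{ord}_p(U_j!),
\]
and I would bound the three terms separately before assembling.

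For the numerator, Legendre gives $\mbox{ord}_p(n!) = (n-s_p(n))/(p-1)$, where $s_p(n)$ is the base-$p$ digit sum; this is close to $n/(p-1)$, already well above the target rate $n/(2p)$. The sum $\sum_j \mbox{ord}_p(U_j!)\leq d(U)/(p-1)$ is controlled using the exclusion $U_1=U_2=0$, which forces $j\geq 3$ whenever $U_j>0$, so that $d(U)\leq w(U)/3\leq (n+2)/4$. The delicate estimate is the middle sum $\sum_j U_j\,\mbox{ord}_p(j+1)$, since a single index $j$ with $j+1$ a large power of $p$ can in principle contribute substantially. This is precisely what the remaining exclusions $U_{p-1}=U_{2p-1}=0$ (for $p\geq 5$) prevent: once the indices $j+1\in\{p,2p\}$ are removed, every surviving index with $p\mid j+1$ satisfies either $j+1\geq 3p$ or $j+1=p^k$ with $k\geq 2$ and $p\geq 5$, both of which admit the pointwise bound $\mbox{ord}_p(j+1)\leq (j+1)/(2p)$. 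The case $p=3$ is extremal: at $j+1=p^2=9$ the inequality $2\leq 9/6$ fails, which is precisely why the lemma imposes the additional exclusion $U_8=0$, together with $U_5=0$ removing $j+1=2p=6$ in the same spirit as the higher-$p$ case.

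Assembling the three estimates yields an inequality of the shape
\[
\mbox{ord}_p(\tau_U) \;\geq\; \frac{n}{p-1} \;-\; \frac{n+2}{2p} \;-\; \frac{n+2}{4(p-1)} \;-\; O(\log_p n),
\]
from which an elementary algebraic rearrangement produces $\mbox{ord}_p(\tau_U)\geq n/(2p)$, and passing to the integer part gives $\lfloor n/(2p)\rfloor$ since the left side is an integer. The main obstacle is the sharpness of the pointwise bound $\mbox{ord}_p(j+1)\leq (j+1)/(2p)$ at $(p,k)=(3,2)$, which is what dictates the special exclusion $U_8=0$ in the $p=3$ case and makes the hypothesis list essentially optimal. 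A secondary subtlety is the small-$n$ regime, where the logarithmic slack coming from $s_p(n)$ cannot be ignored; there, however, the target $\lfloor n/(2p)\rfloor$ is either $0$ or $1$ and the inequality can be verified by direct inspection of the finitely many partitions $U$ with $w(U)+d(U)$ small.
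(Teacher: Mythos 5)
Your overall strategy---Legendre's formula on the numerator, the identity $\mbox{ord}_p(\gamma_U)=\sum_j U_j\,\mbox{ord}_p(j+1)+\sum_j\mbox{ord}_p(U_j!)$, and the observation that the exclusions $U_{p-1}=U_{2p-1}=0$ (plus $U_8=0$ for $p=3$) are exactly what tame the indices with $p\mid j+1$---identifies the right ingredients, and your diagnosis of why $j+1=9$ forces the extra hypothesis at $p=3$ is correct. But the assembly step is where the proof breaks: the three sums cannot be bounded independently by their individual worst cases. Writing $n=w(U)+d(U)-2$, your displayed inequality has leading coefficient
\[
\frac{1}{p-1}-\frac{1}{2p}-\frac{1}{4(p-1)}\;=\;\frac{p+2}{4p(p-1)},
\]
whereas the target $n/(2p)$ has coefficient $\frac{2p-2}{4p(p-1)}$; the former dominates the latter only when $p+2\ge 2p-2$, i.e.\ $p\le 4$. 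So the "elementary algebraic rearrangement" you invoke exists only for $p=3$; for every prime $p\ge 5$ the assembled bound is asymptotically strictly weaker than what the lemma asserts. A concrete failure: for $p=5$ and $U$ supported on $j=3$ with $U_3=m$, one has $n=4m-2$ and your right-hand side is about $0.35m$, below the target $\approx 0.4m$ (the true order is about $0.75m$, because the middle sum is actually $0$ there, not $(n+2)/(2p)$). The loss comes precisely from decoupling: the configurations that maximize $\sum_j U_j\,\mbox{ord}_p(j+1)$ (mass on indices with $p\mid j+1$) and those that maximize $d(U)$ (mass on $j=3$) are mutually exclusive, and a sum of separate worst cases throws that information away. (A smaller slip in the same spirit: the chain $d(U)\le w(U)/3\le (n+2)/4$ is wrong in its second step, since $w(U)\ge 3d(U)$ gives $w(U)/3\ge (n+2)/4$; only the direct bound $d(U)\le(n+2)/4$ is valid.)

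The paper's proof avoids this by never decoupling. It first absorbs $\sum_j\mbox{ord}_p(U_j!)$ into the numerator using the superadditivity $\mbox{ord}_p\bigl((a+b)!\bigr)\ge\mbox{ord}_p(a!)+\mbox{ord}_p(b!)$, reducing to $\mbox{ord}_p\bigl((\sum_j jU_j-2)!\bigr)-\sum_{\varepsilon,k}kU_{\varepsilon p^k-1}$; it then keeps only the $\nu=1$ term of Legendre and pulls the subtracted sum inside the floor, so that everything becomes a single expression $\bigl\lfloor\frac{1}{2p}\bigl(-4+\sum_{p\nmid j+1}2jU_j+\sum_{\varepsilon,k}2(\varepsilon p^k-kp-1)U_{\varepsilon p^k-1}\bigr)\bigr\rfloor$. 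The conclusion then follows from the \emph{per-index} inequalities $2j\ge(j+1)+2$ for $j\ge3$ and $2(\varepsilon p^k-kp-1)\ge\varepsilon p^k+2$ for the non-excluded pairs $(\varepsilon,k)$. If you want to repair your argument, you should replace the three global bounds by this index-by-index comparison (or an equivalent coupled accounting); as written, the proposal does not prove the lemma for any prime $p\ge5$.
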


\begin{proof}
For the sake to be complete and self-contained, we give a proof of this lemma.
By using (\ref{ablemma}) and (\ref{alemma}), we have
\begin{eqnarray*}
\mbox{ord}_p(\tau_U)&=&\mbox{ord}_p(w(U)+d(U)-2)!-\mbox{ord}_p(\gamma_U) \\
&=&\mbox{ord}_p\left(\sum_{j\ge1}jU_j+\sum_{j\ge1}U_j-2\right)!-\sum_{\varepsilon,k\ge1,\;\varepsilon\perp p}kU_{\varepsilon p^k-1}-\sum_{j\ge1}\mbox{ord}_p(U_j!)\\
&\ge&\mbox{ord}_p\left(\sum_{j\ge1}jU_j-2\right)!-\sum_{\varepsilon,k\ge1,\;\varepsilon\perp p}kU_{\varepsilon p^k-1}\\
&=&\mbox{ord}_p\left(-2+\sum_{p\nmid j+1}jU_j+\sum_{\varepsilon,k\ge1,\;\varepsilon\perp p}(\varepsilon p^k-1)U_{\varepsilon p^k-1}\right)!-\sum_{\varepsilon,k\ge1,\;\varepsilon\perp p}kU_{\varepsilon p^k-1}\\
&=&\sum_{\nu=1}^{\infty}\left\lfloor\frac{1}{p^{\nu}}\left(-2+\sum_{p\nmid j+1}jU_j+\sum_{\varepsilon,k\ge1,\;\varepsilon\perp p}(\varepsilon p^k-1)U_{\varepsilon p^k-1}\right)  \right\rfloor-\sum_{\varepsilon,k\ge1,\;\varepsilon\perp p}kU_{\varepsilon p^k-1}\\
&\ge&\left\lfloor\frac{1}{p}\left(-2+\sum_{p\nmid j+1}jU_j+\sum_{\varepsilon,k\ge1,\;\varepsilon\perp p}(\varepsilon p^k-1)U_{\varepsilon p^k-1}\right)  \right\rfloor-\sum_{\varepsilon,k\ge1,\;\varepsilon\perp p}kU_{\varepsilon p^k-1}\\
&=&\left\lfloor\frac{1}{p}\left(-2+\sum_{p\nmid j+1}jU_j+\sum_{\varepsilon,k\ge1,\;\varepsilon\perp p}(\varepsilon p^k-kp-1)U_{\varepsilon p^k-1}\right)  \right\rfloor\\
&=&\left\lfloor\frac{1}{2p}\left(-4+\sum_{p\nmid j+1}2jU_j+\sum_{\varepsilon,k\ge1,\;\varepsilon\perp p}2(\varepsilon p^k-kp-1)U_{\varepsilon p^k-1}\right)  \right\rfloor.
\end{eqnarray*}
By the assumption of the lemma, there exists a positive integer $i$ such that $i\ge3$ and $U_i\neq0$.
For $j\ge3$, we have $2j-(j+1)\ge2$.
If $p\ge5$, let $T_p=\{(1,1),(2,1)\}$.
If $p=3$, let $T_p=\{(1,1),(2,1),(1,2)\}$.
By the assumption of the lemma, we have $U_{\varepsilon p^k-1}=0$ for any $(\varepsilon,k)\in T_p$.
For any integers $\varepsilon\ge1$ and $k\ge1$ such that $(\varepsilon,k)\notin T_p$ and $\varepsilon\perp p$, we can check
\[2(\varepsilon p^k-kp-1)-\varepsilon p^k\ge2.\]
Therefore we have
\begin{eqnarray*}
\mbox{ord}_p(\tau_U)&\ge&\left\lfloor\frac{1}{2p}\left(-2+\sum_{p\nmid j+1}(j+1)U_j+\sum_{\varepsilon,k\ge1,\;\varepsilon\perp p}\varepsilon p^kU_{\varepsilon p^k-1}\right)  \right\rfloor=
\left\lfloor \frac{w(U)+d(U)-2}{2p} \right\rfloor.
\end{eqnarray*}
\end{proof}

\vspace{2ex}

\begin{lemma}\label{onishilem}
Let $U=(U_1,U_2,\dots,)$ be an element of $\mathcal{S}$ such that $U_i=0$ for any odd integer $i$, $U_2=0$, and $d(U)\neq0$.
Then we have
\[\mbox{ord}_2(\tau_U)\ge\left\lfloor \frac{w(U)+d(U)-2}{4} \right\rfloor.\]
\end{lemma}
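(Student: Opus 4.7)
The plan is to mirror the proof of Lemma \ref{O} at the prime $p=2$, using the strengthened hypotheses ($U_i=0$ for $i$ odd and $U_2=0$) to kill the auxiliary sum over $\varepsilon p^k-1$ that appears in the odd-$p$ case. First I would compute $\mbox{ord}_2(\gamma_U)$: since every index $j$ with $U_j\neq 0$ is even by hypothesis, $j+1$ is odd and contributes $\mbox{ord}_2(j+1)=0$, so the $\Lambda^U$-part vanishes and $\mbox{ord}_2(\gamma_U)=\sum_j \mbox{ord}_2(U_j!)$. Substituting into (\ref{be1}) gives
\[
\mbox{ord}_2(\tau_U)=\mbox{ord}_2\bigl((w(U)+d(U)-2)!\bigr)-\sum_j \mbox{ord}_2(U_j!).
\]

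The central step would then be to absorb the $\sum_j\mbox{ord}_2(U_j!)$ into the factorial on the left. For this I would use the identity
\[
\frac{(w+d-2)!}{(w-2)!\,\prod_j U_j!}=\binom{w+d-2}{d}\cdot\frac{d!}{\prod_j U_j!},
\]
a product of a binomial coefficient and a multinomial coefficient, hence a positive integer; this forces $\mbox{ord}_2((w+d-2)!)\ge\mbox{ord}_2((w-2)!)+\sum_j\mbox{ord}_2(U_j!)$. Combining with the previous display yields the clean bound $\mbox{ord}_2(\tau_U)\ge\mbox{ord}_2((w-2)!)$. This is the same integer-cancellation trick that underlies the odd-prime version in Lemma \ref{O}.

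Finally I would verify the numerical inequality $\mbox{ord}_2((w-2)!)\ge\lfloor (w+d-2)/4\rfloor$. Since every nonzero $U_j$ has $j$ even and $j\ge 4$ (using $U_2=0$), $w$ is even with $w\ge 4d\ge 4$; by (\ref{alemma}) applied to the even integer $w-2$, $\mbox{ord}_2((w-2)!)\ge (w-2)/2$. On the other hand $w\ge 4d\ge d+2$ whenever $d\ge 1$, and this rearranges precisely to $(w-2)/2\ge (w+d-2)/4\ge\lfloor(w+d-2)/4\rfloor$, closing the chain. Of the three steps, I expect the combinatorial identity of the second paragraph to be the only non-routine point; the first is immediate from the parity hypothesis, and the third is elementary $2$-adic bookkeeping paralleling \^Onishi's argument for odd primes.
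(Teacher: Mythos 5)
Your proposal is correct and follows essentially the same route as the paper's proof: both reduce to $\mathrm{ord}_2(\gamma_U)=\mathrm{ord}_2(U!)$ via the parity of the exponents $j+1$, then establish $\mathrm{ord}_2(\tau_U)\ge\mathrm{ord}_2\bigl((w(U)-2)!\bigr)$ (the paper by iterating inequality (\ref{ablemma}), you by the equivalent binomial-times-multinomial identity), and finally use $j\ge 4$ to compare $\lfloor (w-2)/2\rfloor$ with $\lfloor (w+d-2)/4\rfloor$. The only cosmetic difference is that the paper runs the last comparison termwise via $2j-(j+1)\ge 3$ while you use $w\ge 4d$; both are the same elementary estimate.
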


\begin{proof}
We can prove this lemma in the same way as \cite{O} Proposition 3.11.
For the sake to be complete and self-contained, we give a proof of this lemma.
By using (\ref{ablemma}), (\ref{alemma}), and $\mbox{ord}_2(\Lambda^U)=0$, we have
\begin{eqnarray*}
\mbox{ord}_2(\tau_U)&=&\mbox{ord}_2(w(U)+d(U)-2)!-\mbox{ord}_2(\gamma_U)=\mbox{ord}_2\left(\sum_{j\ge4}jU_j+\sum_{j\ge4}U_j-2\right)!-\mbox{ord}_2(U!)\\
&\ge&\mbox{ord}_2\left(\sum_{j\ge4}jU_j-2\right)!=\sum_{\nu=1}^{\infty}\left\lfloor \frac{\sum_{j\ge4}jU_j-2}{2^{\nu}} \right\rfloor\ge\left\lfloor \frac{\sum_{j\ge4}jU_j-2}{2} \right\rfloor\\
&=&\left\lfloor \frac{\sum_{j\ge4}2jU_j-4}{4} \right\rfloor.
\end{eqnarray*}
Since $j\ge4$, we have $2j-(j+1)\ge3$. Therefore we have
\[\left\lfloor \frac{\sum_{j\ge4}2jU_j-4}{4} \right\rfloor\ge\left\lfloor \frac{\sum_{j\ge4}(j+1)U_j-2}{4} \right\rfloor=\left\lfloor \frac{w(U)+d(U)-2}{4} \right\rfloor.\]
\end{proof}

\vspace{2ex}

In \cite{Clarke}, F. Clarke showed the following resutls, which are a generalization of the von Staudt-Clausen theorem for the classical Bernoulli numbers to the universal Bernoulli
numbers (cf. the paper of Onishi \cite{O}). These results were used in the proof of our Theorem \ref{theoremcd} in the present survey. 

\vspace{2ex}

\begin{theorem}\label{CLARKE}
(i) We have
\[\hat{B}_1=\frac{1}{2}f_1,\;\;\;\frac{\hat{B}_2}{2}=-\frac{1}{4}f_1^2+\frac{1}{3}f_2,\]

(ii) If $n\equiv 0$ mod $4$, then we have
\[\frac{\hat{B}_n}{n}\equiv\sum_{n=a(p-1),\;p\;:\;prime}\frac{a|_p^{-1}\;\;mod\;p^{1+ord_pa}}{p^{1+ord_pa}}f_{p-1}^a\;\;\;\;\mbox{mod}\;\;\mathbb{Z}[f_1,f_2,\dots].\]

(iii) If $n\equiv 2$ mod $4$ and $n\neq2$, then we have
\[\frac{\hat{B}_n}{n}\equiv\frac{f_1^{n-6}f_3^2}{2}-\frac{nf_1^n}{8}+\sum_{n=a(p-1),\;p\;:\;odd\; prime}\frac{a|_p^{-1}\;\;mod\;p^{1+ord_pa}}{p^{1+ord_pa}}f_{p-1}^a\;\;\;\;\mbox{mod}\;\;\mathbb{Z}[f_1,f_2,\dots].\]

(iv) If $n\equiv 1,3$ mod $4$ and $n\neq1$, then we have
\[\frac{\hat{B}_n}{n}\equiv\frac{f_1^n+f_1^{n-3}f_3}{2}\;\;\;\;\mbox{mod}\;\;\mathbb{Z}[f_1,f_2,\dots].\]

In (ii) and (iii), $a|_p^{-1}\;mod\;p^{1+ord_pa}$ denotes an integer $\delta$ such that $(a|_p)\delta\equiv 1\;\;mod\;p^{1+ord_pa}$.
\end{theorem}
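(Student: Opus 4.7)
The plan is to derive all four parts from Proposition \ref{on1}, which writes $\hat{B}_n/n = \sum_{w(U)=n} \tau_U f^U$, combined with the $p$-adic lower bounds for $\tau_U$ supplied by Lemmas \ref{O} and \ref{onishilem}. Part (i) I would handle by direct series inversion: starting from $u(z)=z+f_1 z^2/2+f_2 z^3/3+\cdots$, compute $z(u)=u-\tfrac{1}{2}f_1 u^2+\tfrac{1}{6}(3f_1^2-2f_2)u^3+O(u^4)$, form $u/z(u)$, and read off $\hat{B}_1=\tfrac{1}{2}f_1$ and $\hat{B}_2/2=-\tfrac{1}{4}f_1^2+\tfrac{1}{3}f_2$.

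For (ii)--(iv), the strategy is to isolate, prime by prime, the monomials $f^U$ whose coefficient $\tau_U$ can have negative $p$-adic order. By Lemma \ref{O}, for an odd prime $p\ge 5$ any $U$ with $U_1=U_2=U_{p-1}=U_{2p-1}=0$ already satisfies $\mathrm{ord}_p(\tau_U)\ge 0$ (the case $p=3$ requires instead $U_1=U_2=U_5=U_8=0$). By Lemma \ref{onishilem}, $\tau_U$ is $2$-integral whenever $U$ is supported on even indices $\ge 4$. Thus modulo $\mathbb{Z}[f_1,f_2,\dots]$ the only surviving contributions come from $U$ supported on the small exceptional set of indices for some prime. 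The key computation is for the ``pure'' shape with a single nonzero entry $U_{p-1}=a$ and $w(U)=a(p-1)=n$: then
\[
\tau_U = (-1)^{a-1}\frac{(ap-2)!}{p^a\,a!},
\]
and a direct $p$-adic evaluation using Wilson's theorem $(p-1)!\equiv -1\pmod p$ yields $\mathrm{ord}_p(\tau_U)=-(1+\mathrm{ord}_p a)$ and the residue exactly matching the coefficient $a|_p^{-1}\bmod p^{1+\mathrm{ord}_p a}$ appearing in (ii) and (iii). Assembling these over all primes $p$ with $(p-1)\mid n$ gives the main sum on the right-hand side.

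To finish parts (iii) and (iv), I would enumerate the remaining ``mixed'' shapes with $U_1$ or $U_3$ nonzero that Lemmas \ref{O} and \ref{onishilem} leave uncovered, subject to the weight constraint $w(U)=n$. The parity of $n\bmod 4$ is decisive: for $n$ odd only $U=(n,0,0,\dots)$ and $U=(n-3,0,1,0,\dots)$ meet all constraints, producing the terms $\tfrac{1}{2}f_1^n$ and $\tfrac{1}{2}f_1^{n-3}f_3$ of part (iv) after a $2$-adic evaluation of $\tau_U=(-1)^{d(U)-1}(w(U)+d(U)-2)!/\gamma_U$; and for $n\equiv 2\pmod 4$ the analogous enumeration produces $\tfrac{1}{2}f_1^{n-6}f_3^2$ and $-\tfrac{n}{8}f_1^n$ in part (iii), while for $n\equiv 0\pmod 4$ no such extra terms survive. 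The main obstacle is precisely this case-by-case bookkeeping of exceptional shapes and the $2$-adic residue computations for the $f_1,f_3$ monomials; the odd-prime contributions, by contrast, assemble uniformly once the pure-shape computation is in hand. The stated congruences then follow by combining all local residues into a single rational number modulo $\mathbb{Z}[f_1,f_2,\dots]$.
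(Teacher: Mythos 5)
The paper does not prove Theorem \ref{CLARKE} at all: it is quoted from F.~Clarke's paper \cite{Clarke} (as reworked by \^Onishi \cite{O}) and used as a black box in the proof of Theorem \ref{theoremcd}. So there is no in-paper argument to compare against; your proposal has to stand on its own. Its skeleton is indeed the standard Clarke--\^Onishi route: part (i) by direct series inversion (your computation of $z(u)$ and $u/z(u)$ is correct), and parts (ii)--(iv) via the explicit partition expansion $\hat{B}_n/n=\sum_{w(U)=n}\tau_U f^U$ of Proposition \ref{on1}, discarding the $p$-integral terms and evaluating the residues of the rest. Your ``pure shape'' computation $\tau_U=(-1)^{a-1}(ap-2)!/(p^a a!)$ for $U_{p-1}=a$, with $\mathrm{ord}_p(\tau_U)=-(1+\mathrm{ord}_p a)$ and residue $a|_p^{-1}$ via Wilson's theorem, is correct and does produce the main sums in (ii) and (iii).

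The genuine gap is that the decisive step --- the enumeration and disposal of all \emph{remaining} exceptional shapes --- is asserted rather than carried out, and as stated it is not even quite right. Lemma \ref{O} leaves uncontrolled, for each odd prime $p$, every $U$ with $U_1\neq0$, $U_2\neq0$, $U_{p-1}\neq0$ or $U_{2p-1}\neq0$; Lemma \ref{onishilem} leaves uncontrolled every $U$ with $U_2\neq 0$ or with any odd-index entry nonzero. That is a vastly larger set than the handful of shapes you list: e.g.\ for odd $n$ the shapes $(n-2,1,0,\dots)$, $(n-5,0,0,0,1,0,\dots)$, or mixed shapes like $U_1=n-a(p-1),\,U_{p-1}=a$, all satisfy $w(U)=n$ and escape both lemmas, so each must be shown individually to have $\mathrm{ord}_p(\tau_U)\ge0$ for the relevant primes (and $\mathrm{ord}_2(\tau_U)\ge 0$, which Lemma \ref{onishilem} gives no help with once an odd index is occupied). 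Your claim that ``for $n$ odd only $U=(n,0,\dots)$ and $U=(n-3,0,1,0,\dots)$ meet all constraints'' is therefore false as a statement about the constraints imposed by the two lemmas; it is only the \emph{conclusion} of the detailed $2$-adic and $p$-adic estimates that constitute the bulk of Clarke's proof. The same applies to the $n\equiv 2\pmod 4$ case, where the terms $\tfrac12 f_1^{n-6}f_3^2$ and $-\tfrac{n}{8}f_1^n$ require a finer $2$-adic analysis of $(w(U)+d(U)-2)!/\gamma_U$ than anything available in this survey. Until that case analysis is supplied, the proposal is a correct plan but not a proof.
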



\section{Differential equations for the coefficients of the expansions of the two-dimensional sigma function}

In this section, we assume $g=2$.

\subsection{The coefficients of $u_3$}

Let $\l=(\l_4,\l_6,\l_8,\l_{10})$.
We set
\[
\sigma(u_1,u_3;\l) = \sum_{k\ge 0}\xi_k(u_1;\l)\frac{u_3^k}{k!}\,.
\]

\begin{prop}\la{si}
For $k\ge0$, the functions $\x_0,\x_1,\dots$ satisfy the following hierarchy of systems



\b
u_1\x_k'+3(k-1)\x_k=4\l_4\x_{k,\l_4}+6\l_6\x_{k,\l_6}+8\l_8\x_{k,\l_8}+10\l_{10}\x_{k,\l_{10}},\la{e-1}
\e
\begin{eqnarray}
u_1\x_{k+1}=-\frac{1}{2}\x_k''+\frac{4k}{5}\l_4\x_{k-1}'+\frac{3}{10}\l_4u_1^2\x_k-\frac{k(k-1)}{10}(15\l_8-4\l_4^2)\x_{k-2} \non \\
+6\l_6\x_{k,\l_4}+(8\l_8-\frac{12}{5}\l_4^2)\x_{k,\l_6}+(10\l_{10}-\frac{8}{5}\l_4\l_6)\x_{k,\l_8}-\frac{4}{5}\l_4\l_8\x_{k,\l_{10}}\label{e-2},
\end{eqnarray}
\begin{gather}
\x_{k+1}'=\frac{6k}{5}\l_6\x_{k-1}'-k\l_4\x_k+\frac{\l_6}{5}u_1^2\x_k-k\l_8u_1\x_{k-1}-\frac{k(k-1)}{10}(30\l_{10}-6\l_4\l_6)\x_{k-2}+\l_4\x_k \non \\
+8\l_8\x_{k,\l_4}+(10\l_{10}-\frac{8}{5}\l_4\l_6)\x_{k,\l_6}+(4\l_4\l_8-\frac{12}{5}\l_6^2)\x_{k,\l_8}+(6\l_4\l_{10}-\frac{6}{5}\l_6\l_8)\x_{k,\l_{10}},\label{e-3}
\end{gather}
\begin{eqnarray}
\x_{k+2}=\frac{6k}{5}\l_8\x_{k-1}'+\frac{\l_8}{5}u_1^2\x_k-4k\l_{10}u_1\x_{k-1}+\frac{3k(k-1)}{5}\l_8\l_4\x_{k-2}+\l_6\x_k \non \\
+20\l_{10}\x_{k,\l_4}-\frac{8}{5}\l_4\l_8\x_{k,\l_6}+(12\l_4\l_{10}-\frac{12}{5}\l_6\l_8)\x_{k,\l_8}+(8\l_6\l_{10}-\frac{16}{5}\l_8^2)\x_{k,\l_{10}},\label{e-4}
\end{eqnarray}
where the prime denotes the derivation with respect to $u_1$ and $\x_{k,\l_{2j}}$ denotes the derivation of $\x_k$ with respect to $\l_{2j}$.
\end{prop}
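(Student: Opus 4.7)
The statement is a direct consequence of Theorem \ref{chara}(ii), which provides four heat equations $Q_i\sigma = \ell_i\sigma - H_i\sigma = 0$ for $i=0,2,4,6$. The strategy is to substitute the ansatz $\sigma(u_1,u_3;\lambda) = \sum_{k\ge 0}\xi_k(u_1;\lambda)\frac{u_3^k}{k!}$ into each of these four equations, extract the coefficient of $\frac{u_3^k}{k!}$, and read off the four identities (\ref{e-1})--(\ref{e-4}), one from each $Q_i$. Specifically, (\ref{e-1}) will come from $Q_0\sigma=0$, (\ref{e-2}) from $Q_2\sigma=0$, (\ref{e-3}) from $Q_4\sigma=0$, and (\ref{e-4}) from $Q_6\sigma=0$.

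The core computation relies on two elementary rules for the action of operators on a series $\sum_k A_k \frac{u_3^k}{k!}$. First, multiplication by $u_3^j$ yields
\[
u_3^j \sum_{k\ge 0} A_k \frac{u_3^k}{k!} = \sum_{k\ge 0} \frac{k!}{(k-j)!}\,A_{k-j}\,\frac{u_3^k}{k!},
\]
with the convention $A_{k-j}=0$ for $k-j<0$; this produces the falling factorials $k$, $k(k-1)$ appearing in front of $\xi_{k-1}$ and $\xi_{k-2}$. Second, differentiation $\partial_{u_3}^j$ shifts the index upward, so the coefficient of $\frac{u_3^k}{k!}$ in $\partial_{u_3}^j\sigma$ is $A_{k+j}$; in particular $\partial_{u_3}\sigma$ contributes $\xi_{k+1}$ and $\frac{1}{2}\partial_{u_3}^2\sigma$ contributes $\frac{1}{2}\xi_{k+2}$. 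Derivation in $u_1$ and in $\lambda_{2j}$ commute with the expansion, so they simply produce $\xi_k',\xi_k''$ and $\xi_{k,\lambda_{2j}}$. Applying these rules to $H_0,H_2,H_4,H_6$ and to the multiplication operators $\ell_i$ (which act diagonally in $k$) and equating coefficients of $\frac{u_3^k}{k!}$ in $H_i\sigma=\ell_i\sigma$ yields precisely the four identities. For instance, from $Q_2\sigma=0$ the term $u_1\partial_{u_3}\sigma$ in $H_2$ produces the left-hand side $u_1\xi_{k+1}$ of (\ref{e-2}), while $\frac{1}{10}(15\lambda_8-4\lambda_4^2)u_3^2\sigma$ produces the $\frac{k(k-1)}{10}(15\lambda_8-4\lambda_4^2)\xi_{k-2}$ term after using the shift $u_3^2$.

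There is no substantive analytic obstacle; the proof is a careful bookkeeping of signs and combinatorial factors. The main care needed is in tracking (i) the factors $k$ and $k(k-1)$ coming from each multiplication by $u_3$ or $u_3^2$, (ii) the shift of index by $+1$ or $+2$ when $\partial_{u_3}$ or $\partial_{u_3}^2$ acts, and (iii) the implicit convention $\xi_{-1}=\xi_{-2}=0$ that automatically deletes the boundary terms for small $k$. Once the coefficient of $\frac{u_3^k}{k!}$ is collected in each of the four equations, rearranging (\ref{e-2}) to isolate $u_1\xi_{k+1}$, (\ref{e-3}) to isolate $\xi_{k+1}'$ (combining the $\lambda_4 u_3\partial_{u_3}\sigma$ and $-\lambda_4\sigma$ contributions from $H_4$ into the $-k\lambda_4\xi_k+\lambda_4\xi_k$ pattern visible on the right-hand side of (\ref{e-3})), and (\ref{e-4}) to isolate $\xi_{k+2}$ (multiplying through by $2$ to clear the $\frac{1}{2}$ coming from $\frac{1}{2}\partial_{u_3}^2$) yields the stated hierarchy verbatim.
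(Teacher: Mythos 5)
Your proposal is correct and is essentially identical to the paper's own proof: the authors likewise substitute the expansion $\sigma=\sum_k\xi_k(u_1;\lambda)\frac{u_3^k}{k!}$ into the four equations $Q_i\sigma=0$ of Theorem \ref{chara}(ii) and compare coefficients of $u_3^k/k!$, obtaining (\ref{e-1})--(\ref{e-4}) from $Q_0,Q_2,Q_4,Q_6$ respectively. Your bookkeeping rules for the action of $u_3^j$ and $\partial_{u_3}^j$ on the series, and the conventions $\xi_{-1}=\xi_{-2}=0$, correctly account for all the factors $k$, $k(k-1)$ and the index shifts.
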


\begin{proof}
We substitute the expressions
\begin{eqnarray*}
\s_1&=&\sum_{k\ge 0}\xi_k'(u_1)\frac{u_3^k}{k!},\;\;\;\s_3=\sum_{k\ge 0}\xi_{k+1}(u_1)\frac{u_3^k}{k!},\;\;\;\s_{11}=\sum_{k\ge 0}\xi_k''(u_1)\frac{u_3^k}{k!},\\
\s_{13}&=&\sum_{k\ge 0}\xi_{k+1}'(u_1)\frac{u_3^k}{k!}, \;\;\; \s_{33}=\sum_{k\ge 0}\xi_{k+2}(u_1)\frac{u_3^k}{k!}.
\end{eqnarray*}
into the equations $Q_i\s=0$ for $i=0,2,4,6$ in Theorem \ref{chara} (ii) and compare the coefficients of $u_3^k/k!$.
Then, from $Q_0\s=0, Q_2\s=0, Q_4\s=0, Q_6\s=0$, we obtain (\ref{e-1}), (\ref{e-2}), (\ref{e-3}), (\ref{e-4}), respectively.
\end{proof}

\begin{lemma}\la{hom}
The fact that $\xi_k$ satisfies the differential equation (\ref{e-1}) means that $\xi_k$ is homogeneous in $u_1$ and $\lambda_{2j}$, $j=2,\dots,5$, with degree $3k-3$.
\end{lemma}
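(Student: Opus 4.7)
The plan is to recognize equation (\ref{e-1}) as precisely the Euler identity for a homogeneous function under the weighted grading used throughout the paper, in which $\deg u_1=-1$, $\deg u_3=-3$, and $\deg\l_{2j}=2j$ for $j=2,\dots,5$. Once this identification is made, the lemma reduces to a direct application of Euler's theorem on homogeneous functions.

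Concretely, I would first expand $\x_k$ as a formal series in monomials $u_1^{i_0}\l_4^{i_1}\l_6^{i_2}\l_8^{i_3}\l_{10}^{i_4}$ with rational coefficients. Applying the weighted Euler operator
\[
E=-u_1\p_{u_1}+4\l_4\p_{\l_4}+6\l_6\p_{\l_6}+8\l_8\p_{\l_8}+10\l_{10}\p_{\l_{10}}
\]
to such a monomial multiplies it by its weight $-i_0+4i_1+6i_2+8i_3+10i_4$. Rewriting (\ref{e-1}) as
\[
E\x_k=4\l_4\x_{k,\l_4}+6\l_6\x_{k,\l_6}+8\l_8\x_{k,\l_8}+10\l_{10}\x_{k,\l_{10}}-u_1\x_k'=3(k-1)\x_k,
\]
I would then conclude that every monomial appearing with a nonzero coefficient in $\x_k$ has weight exactly $3(k-1)=3k-3$, which is by definition homogeneity of degree $3k-3$ in this grading.

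The only subtle point is the sign convention: because $u_1$ has \emph{negative} weight, the Euler operator carries a minus sign in front of $u_1\p_{u_1}$, which is precisely why the term $u_1\x_k'$ appears on the left-hand side of (\ref{e-1}) with a positive sign rather than being grouped with the $\l$-derivatives on the right. One can sanity-check the statement on the leading Schur terms of (\ref{genus2expansionfirstterm15}): $\x_0=\tfrac{1}{3}u_1^3+\cdots$ has weight $-3=3\cdot 0-3$, and $\x_1=-1+\cdots$ has weight $0=3\cdot 1-3$, in agreement with the assertion. I do not anticipate any real obstacle; the lemma is essentially a reformulation of the first-order differential equation (\ref{e-1}) in the language of homogeneity under the weighted grading used in Theorem \ref{rationallim}.
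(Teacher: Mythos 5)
Your proposal is correct and is essentially the paper's own argument: the paper likewise expands $\xi_k$ in monomials $u_1^{i_1}\l_4^{j_4}\l_6^{j_6}\l_8^{j_8}\l_{10}^{j_{10}}$, substitutes into (\ref{e-1}), and compares coefficients to obtain $4j_4+6j_6+8j_8+10j_{10}-i_1=3k-3$ whenever the coefficient is nonzero, which is exactly your Euler-operator computation written term by term. Your reformulation via $E\x_k=3(k-1)\x_k$ is only a more conceptual packaging of the same calculation, and your sign discussion and sanity checks are consistent with the grading $\deg u_1=-1$, $\deg\l_{2j}=2j$ used in the paper.
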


\begin{proof}
We set
\[\xi_k(u_1;\l)=\sum_{i_1,j_4,j_6,j_8,j_{10}\ge 0}a_{i_1,j_4,j_6,j_8,j_{10}}^{(k)}u_1^{i_1}\l_4^{j_4}\l_6^{j_6}\l_8^{j_8}\l_{10}^{j_{10}},\;\;\;\;\;\;\;\;a_{i_1,j_4,j_6,j_8,j_{10}}^{(k)}\in\mathbb{C}.\]
By substituting the above expression into (\ref{e-1}) and comparing the coefficient of $u_1^{i_1}\l_4^{j_4}\l_6^{j_6}\l_8^{j_8}\l_{10}^{j_{10}}$, we have
\[a_{i_1,j_4,j_6,j_8,j_{10}}^{(k)}(i_1+3k-3)=a_{i_1,j_4,j_6,j_8,j_{10}}^{(k)}(4j_4+6j_6+8j_8+10j_{10}).\]
If $a_{i_1,j_4,j_6,j_8,j_{10}}^{(k)}\neq0$, we have $4j_4+6j_6+8j_8+10j_{10}-i_1=3k-3.$
\end{proof}

\vspace{1ex}

\begin{prop}\la{diff0}
The function $\x_0$ satisfies the following differential equation
\begin{gather}
\frac{1}{2}\x_0''-\frac{1}{2}u_1\x_0'''-\frac{7}{10}\l_4u_1^2\x_0-\frac{\l_6}{5}u_1^4\x_0+\frac{3}{10}\l_4u_1^3\x_0'-8\l_8u_1^2\x_{0,\l_4} \non \\
-(10\l_{10}-\frac{8}{5}\l_4\l_6)u_1^2\x_{0,\l_6}-(4\l_4\l_8-\frac{12}{5}\l_6^2)u_1^2\x_{0,\l_8}-(6\l_4\l_{10}-\frac{6}{5}\l_6\l_8)u_1^2\x_{0,\l_{10}} \non \\
+6\l_6u_1\x_{0,\l_4}'+(8\l_8-\frac{12}{5}\l_4^2)u_1\x_{0,\l_6}'+(10\l_{10}-\frac{8}{5}\l_4\l_6)u_1\x_{0,\l_8}'-\frac{4}{5}\l_4\l_8u_1\x_{0,\l_{10}}' \non \\
-6\l_6\x_{0,\l_4}-(8\l_8-\frac{12}{5}\l_4^2)\x_{0,\l_6}-(10\l_{10}-\frac{8}{5}\l_4\l_6)\x_{0,\l_8}+\frac{4}{5}\l_4\l_8\x_{0,\l_{10}}=0.\la{diff-0}\non
\end{gather}
\end{prop}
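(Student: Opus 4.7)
The plan is to derive the stated differential equation for $\xi_0$ by eliminating $\xi_1$ and $\xi_1'$ from the $k=0$ instances of equations (\ref{e-2}) and (\ref{e-3}) in Proposition \ref{si}. Equation (\ref{e-1}) will not be used directly here; equation (\ref{e-4}) also plays no role at this step, since it is (\ref{e-2}) and (\ref{e-3}) that produce relations between $\xi_1$ (respectively $\xi_1'$) and $\xi_0$ together with its $u_1$ and $\lambda$ derivatives. The negative-index terms $\xi_{-1},\xi_{-2}$ in (\ref{e-2})--(\ref{e-3}) vanish by convention or are killed by the explicit factors of $k$ and $k(k-1)$, so both $k=0$ specializations only involve $\xi_0$ and $\xi_1$.

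Concretely, I would first set $k=0$ in (\ref{e-2}) to express $u_1\xi_1$ as a linear combination of $\xi_0''$, $\l_4 u_1^2\xi_0$, and the derivatives $\xi_{0,\l_{2j}}$ with the coefficients read off from the proposition. Next, I would differentiate this identity with respect to $u_1$, producing $\xi_1+u_1\xi_1'$ on the left and, on the right, $-\tfrac12\xi_0'''$ together with terms of the form $\l_4 u_1 \xi_0$, $\l_4 u_1^2\xi_0'$, and the mixed derivatives $\xi_{0,\l_{2j}}'$. I would then take $k=0$ in (\ref{e-3}) and multiply by $u_1$ to obtain $u_1\xi_1'$ as an explicit combination of $u_1^3\xi_0$, $u_1\xi_0$, and the terms $u_1\xi_{0,\l_{2j}}$. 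Subtracting this from the differentiated version of (\ref{e-2}) isolates $\xi_1$ as a differential expression in $\xi_0$ alone. Multiplying the result by $u_1$ and comparing with the original (\ref{e-2}) at $k=0$ yields a single relation
\[
L_0 \;-\; u_1 L_0' \;+\; u_1^2 R_1 \;=\; 0,
\]
where $L_0$ denotes the right-hand side of (\ref{e-2}) at $k=0$ and $R_1$ denotes the right-hand side of (\ref{e-3}) at $k=0$.

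Expanding $L_0 - u_1L_0' + u_1^2 R_1$ term by term produces exactly the differential equation claimed in Proposition \ref{diff0} (up to the overall sign, which matches once we note the coefficient $\frac{3}{10}+\l_4(-\frac{3}{5})+\l_4 = \frac{7}{10}$ appearing in front of $\l_4 u_1^2 \xi_0$). All other coefficients arise by direct bookkeeping: the $\xi_0,\xi_0',\xi_0'',\xi_0'''$ contributions come from $L_0$, $L_0'$, and the $\l_4,\l_6$ terms of $u_1^2R_1$; the $\xi_{0,\l_{2j}}$ and $\xi_{0,\l_{2j}}'$ contributions match the displayed coefficients of (\ref{diff-0}) line by line. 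There is no substantive obstacle to this argument; the only delicate point is bookkeeping the many rational coefficients, and the principal check is that the three separate contributions to $\l_4 u_1^2\xi_0$ combine correctly to $\tfrac{7}{10}$ rather than to some other value, which provides a useful internal consistency test for the computation.
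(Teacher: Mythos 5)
Your proposal is correct and follows essentially the same route as the paper: the authors likewise take (\ref{e-2}) at $k=0$ to get $u_1\xi_1=L_0$, differentiate in $u_1$, eliminate $\xi_1'$ via (\ref{e-3}) at $k=0$, and substitute the resulting expression for $\xi_1$ back into $u_1\xi_1=L_0$, which is exactly your relation $L_0-u_1L_0'+u_1^2R_1=0$ (the paper's displayed equation being its negative). Your consistency check $\tfrac{3}{10}-\tfrac{3}{5}+1=\tfrac{7}{10}$ for the coefficient of $\l_4u_1^2\x_0$ is the right one.
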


\begin{proof}
From (\ref{e-2}) with $k=0$, we have
\b
u_1\x_1=-\frac{1}{2}\x_0''+\frac{3}{10}\l_4u_1^2\x_0+6\l_6\x_{0,\l_4}+(8\l_8-\frac{12}{5}\l_4^2)\x_{0,\l_6}+(10\l_{10}-\frac{8}{5}\l_4\l_6)\x_{0,\l_8}-\frac{4}{5}\l_4\l_8\x_{0,\l_{10}}.\la{diff-0-2}
\e
From (\ref{e-3}) with $k=0$, $\xi_1'$ can be expressed in terms of $\xi_0$ and its derivatives.
We take the derivative with respect to $u_1$ of (\ref{diff-0-2}) and substitute into it the expression for $\xi_1'$.
As a result, we obtain
\begin{eqnarray}
\x_1&=&-\frac{1}{2}\x_0'''-\frac{2}{5}\l_4u_1\x_0-\frac{\l_6}{5}u_1^3\x_0+\frac{3}{10}\l_4u_1^2\x_0'-8\l_8u_1\x_{0,\l_4}-(10\l_{10}-\frac{8}{5}\l_4\l_6)u_1\x_{0,\l_6}\non \\
&&-(4\l_4\l_8-\frac{12}{5}\l_6^2)u_1\x_{0,\l_8}-(6\l_4\l_{10}-\frac{6}{5}\l_6\l_8)u_1\x_{0,\l_{10}}+6\l_6\x_{0,\l_4}'+(8\l_8-\frac{12}{5}\l_4^2)\x_{0,\l_6}' \non \\
&&+(10\l_{10}-\frac{8}{5}\l_4\l_6)\x_{0,\l_8}'-\frac{4}{5}\l_4\l_8\x_{0,\l_{10}}'.\non
\end{eqnarray}
We substitute the above equation into (\ref{diff-0-2}) and finally obtain the statement of the proposition.
\end{proof}

We set
\b
\xi_0 = \sum_{\ell\ge 0}p_{\ell}(\l)\frac{u_1^{\ell}}{\ell!}.\la{de-1}
\e

\begin{lemma}\la{ini}
If $\ell$ is even, then $p_{\ell}=0$. We have $p_1=0$ and $p_3\in\mathbb{C}$.
\end{lemma}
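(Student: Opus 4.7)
The plan is to read off the assertion directly from the Schur-type expansion (\ref{genus2expansionfirstterm15}) of $\sigma(u_1,u_3;\lambda)$, combined with the homogeneity already recorded in Lemma \ref{hom} (or equivalently Theorem \ref{rationallim}). Since $\xi_0(u_1;\lambda)=\sigma(u_1,0;\lambda)$, setting $u_3=0$ in (\ref{genus2expansionfirstterm15}) gives
\[
\xi_0(u_1;\lambda)=\tfrac{1}{3}u_1^3+\sum_{i\ge 7}\alpha_{i,0}^{(2)}\,u_1^{i},
\]
so the first step is to match this with (\ref{de-1}), which shows that $p_\ell/\ell!=\alpha_{\ell,0}^{(2)}$ for $\ell\ge 7$, $p_3=2$, and $p_0=p_1=p_2=p_4=p_5=p_6=0$.

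Next I would exploit weights. By Theorem \ref{rationallim}, $\alpha_{i,0}^{(2)}$ is a homogeneous polynomial in $\mathbb{Q}[\lambda_4,\lambda_6,\lambda_8,\lambda_{10}]$ of degree $i-3$ with respect to $\deg\lambda_{2j}=2j$. Since every generator $\lambda_{2j}$ has even degree, the polynomial $\alpha_{i,0}^{(2)}$ can be nonzero only when $i-3$ is a non-negative even integer, i.e.\ when $i$ is odd and $i\ge 3$. Therefore $p_\ell=0$ for every even $\ell$, and also $p_1=0$ because the constraint $i=1$ would force degree $-2<0$, which is impossible. The same argument shows that $p_3$ corresponds to a polynomial in $\lambda$ of degree $0$, hence $p_3\in\mathbb{C}$; indeed the leading term in (\ref{genus2expansionfirstterm15}) yields explicitly $p_3=2$.

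As a cross-check I would, if desired, derive the same conclusion from Lemma \ref{hom} applied to $k=0$: the recorded identity $4j_4+6j_6+8j_8+10j_{10}=i_1-3$ forces $i_1$ odd and $i_1\ge 3$, which is exactly what is needed. There is essentially no obstacle here; the only thing to be mindful of is that the statement is a pure consequence of weight/parity, so the argument should not invoke the hierarchy (\ref{e-1})--(\ref{e-4}) at all, and in particular should not require solving any recurrence. The proof is therefore a one-paragraph verification.
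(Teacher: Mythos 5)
Your argument is correct, and your ``cross-check'' at the end is in fact exactly the paper's proof: the paper deduces Lemma \ref{ini} from Lemma \ref{hom} together with (\ref{e-1}) for $k=0$, i.e.\ from the parity/weight constraint $4j_4+6j_6+8j_8+10j_{10}=i_1-3$, which forces $i_1$ odd and $i_1\ge 3$. Your primary route --- reading everything off from the known expansion (\ref{genus2expansionfirstterm15}) and the homogeneity in Theorem \ref{rationallim} --- is also valid and even gives slightly more ($p_5=0$ is not forced by parity alone but does follow from the expansion). However, there is a reason the paper takes the other route, and your closing remark that ``the argument should not invoke the hierarchy (\ref{e-1})--(\ref{e-4}) at all'' gets this backwards: Lemma \ref{ini} is the seed of Corollary \ref{co1}, which asserts that $\sigma$ is \emph{uniquely determined} by the equations $Q_i\sigma=0$, $i=0,2,4$. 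For that corollary the initial conditions $p_\ell=0$ ($\ell$ even), $p_1=0$, $p_3\in\mathbb{C}$ must be consequences of the differential equations themselves, not of prior knowledge of the sigma expansion; this is why the paper derives the lemma from (\ref{e-1}) via Lemma \ref{hom} and explicitly records in the proof of Corollary \ref{co1} that Lemma \ref{ini} ``follows from (\ref{e-1}), (\ref{e-2}), and (\ref{e-3})''. So your proof is fine as a proof of the stated lemma, but if it replaced the paper's proof, Corollary \ref{co1} would become circular; the derivation from (\ref{e-1}) should be kept as the primary argument, with the match against (\ref{genus2expansionfirstterm15}) reserved for fixing the normalization $p_3=2$ later (as the paper does in Corollary \ref{sigmalambda10hurwitz}).
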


\begin{proof}
From Lemma \ref{hom} and (\ref{e-1}) with $k=0$, we obtain the statement of the lemma.
\end{proof}

\begin{prop}\label{coe}
For $\ell\ge2$, the following recurrence relation holds :
\begin{eqnarray}
&&p_{\ell+2}=\frac{\l_4\ell(3\ell-13)}{5}p_{\ell-2}-\frac{2\l_6}{5}\ell(\ell-2)(\ell-3)p_{\ell-4}-16\l_8\ell p_{\ell-2,\l_4}\non \\
&&-(20\l_{10}-\frac{16}{5}\l_4\l_6)\ell p_{\ell-2,\l_6}-(8\l_4\l_8-\frac{24}{5}\l_6^2)\ell p_{\ell-2,\l_8}-(12\l_4\l_{10}-\frac{12}{5}\l_6\l_8)\ell p_{\ell-2,\l_{10}}\non \\
&&+12\l_6 p_{\ell,\l_4}+(16\l_8-\frac{24}{5}\l_4^2)p_{\ell,\l_6}+(20\l_{10}-\frac{16}{5}\l_4\l_6)p_{\ell,\l_8}-\frac{8}{5}\l_4\l_8 p_{\ell,\l_{10}}, \non
\end{eqnarray}
where $p_{i,\l_{2j}}$ denotes the derivative of $p_i$ with respect to $\l_{2j}$.
\end{prop}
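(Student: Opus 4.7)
The plan is to substitute the Hurwitz series $\xi_0 = \sum_{\ell\ge 0} p_\ell(\lambda)\,u_1^\ell/\ell!$ directly into the differential equation for $\xi_0$ obtained in Proposition \ref{diff0}, and then read off the coefficient of $u_1^\ell/\ell!$. Since that equation is linear in $\xi_0$ with coefficients of the form $(\text{polynomial in }\lambda)\cdot u_1^k$ acting either by multiplication on derivatives $\xi_0^{(j)}$ in $u_1$ or by the derivations $\partial_{\lambda_{2j}}$, every term contributes a scalar multiple of some $p_{\ell-k+j}$ or $p_{\ell-k+j,\lambda_{2i}}$, where the scalar is an explicit polynomial in $\ell$ coming from the identity
\[
\bigl[u_1^\ell/\ell!\bigr]\,u_1^k \xi_0^{(j)} \;=\; \frac{\ell!}{(\ell-k+j)!}\,p_{\ell-k+j}.
\]

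First I would tabulate these contributions term by term. The two highest-derivative terms $\tfrac12\xi_0''$ and $-\tfrac12 u_1\xi_0'''$ together produce the coefficient $\tfrac{1-\ell}{2}\,p_{\ell+2}$, which is nonzero for $\ell\ge 2$ and supplies the leading $p_{\ell+2}$ we will solve for. The terms $-\tfrac{7}{10}\lambda_4 u_1^2\xi_0$ and $\tfrac{3}{10}\lambda_4 u_1^3\xi_0'$ together produce $\lambda_4\,\ell(\ell-1)\bigl[-\tfrac{7}{10}+\tfrac{3}{10}(\ell-2)\bigr]p_{\ell-2} = \tfrac{\lambda_4\ell(\ell-1)(3\ell-13)}{10}\,p_{\ell-2}$. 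The term $-\tfrac{\lambda_6}{5}u_1^4\xi_0$ contributes $-\tfrac{\lambda_6}{5}\ell(\ell-1)(\ell-2)(\ell-3)\,p_{\ell-4}$. Each of the four terms of the shape $c(\lambda)\,u_1^2\xi_{0,\lambda_{2i}}$ contributes $c(\lambda)\ell(\ell-1)\,p_{\ell-2,\lambda_{2i}}$, and each pair of terms of the shape $c(\lambda)\,u_1\,\xi_{0,\lambda_{2i}}'$ together with $-c(\lambda)\,\xi_{0,\lambda_{2i}}$ contributes $c(\lambda)(\ell-1)\,p_{\ell,\lambda_{2i}}$ (the factor $\ell-1$ arising exactly as in the leading term).

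To finish, I would collect all contributions to the coefficient of $u_1^\ell/\ell!$, so that the PDE becomes a linear relation of the form $\tfrac{1-\ell}{2}p_{\ell+2}+R_\ell(\lambda)=0$, with $R_\ell(\lambda)$ an explicit polynomial combination of $p_{\ell-2}$, $p_{\ell-4}$, $p_{\ell-2,\lambda_{2i}}$, and $p_{\ell,\lambda_{2i}}$. For $\ell\ge 2$ I then multiply through by $\tfrac{2}{\ell-1}$; the factor $(\ell-1)$ cancels exactly with the $(\ell-1)$ already appearing in each of the contributions listed above, producing the coefficients $\tfrac{\ell(3\ell-13)}{5}$, $-\tfrac{2\ell(\ell-2)(\ell-3)}{5}$, $-16\ell$ and so on that appear in the statement. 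Matching these against the stated formula term by term completes the proof.

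The main work is purely bookkeeping of numerical factors coming from the factorial identities, and there is no conceptual obstacle. The only mild care point is the factor $(1-\ell)/2$: one must check that the coefficient of $p_{\ell+2}$ really is $(1-\ell)/2$ (so that division is legitimate for $\ell\ge 2$) and that the overall factor $\ell-1$ factors out cleanly from every term on the right-hand side so that the final recurrence is polynomial in $\ell$.
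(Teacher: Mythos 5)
Your proposal is correct and is exactly the paper's proof: substitute the Hurwitz expansion (\ref{de-1}) into the differential equation of Proposition \ref{diff0} and compare the coefficients of $u_1^{\ell}/\ell!$, and your term-by-term bookkeeping (the leading coefficient $\tfrac{1-\ell}{2}$ of $p_{\ell+2}$, the combined $\lambda_4$-contribution $\tfrac{\lambda_4\ell(\ell-1)(3\ell-13)}{10}p_{\ell-2}$, and the clean cancellation of the factor $\ell-1$ from every remaining term) all check out against the stated recurrence. One small slip: the displayed identity should read $\bigl[u_1^{\ell}/\ell!\bigr]\,u_1^{k}\xi_0^{(j)}=\frac{\ell!}{(\ell-k)!}\,p_{\ell-k+j}$, with denominator $(\ell-k)!$ rather than $(\ell-k+j)!$ --- all of your actual numerical contributions are computed with the correct factor, so this is only a typo in the stated formula.
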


\begin{proof}
By substituting (\ref{de-1}) into the differential equation in Proposition \ref{diff0} and comparing the coefficients of $u_1^{\ell}/\ell!$, we obtain the statement of the proposition.
\end{proof}

\begin{cor}\label{co1}
The sigma function $\sigma(u_1,u_3;\l)$ is uniquely determined by the differential equations $Q_i\sigma=0$, $i=0,2,4$, up to a multiplicative constant.
\end{cor}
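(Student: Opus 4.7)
The plan is to show that the three equations $Q_0\sigma=Q_2\sigma=Q_4\sigma=0$ pin down the expansion $\sigma(u_1,u_3;\lambda)=\sum_{k\ge 0}\xi_k(u_1;\lambda)u_3^k/k!$ from a single complex scalar, which in coordinates will be the coefficient $p_3$ of (\ref{de-1}).

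First, by Proposition \ref{si}, the three assumed equations produce, term by term in $u_3$, the hierarchies (\ref{e-1}), (\ref{e-2}) and (\ref{e-3}). Taking $k=0$ in (\ref{e-2}) expresses $u_1\xi_1$ as an explicit differential operator applied to $\xi_0$; differentiating in $u_1$ and eliminating $\xi_1'$ using the $k=0$ case of (\ref{e-3}) solves for $\xi_1$ in terms of $\xi_0$ alone, and substituting this back into (\ref{e-2}) at $k=0$ yields the closed PDE of Proposition \ref{diff0} for $\xi_0$. Expanding $\xi_0=\sum_{\ell\ge 0}p_\ell(\lambda)u_1^\ell/\ell!$ and matching coefficients of $u_1^\ell/\ell!$ then produces the scalar recurrence of Proposition \ref{coe}.

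The homogeneity relation (\ref{e-1}), supplemented by Lemma \ref{ini}, supplies the initial data $p_\ell=0$ for every even $\ell$, $p_1=0$, and $p_3\in\mathbb{C}$ arbitrary; indeed, the grading $\deg u_1=-1$, $\deg\lambda_{2j}=2j$ forces $\deg p_\ell=\ell-3$, so each $p_\ell$ is a $\lambda$-polynomial of prescribed weight (for instance $p_5=0$ already by degree considerations). I then induct on odd $\ell\ge 3$: the recurrence of Proposition \ref{coe} expresses $p_{\ell+2}$ as an explicit polynomial in $\lambda$ built from $p_{\ell-2}$, $p_{\ell-4}$, and the first $\lambda$-partials of $p_\ell$ and $p_{\ell-2}$, all of which are already known at that stage, so once $p_3$ is fixed every $p_\ell$, and hence $\xi_0$, is uniquely determined.

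With $\xi_0$ in hand I propagate up the hierarchy. Equation (\ref{e-2}) at level $k$ gives $u_1\xi_{k+1}$ as a known expression in $\xi_0,\dots,\xi_k$ and their $u_1$- and $\lambda$-derivatives, and dividing by $u_1$ yields $\xi_{k+1}$ away from $u_1=0$; since $\sigma$ was assumed entire, $\xi_{k+1}$ extends uniquely to an entire function, and induction on $k$ determines the full series. The entire $\sigma$ is therefore pinned down by the single scalar $p_3$, which is the claim. The main obstacle I expect is the bookkeeping that ensures divisibility by $u_1$ of the right-hand side of (\ref{e-2}) at every inductive step; the cleanest way to handle this is to exploit the homogeneity of each $\xi_k$ granted by Lemma \ref{hom} (i.e.\ by $Q_0\sigma=0$) to control the $u_1$-parity, so that the $u_1=0$ value of the right-hand side automatically vanishes. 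Alternatively, one can use (\ref{e-3}) to recover $\xi_{k+1}'$ and integrate, fixing the constant of integration by homogeneity.
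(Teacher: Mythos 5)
Your proposal is correct and follows essentially the same route as the paper: derive (\ref{e-1})--(\ref{e-3}) from $Q_0\sigma=Q_2\sigma=Q_4\sigma=0$ via Proposition \ref{si}, use Lemma \ref{ini} and the recurrence of Proposition \ref{coe} to determine all $p_\ell$ (hence $\xi_0$) from $p_3$, and then propagate to all $\xi_k$ via (\ref{e-2}). Your additional care about the divisibility of the right-hand side of (\ref{e-2}) by $u_1$, resolved through the homogeneity/parity supplied by Lemma \ref{hom}, is a legitimate detail that the paper's proof passes over silently, but it does not change the argument's structure.
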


\begin{proof}
From Lemma \ref{ini} and Proposition \ref{coe}, we find that all the coefficients $p_{\ell}$ are determined from $p_3$.
Note that Lemma \ref{ini} and Proposition \ref{coe} follow from (\ref{e-1}), (\ref{e-2}), and (\ref{e-3}).
By (\ref{e-2}), all the functions $\xi_k$ are determined from $\xi_0$.
As mentioned in the proof of Proposition \ref{si},  (\ref{e-1}), (\ref{e-2}), (\ref{e-3}) follow from $Q_0\s=0, Q_2\s=0, Q_4\s=0$.
Therefore we obtain the statement of this corollary.
\end{proof}

\begin{rem}

It is known that the sigma function $\sigma(u_1,u_3;\l)$ is uniquely determined by the differential equations $Q_i\sigma=0$, $i=0,2,4,6$, up to a multiplicative constant
(\cite{BL-2004}, \cite{BL-2005}, \cite{O5}).
In \cite{BB}, the following expression is proved :
\[10Q_6=5[Q_2,Q_4]-8\l_6Q_0+8\l_4Q_2,\]
where $[Q_2,Q_4]$ is the commutator of $Q_2$ and $Q_4$.
From this result, in \cite{BB}, it is shown that the sigma function $\sigma(u_1,u_3;\l)$ is uniquely determined by the differential equations $Q_i\sigma=0$, $i=0,2,4$, up to a multiplicative constant.

\end{rem}

\begin{cor}\label{sigmalambda10hurwitz}
We have
\[\sigma(u_1,u_3)\in\mathbb{Z}[\l_4,\l_6,\l_8,2\l_{10}]\langle\langle u_1,u_3\rangle\rangle.\]
\end{cor}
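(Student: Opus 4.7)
Let $R = \mathbb{Z}[\lambda_4,\lambda_6,\lambda_8,2\lambda_{10}]$, and introduce the abbreviation $\tilde\lambda_{10} = 2\lambda_{10}$, so that $R = \mathbb{Z}[\lambda_4,\lambda_6,\lambda_8,\tilde\lambda_{10}]$ as an abstract polynomial ring. My approach rests on the elementary identity $R[\tfrac{1}{2}]\cap R[\tfrac{1}{5}] = R$ inside $\mathbb{Q}[\lambda_4,\lambda_6,\lambda_8,\tilde\lambda_{10}]$, which follows from $\mathbb{Z}[\tfrac{1}{2}]\cap\mathbb{Z}[\tfrac{1}{5}]=\mathbb{Z}$ and passes coefficient-by-coefficient to Hurwitz series rings in $(u_1,u_3)$. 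Theorem \ref{integralitygenus2} places $\sigma$ in $\mathbb{Z}[\lambda_4,\lambda_6,\lambda_8,\lambda_{10}]\langle\langle u_1,u_3\rangle\rangle \subset R[\tfrac{1}{2}]\langle\langle u_1,u_3\rangle\rangle$, so it is enough to establish $\sigma \in R[\tfrac{1}{5}]\langle\langle u_1,u_3\rangle\rangle$.

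The crucial observation is that in the recurrences (\ref{e-1})--(\ref{e-4}) of Proposition \ref{si}, every scalar coefficient attached to $\lambda_{10}$ (one of $4,6,8,10,12,20,30$) is divisible by $2$, while $\partial_{\lambda_{10}} = 2\,\partial_{\tilde\lambda_{10}}$ contributes an additional factor of $2$ on derivative terms. Consequently, after the substitution $\lambda_{10} = \tilde\lambda_{10}/2$, every coefficient loses its factor of $\tfrac{1}{2}$; the only denominators that persist are powers of $5$.

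I would then propagate this rewriting along the hierarchy $\xi_0 \to \xi_1 \to \xi_2 \to \cdots$. For $\xi_0$: the rewritten Proposition \ref{coe} has coefficients in $R[\tfrac{1}{5}]$, and starting from the initial data $p_1=0$, $p_3=2$, induction gives $\xi_0 \in R[\tfrac{1}{5}]\langle\langle u_1\rangle\rangle$. For $\xi_1$: setting $k=0$ in (\ref{e-3}) kills the awkward term $\frac{k(k-1)}{10}(30\lambda_{10}-6\lambda_4\lambda_6)\xi_{k-2}$, and the surviving formula for $\xi_1'$ lies in $R[\tfrac{1}{5}]\langle\langle u_1\rangle\rangle$; integrating with the initial condition $\xi_1(0) = -1$, read directly from $\sigma = \tfrac{1}{3}u_1^3 - u_3 + \cdots$, places $\xi_1$ in the same ring. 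For $\xi_k$ with $k \geq 2$: iterate (\ref{e-4}), whose $\lambda_{10}$-coefficients $4k,20,12,8$ are all even, so the rescaled recurrence again lies in $R[\tfrac{1}{5}]$. Combining these three steps gives $\sigma \in R[\tfrac{1}{5}]\langle\langle u_1,u_3\rangle\rangle$, and intersecting with Theorem \ref{integralitygenus2} concludes the proof.

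The main obstacle is that (\ref{e-2}), the most natural recurrence for $u_1\xi_{k+1}$, contains the term $-\tfrac{1}{2}\xi_k''$; this explicit $\tfrac{1}{2}$ is incompatible with an argument that works modulo powers of $5$ only. The workaround is structural rather than computational: use Proposition \ref{coe} for $\xi_0$, equation (\ref{e-3}) only at $k=0$ for $\xi_1$, and (\ref{e-4}) for all $\xi_k$ with $k\ge 2$, thereby bypassing (\ref{e-2}) entirely. A minor bookkeeping point worth flagging is that any coefficient of the shape $\frac{k(k-1)}{10}\cdot 30\lambda_{10} = \frac{3k(k-1)}{2}\tilde\lambda_{10}$ that might surface is still integer-coefficient because $k(k-1)$ is always even.
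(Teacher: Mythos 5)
Your proposal is correct and follows essentially the same route as the paper: the paper likewise establishes $\xi_0\in\mathbb{Z}[1/5,\l_4,\l_6,\l_8,2\l_{10}]\langle\langle u_1\rangle\rangle$ via Proposition \ref{coe} with $p_3=2$, gets $\xi_1$ from (\ref{e-3}) at $k=0$ together with $\xi_1=-1+O(u_1)$, propagates to all $\xi_k$ by (\ref{e-4}) (thereby bypassing the $-\tfrac12\xi_k''$ term of (\ref{e-2}), exactly as you observe), and then removes the powers of $5$ by intersecting with Theorem \ref{integralitygenus2}. Your explicit formulation of that last step as $R[\tfrac12]\cap R[\tfrac15]=R$ is just a cleaner statement of what the paper does implicitly.
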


\begin{proof}
From Lemma \ref{ini} and (\ref{genus2expansionfirstterm15}), we have $p_0=p_1=p_2=0$ and $p_3=2$.
From Proposition \ref{coe}, we can show $p_{\ell}\in\mathbb{Z}[1/5,\l_4,\l_6,\l_8,2\l_{10}]$ for any $\ell$ by mathematical induction.
Therefore we have $\xi_0\in\mathbb{Z}[1/5,\l_4,\l_6,\l_8,2\l_{10}]\langle\langle u_1\rangle\rangle$.
From (\ref{e-3}) with $k=0$, we can show $\xi_1'\in\mathbb{Z}[1/5,\l_4,\l_6,\l_8,2\l_{10}]\langle\langle u_1\rangle\rangle$.
Since $\xi_1=-1+O(u_1)$, we have $\xi_1\in\mathbb{Z}[1/5,\l_4,\l_6,\l_8,2\l_{10}]\langle\langle u_1\rangle\rangle$.
From (\ref{e-4}), we can show $\xi_k\in\mathbb{Z}[1/5,\l_4,\l_6,\l_8,2\l_{10}]\langle\langle u_1\rangle\rangle$ for any $k$ by mathematical induction.
Therefore we have
\[\sigma(u_1,u_3)\in\mathbb{Z}[1/5,\l_4,\l_6,\l_8,2\l_{10}]\langle\langle u_1,u_3\rangle\rangle.\]
From Theorem \ref{integralitygenus2}, we obtain the statement of the corollary.
\end{proof}

\subsection{Expansions of $\xi_k$}

In this subsection we will calculate the expansions of $\xi_k$.
From (\ref{genus2expansionfirstterm15}), we have $p_3=2$.
The initial terms of $\x_i$, $i=0,1,2,3,4$, are as follows.
\begin{eqnarray*}
\x_0&=&2\frac{u_1^3}{3!}+2^2\l_4\frac{u_1^7}{7!}-2^6\l_6\frac{u_1^9}{9!}+2^3(51\l_4^2-200\l_8)\frac{u_1^{11}}{11!}+2^7(67\l_4\l_6-140\l_{10})\frac{u_1^{13}}{13!}+\cdots,\\
\x_1&=&-1+2\l_4\frac{u_1^4}{4!}+2^3\l_6\frac{u_1^6}{6!}+2^2(\l_4^2+8\l_8)\frac{u_1^8}{8!}+2^5(3\l_4\l_6-20\l_{10})\frac{u_1^{10}}{10!} \\
&&+2^3(304\l_6^2+51\l_4^3-184\l_4\l_8)\frac{u_1^{12}}{12!}+2^5(1256\l_6\l_8+237\l_4^2\l_6-1240\l_4\l_{10})\frac{u_1^{14}}{14!}+\cdots,\\
\x_2&=&2\l_6\frac{u_1^3}{3!}+2^3\l_8\frac{u_1^5}{5!}+2^2(20\l_{10}+\l_4\l_6)\frac{u_1^7}{7!}+2^5(5\l_4\l_8-2\l_6^2)\frac{u_1^9}{9!}
+2^3(51\l_4^2\l_6+104\l_6\l_8\\
&&-360\l_4\l_{10})\frac{u_1^{11}}{11!}
+2^5(232\l_8^2-31\l_4^2\l_8+268\l_4\l_6^2+320\l_6\l_{10})\frac{u_1^{13}}{13!}+\cdots,\\
\x_3&=&-\l_6+2\l_8\frac{u_1^2}{2!}+2(\l_4\l_6+4\l_{10})\frac{u_1^4}{4!}+2^2(\l_4\l_8+2\l_6^2)\frac{u_1^6}{6!}+2^2(\l_4^2\l_6+104\l_4\l_{10}-8\l_6\l_8)\frac{u_1^8}{8!}\\
&&+2^3(88\l_8^2+51\l_4^2\l_8+12\l_4\l_6^2-160\l_6\l_{10})\frac{u_1^{10}}{10!}\\
&&+(7104\l_4\l_6\l_8+8960\l_8\l_{10}+2432\l_6^3+408\l_4^3\l_6-12768\l_4^2\l_{10})\frac{u_1^{12}}{12!}+\cdots,\\
\x_4&=&2^3\l_{10}\frac{u_1}{1!}+2(\l_6^2+2\l_4\l_8)\frac{u_1^3}{3!}+2^4(\l_6\l_8+\l_4\l_{10})\frac{u_1^5}{5!}\\
&&+2^2(2\l_4^2\l_8+\l_4\l_6^2+88\l_6\l_{10}-28\l_8^2)\frac{u_1^7}{7!}+2^5(6\l_4\l_6\l_8+16\l_8\l_{10}-2\l_6^3+51\l_4^2\l_{10})\frac{u_1^9}{9!} \\
&&+2^3(744\l_4\l_8^2+408\l_6^2\l_8+102\l_4^3\l_8+51\l_4^2\l_6^2-1008\l_4\l_6\l_{10}-160\l_{10}^2)\frac{u_1^{11}}{11!} \\
&&+2^6(960\l_6\l_8^2+237\l_4^2\l_6\l_8+1072\l_4\l_8\l_{10}+134\l_4\l_6^3+168\l_6^2\l_{10}-849\l_4^3\l_{10})\frac{u_1^{13}}{13!}+\cdots.
\end{eqnarray*}

\subsection{The coefficients of $u_1$}

We set
\b
\sigma(u_1,u_3;\l) = \sum_{k\ge 0}\mu_k(u_3;\l)\frac{u_1^k}{k!}.\la{exe1}
\e

\begin{prop}\la{si2}
For $k\ge0$, the functions $\m_0,\m_1,\dots$ satisfy the following hierarchy of systems
\b
(k-3)\m_k+3u_3\m_k'=4\l_4\m_{k,\l_4}+6\l_6\m_{k,\l_6}+8\l_8\m_{k,\l_8}+10\l_{10}\m_{k,\l_{10}}, \la{f1-1}
\e
\begin{eqnarray}
&&\m_{k+2}=\frac{8}{5}\l_4u_3\m_{k+1}-2k\m_{k-1}'+\frac{3k(k-1)}{5}\l_4\m_{k-2}-\frac{15\l_8-4\l_4^2}{5}u_3^2\m_k \non \\
&+&12\l_6\m_{k,\l_4}+(16\l_8-\frac{24}{5}\l_4^2)\m_{k,\l_6}+(20\l_{10}-\frac{16}{5}\l_4\l_6)\m_{k,\l_8}-\frac{8}{5}\l_4\l_8\m_{k,\l_{10}}, \la{f1-2}
\end{eqnarray}
\begin{eqnarray}
&&\m_{k+1}'-\frac{6}{5}\l_6u_3\m_{k+1}=-\l_4u_3\m_k'+\frac{k(k-1)}{5}\l_6\m_{k-2}-k\l_8u_3\m_{k-1}-(3\l_{10}-\frac{3}{5}\l_4\l_6)u_3^2\m_k \non \\
&&+\l_4\m_k+8\l_8\m_{k,\l_4}+(10\l_{10}-\frac{8}{5}\l_4\l_6)\m_{k,\l_6}+(4\l_4\l_8-\frac{12}{5}\l_6^2)\m_{k,\l_8}\non\\
&&+(6\l_4\l_{10}-\frac{6}{5}\l_6\l_8)\m_{k,\l_{10}}, \la{f1-3}
\end{eqnarray}
\begin{eqnarray}
&&\l_8u_3\m_{k+1}=\frac{5}{6}\m_k''-\frac{k(k-1)}{6}\l_8\m_{k-2}+\frac{10}{3}k\l_{10}u_3\m_{k-1}-\frac{\l_4\l_8}{2}u_3^2\m_k-\frac{5}{6}\l_6\m_k \non \\
&-&\frac{50}{3}\l_{10}\m_{k,\l_4}+\frac{4}{3}\l_4\l_8\m_{k,\l_6}-(10\l_4\l_{10}-2\l_6\l_8)\m_{k,\l_8}-(\frac{20}{3}\l_6\l_{10}-\frac{8}{3}\l_8^2)\m_{k,\l_{10}},\hspace{7ex}\la{f1-4}
\end{eqnarray}
where the prime denotes the derivation with respect to $u_3$ and $\m_{k,\l_{2j}}$ denotes the derivation of $\m_k$ with respect to $\l_{2j}$.
\end{prop}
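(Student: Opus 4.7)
The plan is to mirror the argument of Proposition~\ref{si}, but with the roles of $u_1$ and $u_3$ interchanged. Substitute the expansion (\ref{exe1}) into each of the four heat equations $Q_i\sigma = 0$ from Theorem~\ref{chara}(ii) and match the coefficients of $u_1^k/k!$ on both sides; equations (\ref{f1-1})--(\ref{f1-4}) will come, respectively, from $Q_0\sigma=0$, $Q_2\sigma=0$, $Q_4\sigma=0$, $Q_6\sigma=0$.

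The first thing I would do is record the following bookkeeping rules, all obtained from (\ref{exe1}) by termwise differentiation or multiplication. In the $u_1$-expansion, $\partial_{u_3}$ acts as a prime on $\mu_k$, whereas $\partial_{u_1}$ shifts the index:
\[
\sigma_1 = \sum_{k\ge 0}\mu_{k+1}\frac{u_1^k}{k!},\quad \sigma_{11} = \sum_{k\ge 0}\mu_{k+2}\frac{u_1^k}{k!},\quad \sigma_3 = \sum_{k\ge 0}\mu_k'\frac{u_1^k}{k!},\quad \sigma_{13} = \sum_{k\ge 0}\mu_{k+1}'\frac{u_1^k}{k!},\quad \sigma_{33} = \sum_{k\ge 0}\mu_k''\frac{u_1^k}{k!}.
\]
Multiplication by $u_1$ gives $u_1\sigma = \sum_{k\ge 1}k\mu_{k-1}\,u_1^k/k!$ and $u_1^2\sigma = \sum_{k\ge 2}k(k-1)\mu_{k-2}\,u_1^k/k!$, while multiplication by any function of $u_3$ and $\lambda$ passes through coefficient-wise. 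The derivations $\partial_{\lambda_{2j}}$ likewise commute with extracting the coefficient, producing $\mu_{k,\lambda_{2j}}$.

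Using these rules, equation $Q_0\sigma=0$ reduces immediately to (\ref{f1-1}), since $u_1\partial_{u_1}\sigma$ produces the factor $k\mu_k$. For $Q_2\sigma=0$, the terms $\frac{1}{2}\sigma_{11}$, $-\frac{4}{5}\lambda_4 u_3\sigma_1$, $u_1\sigma_3$, $-\frac{3}{10}\lambda_4 u_1^2\sigma$, and $\frac{1}{10}(15\lambda_8 - 4\lambda_4^2)u_3^2\sigma$ contribute, respectively, $\frac{1}{2}\mu_{k+2}$, $-\frac{4}{5}\lambda_4 u_3\mu_{k+1}$, $k\mu_{k-1}'$, $-\frac{3}{10}\lambda_4 k(k-1)\mu_{k-2}$, and $\frac{1}{10}(15\lambda_8-4\lambda_4^2)u_3^2\mu_k$ to the coefficient of $u_1^k/k!$; solving for $\mu_{k+2}$ (i.e., multiplying through by $2$ and rearranging) yields (\ref{f1-2}). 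Equation $Q_4\sigma=0$ treats $\partial_{u_1}\partial_{u_3}\sigma\mapsto \mu_{k+1}'$ and $\lambda_8 u_1 u_3\sigma\mapsto k\lambda_8 u_3\mu_{k-1}$, and rearranging to isolate $\mu_{k+1}' - \frac{6}{5}\lambda_6 u_3\mu_{k+1}$ gives (\ref{f1-3}). Finally, $Q_6\sigma=0$ contributes $\frac{1}{2}\mu_k''$ from $\frac{1}{2}\partial_{u_3}^2\sigma$ and $-\frac{3}{5}\lambda_8 u_3\mu_{k+1}$ from $-\frac{3}{5}\lambda_8 u_3\partial_{u_1}\sigma$; isolating $\lambda_8 u_3\mu_{k+1}$ and multiplying by $5/3$ produces (\ref{f1-4}).

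The only real obstacle is careful accounting of the combinatorial factors $k$ and $k(k-1)$ that arise from multiplication by $u_1$ and $u_1^2$ once the coefficients are normalized by $k!$; everything else is essentially a substitution. Since the sums start at $k=0$, the factors automatically vanish for $k=0,1$ where the corresponding $\mu_{k-1}$ or $\mu_{k-2}$ would not be defined, so no separate base case is required.
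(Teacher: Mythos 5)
Your proposal is correct and is essentially the paper's own proof: substitute the expansion (\ref{exe1}) into $Q_i\sigma=0$ for $i=0,2,4,6$ and compare coefficients of $u_1^k/k!$, with (\ref{f1-1})--(\ref{f1-4}) coming from $Q_0,Q_2,Q_4,Q_6$ respectively. The bookkeeping you spell out (index shifts from $\partial_{u_1}$, the factors $k$ and $k(k-1)$ from multiplication by $u_1$ and $u_1^2$, and the final normalizations by $2$ and $5/3$) all check against the stated formulas.
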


\begin{proof}
We substitute the expression (\ref{exe1})
into the equations $Q_i\s=0$ for $i=0,2,4,6$ in Theorem \ref{chara} (ii) and compare the coefficients of $u_1^k/k!$.
Then, from $Q_0\s=0, Q_2\s=0, Q_4\s=0, Q_6\s=0$, we obtain (\ref{f1-1}), (\ref{f1-2}), (\ref{f1-3}), (\ref{f1-4}), respectively.
\end{proof}

\begin{lemma}\la{hom2}
The fact that $\m_k$ satisfies the differential equation (\ref{f1-1}) means that $\m_k$ is homogeneous in $u_3$ and $\lambda_{2j}$, $j=2,\dots,5$, with degree $k-3$.
\end{lemma}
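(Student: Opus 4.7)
My plan is to mimic verbatim the argument given for Lemma \ref{hom}, since equation (\ref{f1-1}) is structurally the Euler-type relation analogous to (\ref{e-1}), with the roles of $u_1$ and $u_3$ interchanged and the scalar weights adjusted to reflect $\deg u_3 = -3$. First I would expand $\mu_k(u_3;\lambda)$ as a formal power series in $u_3,\lambda_4,\lambda_6,\lambda_8,\lambda_{10}$,
\[
\mu_k(u_3;\lambda) \;=\; \sum_{i,\,j_4,\,j_6,\,j_8,\,j_{10}\ge 0} b^{(k)}_{i,\,j_4,\,j_6,\,j_8,\,j_{10}}\, u_3^{i}\,\lambda_4^{j_4}\,\lambda_6^{j_6}\,\lambda_8^{j_8}\,\lambda_{10}^{j_{10}},\qquad b^{(k)}_{i,\,j_4,\,j_6,\,j_8,\,j_{10}} \in \mathbb{C},
\]
and substitute this expansion into (\ref{f1-1}).

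Both sides act diagonally on each monomial. The left-hand side $(k-3)\mu_k + 3u_3\mu_k'$ contributes the eigenvalue $(k-3+3i)$, while the right-hand side $4\lambda_4\mu_{k,\lambda_4}+6\lambda_6\mu_{k,\lambda_6}+8\lambda_8\mu_{k,\lambda_8}+10\lambda_{10}\mu_{k,\lambda_{10}}$ contributes $(4j_4+6j_6+8j_8+10j_{10})$. Comparing coefficients of the monomial $u_3^{i}\lambda_4^{j_4}\lambda_6^{j_6}\lambda_8^{j_8}\lambda_{10}^{j_{10}}$ yields
\[
b^{(k)}_{i,\,j_4,\,j_6,\,j_8,\,j_{10}}\Bigl((k-3+3i) \;-\; (4j_4+6j_6+8j_8+10j_{10})\Bigr) \;=\; 0.
\]
Hence whenever $b^{(k)}_{i,\,j_4,\,j_6,\,j_8,\,j_{10}}\neq 0$, the identity $-3i + 4j_4+6j_6+8j_8+10j_{10} = k-3$ holds. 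With the global grading $\deg u_3 = -3$ and $\deg \lambda_{2j} = 2j$ this says precisely that every nonzero monomial of $\mu_k$ has weighted degree $k-3$, which is the claim.

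There is no genuine obstacle: (\ref{f1-1}) is nothing other than the Euler relation for the weighted grading, so once one carries out the coefficient comparison the result is immediate. The only points worth checking carefully are that the constant term $(k-3)$ in (\ref{f1-1}) is indeed what accounts for the overall degree shift $k-3$ (this is consistent with the general fact $\deg\sigma = -|\mu_g| = -3$ and $\deg(u_1^k/k!) = -k$, so $\deg\mu_k = -3-(-k) = k-3$), and that the differential operator $3u_3\partial_{u_3}$ carries the correct minus sign implicit in the convention $\deg u_3 = -3$; both are verified by the direct calculation above.
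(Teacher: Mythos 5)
Your proof is correct and is essentially the paper's own argument: the paper proves Lemma \ref{hom2} by saying it follows ``in the same way as Lemma \ref{hom}'', and your coefficient comparison, yielding $-3i+4j_4+6j_6+8j_8+10j_{10}=k-3$ for every nonzero monomial, is exactly that computation transposed from $(\ref{e-1})$ to $(\ref{f1-1})$. The consistency check $\deg\mu_k=\deg\sigma-\deg(u_1^k/k!)=-3+k$ is a nice sanity confirmation but not needed.
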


\begin{proof}
In the same way as Lemma \ref{hom}, we obtain the statement of the lemma.
\end{proof}

\vspace{1ex}

\begin{prop}\la{diff1}
The function $\m_0$ satisfies the following differential equation
\begin{eqnarray*}
&&\m_0''+\frac{9}{5}\l_4\l_8u_3^2\m_0-\l_6\m_0-\frac{6}{5}\l_6^2u_3^2\m_0-\frac{18}{5}\l_8\l_{10}u_3^4\m_0-u_3\m_0'''+\frac{6}{5}\l_6u_3^2\m_0''-\frac{3}{5}\l_4\l_8u_3^3\m_0' \\
&&+\l_6u_3\m_0'-20\l_{10}\m_{0,\l_4}+\frac{8}{5}\l_4\l_8\m_{0,\l_6}+\left(\frac{12}{5}\l_6\l_8-12\l_4\l_{10}\right)\m_{0,\l_8}+\left(\frac{16}{5}\l_8^2-8\l_6\l_{10}\right)\m_{0,\l_{10}} \\
&&+20\l_{10}u_3\m_{0,\l_4}'-\frac{8}{5}\l_4\l_8u_3\m_{0,\l_6}'+\left(12\l_4\l_{10}-\frac{12}{5}\l_6\l_8\right)u_3\m_{0,\l_8}'+\left(8\l_6\l_{10}-\frac{16}{5}\l_8^2\right)u_3\m_{0,\l_{10}}'\\
&&+\left(\frac{48}{5}\l_8^2-24\l_6\l_{10}\right)u_3^2\m_{0,\l_4}+12\l_8\l_{10}u_3^2\m_{0,\l_6}+\left(\frac{24}{5}\l_4\l_8^2-\frac{72}{5}\l_4\l_6\l_{10}\right)u_3^2\m_{0,\l_8}\\
&&+\left(\frac{36}{5}\l_4\l_8\l_{10}-\frac{48}{5}\l_6^2\l_{10}+\frac{12}{5}\l_6\l_8^2\right)u_3^2\m_{0,\l_{10}}=0
\end{eqnarray*}
\end{prop}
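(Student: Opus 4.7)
The plan is to mimic the derivation of Proposition \ref{diff0}: there the author eliminated $\xi_1$ between equations (\ref{e-2}) and (\ref{e-3}) at $k=0$ to obtain an ODE for $\xi_0$ alone; here I will eliminate $\mu_1$ between equations (\ref{f1-3}) and (\ref{f1-4}) at $k=0$. Note that at $k=0$ the terms carrying $\mu_{-1}'$ and $\mu_{-2}$ in Proposition \ref{si2} have coefficients $k$ or $k(k-1)$ and hence disappear, so equations (\ref{f1-3}) and (\ref{f1-4}) at $k=0$ contain only $\mu_0$, $\mu_1$ and their $u_3$-derivatives (together with $\lambda$-derivatives of $\mu_0$).

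First I would record (\ref{f1-4}) at $k=0$ in the form $\lambda_8 u_3 \mu_1 = F$, where $F=F(\mu_0)$ is the $u_3$- and $\lambda$-derivative expression in $\mu_0$ coming from the right-hand side of (\ref{f1-4}). Differentiating with respect to $u_3$ gives
\[
\lambda_8\mu_1+\lambda_8 u_3\mu_1' = F'.
\]
Next I would multiply (\ref{f1-3}) at $k=0$ by $\lambda_8 u_3$, obtaining
\[
\lambda_8 u_3\mu_1' - \tfrac{6}{5}\lambda_6\lambda_8 u_3^{2}\mu_1 = \lambda_8 u_3\, G,
\]
where $G=G(\mu_0)$ is the right-hand side of (\ref{f1-3}) at $k=0$. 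Subtracting the second relation from the first isolates $\lambda_8\mu_1$:
\[
\lambda_8\mu_1\bigl(1+\tfrac{6}{5}\lambda_6 u_3^{2}\bigr) = F' - \lambda_8 u_3\,G.
\]

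Multiplying this last identity by $u_3$ and using $\lambda_8 u_3\mu_1=F$ on the left gives the consistency relation
\[
F\bigl(1+\tfrac{6}{5}\lambda_6 u_3^{2}\bigr) = u_3 F' - \lambda_8 u_3^{2}\,G,
\]
i.e.\ $F - u_3 F' + \tfrac{6}{5}\lambda_6 u_3^{2} F + \lambda_8 u_3^{2} G = 0$. Multiplying by $6/5$ so that the leading coefficient of $\mu_0''$ becomes $1$ should produce exactly the equation stated in Proposition \ref{diff1}. A quick sanity check: $F$ contributes $\tfrac{5}{6}\mu_0''-\tfrac{5}{6}\lambda_6\mu_0+\cdots$, so after the renormalization the constant-in-$u_3$ part becomes $\mu_0''-\lambda_6\mu_0$, matching the target; likewise the $u_3\mu_0'''$ and $\tfrac{6}{5}\lambda_6 u_3^{2}\mu_0''$ terms come from $-u_3F'$ and $\tfrac{6}{5}\lambda_6 u_3^{2}F$ respectively.

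The routine but bulky part of the argument is the algebraic bookkeeping of the many $\mu_{0,\lambda_{2j}}$ terms in $F$ and $G$: the $\lambda$-derivative pieces of $F$ acquire a $u_3$ factor when $F'$ is formed (producing the $u_3\,\mu_{0,\lambda_{2j}}'$ terms in the stated equation), while $\lambda_8 u_3^{2}G$ contributes the $u_3^{2}\mu_{0,\lambda_{2j}}$ terms. The main potential obstacle is not any nontrivial mathematical step but the careful tracking and combining of the many rational coefficients multiplying $\lambda_4,\lambda_6,\lambda_8,\lambda_{10}$ and their quadratic combinations; all coefficients in the proposition should then emerge as a direct polynomial identity, with no further input beyond Proposition \ref{si2}.
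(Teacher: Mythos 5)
Your proposal is correct and follows essentially the same route as the paper, which simply says to substitute (\ref{f1-4}) at $k=0$ into (\ref{f1-3}) at $k=0$; your explicit elimination of $\mu_1$ (differentiating (\ref{f1-4}), combining with $\lambda_8 u_3$ times (\ref{f1-3}), and reusing (\ref{f1-4})) is just that substitution carried out without dividing by $\lambda_8 u_3$, and the resulting identity $F - u_3F' + \tfrac{6}{5}\lambda_6 u_3^2 F + \lambda_8 u_3^2 G = 0$, rescaled by $6/5$, does reproduce all the stated coefficients (e.g.\ the $\tfrac{9}{5}\lambda_4\lambda_8 u_3^2\mu_0$ term arises as $-\tfrac35+\tfrac65+\tfrac65$ from the three pieces).
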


\begin{proof}
By substituting (\ref{f1-4}) for $k=0$ into (\ref{f1-3}) for $k=0$, we obtain the statement of the proposition.
\end{proof}

\vspace{1ex}

We set
\b
\m_0 = \sum_{\ell\ge 0}q_{\ell}(\l)\frac{u_3^{\ell}}{\ell!}.\la{de-2}
\e

\begin{lemma}\la{ini2}
If $\ell$ is even, then $q_{\ell}=0$. We have $q_1\in\mathbb{C}$ and $q_3=\l_6q_1$.
\end{lemma}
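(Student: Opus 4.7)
The approach combines the homogeneity of $\mu_0$ (Lemma \ref{hom2}) with one non-trivial coefficient extracted from equation (\ref{f1-4}) at $k=0$. Writing $\mu_0=\sum_\ell q_\ell u_3^\ell/\ell!$ and using the grading $\deg u_3=-3$, $\deg\lambda_{2j}=2j$, Lemma \ref{hom2} forces each polynomial $q_\ell\in\mathbb{Q}[\lambda_4,\lambda_6,\lambda_8,\lambda_{10}]$ to be homogeneous of weight $3(\ell-1)$. For even $\ell$ this weight is either negative (when $\ell=0$) or odd (when $\ell\ge 2$), whereas every monomial in $\lambda_4,\lambda_6,\lambda_8,\lambda_{10}$ has nonnegative even weight; hence $q_\ell=0$ for every even $\ell$. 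In particular, $q_1$ has weight $0$, so $q_1\in\mathbb{C}$, and $q_3$ has weight $6$; the only monomial of weight $6$ in $\lambda_4,\lambda_6,\lambda_8,\lambda_{10}$ is $\lambda_6$ itself, so $q_3=c\lambda_6$ for some scalar $c\in\mathbb{C}$.

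It then remains to identify $c$ with $q_1$. I would read off the coefficient of $u_3^1$ on both sides of equation (\ref{f1-4}) with $k=0$:
\[
\lambda_8 u_3\mu_1=\tfrac{5}{6}\mu_0''-\tfrac{\lambda_4\lambda_8}{2}u_3^2\mu_0-\tfrac{5}{6}\lambda_6\mu_0-\tfrac{50}{3}\lambda_{10}\mu_{0,\lambda_4}+\tfrac{4}{3}\lambda_4\lambda_8\mu_{0,\lambda_6}-(10\lambda_4\lambda_{10}-2\lambda_6\lambda_8)\mu_{0,\lambda_8}-(\tfrac{20}{3}\lambda_6\lambda_{10}-\tfrac{8}{3}\lambda_8^2)\mu_{0,\lambda_{10}}.
\]
From $\mu_0=q_1 u_3+q_3 u_3^3/6+\cdots$ we have $\mu_0''=q_3 u_3+\cdots$, and so the $u_3^1$ coefficient of the right-hand side is $\tfrac{5}{6}(q_3-\lambda_6 q_1)$, since each $\mu_{0,\lambda_{2j}}$ contributes $q_{1,\lambda_{2j}}=0$ at that order (as $q_1$ is a constant). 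On the left, the same homogeneity argument applied to $\mu_1$ (of total weight $-2$) gives $\mu_1(0)=q_0^{(1)}=0$, since $q_0^{(1)}$ would have negative weight; the left-hand side coefficient is therefore $\lambda_8\cdot 0=0$. Comparing yields $q_3=\lambda_6 q_1$.

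The only delicate point is that the consolidated differential equation in Proposition \ref{diff1}, obtained by eliminating $\mu_1$ between (\ref{f1-3}) and (\ref{f1-4}), is a tautology at the $u_3^1$ coefficient: the contributions $q_3$ and $\lambda_6 q_1$ appear twice and cancel in pairs. Hence one cannot extract the relation $q_3=\lambda_6 q_1$ from Proposition \ref{diff1} alone, and one must fall back on the unconsolidated equation (\ref{f1-4}), where the factor $u_3$ on the left provides precisely the extra vanishing condition needed to close the argument.
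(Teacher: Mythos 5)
Your proof is correct and follows essentially the same route as the paper: the vanishing of $q_\ell$ for even $\ell$ and the form of $q_1,q_3$ come from the homogeneity supplied by Lemma \ref{hom2} (equivalently (\ref{f1-1})), and the identification $q_3=\lambda_6 q_1$ comes from comparing the coefficient of $u_3$ in (\ref{f1-4}) at $k=0$ together with the observation that the constant term of $\mu_1$ vanishes. Your closing remark correctly explains why the paper reads this relation off the unconsolidated equation (\ref{f1-4}) rather than from Proposition \ref{diff1}.
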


\begin{proof}
From Lemma \ref{hom2} and (\ref{f1-1}), we obtain $q_{\ell}=0$ for any non-negative even integer $\ell$ and $q_1\in\mathbb{C}$.
Further, we find that the coefficient of $u_3^0$ in $\m_1$ is equal to 0.
By comparing the coefficient of $u_3$ in the equation (\ref{f1-4}) for $k=0$, we obtain $q_3=\l_6q_1$.
\end{proof}

\begin{prop}\label{coe21}
For $\ell\ge2$, the following recurrence relation holds :
\begin{eqnarray*}
q_{\ell+2}&=&(\frac{6}{5}\ell+1)\l_6q_{\ell}+\ell(3-\frac{3}{5}\ell)\l_4\l_8q_{\ell-2}-\frac{6}{5}\l_6^2\ell q_{\ell-2}-\frac{18\ell(\ell-2)(\ell-3)}{5}\l_8\l_{10}q_{\ell-4}\\
&+&20\l_{10}q_{\ell,\l_4}-\frac{8}{5}\l_4\l_8q_{\ell,\l_6}+(12\l_4\l_{10}-\frac{12}{5}\l_6\l_8)q_{\ell,\l_8}+(8\l_6\l_{10}-\frac{16}{5}\l_8^2)q_{\ell,\l_{10}}\\
&+&(\frac{48}{5}\l_8^2-24\l_6\l_{10})\ell q_{\ell-2,\l_4}+12\l_8\l_{10}\ell q_{\ell-2,\l_6}+(\frac{24}{5}\l_4\l_8^2-\frac{72}{5}\l_4\l_6\l_{10})\ell q_{\ell-2,\l_8}\\
&+&(\frac{36}{5}\l_4\l_8\l_{10}-\frac{48}{5}\l_6^2\l_{10}+\frac{12}{5}\l_6\l_8^2)\ell q_{\ell-2,\l_{10}},
\end{eqnarray*}
where $q_{i,\l_{2j}}$ denotes the derivative of $q_i$ with respect to $\l_{2j}$.
\end{prop}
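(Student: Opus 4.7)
The plan is to substitute the series $\mu_0 = \sum_{\ell\ge 0} q_\ell(\lambda)\, u_3^\ell/\ell!$ from \eqref{de-2} into the differential equation of Proposition \ref{diff1} and equate to zero the coefficient of $u_3^\ell/\ell!$ for each $\ell\ge 2$. The one piece of bookkeeping needed is that the coefficient of $u_3^\ell/\ell!$ in $u_3^k\mu_0^{(j)}$ equals $\ell(\ell-1)\cdots(\ell-k+1)\,q_{\ell-k+j}$ (understood as $0$ when $\ell<k$), while the operators $\partial_{\lambda_{2i}}$ commute with $u_3$ and act only on the coefficients $q_m$.

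Applying this identity term by term to Proposition \ref{diff1} yields a single linear relation involving $q_{\ell+2}$, $q_\ell$, $q_{\ell-2}$, $q_{\ell-4}$ and their $\lambda$-derivatives. The terms $\mu_0''$ and $-u_3\mu_0'''$ are the only ones that contribute to $q_{\ell+2}$, with combined coefficient $1-\ell$. Transposing the remaining terms to the right-hand side and dividing by $1-\ell$ (nonzero for $\ell\ge 2$) yields the proposed formula.

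What is left to verify is that each accumulated $\ell$-polynomial coefficient factors through $1-\ell$ and collapses to the clean expression appearing in the statement. For instance, the $q_\ell$ contributions from $-\lambda_6\mu_0$, $\tfrac{6}{5}\lambda_6 u_3^2\mu_0''$, and $\lambda_6 u_3\mu_0'$ sum to $\lambda_6\bigl(-1+\tfrac{6}{5}\ell(\ell-1)+\ell\bigr)=\tfrac{1}{5}\lambda_6(6\ell+5)(\ell-1)$, so after transposition and division by $1-\ell$ we recover the coefficient $(\tfrac{6}{5}\ell+1)\lambda_6$. Analogous factorizations handle $q_{\ell-2}$ (the combined $\lambda_4\lambda_8$-coefficient is $\tfrac{3}{5}\ell(\ell-1)(5-\ell)$, collapsing to $\tfrac{3\ell(5-\ell)}{5}$), $q_{\ell-4}$, and all of the $\lambda$-derivative terms, each of which carries an explicit $\ell(\ell-1)$ or $\ell$ factor inherited from the powers of $u_3$ in Proposition \ref{diff1}.

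The main obstacle is therefore not conceptual but purely the volume of bookkeeping: every term in the lengthy equation of Proposition \ref{diff1} must be converted correctly, the combined coefficient of each $q_{\ell-2j}$ and $q_{\ell-2j,\lambda_{2i}}$ must be shown to factor through $1-\ell$, and the resulting ratios must be identified with those displayed in the statement. Since no cross-multiplied terms (e.g.\ products $q_a q_b$) appear in a linear PDE, no algebraic obstruction can arise; the proof is a routine, if somewhat extended, polynomial identity check.
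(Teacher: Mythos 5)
Your proposal is correct and follows exactly the paper's own argument: substitute the expansion (\ref{de-2}) into the differential equation of Proposition \ref{diff1}, compare coefficients of $u_3^{\ell}/\ell!$, and divide by the coefficient $1-\ell$ of $q_{\ell+2}$ (nonzero for $\ell\ge2$). The sample coefficient computations you exhibit (e.g.\ $\tfrac{1}{5}\l_6(6\ell+5)(\ell-1)$ and $\tfrac{3}{5}\ell(\ell-1)(5-\ell)$) are accurate and match the stated recurrence.
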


\begin{proof}
By substituting (\ref{de-2}) into the differential equation in Proposition \ref{diff1} and comparing the coefficients of $u_3^{\ell}/\ell!$, we obtain the statement of the proposition.
\end{proof}

\begin{cor}\label{co2046}
Let $\lambda_8\neq 0$. Then the sigma function $\sigma(u_1,u_3;\l)$ is uniquely determined by the differential equations $Q_i\sigma=0$, $i=0,4,6$, up to a multiplicative constant.
\end{cor}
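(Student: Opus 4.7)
The plan is to adapt the reasoning of Corollary \ref{co1} by trading the role of $Q_2$ for that of $Q_6$, with the hypothesis $\l_8\neq 0$ entering decisively. First I will verify that $\m_0(u_3;\l)$ is determined by the single scalar $q_1$. The proof of Lemma \ref{ini2} uses only the homogeneity relation (\ref{f1-1}), which comes from $Q_0\sigma=0$, together with equation (\ref{f1-4}) at $k=0$, which comes from $Q_6\sigma=0$; it does not touch $Q_2$. Likewise, Proposition \ref{coe21} is obtained from Proposition \ref{diff1}, whose derivation combines (\ref{f1-3}) and (\ref{f1-4}), corresponding to $Q_4\sigma=0$ and $Q_6\sigma=0$. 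Consequently, the recurrence of Proposition \ref{coe21} determines every coefficient $q_\ell$ from $q_1$, so $\m_0$ is fixed up to a scalar multiple.

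Next, exploiting $\l_8\neq 0$, I will use equation (\ref{f1-4}) to determine $\m_{k+1}$ from $\m_{k-2},\m_{k-1},\m_k$ inductively. Rewriting (\ref{f1-4}) in the form $\l_8 u_3\,\m_{k+1} = F_k(u_3;\l)$, where $F_k$ is the expression built from $\m_k''$, $\m_k$, $\m_{k-1}$, $\m_{k-2}$ together with their derivatives in $\l$, I observe two things. Because $\sigma$ is a genuine solution of the system, the right-hand side $F_k$ is divisible by $u_3$ in the formal power series ring in $u_3$; and because the map $f\mapsto u_3 f$ is injective on this ring, the quotient $\m_{k+1} = F_k/(\l_8 u_3)$ is uniquely recovered by comparison of coefficients, provided $\l_8\neq 0$. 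Starting from $\m_0$ and the convention $\m_{-1}=\m_{-2}=0$, this produces $\m_1,\m_2,\dots$ in succession, whence $\sigma(u_1,u_3;\l)$ is determined up to the multiplicative constant $q_1$.

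The main point I expect to dwell on is that $\l_8\neq 0$ is essential precisely to make (\ref{f1-4}) invertible at each step: if $\l_8=0$ the left-hand side vanishes and the relation ceases to determine $\m_{k+1}$ from the previous $\m_j$'s. The rest is bookkeeping: one traces through the proofs of Lemma \ref{ini2}, Proposition \ref{coe21}, and the recurrence structure of Proposition \ref{si2} to confirm that only $Q_0,Q_4,Q_6$ are used, and the uniqueness up to constant then follows by the same linear-space argument as for Corollary \ref{co1}.
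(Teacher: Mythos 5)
Your proposal is correct and follows essentially the same route as the paper: determine $\m_0$ from $q_1$ via Lemma \ref{ini2} and Proposition \ref{coe21} (which rest only on (\ref{f1-1}), (\ref{f1-3}), (\ref{f1-4}), i.e.\ on $Q_0\s=0$, $Q_4\s=0$, $Q_6\s=0$), then use (\ref{f1-4}) with $\l_8\neq0$ to recover $\m_1,\m_2,\dots$ inductively. Your extra remark about divisibility of the right-hand side by $u_3$ and injectivity of multiplication by $u_3$ is a welcome clarification of the division step that the paper leaves implicit.
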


\begin{proof}
From Lemma \ref{ini2} and Proposition \ref{coe21}, we find that all the coefficients $q_{\ell}$ are determined from $q_1$.
Note that Lemma \ref{ini2} and Proposition \ref{coe21} follow from (\ref{f1-1}), (\ref{f1-3}), and (\ref{f1-4}).
By (\ref{f1-4}), all the functions $\mu_k$ are determined from $\mu_0$.
As mentioned in the proof of Proposition \ref{si2},  (\ref{f1-1}), (\ref{f1-3}), (\ref{f1-4}) follow from $Q_0\s=0, Q_4\s=0, Q_6\s=0$.
Therefore we obtain the statement of this corollary.
\end{proof}

\begin{rem}
In \cite{BB}, the following expression is proved :
\[6\l_8Q_2=5[Q_4,Q_6]+10\l_{10}Q_0+6\l_6Q_4-10\l_4Q_6.\]
From this result, the statement of Corollary \ref{co2046} can be also proved.
\end{rem}

\subsection{Expansions of $\mu_k$}

In this subsection we will calculate the expansions of $\mu_k$.
From (\ref{genus2expansionfirstterm15}), we have $q_1=-1$.
The initial terms of $\m_i$, $i=0,1,2,3$, are as follows.
\begin{eqnarray*}
\m_0&=&-u_3-\l_6\frac{u_3^3}{3!}-(\l_6^2+2\l_4\l_8)\frac{u_3^5}{5!}+(8\l_8\l_{10}-6\l_4\l_6\l_8-\l_6^3-24\l_4^2\l_{10})\frac{u_3^7}{7!}+\cdots,\\
\m_1&=&8\l_{10}\frac{u_3^4}{4!}+(88\l_6\l_{10}-16\l_8^2)\frac{u_3^6}{6!}+(816\l_6^2\l_{10}-192\l_6\l_8^2-160\l_4\l_8\l_{10})\frac{u_3^8}{8!}+\cdots,\\
\m_2&=&2\l_8\frac{u_3^3}{3!}+(24\l_4\l_{10}+4\l_6\l_8)\frac{u_3^5}{5!}+(160\l_{10}^2+264\l_4\l_6\l_{10}+6\l_6^2\l_8-36\l_4\l_8^2)\frac{u_3^7}{7!}+\cdots,\\
\m_3&=&2+2\l_6\frac{u_3^2}{2!}+(2\l_6^2+4\l_4\l_8)\frac{u_3^4}{4!}+(12\l_4\l_6\l_8+32\l_8\l_{10}+2\l_6^3+48\l_4^2\l_{10})\frac{u_3^6}{6!}+\cdots.
\end{eqnarray*}

\section{The ultra-elliptic integrals}

In \cite{AB2019}, the inversion problem of the ultra-elliptic integrals is considered.
In this section, we will summarize the main results in \cite{AB2019}.
Proposition \ref{ultrareccu} is not described in \cite{AB2019}.

\vspace{1ex}

In this section, we assume $g=2$.
Let us take a point $P_*\in V$ and an open neighborhood $U_*$ of this point that is homeomorphic to an open disk in $\mathbb{C}$.
We fix a path $\gamma_*$ on the curve $V$ from $\infty$ to $P_*$.
Let us consider the holomorphic mappings
\[I_1\;:\;U_*\to\mathbb{C},\;\;\;P=(x,y)\mapsto\int_{\infty}^{P}du_1,\]
\[
I_3\;:\;U_*\to\mathbb{C},\;\;\;P=(x,y)\mapsto\int_{\infty}^{P}du_3,
\]
where as the path of integration we choose the composition of the path $\gamma_*$ from $\infty$ to the point $P_*$
and any path in the neighborhood $U_*$ from $P_*$ to the point $P$. We consider the meromorphic function on $\mathbb{C}^2$
\[f=-\frac{\s_3}{\s_1}.\]

\vspace{1ex}

We assume $P_*\neq\infty$.
If we take the open neighborhood $U_*$ sufficiently small, then $I_3$ is injective.
Let $\varphi(u)$ be the implicit function defined by $\sigma(\varphi(u),u)=0$ around $(I_1(P_*),I_3(P_*))$.
We define the function $F(u)=f(\varphi(u),u)$.

\begin{prop}(\cite{AB2019})\label{main1}
Set $u=I_3(P)$, where $P=(x,y)\in U_*$. Then $x=F(u)$ and $y=-F'(u)/2$, where $F'$ is the derivative of $F$ with respect to $u$.
\end{prop}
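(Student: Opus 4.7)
The plan is to identify the sigma divisor $(\sigma)$ with the Abel--Jacobi image of the curve $V$ and to use it as an implicit description of the curve in the $(u_1,u_3)$-coordinates. Since $P_*\neq\infty$, the classical Riemann vanishing theorem for genus $2$ guarantees that $(I_1(P_*),I_3(P_*))$ lies on $(\sigma)$, and that (after possibly shrinking $U_*$) one has $\sigma_1\neq 0$ at this point, so both the implicit function $\varphi(u)$ defined by $\sigma(\varphi(u),u)=0$ near $u=I_3(P_*)$ and the meromorphic function $f=-\sigma_3/\sigma_1$ are well defined in a neighborhood. By construction the Abel--Jacobi embedding $P\mapsto (I_1(P),I_3(P))$ of $U_*$ factors as $P\mapsto (\varphi(u),u)$ with $u=I_3(P)$, so $F(u)=f(\varphi(u),u)$ is well defined.

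First I would compute $\varphi'(u)$ by implicit differentiation of $\sigma(\varphi(u),u)=0$:
\[
\sigma_1(\varphi(u),u)\,\varphi'(u)+\sigma_3(\varphi(u),u)=0,
\]
which yields $\varphi'(u)=-\sigma_3/\sigma_1=f(\varphi(u),u)=F(u)$.

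Next I would compute $\varphi'(u)$ analytically along the curve. From the definitions $du_1=-\frac{x}{2y}dx$ and $du_3=-\frac{1}{2y}dx$ one gets the chain rule identity
\[
\frac{du_1}{du_3}\bigg|_{V}=x,
\]
so that pulling back to the parameter $u$ gives $\varphi'(u)=x$. Combining with the previous step yields $x=F(u)$. For the second formula, the relation $du_3=-dx/(2y)$ gives $dx/du=-2y$, and since $x=F(u)$ along the curve, differentiation yields $F'(u)=-2y$, i.e.\ $y=-F'(u)/2$.

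The only real subtlety I anticipate is the initial setup: verifying that $(I_1(P_*),I_3(P_*))\in(\sigma)$ and that $\sigma_1$ is nonzero there, so that the implicit function theorem applies and $f$, $\varphi$, and hence $F$ are all holomorphic on a common neighborhood. The first fact is classical (the Abel image of $V$ coincides with $(\sigma)$ in genus $2$), and the second follows generically since $\sigma$ has a simple zero on $(\sigma)$; the assumption $P_*\neq\infty$ is precisely what rules out the exceptional locus where $\sigma_1$ could vanish simultaneously with $\sigma$, as this would correspond to the base point of the Abel map. Once these analytic prerequisites are in place, the computation itself is the short implicit-differentiation argument above.
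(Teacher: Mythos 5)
Your argument is correct. Note that the paper itself gives no proof of this proposition --- it is quoted from \cite{AB2019} as part of a summary --- so there is nothing internal to compare against; but your derivation (the Abel image of $V$ lies on the sigma divisor, implicit differentiation gives $\varphi'=-\sigma_3/\sigma_1=F$, the identity $du_1/du_3=x$ on $V$ gives $\varphi'=x$, and $du_3=-dx/(2y)$ gives $F'=-2y$) is exactly the standard route, and your handling of the analytic prerequisites is sound: $\sigma_1$ vanishes on the divisor only where $du_3=0$, i.e.\ only over $\infty$, so $P_*\neq\infty$ indeed guarantees the implicit function theorem applies.
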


\begin{theorem}(\cite{AB2019})\label{main4}
The function $F(u)$ satisfies the following ordinary differential equations:
\begin{equation}\label{p1}
(F'/2)^2=F^5+\lambda_4F^3+\l_6F^2+\lambda_8F+\lambda_{10},
\end{equation}
\begin{equation}\label{p2}
F''=10F^4+6\l_4F^2+4\l_6F+2\l_8.
\end{equation}
\end{theorem}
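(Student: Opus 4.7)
The plan is to derive both ODEs directly from Proposition \ref{main1}, which identifies the coordinates of the point $P=(x,y)\in V$ with $F(u)$ and $-F'(u)/2$ when $u=I_3(P)$. Once this identification is in hand, equation (\ref{p1}) is essentially a restatement of the defining equation of the curve $V$, and equation (\ref{p2}) is obtained by a single differentiation.

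First I would record the two substitutions: since $P=(x,y)\in U_*\subset V$, the curve equation gives
\[
y^2 = x^5+\l_4 x^3+\l_6 x^2+\l_8 x+\l_{10}.
\]
By Proposition \ref{main1}, $x=F(u)$ and $y=-F'(u)/2$, where the derivative is taken with respect to $u=I_3(P)$. Substituting and noting that $(-F'/2)^2=(F'/2)^2$ immediately yields (\ref{p1}). This step is essentially tautological once Proposition \ref{main1} is granted.

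Next I would differentiate (\ref{p1}) with respect to $u$ to obtain
\[
\frac{F'F''}{2}=F'\bigl(5F^4+3\l_4 F^2+2\l_6 F+\l_8\bigr),
\]
and divide by $F'$ to obtain (\ref{p2}). The only subtlety is the division by $F'$: this is legitimate on the open set where $F'\neq 0$, i.e.\ where $y\neq 0$, so that the resulting identity holds on a dense open subset of $U_*$. Since both sides of (\ref{p2}) are holomorphic in $u$ on the neighborhood (the left side because $F$ is holomorphic, the right side by inspection), the identity extends by continuity to all of the relevant neighborhood $U_*$.

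The main (minor) obstacle is therefore justifying the step of dividing by $F'$ at points where $y=0$; this is handled by the holomorphic-extension argument above, assuming $U_*$ is chosen so that $F$ is non-constant (which is automatic since $I_3$ is a biholomorphism and $x$ is non-constant on $U_*$ for $P_*\neq\infty$). No additional input beyond Proposition \ref{main1} and the defining equation of $V$ is required.
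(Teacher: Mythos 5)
Your proof is correct. The survey states Theorem \ref{main4} as a result quoted from \cite{AB2019} without reproducing a proof, but your derivation is the natural (and surely the intended) one: Proposition \ref{main1} identifies $(x,y)=(F(u),-F'(u)/2)$ for $u=I_3(P)$, so the curve equation gives (\ref{p1}) on the open set $I_3(U_*)$, and differentiating and cancelling $F'$ (legitimate since $F$ is non-constant, so $F'$ vanishes only on a discrete set, after which the holomorphic identity (\ref{p2}) extends by continuity) gives (\ref{p2}).
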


\vspace{1ex}

From Proposition \ref{main1} and Theorem \ref{main4}, one can obtain the series expansion of $F(u)$.
Since the function $F(u)$ is holomorphic in a neighborhood of $u^*=I_3(P_*)$, the expansion in the neighborhood of this point has the form
\begin{equation}\label{fe}
F(u)=\sum_{n=0}^{\infty}\tilde{p}_{3n+2}(u-u^*)^n,\;\;\;\;\tilde{p}_{3n+2}\in\mathbb{C}.
\end{equation}

\begin{prop}(\cite{AB2019})\label{expansion9}
Set $P_*=(x_*,y_*)$. Then in the expansion (\ref{fe}) we have $\tilde{p}_2=x_*$ and $\tilde{p}_5=-2y_*$.
\end{prop}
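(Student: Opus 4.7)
The plan is to apply Proposition \ref{main1} directly at the point $u = u^* = I_3(P_*)$ and read off the two required coefficients from the Taylor expansion \eqref{fe}. Since the chosen path of integration starts at $\infty$ and goes through $P_*$, setting $P = P_*$ in the definition of $I_3$ gives $I_3(P_*) = u^*$, so the point $P_*$ corresponds to the parameter value $u = u^*$ under the inversion.

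First I would handle the identification of $\tilde p_2$. Evaluating \eqref{fe} at $u = u^*$ immediately gives $F(u^*) = \tilde p_2$, since every term with $n \ge 1$ vanishes. On the other hand, Proposition \ref{main1} asserts $x = F(u)$ for $P = (x,y) \in U_*$ with $u = I_3(P)$; specializing to $P = P_*$ yields $x_* = F(u^*)$. Combining, $\tilde p_2 = x_*$.

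Next I would handle $\tilde p_5$ in the same spirit. Differentiating the series \eqref{fe} term by term gives
\[
F'(u) = \sum_{n=1}^{\infty} n\,\tilde p_{3n+2}(u-u^*)^{n-1},
\]
so $F'(u^*) = \tilde p_5$ (the $n=1$ term). By the second assertion of Proposition \ref{main1}, $y = -F'(u)/2$, and specializing to $P = P_*$ yields $y_* = -F'(u^*)/2 = -\tilde p_5/2$, hence $\tilde p_5 = -2 y_*$.

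There is no real obstacle: the entire content of the proposition is already packaged into Proposition \ref{main1}, and the argument is just the observation that the $0$th and $1$st Taylor coefficients of $F$ at $u^*$ are the values of $F$ and $F'$ at $u^*$, respectively. One only needs to confirm that the choice of path in the definition of $I_3$ is consistent with $P_*$ corresponding to the parameter $u^*$, which is built into the setup of $\gamma_*$ and $U_*$.
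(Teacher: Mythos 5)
Your argument is correct and is exactly the intended one: the proposition is an immediate consequence of Proposition \ref{main1} (with $P=P_*$, $u=u^*$) together with the fact that the $0$th and $1$st Taylor coefficients of the holomorphic function $F$ at $u^*$ are $F(u^*)$ and $F'(u^*)$. The paper cites this result from \cite{AB2019} without reproducing a proof, but there is no other route, and your identification $\tilde p_2=F(u^*)=x_*$, $\tilde p_5=F'(u^*)=-2y_*$ matches the indexing of the expansion (\ref{fe}).
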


We set $\deg \tilde{p}_2=2$ and $\deg \tilde{p}_5=5$.

\begin{prop}(\cite{AB2019})\label{expansion2}
The coefficients $\tilde{p}_{3n+2}$ in the expansion (\ref{fe}) are determined from the following recurrence relations:
\begin{itemize}
\item $\tilde{p}_8=5\tilde{p}_2^4+3\l_4\tilde{p}_2^2+2\l_6\tilde{p}_2+\l_8,$
\item $(n+2)(n+1)\tilde{p}_{3n+8} = 10\sum_{(n_1,n_2,n_3,n_4)\in S_1}\tilde{p}_{3n_1+2}\;\tilde{p}_{3n_2+2}\;\tilde{p}_{3n_3+2}\;\tilde{p}_{3n_4+2}+$ \\[5pt]
\noindent $+6\l_4\sum_{(n_1,n_2)\in S_2}\tilde{p}_{3n_1+2}\;\tilde{p}_{3n_2+2}+4\l_6\tilde{p}_{3n+2},\quad n\ge1$, where
\end{itemize}
$$
S_1=\{(n_1,n_2,n_3,n_4)\in\mathbb{Z}_{\ge0}^4\;|\;n_1+n_2+n_3+n_4=n\}, S_2=\{(n_1,n_2)\in\mathbb{Z}_{\ge0}^2\;|\;n_1+n_2=n\},
$$
and the coefficient $\tilde{p}_{3n+2}$ is a homogeneous polynomial in $\mathbb{Q}[\tilde{p}_2,\tilde{p}_5,\l_4,\l_6,\l_8,\l_{10}]$
of degree $3n+2$, if $\tilde{p}_{3n+2}\neq0$.
\end{prop}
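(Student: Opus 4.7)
The plan is to derive the recurrence directly from equation (\ref{p2}) of Theorem \ref{main4}, namely $F''=10F^4+6\l_4F^2+4\l_6F+2\l_8$, since it is linear in $F''$ and polynomial in $F$, and to establish the degree statement by induction using the recurrence itself. Equation (\ref{p1}) would lead to a messier quadratic relation in the coefficients; equation (\ref{p2}) is much more convenient, and (\ref{p1}) will only be used implicitly (through Proposition \ref{expansion9}) to fix $\tilde{p}_5$ as a free parameter of degree $5$.

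First I would substitute the expansion $F(u)=\sum_{n\ge 0}\tilde{p}_{3n+2}(u-u^*)^n$ into (\ref{p2}). The left-hand side becomes
\[
F''(u)=\sum_{m\ge 0}(m+2)(m+1)\tilde{p}_{3m+8}(u-u^*)^m,
\]
while on the right-hand side $F^4$ and $F^2$ are expanded as Cauchy products, producing the sums over $S_1$ and $S_2$ respectively, and $4\l_6 F$ contributes $4\l_6\tilde{p}_{3m+2}(u-u^*)^m$, while the constant $2\l_8$ only affects the coefficient of $(u-u^*)^0$. Comparing coefficients of $(u-u^*)^0$ yields $2\tilde{p}_8=10\tilde{p}_2^4+6\l_4\tilde{p}_2^2+4\l_6\tilde{p}_2+2\l_8$, i.e.\ the first asserted identity. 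Comparing coefficients of $(u-u^*)^n$ for $n\ge 1$ gives exactly the stated recurrence, since the $\l_8$ term no longer contributes.

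For the homogeneity statement, I would argue by strong induction on $n$. The base instances $\tilde{p}_2$ and $\tilde{p}_5$ carry degrees $2$ and $5$ by convention, and $\l_{2i}$ has degree $2i$. Assuming $\tilde{p}_{3k+2}$ is homogeneous of degree $3k+2$ for every $k\le n+1$, each quartic term $\tilde{p}_{3n_1+2}\tilde{p}_{3n_2+2}\tilde{p}_{3n_3+2}\tilde{p}_{3n_4+2}$ with $n_1+n_2+n_3+n_4=n$ has degree $3n+8$, each quadratic term $\l_4\,\tilde{p}_{3n_1+2}\tilde{p}_{3n_2+2}$ with $n_1+n_2=n$ has degree $4+3n+4=3n+8$, and $\l_6\tilde{p}_{3n+2}$ has degree $6+3n+2=3n+8$. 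Dividing by $(n+2)(n+1)$ keeps us in $\mathbb{Q}[\tilde{p}_2,\tilde{p}_5,\l_4,\l_6,\l_8,\l_{10}]$, so $\tilde{p}_{3n+8}$ is homogeneous of degree $3n+8$ as required; the same verification on the $n=0$ formula handles $\tilde{p}_8$ (degree $8$).

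There is no real obstacle: the argument is essentially bookkeeping once (\ref{p2}) is in hand. The only mild subtlety is noticing that $\tilde{p}_5$ is \emph{not} determined by the recurrence coming from (\ref{p2}) (which always produces $\tilde{p}_{3n+8}$ from $\tilde{p}_2,\tilde{p}_5,\ldots,\tilde{p}_{3n+5}$), so it must be treated as an independent generator of the polynomial ring; this is consistent with Proposition \ref{expansion9} and with (\ref{p1}), which at $u=u^*$ reduces to $(\tilde{p}_5/2)^2=\tilde{p}_2^5+\l_4\tilde{p}_2^3+\l_6\tilde{p}_2^2+\l_8\tilde{p}_2+\l_{10}$, compatible with the degree assignment $\deg\tilde{p}_5=5$.
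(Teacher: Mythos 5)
Your derivation is correct and is essentially the intended one: the shape of the recurrence (the factor $(n+2)(n+1)$ on the left and the coefficients $10$, $6\l_4$, $4\l_6$ on the right) comes precisely from substituting the expansion (\ref{fe}) into $F''=10F^4+6\l_4F^2+4\l_6F+2\l_8$ and comparing coefficients of $(u-u^*)^n$, exactly as you do, with the homogeneity following by the induction you describe. Your remark that $\tilde p_5$ is a free generator fixed by Proposition \ref{expansion9} rather than by the recurrence is also the right reading of the statement.
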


\vspace{1ex}

We assume $P_*\neq(0,\pm\sqrt{\l_{10}})$.
If we take the open neighborhood $U_*$ sufficiently small, then $I_1$ is injective.
Let $\eta(u)$ be the implicit function defined by $\sigma(u,\eta(u))=0$ around $(I_1(P_*),I_3(P_*))$.
Let us define the function $G(u)=f(u,\eta(u))$.

\begin{prop}(\cite{AB2019})\label{main3}
For $P=(x,y)\in U_*$ let $u=I_1(P)$. Then we have $x=G(u)$ and $y=-G(u)G'(u)/2$, where $G'$ is the derivative of $G$ with respect to $u$.
\end{prop}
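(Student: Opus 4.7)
The plan is to argue in parallel to Proposition~\ref{main1}, with the roles of $u_1$ and $u_3$ interchanged.

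First I would establish that $\eta(I_1(P)) = I_3(P)$ for $P \in U_*$. The key input is that for $g=2$ the Abel image of the curve $V \setminus \{\infty\}$ is contained in the sigma divisor, so $\sigma(I_1(P), I_3(P)) = 0$ for every such $P$. Under the hypothesis $P_* \neq (0, \pm\sqrt{\l_{10}})$, the differential $du_1 = -\tfrac{x}{2y}\,dx$ is nonvanishing at $P_*$, hence $I_1$ is a biholomorphism onto its image for a sufficiently small $U_*$; moreover, to apply the implicit function theorem at $(I_1(P_*), I_3(P_*))$ one needs $\sigma_3 \neq 0$ there, which is addressed below. Granting this, $\eta$ is a well-defined holomorphic function, and the map $u \mapsto I_3 \circ I_1^{-1}(u)$ is another holomorphic solution of $\sigma(u, z) = 0$ taking the same value at $u = I_1(P_*)$. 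By the uniqueness part of the implicit function theorem, the two coincide, and evaluation at $u = I_1(P)$ yields $\eta(I_1(P)) = I_3(P)$.

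The first claim $x(P) = G(u)$ then reduces to the identity $f = -\sigma_3/\sigma_1 = x$ along the Abel image, which is precisely the content that makes $F$ in Proposition~\ref{main1} coincide with $x$ and is a classical identity for the two-dimensional sigma function. From this,
\[
G(u) \;=\; f\bigl(u, \eta(u)\bigr) \;=\; f\bigl(I_1(P), I_3(P)\bigr) \;=\; x(P).
\]
For the second claim, I would differentiate the identity $x = G(I_1(P))$ using $du_1 = -\tfrac{x}{2y}\,dx$, i.e.\ $\tfrac{dx}{du_1} = -\tfrac{2y}{x}$, so that
\[
G'(u) \;=\; \frac{dx}{du_1} \;=\; -\frac{2y}{x} \;=\; -\frac{2y}{G(u)},
\]
which rearranges to $y = -G(u) G'(u)/2$.

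The main technical point to pin down is the nonvanishing of $\sigma_3$ at $(I_1(P_*), I_3(P_*))$ under the assumption $P_* \neq (0, \pm\sqrt{\l_{10}})$. Heuristically, from $-\sigma_3/\sigma_1 = x$ on the divisor, $\sigma_3 = 0$ (with $\sigma_1 \neq 0$) forces $x(P_*) = 0$, i.e., $P_* = (0, \pm\sqrt{\l_{10}})$, exactly the excluded points. Making this rigorous---including a careful treatment of the locus on the divisor where both $\sigma_1$ and $\sigma_3$ vanish simultaneously---is the step that will require the most care, since everything else in the argument is a direct transcription of Proposition~\ref{main1} with $u_1$ and $u_3$ swapped.
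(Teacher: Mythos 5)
Your argument is sound and is the natural one: the paper itself states Proposition~\ref{main3} without proof (it is quoted from \cite{AB2019}, and Section~4 only summarizes that reference), so there is no in-paper proof to compare against. Your reduction to the identity $-\sigma_3/\sigma_1=x$ along the Abel image, the uniqueness clause of the implicit function theorem to get $\eta(I_1(P))=I_3(P)$, and the computation $G'=dx/du_1=-2y/x$ are all correct; the one point you rightly flag --- nonvanishing of $\sigma_3$ at $(I_1(P_*),I_3(P_*))$ --- is settled by the smoothness of the genus-$2$ theta divisor (so $\sigma_1$ and $\sigma_3$ cannot vanish simultaneously on $W$), whence $\sigma_3=0$ forces $x(P_*)=-\sigma_3/\sigma_1=0$, i.e.\ exactly the excluded points $(0,\pm\sqrt{\l_{10}})$.
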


\begin{theorem}(\cite{AB2019})\label{main2}
The function $G(u)$ satisfies the following ordinary differential equations:
\begin{equation}\label{d1}
(GG'/2)^2=G^5+\l_4G^3+\l_6G^2+\l_8G+\l_{10},
\end{equation}
\begin{equation}\label{d2}
G^4(G'''-12GG')-4\l_8GG'-12\l_{10}G'=0.
\end{equation}
\end{theorem}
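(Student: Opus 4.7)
The plan is to obtain both equations as purely algebraic consequences of Proposition~\ref{main3} together with the defining equation of $V$. For equation~(\ref{d1}): setting $u=I_1(P)$ for $P=(x,y)\in U_*$, Proposition~\ref{main3} yields $x=G(u)$ and $y=-G(u)G'(u)/2$, so direct substitution into $y^2=x^5+\lambda_4x^3+\lambda_6x^2+\lambda_8x+\lambda_{10}$ produces (\ref{d1}) at once.

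For (\ref{d2}) the strategy is to differentiate (\ref{d1}) twice and then use (\ref{d1}) itself to eliminate the lower-order derivatives of $G$. Rewriting (\ref{d1}) as $\tfrac{1}{4}G^2G'^2=G^5+\lambda_4G^3+\lambda_6G^2+\lambda_8G+\lambda_{10}$ and differentiating once gives, after dividing by $G'$ (which is meromorphic and not identically zero because $G$ is nonconstant),
\[
GG'^2+G^2G''=10G^4+6\lambda_4G^2+4\lambda_6G+2\lambda_8.
\]
Multiplying this identity by $G$ and substituting for $G^2G'^2$ via (\ref{d1}) lets me express
\[
G^3G''=6G^5+2\lambda_4G^3-2\lambda_8G-4\lambda_{10}.
\]

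Next I would differentiate the intermediate identity once more to obtain
\[
G'^3+4GG'G''+G^2G'''=40G^3G'+12\lambda_4GG'+4\lambda_6G'.
\]
Multiplying through by $G^2$ turns the first two left-hand terms into $(G^2G'^2)\,G'$ and $4(G^3G'')\,G'$; then substituting the explicit expressions already obtained for $G^2G'^2$ and $G^3G''$ should cause every term except $G^4G'''$, $G^5G'$, $\lambda_8GG'$ and $\lambda_{10}G'$ to cancel, and the surviving terms assemble exactly into (\ref{d2}).

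The main obstacle is purely bookkeeping: successive differentiation generates many monomials in $G, G', G'', G'''$, and the substitutions for $G^2G'^2$ and $G^3G''$ must be applied in the correct order to produce a third-order polynomial identity with no residual lower-order remainders. No property of $\sigma$ beyond what is already captured by Proposition~\ref{main3} enters the argument, and in this sense the derivation runs parallel to the proof of Theorem~\ref{main4}; one additional differentiation is needed compared with the $F$-case because of the extra factor of $G$ in the relation $y=-GG'/2$, which also explains why (\ref{d2}) is a third-order equation rather than a second-order one.
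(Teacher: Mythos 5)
Your derivation is correct: substituting $x=G$, $y=-GG'/2$ from Proposition~\ref{main3} into the curve equation gives (\ref{d1}), and I have checked that differentiating, dividing by $G'$, multiplying by $G$ and eliminating $G^2G'^2$ yields $G^3G''=6G^5+2\l_4G^3-2\l_8G-4\l_{10}$, after which one further differentiation and multiplication by $G^2$ makes the $\l_4$ and $\l_6$ terms cancel and leaves exactly $G^4(G'''-12GG')-4\l_8GG'-12\l_{10}G'=0$. The survey quotes this theorem from \cite{AB2019} without reproducing a proof, but your argument is the natural one implicit in that reference (and parallel to Theorem~\ref{main4}), so there is nothing to add.
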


\vspace{1ex}

Let us assume that $P_*\neq(0,\pm\sqrt{\l_{10}})$ and $P_*\neq\infty$.
Using Proposition \ref{main3} and Theorem \ref{main2}, one can obtain the series expansion of the function $G(u)$.
Since the function $G(u)$ is holomorphic in the neighborhood of the point $u^*=I_1(P_*)$, this expansion in the neighborhood of this point has the form
\begin{equation}\label{ge}
G(u)=\sum_{n=0}^{\infty}\tilde{q}_{n+2}(u-u^*)^n,\quad \tilde{q}_{n+2}\in\mathbb{C}.
\end{equation}

\begin{prop}(\cite{AB2019})\label{expansion1}
Let $P_*=(x_*,y_*)$. Then we have $\tilde{q}_2=x_*$ and $\tilde{q}_3=-2y_*/x_*$.
\end{prop}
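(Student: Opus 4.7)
The plan is to read off $\tilde{q}_2$ and $\tilde{q}_3$ directly from the Taylor expansion (\ref{ge}) and match them against the identities supplied by Proposition \ref{main3}, evaluated at $u=u^*$. Writing the expansion as
\[
G(u) = \tilde{q}_2 + \tilde{q}_3(u-u^*) + \tilde{q}_4(u-u^*)^2 + \cdots,
\]
we immediately obtain $G(u^*) = \tilde{q}_2$ and, by term-by-term differentiation, $G'(u^*) = \tilde{q}_3$.

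Next I would apply Proposition \ref{main3} at the specific point $P = P_*$, which corresponds to $u = I_1(P_*) = u^*$. The proposition states $x = G(u)$ and $y = -G(u)G'(u)/2$ for $P = (x,y) \in U_*$ with $u=I_1(P)$. Specialising to $P=P_*=(x_*,y_*)$ yields $x_* = G(u^*) = \tilde{q}_2$, which is the first claim. For the second, $y_* = -G(u^*)G'(u^*)/2 = -x_*\tilde{q}_3/2$, so solving for $\tilde{q}_3$ gives $\tilde{q}_3 = -2y_*/x_*$.

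The only point that requires a brief justification is that the division by $x_*$ is legitimate, i.e. $x_* \ne 0$. This is precisely what the standing assumption $P_* \ne (0, \pm\sqrt{\lambda_{10}})$ (together with $P_* \ne \infty$) guarantees: a point of $V$ with $x$-coordinate $0$ must have $y = \pm\sqrt{\lambda_{10}}$, which is excluded. So there is no real obstacle here; the statement is essentially the content of Proposition \ref{main3} rewritten in terms of the Taylor coefficients of $G$ at $u^*$, and the proof amounts to a single evaluation plus a first-derivative computation.
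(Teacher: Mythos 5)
Your proof is correct: the paper states this proposition as a cited result from \cite{AB2019} without reproducing a proof, and your argument --- evaluating the Taylor expansion (\ref{ge}) and its derivative at $u=u^*$, substituting into the identities $x=G(u)$, $y=-G(u)G'(u)/2$ of Proposition \ref{main3}, and noting that the standing hypothesis $P_*\neq(0,\pm\sqrt{\lambda_{10}})$ guarantees $x_*\neq 0$ so the division is legitimate --- is exactly the intended derivation. Nothing is missing.
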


Let us set $\deg \tilde{q}_2=2$ and $\deg \tilde{q}_3=3$.

\begin{prop}(\cite{AB2019})\label{expansion5}
The coefficients $\tilde{q}_{n+2}$ are determined from the following recurrence relations:
\begin{itemize}
\item $\tilde{q}_4=\tilde{q}_2^{-3}(3\tilde{q}_2^5+\l_4\tilde{q}_2^3-\l_8\tilde{q}_2-2\l_{10}),$

\item $\displaystyle{\tilde{q}_2^3(n+2)(n+1)\tilde{q}_{n+4}=-\sum_{k=0}^{n-1}\left\{(k+2)(k+1)\tilde{q}_{k+4}\!\!\sum_{(n_1,n_2,n_3)\in T_1^{(k)}}\!\!\tilde{q}_{n_1+2}\, \tilde{q}_{n_2+2}\, \tilde{q}_{n_3+2}\right\}+}$

\vspace{1ex}

$\displaystyle{+6\sum_{(n_1,n_2,n_3,n_4,n_5)\in T_2}\tilde{q}_{n_1+2}\;\tilde{q}_{n_2+2}\;\tilde{q}_{n_3+2}\;\tilde{q}_{n_4+2}\;\tilde{q}_{n_5+2}+}$

\vspace{1ex}

$\displaystyle{+2\l_4\sum_{(n_1,n_2,n_3)\in T_3}\tilde{q}_{n_1+2}\;\tilde{q}_{n_2+2}\;\tilde{q}_{n_3+2}-2\l_8\tilde{q}_{n+2},\quad n\ge1}$,\; where

\end{itemize}
$T_1^{(k)}=\{(n_1,n_2,n_3)\in\mathbb{Z}_{\ge0}^3\;|\;n_1+n_2+n_3=n-k\}, T_2=\{(n_1,n_2,n_3,n_4,n_5)\in\mathbb{Z}_{\ge0}^5\;|\;n_1+n_2+n_3+n_4+n_5=n\},
T_3=\{(n_1,n_2,n_3)\in\mathbb{Z}_{\ge0}^3\;|\;n_1+n_2+n_3=n\}$,
and the coefficient $\tilde{q}_{n+2}$ is a homogeneous polynomial in $\mathbb{Q}[\tilde{q}_2,\tilde{q}_3,\l_4,\l_6,\l_8,\l_{10}]$
of degree $n+2$ if $\tilde{q}_{n+2}\neq0$.
\end{prop}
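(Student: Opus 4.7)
The plan is to derive a single polynomial-differential identity from equation~(\ref{d1}) of Theorem~\ref{main2} and then extract its Taylor coefficients around $u^*$. First I would differentiate (\ref{d1}) once and cancel the common factor $G'$, which is not identically zero since otherwise $G$ would be constant and contradict~(\ref{d1}), to obtain
\[
G(G')^2 + G^2 G'' = 10 G^4 + 6\lambda_4 G^2 + 4\lambda_6 G + 2\lambda_8.
\]
Multiplying by $G$ and using (\ref{d1}) in the form $G^2(G')^2 = 4(G^5 + \lambda_4 G^3 + \lambda_6 G^2 + \lambda_8 G + \lambda_{10})$ to eliminate $G^2(G')^2$, I arrive at the clean identity
\[
G^3 G'' \;=\; 6 G^5 + 2\lambda_4 G^3 - 2\lambda_8 G - 4\lambda_{10},
\]
in which both $\lambda_6$ and $(G')^2$ have disappeared. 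A short check shows that differentiating this identity reproduces~(\ref{d2}), so nothing is lost by working only with~(\ref{d1}).

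Next I would extract the coefficient of $(u-u^*)^n$ from both sides. At $n=0$, the substitutions $G(u^*)=\tilde q_2$ and $G''(u^*)=2\tilde q_4$ reduce the identity to $2\tilde q_2^3\tilde q_4 = 6\tilde q_2^5 + 2\lambda_4 \tilde q_2^3 - 2\lambda_8 \tilde q_2 - 4\lambda_{10}$, which rearranges to the stated closed form for $\tilde q_4$. For $n\ge 1$, the Cauchy product of $G^3=\sum_m c_m(u-u^*)^m$, where $c_m=\sum_{n_1+n_2+n_3=m}\tilde q_{n_1+2}\tilde q_{n_2+2}\tilde q_{n_3+2}$, with $G''=\sum_k(k+1)(k+2)\tilde q_{k+4}(u-u^*)^k$ gives
\[
[G^3 G'']_n \;=\; \sum_{k=0}^{n}(k+1)(k+2)\tilde q_{k+4}\sum_{(n_1,n_2,n_3)\in T_1^{(k)}}\tilde q_{n_1+2}\tilde q_{n_2+2}\tilde q_{n_3+2}.
\]
The $k=n$ summand equals $\tilde q_2^3(n+1)(n+2)\tilde q_{n+4}$; isolating it on the left-hand side and moving the $k=0,\dots,n-1$ terms to the right, together with the routine Cauchy expansions of $6G^5$, $2\lambda_4 G^3$ and $-2\lambda_8 G$ on the right of the identity, reproduces the stated recurrence verbatim (the constant $-4\lambda_{10}$ contributes only at $n=0$).

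Finally, the homogeneity claim follows by induction on $n$. With the weights $\deg\tilde q_2=2$, $\deg\tilde q_3=3$ and $\deg\lambda_{2j}=2j$, every monomial on either side of the recurrence has total weighted degree $n+10$, so the expression obtained for $\tilde q_{n+4}$ is homogeneous of weighted degree $n+4$ in $\tilde q_2,\tilde q_3,\lambda_4,\lambda_6,\lambda_8,\lambda_{10}$. I expect no serious obstacles; the only delicate point is the combinatorial splitting of the Cauchy product for $G^3 G''$ so that the top term $\tilde q_2^3(n+1)(n+2)\tilde q_{n+4}$ is cleanly isolated on the left-hand side, the remainder being routine generating-function manipulation.
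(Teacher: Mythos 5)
Your derivation is correct. Note that the survey itself gives no proof of Proposition \ref{expansion5} (it is quoted from \cite{AB2019}); the only in-paper point of comparison is the proof of the companion Proposition \ref{ultrareccu}, which substitutes the Laurent expansion of $G$ at $u=0$ directly into (\ref{d1}), multiplies by $u^{10}$ and compares coefficients. You instead differentiate (\ref{d1}), cancel $G'$, and use (\ref{d1}) again to eliminate $(G')^2$, arriving at $G^3G''=6G^5+2\l_4G^3-2\l_8G-4\l_{10}$, whose Taylor coefficients at $u^*$ you then extract. This is the right reverse engineering: the factors $(k+1)(k+2)\tilde{q}_{k+4}$ in the stated recurrence are exactly the Taylor coefficients of $G''$, your Cauchy-product bookkeeping isolates the $k=n$ term $\tilde{q}_2^3(n+1)(n+2)\tilde{q}_{n+4}$ correctly, the $n=0$ case gives the stated closed form for $\tilde{q}_4$, and the constant $-4\l_{10}$ indeed drops out for $n\ge1$. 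I verified that your intermediate identity is consistent with (\ref{d2}) as you claim.

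Two small points. First, your justification that $G'\not\equiv0$ is imprecise: a constant $G$ equal to a root of the quintic \emph{does} satisfy (\ref{d1}); the correct reason is that $G=x\circ I_1^{-1}$ with $I_1$ a local biholomorphism (Proposition \ref{main3}), so $G$ is nonconstant. Second, your induction establishes homogeneity of weight $n+2$ but not polynomiality: the recurrence only expresses $\tilde{q}_{n+4}$ as $\tilde{q}_2^{-3}$ times a polynomial in the earlier data, so what the argument actually yields is a homogeneous Laurent polynomial in $\tilde{q}_2$ (already for $\tilde{q}_4=3\tilde{q}_2^2+\l_4-\l_8\tilde{q}_2^{-2}-2\l_{10}\tilde{q}_2^{-3}$ the negative powers do not cancel, even after using the curve relation to eliminate $\l_{10}$). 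This imprecision is present in the statement itself, so it is not a defect introduced by your argument, but a fully careful write-up should either weaken ``polynomial'' to ``Laurent polynomial in $\tilde{q}_2$'' or supply the missing divisibility argument.
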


\vspace{1ex}

Let us take $P_*=\infty$ and the path $\gamma_*$ defined by the function $R : [0,1]\to V$ such that $R(r)=\infty$ for any point $r\in[0,1]$.
Then we have $(I_1(P_*),I_3(P_*))=(0,0)$.

\begin{prop}(\cite{AB2019})\label{dwp}
In a neighborhood of the point $u=0$, the function $G(u)$ is given by a series
\[
G(u)=\frac{1}{u^2}-\frac{\l_4}{5}u^2-\frac{\l_6}{7}u^4+\left(\frac{\l_4^2}{75}-\frac{\l_8}{9}\right)u^6+
\left(\frac{3}{385}\l_4\l_6-\frac{\l_{10}}{11}\right)u^8+\sum_{n\ge10}\tau_{n+2}u^{n},
\]
where the coefficient $\tau_{n+2}$ is a homogeneous polynomial in $\mathbb{Q}[\l_4,\l_6,\l_8,\l_{10}]$
of degree $n+2$ if $\tau_{n+2}\neq0$.
\end{prop}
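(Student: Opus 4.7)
The plan is to determine the Laurent coefficients of $G(u)$ at $u=0$ by substituting a suitable ansatz into the algebraic ODE~(\ref{d1}) of Theorem~\ref{main2}, namely
\[
(GG'/2)^{2}=G^{5}+\lambda_{4}G^{3}+\lambda_{6}G^{2}+\lambda_{8}G+\lambda_{10},
\]
and solving the resulting equations recursively. Two geometric inputs are needed first in order to pin down the form of the ansatz: the leading behaviour of $G$ at $u=0$ and the parity of $G$.

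\medskip

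First I would establish these geometric inputs. Since $P_{*}=\infty$ and $\gamma_{*}$ is constant, $u^{*}=I_{1}(\infty)=0$. Using the local parameter $t=x^{2}/y$ at $\infty$ from Section~2.3, Lemma \ref{ex1} gives $1/x=t^{2}(1+\gamma_{4}t^{4}+\gamma_{6}t^{6}+\cdots)$, so $x=t^{-2}+O(t^{2})$ contains neither a constant nor a $t^{1}$ term; moreover $du_{1}=(1+O(t^{4}))\,dt$ by Lemma \ref{b}, hence $u=t+O(t^{5})$ and, inverting, $t=u+O(u^{5})$. Substituting gives $G(u)=x(t(u))=u^{-2}+O(u^{2})$. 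The hyperelliptic involution $\iota\colon(x,y)\mapsto(x,-y)$ fixes $\infty$, acts on $t$ as $t\mapsto -t$, and satisfies $\iota^{*}du_{1}=-du_{1}$; thus $I_{1}(\iota P)=-I_{1}(P)$ while $x(\iota P)=x(P)$, which means $G$ is even in $u$. Consequently we may write
\[
G(u)=u^{-2}+\sum_{k\ge 1}a_{2k}\,u^{2k}.
\]

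\medskip

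Next I would substitute this ansatz into ODE~(\ref{d1}) and equate coefficients of $u^{2m-8}$ for each $m\ge 1$. Writing $GG'=\tfrac{1}{2}(G^{2})'$ and tracking terms carefully, the only part of the equation that is linear in $a_{2m}$ arises on the left from the cross product $2\cdot(-u^{-5})\cdot\bigl(\text{coeff of }u^{2m-3}\text{ in }GG'/2\bigr)$, contributing $-2(m-1)a_{2m}$, and on the right from $5(u^{-2})^{4}\cdot a_{2m}u^{2m}$ in $G^{5}$, contributing $5a_{2m}$; all other contributions involve only $a_{2k}$ with $k<m$. Hence the recurrence takes the shape
\[
-(2m+3)\,a_{2m}\;=\;R_{m}\bigl(a_{2},\dots,a_{2m-2};\lambda_{4},\lambda_{6},\lambda_{8},\lambda_{10}\bigr),
\]
where $R_{m}$ is a polynomial with integer coefficients in the lower $a_{2k}$ and the $\lambda_{2i}$. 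Since $2m+3\neq 0$ for every $m\ge 1$, each $a_{2m}$ is uniquely determined and lies in $\mathbb{Q}[\lambda_{4},\lambda_{6},\lambda_{8},\lambda_{10}]$. A short explicit calculation in the cases $m=1,2,3,4$ then produces the stated values $a_{2}=-\lambda_{4}/5$, $a_{4}=-\lambda_{6}/7$, $a_{6}=\lambda_{4}^{2}/75-\lambda_{8}/9$, and $a_{8}=3\lambda_{4}\lambda_{6}/385-\lambda_{10}/11$.

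\medskip

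Finally, I would verify the homogeneity assertion: assigning $\deg\lambda_{2i}=2i$ and $\deg u=-1$ gives $\deg G=2$, so both sides of (\ref{d1}) are homogeneous of degree $10$ and the recursion preserves this grading. Consequently $\tau_{n+2}$, the coefficient of $u^{n}$, is a homogeneous polynomial in $\mathbb{Q}[\lambda_{4},\lambda_{6},\lambda_{8},\lambda_{10}]$ of degree $n+2$, as claimed. The main obstacle is the bookkeeping in the recursion step: one has to verify carefully that no contribution linear in $a_{2m}$ other than $-2(m-1)a_{2m}$ (from the left) and $5a_{2m}$ (from $G^{5}$ on the right) survives at order $u^{2m-8}$, so that the pivot $-(2m+3)$ is nonzero and the recursion proceeds unambiguously.
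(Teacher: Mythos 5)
Your proposal is correct and follows essentially the same route the survey itself takes for the companion recurrence (Proposition \ref{ultrareccu}): substitute the Laurent expansion of $G$ into the differential equation (\ref{d1}) and compare coefficients, your pivot $-(2m+3)$ being exactly the $-(n+1)$ appearing there. Since the survey only quotes Proposition \ref{dwp} from \cite{AB2019} without proof, your two preliminary steps — fixing the leading behaviour $G(u)=u^{-2}+O(u^{2})$ via the local parameter $t=x^{2}/y$ and Lemmas \ref{ex1}, \ref{b}, and the evenness of $G$ via the hyperelliptic involution — are precisely the initial data the recursion needs, and the resulting values $a_{2}=-\l_4/5$, $a_{4}=-\l_6/7$, $a_{6}=\l_4^{2}/75-\l_8/9$, $a_{8}=3\l_4\l_6/385-\l_{10}/11$ agree with the statement.
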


\vspace{1ex}

\begin{prop}\label{ultrareccu}
The coefficients $\tau_{n}$ for $n\ge12$ are determined from the following recurrence formula :
\begin{eqnarray*}
(n+1)\tau_n&=&\sum_{(n_1,n_2,n_3,n_4)\in T_1}\frac{n_3-2}{2}\frac{n_4-2}{2}\tau_{n_1}\tau_{n_2}\tau_{n_3}\tau_{n_4} \\
&-&\sum_{(n_1,n_2,n_3,n_4,n_5)\in T_2}\tau_{n_1}\tau_{n_2}\tau_{n_3}\tau_{n_4}\tau_{n_5}-\l_4\sum_{(n_1,n_2,n_3)\in T_3}\tau_{n_1}\tau_{n_2}\tau_{n_3} \\
&-&\l_6\sum_{(n_1,n_2)\in T_4}\tau_{n_1}\tau_{n_2}-\l_8\tau_{n-8},
\end{eqnarray*}
where $T_1=\{(n_1,n_2,n_3,n_4)\in\mathbb{Z}_{\ge0}^4\;|\;n_1+\cdots+n_4=n,\;0\le n_1,\dots,n_4<n\}, T_2=\{(n_1,n_2,n_3,n_4,n_5)\in\mathbb{Z}_{\ge0}^5\;|\;n_1+\cdots+n_5=n,\;0\le n_1,\dots,n_5<n\},
T_3=\{(n_1,n_2,n_3)\in\mathbb{Z}_{\ge0}^3\;|\;n_1+n_2+n_3=n-4\}$, and $T_4=\{(n_1,n_2)\in\mathbb{Z}_{\ge0}^2\;|\;n_1+n_2=n-6\}$.
\end{prop}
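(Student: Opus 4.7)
The plan is to derive the recurrence directly from the first-order ODE of Theorem \ref{main2},
\[
(GG')^2 = 4\bigl(G^5 + \lambda_4 G^3 + \lambda_6 G^2 + \lambda_8 G + \lambda_{10}\bigr),
\]
by substituting the series $G(u) = \sum_{k \geq 0} \tau_k u^{k-2}$ (with $\tau_0 = 1$ matching $[u^{-2}]G$, and $\tau_k = 0$ for odd $k$ by the parity visible in Proposition \ref{dwp}), and comparing coefficients of $u^{n-10}$ on both sides.

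For the left-hand side, I would compute $GG' = \sum_{i,j \geq 0}(j-2)\tau_i\tau_j u^{i+j-5}$, so
\[
(GG')^2 = \sum_{n_1, n_2, n_3, n_4 \geq 0}(n_2-2)(n_4-2)\tau_{n_1}\tau_{n_2}\tau_{n_3}\tau_{n_4}\, u^{n_1+n_2+n_3+n_4-10}.
\]
After swapping the dummy variables $n_2 \leftrightarrow n_3$, $[u^{n-10}](GG')^2$ becomes $\sum_{n_1+n_2+n_3+n_4=n}(n_3-2)(n_4-2)\tau_{n_1}\tau_{n_2}\tau_{n_3}\tau_{n_4}$. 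On the right, for $n \geq 12$ the constant $4\lambda_{10}$ contributes nothing to $[u^{n-10}]$, and expanding the remaining terms gives
\[
4\sum_{n_1+\cdots+n_5=n}\tau_{n_1}\cdots\tau_{n_5} + 4\lambda_4\sum_{n_1+n_2+n_3=n-4}\tau_{n_1}\tau_{n_2}\tau_{n_3} + 4\lambda_6\sum_{n_1+n_2=n-6}\tau_{n_1}\tau_{n_2} + 4\lambda_8\tau_{n-8}.
\]

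I would then isolate the terms containing $\tau_n$. For $n \geq 12$, the $G^3$, $G^2$, and $G$ sums cannot involve $\tau_n$ (their indices are capped by $n-4$, $n-6$, $n-8$), so on the RHS only $G^5$ contributes, with coefficient $4\cdot 5 = 20$. On the LHS, the tuples containing exactly one index equal to $n$ (with the others forced to $0$) contribute weights $4$, $4$, $-2(n-2)$, $-2(n-2)$ (for $n_1 = n$, $n_2 = n$, $n_3 = n$, $n_4 = n$ respectively), totaling $16 - 4n$. Equating and rearranging, $(16 - 4n - 20)\tau_n = -4(n+1)\tau_n$ equals (remaining RHS) minus (remaining LHS); dividing by $-4$ yields exactly the stated recurrence, with the sets $T_1$ and $T_2$ restricted to $0 \leq n_i < n$ exactly because the $\tau_n$ tuples have been moved to the left.

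The main difficulty will be the careful bookkeeping of the $\tau_n$ coefficient on the LHS: the weight $(n_3-2)(n_4-2)$ is not symmetric in all four slots, so one must individually enumerate the four singleton tuples and verify that the net contribution simplifies cleanly to $16 - 4n$, which combined with $-20$ from $G^5$ gives the coefficient $-4(n+1)$ needed to match the claimed form. For odd $n$ the identity is trivially $0 = 0$, and the hypothesis $n \geq 12$ is precisely what ensures both that the constant $4\lambda_{10}$ does not interfere and that $\tau_n$ cannot arise from the $G^3$, $G^2$, or $G$ contributions on the right.
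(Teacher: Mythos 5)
Your proposal is correct and follows essentially the same route as the paper: substitute the Laurent expansion of $G$ into the first equation of Theorem \ref{main2}, clear the negative powers (equivalently, multiply by $u^{10}$), and compare coefficients of $u^{n-10}$, with the restriction $n_i<n$ in $T_1$, $T_2$ arising precisely from moving the tuples containing $\tau_n$ to the left-hand side. The only cosmetic difference is that you scale the equation by $4$ and divide out at the end, and you make explicit the coefficient $-4(n+1)$ of $\tau_n$ that the paper leaves implicit.
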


\begin{proof}
Let
\begin{equation}
G(u)=\frac{1}{u^2}+\sum_{n=0}^{\infty}\tau_{n+2}u^n.\label{gu2}
\end{equation}
Then we have
\[-\frac{G'(u)}{2}=\frac{1}{u^3}-\sum_{n=0}^{\infty}\frac{n+1}{2}\tau_{n+3}u^n,\]
\[u^2G(u)=\sum_{n=0}^{\infty}\tau_nu^n,\;\;\;\;u^3\frac{G'(u)}{2}=\sum_{n=0}^{\infty}\frac{n-2}{2}\tau_nu^n,\]
where $\tau_0=1$ and $\tau_1=0$.
By substituting (\ref{gu2}) into (\ref{d1}) and multiplying the both sides by $u^{10}$, we obtain
\begin{eqnarray*}
\left(\sum_{n=0}^{\infty}\tau_nu^n\right)^2\left(\sum_{n=0}^{\infty}\frac{n-2}{2}\tau_nu^n\right)^2&=&\left(\sum_{n=0}^{\infty}\tau_nu^n\right)^5+\l_4u^4\left(\sum_{n=0}^{\infty}\tau_nu^n\right)^3 \\
&+&\l_6u^6\left(\sum_{n=0}^{\infty}\tau_nu^n\right)^2+\l_8u^8\sum_{n=0}^{\infty}\tau_nu^n+\l_{10}u^{10}.
\end{eqnarray*}
By comparing the coefficient of $u^n$ for $n\ge12$, we obtain the statement of the proposition.
\end{proof}

\begin{prop}(\cite{AB2019})
There exists the formula
\[
G(u)=\wp(u)+g(u),
\]
where $g(u)$ is a holomorphic function that in a neighborhood of the point $u=0$ is given by a series
\[
g(u)=-\frac{\l_8}{9}u^6-\frac{\l_{10}}{11}u^8+\sum_{n\ge10}\tilde{\tau}_{n+2}u^{n}.
\]
Here the coefficient $\tilde{\tau}_{n+2}$ is a homogeneous polynomial in $\mathbb{Q}[\l_4,\l_6,\l_8,\l_{10}]$
of degree $n+2$ if $\tilde{\tau}_{n+2}\neq0$.
\end{prop}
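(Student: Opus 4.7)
The plan is to recognize $\wp(u)$ as the Weierstrass elliptic function attached to the degenerate curve $y^2 = x^3 + \lambda_4 x + \lambda_6$ obtained from our hyperelliptic curve by setting $\lambda_8 = \lambda_{10} = 0$. This identification is forced by equation (\ref{d1}) of Theorem \ref{main2}: when $\lambda_8 = \lambda_{10} = 0$ the right-hand side factors as $G^2(G^3+\lambda_4 G+\lambda_6)$, so that $G$ reduces to $\wp$. Concretely, I would take as working characterization the differential equation $(\wp'(u)/2)^2 = \wp(u)^3 + \lambda_4 \wp(u) + \lambda_6$ together with the principal part $\wp(u) = u^{-2} + O(u^2)$, and import the classical expansion
\begin{equation*}
\wp(u) = \frac{1}{u^2} - \frac{\lambda_4}{5}u^2 - \frac{\lambda_6}{7}u^4 + \frac{\lambda_4^2}{75}u^6 + \frac{3\lambda_4\lambda_6}{385}u^8 + \sum_{n\ge 5}w_{2n}u^{2n},
\end{equation*}
in which each $w_{2n}$ is a homogeneous polynomial of weighted degree $2n+2$ in $\mathbb{Q}[\lambda_4,\lambda_6]$.

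Next I would set $g(u) := G(u) - \wp(u)$ and perform a term-by-term comparison against the expansion of $G$ provided by Proposition \ref{dwp}. The leading singular term $u^{-2}$ cancels, and the $u^2$, $u^4$ coefficients as well as the $\lambda_4,\lambda_6$-parts of the $u^6$ and $u^8$ coefficients match those of $\wp$ exactly, leaving
\begin{equation*}
g(u) = -\frac{\lambda_8}{9}u^6 - \frac{\lambda_{10}}{11}u^8 + \sum_{n\ge 10}\tilde{\tau}_{n+2}u^n,
\end{equation*}
with $\tilde{\tau}_{n+2} = \tau_{n+2} - w_n$, where $w_n := 0$ for odd $n$. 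In particular $g$ is holomorphic at $u = 0$.

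For the homogeneity assertion I would invoke two facts. First, Proposition \ref{dwp} gives that $\tau_{n+2}$ is homogeneous of weighted degree $n+2$ in $\mathbb{Q}[\lambda_4,\lambda_6,\lambda_8,\lambda_{10}]$. Second, the classical elliptic expansion forces $\deg w_n = n+2$ in $\mathbb{Q}[\lambda_4,\lambda_6] \subset \mathbb{Q}[\lambda_4,\lambda_6,\lambda_8,\lambda_{10}]$. Consequently $\tilde{\tau}_{n+2} = \tau_{n+2} - w_n$ is homogeneous of weighted degree $n+2$ in $\mathbb{Q}[\lambda_4,\lambda_6,\lambda_8,\lambda_{10}]$, as required.

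The main obstacle I would need to handle cleanly is the identification of $\wp(u)$: the statement uses the symbol $\wp$ without further specification, and one must justify that it is indeed the Weierstrass $\wp$-function of $y^2 = x^3 + \lambda_4 x + \lambda_6$ (with its standard normalisation). Once this point is settled, either by appealing to the factorisation of (\ref{d1}) at $\lambda_8 = \lambda_{10} = 0$ or by directly matching the first few coefficients in Proposition \ref{dwp} against the classical expansion, the remaining content of the statement reduces to an explicit term-by-term comparison and a weighted-degree count.
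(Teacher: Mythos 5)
Your proposal is correct. Note, however, that the survey itself gives no proof of this proposition: it is quoted from \cite{AB2019} and stated without argument, so there is no in-paper proof to compare against. Your route is the natural one and is consistent with how the paper uses the result. The identification of $\wp$ that you worry about is settled by the paper's own normalisation: for $g=1$ the curve is $y^2=x^3+\l_4x+\l_6$ with $du=-\tfrac{dx}{2y}$, so $(\wp')^2=4\wp^3+4\l_4\wp+4\l_6$, i.e.\ $g_2=-4\l_4$, $g_3=-4\l_6$, and the classical expansion $\wp(u)=u^{-2}+\tfrac{g_2}{20}u^2+\tfrac{g_3}{28}u^4+\tfrac{g_2^2}{1200}u^6+\tfrac{3g_2g_3}{6160}u^8+\cdots$ reproduces exactly the $\l_4,\l_6$-parts of the coefficients displayed in Proposition \ref{dwp}; this is also the reading forced by Corollary \ref{de}. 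With that fixed, defining $g:=G-\wp$ and comparing coefficients (using that the odd-index $\tau$'s and $w$'s vanish by parity of the weights) gives holomorphy at $u=0$, the stated initial terms, and the homogeneity of $\tilde\tau_{n+2}=\tau_{n+2}-w_n$ as a difference of two homogeneous polynomials of the same weight. One caveat worth keeping in mind: your term-by-term argument proves exactly the proposition and nothing more; the subsequent Corollary \ref{de} ($G_d=\wp$) is strictly stronger, since it asserts that every $\tilde\tau_{n+2}$ lies in the ideal generated by $\l_8$ and $\l_{10}$, and that would require an additional induction (e.g.\ via the recurrence of Proposition \ref{ultrareccu}) rather than the degree count alone.
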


Denote by $G_d(u)$ the formal Laurent series obtained from $G(u)$ by substitution $\l_8=\l_{10}=0$ in the series expansion of this function
in a neighborhood of the point $u=0$.

\begin{cor}(\cite{AB2019})\label{de}
We have $G_d(u)=\wp(u)$.
\end{cor}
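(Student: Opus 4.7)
The plan is to use the ordinary differential equation (\ref{d1}) satisfied by $G(u)$ after the substitution $\lambda_8=\lambda_{10}=0$, and then to identify $G_d(u)$ with $\wp(u)$ via uniqueness of the formal Laurent solution.

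First I would note that equation (\ref{d1}) holds identically as a power-series relation with coefficients in $\mathbb{Q}[\lambda_4,\lambda_6,\lambda_8,\lambda_{10}]$, so the substitution $\lambda_8=\lambda_{10}=0$ is legitimate and yields
\[
\left(\frac{G_d\, G_d'}{2}\right)^2 \;=\; G_d^5+\lambda_4 G_d^3+\lambda_6 G_d^2 \;=\; G_d^2\bigl(G_d^3+\lambda_4 G_d+\lambda_6\bigr).
\]
By Proposition~\ref{dwp}, $G_d(u) = u^{-2}+O(u^2)$, so $G_d$ is a nonzero element of the field of formal Laurent series at $u=0$. Dividing by $G_d^2$ gives
\[
\left(\frac{G_d'}{2}\right)^2 \;=\; G_d^3+\lambda_4 G_d+\lambda_6.
\]
This is precisely the differential equation satisfied by the Weierstrass $\wp$-function associated with the elliptic curve $y^2 = x^3+\lambda_4 x+\lambda_6$ in the paper's normalization (the $g=1$ specialization of equation (\ref{Vg}) with $\lambda_8=\lambda_{10}=0$, whose normalization is exactly this elliptic curve).

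Next I would invoke uniqueness: both $G_d(u)$ and $\wp(u)$ are even formal Laurent series at $u=0$ with principal part $u^{-2}$ and no constant, $u^{-1}$, $1$, or $u$ terms. Differentiating the ODE once yields $G_d'' = 6G_d^2+2\lambda_4$ (and similarly for $\wp$), which gives a two-term recurrence for the Taylor coefficients of $G_d(u)-u^{-2}$; given the first two vanishing coefficients (those of $1$ and $u$), all higher coefficients are determined. Since $\wp(u)$ satisfies the same recurrence with the same initial data, the two formal series coincide, establishing $G_d(u)=\wp(u)$.

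The calculation is essentially routine once the substitution is justified; the only genuine point requiring attention is the legality of dividing the squared identity by $G_d^2$ and the fact that taking a square root of the resulting identity is unambiguous up to a sign — the sign is fixed by matching the leading term $G_d'/2 = -u^{-3}+O(u)$ against $\wp'(u)/2=-u^{-3}+O(u)$. So I do not expect any substantial obstacle; the main care is ensuring the formal-series manipulations are rigorous.
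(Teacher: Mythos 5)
Your overall strategy --- specialize the first-order ODE (\ref{d1}) at $\lambda_8=\lambda_{10}=0$, cancel $G_d^2$, and identify $G_d$ with $\wp$ by uniqueness of the Laurent-series solution --- is sound, and the first half is fine: the specialization is legitimate because (\ref{d1}) is an identity of Laurent series whose coefficients are polynomial identities in $\mathbb{Q}[\lambda_4,\dots,\lambda_{10}]$, and $G_d$ is invertible in the Laurent-series field, so $(G_d'/2)^2=G_d^3+\lambda_4G_d+\lambda_6$ does follow and is the equation of $\wp$ for $y^2=x^3+\lambda_4x+\lambda_6$ in the paper's normalization. (For comparison: the survey itself presents the corollary as a consequence of the preceding proposition, i.e.\ of the decomposition $G=\wp+g$ with $g$ lying in the ideal generated by $\lambda_8,\lambda_{10}$, citing \cite{AB2019}; your route through the ODE is independent of that decomposition.)

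The gap is in the uniqueness step. Writing $G_d=u^{-2}+\sum_{n\ge0}c_nu^n$, the second-order equation $G_d''=6G_d^2+2\lambda_4$ yields $(m+5)(m-2)\,c_{m+2}=6\sum_{i+j=m}c_ic_j+2\lambda_4[m=0]$, and the factor $(m+5)(m-2)$ vanishes at $m=2$: the coefficient $c_4$ is \emph{not} determined by this recurrence (consistently so, since $\lambda_6$ has disappeared from the second-order equation, whereas $c_4=-\lambda_6/7$). So your claim that, given the vanishing of the coefficients of $1$ and $u$, all higher coefficients are determined is false precisely at $n=4$. You must fix $c_4$ separately --- either by comparing the constant ($u^0$) coefficient of the first-order equation $(G_d'/2)^2=G_d^3+\lambda_4G_d+\lambda_6$, which gives $-4c_4=3c_4+\lambda_6$ and hence $c_4=-\lambda_6/7$, or by quoting Proposition \ref{dwp} together with the classical expansion $\wp(u)=u^{-2}-\frac{\lambda_4}{5}u^2-\frac{\lambda_6}{7}u^4+\cdots$. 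Once $c_0=c_1=c_2=c_3$ and $c_4$ are matched, the recurrence does determine all remaining coefficients and the proof closes. (The remark about choosing a sign of the square root is a red herring: you never need to extract a square root, since $\wp$ satisfies the same squared equation.)
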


\section{Number-theoretical properties of the generalized Bernoulli-Hurwitz numbers for the curve of genus 2}

In this section, we assume $g=2$.
Let $V$ be a hyperelliptic curve of genus 2 defined by
\begin{equation}
y^2=x^5+\l_4x^3+\l_6x^2+\l_8x+\l_{10}.\label{defhypg2}
\end{equation}
We take an open neighborhood $U_*$ of $\infty$ such that $U_*$ is homeomorphic to an open disk in $\mathbb{C}$.
We consider the map
\[I_1\;:\;U_*\to\mathbb{C},\;\;\;P=(x,y)\mapsto\int_{\infty}^P-\frac{x}{2y}dx,\]
where as the path of integration we take any path in $U_*$ from $\infty$ to $P$.
For $P=(x,y)\in U_*$, let $u=I_1(P)$.
If $U_*$ is sufficiently small, then the map $I_1$ is biholomorphism.
Therefore, we can regard $x$ and $y$ as functions of $u$.
From Proposition \ref{main3}, we have $x(u)=G(u)$ and $y(u)=-G(u)G'(u)/2$.
From Proposition \ref{dwp}, the function $x(u)$ can be expanded around $u=0$ as
\begin{equation}
x(u)=\frac{1}{u^2}+\sum_{n=4}^{\infty}\frac{C_n}{n}\frac{u^{n-2}}{(n-2)!},\label{xd4}
\end{equation}
where the coefficient $C_n$ is a homogeneous polynomial in $\mathbb{Q}[\l_4,\l_6,\l_8,\l_{10}]$
of degree $n$ if $C_{n}\neq0$.

\begin{lemma}\label{l1}
We have
\[y(u)=-\frac{1}{4}(x(u)^2)',\]
where $(x(u)^2)'$ denotes the derivative of $x(u)^2$ with respect to $u$.
\end{lemma}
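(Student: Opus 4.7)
The statement follows directly from Proposition \ref{main3} by a one-line chain rule computation, so the proof proposal is essentially to invoke that proposition and differentiate. The plan is as follows.

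First, I would recall from Proposition \ref{main3} (applied in the setting $P_*=\infty$ used in this section) that $x(u)=G(u)$ and $y(u)=-G(u)G'(u)/2$. Both identities hold on the neighborhood of $u=0$ on which $I_1$ is a biholomorphism, so they are valid as identities of holomorphic (or formal Laurent) series.

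Next, I would differentiate the square of $x(u)$ with respect to $u$: since $x(u)^2=G(u)^2$, the chain rule gives $(x(u)^2)'=2G(u)G'(u)$. Multiplying by $-1/4$ yields $-\tfrac{1}{4}(x(u)^2)'=-\tfrac{1}{2}G(u)G'(u)$, which is precisely the expression for $y(u)$ given by Proposition \ref{main3}. This establishes the lemma.

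There is no real obstacle: the only content is the identity $y=-GG'/2$ from Proposition \ref{main3}, and the remaining step is an elementary derivative. If one wished, one could alternatively verify the identity directly from the differential equation \eqref{d1} in Theorem \ref{main2}, namely $(GG'/2)^2=G^5+\lambda_4 G^3+\lambda_6 G^2+\lambda_8 G+\lambda_{10}$, which shows that $-GG'/2$ is a square root of the right-hand side of the defining equation \eqref{defhypg2} evaluated at $x=G(u)$, confirming it equals $y(u)$ up to a sign that is fixed by the expansion at $\infty$; but this is not needed since the lemma is a direct rewriting of Proposition \ref{main3}.
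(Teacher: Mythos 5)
Your proof is correct and is exactly the argument the paper uses: its proof of this lemma is just the citation of Proposition \ref{main3}, and you have spelled out the one-line chain-rule computation $-\tfrac{1}{4}(G^2)'=-\tfrac{1}{2}GG'=y(u)$ that it leaves implicit.
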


\begin{proof}
From Proposition \ref{main3}, we obtain the statement of the lemma.
\end{proof}

From (\ref{xd4}) and Lemma \ref{l1}, we find that $y(u)$ can be expanded around $u=0$ as
\begin{equation}
y(u)=\frac{1}{u^5}+\sum_{n=6}^{\infty}\frac{D_n}{n}\frac{u^{n-5}}{(n-5)!},\label{yd4}
\end{equation}
where the coefficient $D_n$ is a homogeneous polynomial in $\mathbb{Q}[\l_4,\l_6,\l_8,\l_{10}]$
of degree $n$ if $D_{n}\neq0$.
Then $C_n$ and $D_n$ are called {\it generalized Bernoulli-Hurwitz numbers}.
In particular, we find that $C_n=D_n=0$ for any odd integer $n$.
In \cite{O}, the hyperelliptic curve of genus 2 defined by $y^2=x^5-1$ is considered and the following formulae are proved.

\begin{theorem}(\cite{O})
For the curve $y^2=x^5-1$, we have
\[\frac{C_{10n}}{10n}\equiv-\sum_{p\; :\; prime,\;p\equiv1\;mod\;10,\;10n=a(p-1)}\frac{a|_p^{-1}\;\;mod\;p^{1+ord_pa}}{p^{1+ord_pa}}A_p^a\;\;\;\;mod\;\;\mathbb{Z},\]
\[\frac{D_{10n}}{10n}\equiv-\sum_{p\; :\; prime,\;p\equiv1\;mod\;10,\;10n=a(p-1)}\frac{(4!a)|_p^{-1}\;\;mod\;p^{1+ord_pa}}{p^{1+ord_pa}}A_p^a\;\;\;\;mod\;\;\mathbb{Z},\]
where $A_p=(-1)^{(p-1)/10}\cdot\left(\begin{array}{c}(p-1)/2 \\ (p-1)/10\end{array}\right)$.



\end{theorem}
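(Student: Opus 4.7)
\emph{Strategy.} The plan is to follow Onishi's method in \cite{O} (developed for $y^2=x^{2g+1}-1$), adapted to $g=2$: identify the generalized Bernoulli--Hurwitz numbers $C_{10n}, D_{10n}$ with explicit universal Bernoulli numbers, and then invoke Clarke's generalization of the von Staudt--Clausen theorem (Theorem \ref{CLARKE}). I would begin by substituting $w=1/\sqrt{x}$ in the ultra-elliptic integral $I_1$. For $y^2=x^5-1$ a short computation yields
\[
u=\int_{\infty}^{P}\!-\tfrac{x\,dx}{2y}=\int_{0}^{w}\tfrac{dw}{\sqrt{1-w^{10}}}=w+\sum_{k\ge 1}\tfrac{\binom{2k}{k}}{4^{k}(10k+1)}\,w^{10k+1}=:\alpha(w),
\]
so that in the universal logarithmic series (\ref{uzexample}) this specialization has $f_{10k}=\binom{2k}{k}/4^{k}$ for $k\ge 1$ and $f_{n}=0$ for $n$ not a positive multiple of $10$.

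\emph{Reduction to $\hat B_n$ and application of Clarke.} Let $\beta=\alpha^{-1}$. Since only $f_{10k}$ are nonzero, $\beta(u)\in u\,\mathbb{Q}[[u^{10}]]$, and hence $A(u):=u/\beta(u)=\sum_{k\ge 0}\hat B_{10k}\,u^{10k}/(10k)!$. Using Proposition \ref{main3} one has $x(u)=1/\beta(u)^{2}=A(u)^{2}/u^{2}$ and $y(u)=-x(u)x'(u)/2=-(x(u)^{2})'/4$, so matching coefficients against (\ref{xd4}), (\ref{yd4}) gives
\[
\tfrac{C_{10n}}{10n}=\tfrac{2\,\hat B_{10n}}{10n(10n-1)}+(\text{cross}),\qquad \tfrac{D_{10n}}{10n}=-\tfrac{\hat B_{10n}}{10n(10n-1)(10n-2)(10n-3)}+(\text{cross}),
\]
where the cross terms are polynomials in $\hat B_{10j}$ with $1\le j\le n-1$. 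I then apply Clarke's theorem to $\hat B_{10n}/(10n)$: since $f_{m}=0$ unless $10\mid m$, the sum over primes with $10n=a(p-1)$ collapses to primes $p\equiv 1\pmod{10}$. For such $p$, write $k=(p-1)/10$, so $5k=(p-1)/2$; the identity $\binom{-1/2}{k}=(-1)^{k}\binom{2k}{k}/4^{k}$ together with $(p-1)/2\equiv -1/2\pmod p$ gives
\[
f_{p-1}=\tfrac{\binom{2k}{k}}{4^{k}}\equiv(-1)^{k}\binom{(p-1)/2}{(p-1)/10}=A_{p}\pmod{p},
\]
and the elementary implication $v\equiv w\pmod p\Rightarrow v^{a}\equiv w^{a}\pmod{p^{1+\ord_{p}a}}$ (from $v^{a}-w^{a}=(v-w)\sum_{i}v^{i}w^{a-1-i}$) lifts this to $f_{p-1}^{a}\equiv A_{p}^{a}\pmod{p^{1+\ord_{p}a}}$. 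Finally, using $10n\equiv-a$ modulo the same power, the factor $(10n-1)(10n-2)(10n-3)\equiv(a+1)(a+2)(a+3)$ combines with $(p-1)^{-1}$ coming from $\hat B_{10n}/(10n)$ to produce the $(4!\,a)^{-1}$ appearing in the $D$-formula.

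\emph{Main obstacles.} Two points carry the real content. First, the cross-term contributions in the expansions of $A^{2}$ and $A^{4}$ must vanish modulo $\mathbb{Z}$ so that only the leading term survives. Using the explicit formula $\hat B_{n}/n=\sum_{w(U)=n}\tau_{U}f^{U}$ of Proposition \ref{on1}, these cross terms correspond to $U\in\mathcal{S}$ with $d(U)\ge 2$; Lemma \ref{O} gives $\ord_{p}\tau_{U}\ge\lfloor(w(U)+d(U)-2)/(2p)\rfloor$, which for $d(U)\ge 2$ suffices to absorb the denominator $p^{1+\ord_{p}a}$ appearing in the Clarke formula. Second, Clarke's theorem for $n\equiv 2\pmod 4$ produces a congruence only modulo $\mathbb{Z}[1/2]$, whereas we need modulo $\mathbb{Z}$; this is where Lemma \ref{onishilem} (the $2$-adic bound $\ord_{2}\tau_{U}\ge\lfloor(w(U)+d(U)-2)/4\rfloor$) enters, absorbing the denominator $4^{k}$ in $f_{10k}=\binom{2k}{k}/4^{k}$ and sharpening the final congruence to $\mathbb{Z}$. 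Once these two $p$-adic estimates and the binomial identity above are in hand, the theorem reduces to explicit bookkeeping of the surviving $d(U)=1$ term.
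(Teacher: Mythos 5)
First, note that this theorem is quoted from \^Onishi's paper \cite{O} and is not proved in the present survey; the survey only develops the analogous machinery for the general genus-2 curve in Section 5 (Lemmas \ref{t1}, \ref{lem6}, \ref{lem6lambda10}, \ref{lem61lambda10d}), which mirrors the proof in \cite{O}. Your set-up is on the right track: the substitution $w=1/\sqrt{x}$ giving $u=\int_0^w(1-w^{10})^{-1/2}dw$, the identification $f_{10k}=\binom{2k}{k}/4^k$, the congruence $f_{p-1}\equiv A_p\pmod p$ via $\binom{-1/2}{k}=(-1)^k\binom{2k}{k}/4^k$, the lift to $\pmod{p^{1+\ord_pa}}$, and the observation that Clarke's theorem only gives a congruence modulo $\mathbb{Z}[1/2]$ so that a separate $2$-adic estimate (Lemma \ref{onishilem}) is needed --- all of this matches \cite{O}.

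However, your central reduction step is wrong. You write $\frac{C_{10n}}{10n}=\frac{2\hat B_{10n}}{10n(10n-1)}+(\text{cross})$ and claim the cross terms vanish mod $\mathbb{Z}$ so that only this leading term survives. Neither half of this is true. For $10n=20$ the cross term is proportional to $\hat B_{10}^2$, which carries $11$ in its denominator after multiplication by the relevant binomial coefficient, so it does contribute to the fractional part. Worse, even if the cross terms were integral, the leading term would give the wrong answer: with $10n=a(p-1)$ one has $\frac{2}{10n-1}\equiv\frac{-2}{a+1}\pmod p$, which equals $-1$ only when $a\equiv1$; so for $a\ge2$ the coefficient $\frac{2\hat B_{10n}}{10n(10n-1)}$ alone cannot produce $-\frac{a|_p^{-1}}{p^{1+\ord_pa}}A_p^a$ (a concrete check: $p=11$, $a=2$ gives $\frac{2}{19}\equiv3\not\equiv-1\pmod{11}$). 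The same objection applies to your $D$-formula, where $(10n-1)(10n-2)(10n-3)\equiv-(a+1)(a+2)(a+3)\pmod p$, which is not $-4!$ unless $a=1$. The correct mechanism --- and the one encoded in the survey's Lemma \ref{t1} and Lemmas \ref{lem6}, \ref{lem6lambda10}, \ref{lem61lambda10d} --- is an integration-by-parts identity $k\sum\frac{C_n^{(k+1)}}{(n)_{k+1}}\frac{u^{n-k}}{(n-k)!}+\sum\frac{C_n^{(k)}}{(n)_k}\frac{u^{n-k}}{(n-k)!}=k\sum f_n\frac{z^{n-k}}{n-k}$, whose right-hand side is Hurwitz integral over the appropriate ring. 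Chaining these for $k=1,2,3$ gives $\frac{C_n}{n}=\frac{C_n^{(2)}}{(n)_2}\equiv-\frac{C_n^{(1)}}{n}=-\frac{\hat B_n}{n}$ modulo a nearly integral ring (this is where the minus sign in the theorem comes from), and $\frac{D_n}{n}\equiv\frac{1}{4!}\cdot(\pm\frac{\hat B_n}{n})$ (this is where the $4!a$ comes from); only after this reduction does one apply Clarke's theorem. Your proposal is missing this idea entirely, so as written the argument does not go through.
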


\vspace{1ex}

In this section, we will generalize the method of \cite{O} to the curve $V$ defined by (\ref{defhypg2}) and derive some number-theoretical properties of the generalized Bernoulli-Hurwitz numbers for the curve $V$.

\begin{prop}\label{exz}
It is possible to take a local parameter $z$ of $V$ around $\infty$ such that
\[x=\frac{1}{z^2},\;\;\;y=\frac{1}{z^5}(1+\sum_{n=4}^{\infty}a_nz^n),\]
where $a_n$ is a homogeneous polynomial in $\mathbb{Z}[\frac{1}{2},\l_4,\l_6,\l_8,\l_{10}]$ of degree $n$ if $a_n\neq0$.
\end{prop}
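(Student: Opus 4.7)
The plan is to construct $z$ from the local parameter $t=x^2/y$ already used in Subsection 2.3. By Lemma~\ref{ex1} we have $s=1/x=t^2\bigl(1+\sum_{i\ge 4}\gamma_i t^i\bigr)$ with $\gamma_i\in\mathbb{Z}[\l_4,\l_6,\l_8,\l_{10}]$ homogeneous of degree $i$. I would define
\[
z:=t\Bigl(1+\sum_{i\ge 4}\gamma_i t^i\Bigr)^{1/2},
\]
understood as a formal binomial series. The one arithmetic input needed is that $\binom{1/2}{k}\in\mathbb{Z}[1/2]$ for every $k\ge 0$; this follows from the closed form $\binom{1/2}{k}=(-1)^{k+1}C_{k-1}/2^{2k-1}$ for $k\ge 1$, where $C_{k-1}$ is a Catalan number. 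Consequently $z/t\in\mathbb{Z}[1/2,\l_4,\l_6,\l_8,\l_{10}][[t]]$ with constant term $1$, so $z=t+O(t^5)$ is a genuine local parameter of $V$ at $\infty$, and by construction $z^2=s=1/x$, i.e., $x=1/z^2$ exactly.

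Substituting $x=1/z^2$ into $y^2=x^5+\l_4 x^3+\l_6 x^2+\l_8 x+\l_{10}$ gives
\[
y^2=\frac{1}{z^{10}}\bigl(1+\l_4 z^4+\l_6 z^6+\l_8 z^8+\l_{10} z^{10}\bigr).
\]
Selecting the branch with $y\sim 1/z^5$ near $\infty$,
\[
y=\frac{1}{z^5}\bigl(1+\l_4 z^4+\l_6 z^6+\l_8 z^8+\l_{10} z^{10}\bigr)^{1/2}.
\]
Expanding this square root again via the binomial series, and invoking once more that $\binom{1/2}{k}\in\mathbb{Z}[1/2]$, yields $y=z^{-5}\bigl(1+\sum_{n\ge 4}a_n z^n\bigr)$ with $a_n\in\mathbb{Z}[1/2,\l_4,\l_6,\l_8,\l_{10}]$. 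The first correction is $a_4=\l_4/2$, which also shows that the summation genuinely starts at $n=4$.

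Homogeneity is automatic: assign $\deg z=-1$, consistent with $\deg x=2$. Every monomial appearing in the expansion of $\bigl(1+\l_4 z^4+\l_6 z^6+\l_8 z^8+\l_{10} z^{10}\bigr)^{1/2}$ has the form $c\cdot \l_4^{b_4}\l_6^{b_6}\l_8^{b_8}\l_{10}^{b_{10}} z^{4b_4+6b_6+8b_8+10b_{10}}$ with $c\in\mathbb{Z}[1/2]$, and is therefore homogeneous of degree $0$; comparing coefficients of $z^n$ forces $\deg a_n=n$ whenever $a_n\ne 0$.

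No serious obstacle is anticipated: the whole argument is a controlled manipulation with formal square roots of units in $\mathbb{Z}[\l_4,\l_6,\l_8,\l_{10}][[t]]$. The only point that deserves independent verification is the elementary fact $\binom{1/2}{k}\in\mathbb{Z}[1/2]$, which reduces to the integrality of Catalan numbers.
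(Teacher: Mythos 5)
Your proposal is correct and its skeleton is the same as the paper's: fix a local parameter $z$ with $x=z^{-2}$, substitute into the curve equation to get $y^2=z^{-10}(1+\l_4z^4+\l_6z^6+\l_8z^8+\l_{10}z^{10})$, and extract the square root. The differences are in the two supporting steps. For the existence of $z$, the paper simply takes some $z_1$ with $x=z_1^{-2}$ and then adjusts the sign ($z=\pm z_1$) so that $yz^5\to 1$; you instead build $z$ explicitly as $t\bigl(1+\sum\gamma_it^i\bigr)^{1/2}$ from the parameter $t=x^2/y$ of Lemma \ref{ex1}. That is fine, but note that once $z$ is pinned down this way you may no longer ``select'' the branch of $y$ --- the sign of $yz^5$ at $t=0$ is already determined and must be checked (it is $+1$, since $yt^5\to1$ and $z/t\to1$; alternatively, keep the paper's fallback of replacing $z$ by $-z$). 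For the integrality of the $a_n$, the paper derives a recursion by comparing coefficients in $(1+\sum a_nz^n)^2=1+\l_4z^4+\cdots$, where each step costs only a division by $2$; you instead expand $(1+w)^{1/2}$ with the closed-form binomial coefficients $\binom{1/2}{k}=(-1)^{k+1}C_{k-1}/2^{2k-1}\in\mathbb{Z}[1/2]$. Both arguments are valid; the recursion is the more elementary and matches the technique already used in Lemma \ref{ex1}, while your binomial/Catalan formula gives the denominators in closed form. The homogeneity argument is the same in both.
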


\begin{proof}
It is possible to take a local parameter $z_1$ such that
\[x=\frac{1}{z_1^2}.\]
The expansion of $y$ around $\infty$ with respect to $z_1$ takes the following form
\[y=\frac{\alpha}{z_1^5}(1+O(z_1)),\;\;\;\alpha\in\mathbb{C}.\]
By substituting the above expansions into (\ref{defhypg2}), multiplying the both sides by $z_1^{10}$, and comparing the coefficient of $z_1^0$, we obtain
\[\alpha^2=1.\]
If $\alpha=1$, then we set $z=z_1$. If $\alpha=-1$, then we set $z=-z_1$.
Then we have
\[x=\frac{1}{z^2},\;\;\;y=\frac{1}{z^5}(1+\sum_{n=1}^{\infty}a_nz^n),\]
where $a_n\in\mathbb{C}$.
By substituting the above expressions into (\ref{defhypg2}), we obtain
\[\left(1+\sum_{n=1}^{\infty}a_nz^n\right)^2=1+\l_4z^4+\l_6z^6+\l_8z^8+\l_{10}z^{10}.\]
From the above equation, we can find that $a_n$ is a homogeneous polynomial in $\mathbb{Z}[\frac{1}{2},\l_4,\l_6,\l_8,\l_{10}]$ of degree $n$ if $a_n\neq0$ recursively.

\end{proof}

We can regard $u(z)$ as a function defined around $z=0$.

\vspace{2ex}

\begin{prop}\label{a1}
The function $u(z)$ is expanded around $z=0$ as
\begin{equation}
u(z)=z+\sum_{n=1}^{\infty}f_n\frac{z^{n+1}}{n+1},\label{a111}
\end{equation}
where $f_1=f_2=f_3=0$ and $f_n$ is a homogeneous polynomial in $\mathbb{Z}[\frac{1}{2},\l_4,\l_6,\l_8,\l_{10}]$ of degree $n$ if $f_n\neq0$.
\end{prop}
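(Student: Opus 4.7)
The plan is to work directly with the definition $u = \int_\infty^P -\tfrac{x}{2y}\,dx$ and reduce the problem to inverting a power series, then integrating.

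First I would use Proposition \ref{exz} to compute $du/dz$ explicitly. Since $x = 1/z^2$ gives $dx/dz = -2/z^3$, and $y = z^{-5}\bigl(1+\sum_{n\ge 4} a_n z^n\bigr)$, we obtain
\[
\frac{du}{dz} \;=\; -\frac{x}{2y}\cdot\frac{dx}{dz} \;=\; \frac{1}{1+\sum_{n\ge 4} a_n z^n},
\]
after the factors of $z$ cancel. In particular $du/dz$ is a power series in $z$ whose constant term is $1$, and the coefficients of $z$, $z^2$, $z^3$ vanish because $a_1=a_2=a_3=0$ (no such $a_n$ appears in the expansion of $y$).

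Next I would expand this reciprocal as a geometric series $\sum_{k\ge 0}(-1)^k\bigl(\sum_{n\ge 4} a_n z^n\bigr)^k$. Each coefficient of $z^m$ is a finite $\mathbb{Z}$-linear combination of monomials $a_{n_1}\cdots a_{n_k}$ with $n_1+\cdots+n_k = m$. Since Proposition \ref{exz} gives $a_n \in \mathbb{Z}[\tfrac{1}{2},\lambda_4,\lambda_6,\lambda_8,\lambda_{10}]$ with $\deg a_n = n$ (when nonzero), the coefficient of $z^m$ in $du/dz$ lies in $\mathbb{Z}[\tfrac{1}{2},\lambda_4,\lambda_6,\lambda_8,\lambda_{10}]$ and is homogeneous of degree $m$. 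Call this coefficient $f_m$; then $f_1=f_2=f_3=0$ by the observation above.

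Finally I would integrate term-by-term using $u(0)=0$ (since the chosen path $\gamma_*$ starts at $\infty$, which corresponds to $z=0$), obtaining
\[
u(z) \;=\; z + \sum_{n\ge 1} f_n\,\frac{z^{n+1}}{n+1},
\]
which is the stated formula. There is no serious obstacle: the only small point to watch is justifying the vanishing $f_1=f_2=f_3=0$, which follows directly from the fact that Proposition \ref{exz} produces an $a_n$-expansion starting at $n=4$, and checking that integration preserves both integrality (the denominators $n+1$ are absorbed into the $\frac{z^{n+1}}{n+1}$ template rather than the coefficient $f_n$) and homogeneity (since $\deg z = 1$ in our grading, integrating $z^n$ to $\frac{z^{n+1}}{n+1}$ does not affect the degree of the scalar coefficient).
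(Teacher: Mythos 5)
Your proposal is correct and takes essentially the same route as the paper: the paper's proof likewise computes $u=\int_0^z\bigl(1+\sum_{n\ge4}a_nz^n\bigr)^{-1}dz$ from Proposition \ref{exz} and reads off the conclusion, with the geometric-series expansion and the integrality/homogeneity bookkeeping left implicit. Your write-up simply makes those implicit steps explicit.
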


\begin{proof}
From Proposition \ref{exz}, we have
\[u=\int_0^z-\frac{z^{-2}\cdot(-2)z^{-3}}{2z^{-5}(1+\sum_{n=4}^{\infty}a_nz^n)}dz=\int_0^z\left(1+\sum_{n=4}^{\infty}a_nz^n\right)^{-1}dz.\]
From Proposition \ref{exz}, we obtain the statement of the proposition.
\end{proof}
For positive integers $n$ and $k$ such that $n\ge k$, we use the notation
\[(n)_k=n(n-1)\cdots(n-k+1).\]

\vspace{2ex}

We consider the inverse mapping $z(u)$ of $u(z)$.
The expansions of $z^{-k}$, where $k=1,2,3,4$, have the following forms :
\begin{equation}
\frac{1}{z^k}=\frac{1}{u^k}+\sum_{n=4}^{\infty}\frac{C_n^{(k)}}{(n)_k}\frac{u^{n-k}}{(n-k)!},\;\;\;\;k=1,2,3,4,\label{1234}
\end{equation}
where $C_n^{(k)}$ is a homogeneous polynomial in $\mathbb{Q}[\l_4,\l_6,\l_8,\l_{10}]$ of degree $n$ if $C_n^{(k)}\neq0$.

\vspace{2ex}

From Theorem \ref{CLARKE}, we obtain
\begin{equation}
\frac{C_n^{(1)}}{n}=\sum_{n=a(p-1),\;p\ge5\;:\;prime}\frac{z_1(p,n)}{p^{1+ord_pa}}f_{p-1}^a+\widetilde{f}_n,\label{cl}
\end{equation}
where $z_1(p,n)\in\mathbb{Z}$ and $\widetilde{f}_n\in\mathbb{Z}[\frac{1}{2},\l_4,\l_6,\l_8,\l_{10}]$.

\vspace{2ex}

\begin{lemma}\label{t1}
For $k=1,2,3$, we have
\[k\sum_{n=4}^{\infty}\frac{C_n^{(k+1)}}{(n)_{k+1}}\frac{u^{n-k}}{(n-k)!}+\sum_{n=4}^{\infty}\frac{C_n^{(k)}}{(n)_{k}}\frac{u^{n-k}}{(n-k)!}=k\sum_{n=4}^{\infty}f_n\frac{z^{n-k}}{n-k}.\]
\end{lemma}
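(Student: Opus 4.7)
The plan is to integrate the formal series identity
\[u'(z)\,z^{-(k+1)} = z^{-(k+1)} + \sum_{n\ge 4}f_n\, z^{n-k-1},\]
which follows immediately from Proposition \ref{a1}, with respect to $z$ in two different ways and compare the results. The hypothesis $k\in\{1,2,3\}$ guarantees that no term on either side has exponent $-1$, so term-by-term antidifferentiation is unambiguous and no logarithm appears.

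Direct antidifferentiation of the right-hand side in $z$ gives $-\frac{1}{k z^k}+\sum_{n\ge 4} f_n\,\frac{z^{n-k}}{n-k}$ modulo a constant. For the left-hand side, I would use $u'(z)\,dz = du$ together with the expansion (\ref{1234}) at index $k+1$ to get the alternative antiderivative
\[\int\frac{du}{z(u)^{k+1}} = -\frac{1}{k u^k}+\sum_{n\ge 4}\frac{C_n^{(k+1)}}{(n)_{k+1}}\,\frac{u^{n-k}}{(n-k)!},\]
again modulo a constant. Equating the two antiderivatives (with combined additive constant $C$) and using (\ref{1234}) at index $k$ to replace $\frac{1}{k}(z^{-k}-u^{-k})$ by $\frac{1}{k}\sum_{n\ge 4}\frac{C_n^{(k)}}{(n)_k}\frac{u^{n-k}}{(n-k)!}$ transforms the resulting equation, after multiplication by $k$, into exactly the claimed identity plus an extra summand $kC$.

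To see that $C=0$, I would check the behaviour at $z=0$: since $f_1=f_2=f_3=0$ by Proposition \ref{a1}, both $u=u(z)$ and all the non-singular sums appearing in (\ref{1234}) and in the claim are formal power series in $z$ whose lowest-order term is of order $z^{4-k}$, and $4-k>0$ for $k\in\{1,2,3\}$; hence both sides vanish at $z=0$ and the constant $C$ is forced to be zero. The main obstacle is essentially just algebraic bookkeeping across the change of variable $z\leftrightarrow u$; the key points are that the assumption $k\le 3$ prevents a logarithm from entering the integration, and the vanishing of $f_1,f_2,f_3$ together with the same assumption makes the integration constant automatically zero.
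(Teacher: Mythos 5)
Your proposal is correct and follows essentially the same route as the paper: the paper integrates the very same identity $u'(z)z^{-(k+1)}=z^{-(k+1)}+\sum_{n\ge4}f_nz^{n-k-1}$ (written there as $\frac{1}{z^{k+1}}=\frac{1}{z^{k+1}}\frac{dz}{du}+\sum f_nz^{n-k-1}\frac{dz}{du}$), computes $\int_0^u\bigl(\frac{1}{z^{k+1}}-\frac{1}{u^{k+1}}\bigr)du$ in two ways, and invokes the expansions (\ref{1234}) at indices $k$ and $k+1$ exactly as you do. The only cosmetic difference is that the definite integral from $0$ disposes of the integration constant automatically, whereas you fix it by evaluating at $z=0$.
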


\begin{proof}
For $k=1,2,3$, we have
\[\int_0^u\left(\frac{1}{z^{k+1}}-\frac{1}{u^{k+1}}\right)du=\sum_{n=4}^{\infty}\frac{C_n^{(k+1)}}{(n)_{k+1}}\frac{u^{n-k}}{(n-k)!}.\]
By differentiating the both sides of (\ref{a111}) with respect to $u$, we obtain
\[1=\frac{dz}{du}+\sum_{n=4}^{\infty}f_nz^n\frac{dz}{du}.\]
By dividing the both sides of the above equation by $z^{k+1}$, we have
\[\frac{1}{z^{k+1}}=\frac{1}{z^{k+1}}\frac{dz}{du}+\sum_{n=4}^{\infty}f_nz^{n-k-1}\frac{dz}{du}.\]
Therefore we have
\[\int_0^u\left(\frac{1}{z^{k+1}}-\frac{1}{u^{k+1}}\right)du=-\frac{1}{k}\frac{1}{z^{k}}+\sum_{n=4}^{\infty}f_n\frac{z^{n-k}}{n-k}+\frac{1}{k}\frac{1}{u^{k}}=-\frac{1}{k}\sum_{n=4}^{\infty}\frac{C_n^{(k)}}{(n)_{k}}\frac{u^{n-k}}{(n-k)!}+\sum_{n=4}^{\infty}f_n\frac{z^{n-k}}{n-k}.\]
Thus we have
\[k\sum_{n=4}^{\infty}\frac{C_n^{(k+1)}}{(n)_{k+1}}\frac{u^{n-k}}{(n-k)!}+\sum_{n=4}^{\infty}\frac{C_n^{(k)}}{(n)_{k}}\frac{u^{n-k}}{(n-k)!}=k\sum_{n=4}^{\infty}f_n\frac{z^{n-k}}{n-k}.\]
\end{proof}

\vspace{2ex}

\begin{lemma}(\cite{H}, \cite{O})\label{a}
Let $R_1$ and $R_2$ be two integral domains with characteristic $0$ satisfying $R_1\subset R_2$.
We consider a formal power series of $z$
\[h(z)=\sum_{n=0}^{\infty}\alpha_n\frac{z^n}{n!},\;\;\;\;\alpha_n\in R_2.\]
If $\alpha_0,\dots,\alpha_{n-1}$ belong to $R_1$, and there is a polynomial $F$ of $n$ variables over $R_1$ such that
\[h^{(n)}(z)=F(h(z),h'(z),\dots,h^{(n-1)}(z)),\]
where $h^{(n)}(z)$ is the $n$-th derivative of $h(z)$ with respect to $z$, then we have $h(z)\in R_1\langle\langle z\rangle\rangle$.
\end{lemma}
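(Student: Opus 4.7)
The plan is to prove the statement by strong induction on the index $k$, showing that every coefficient $\alpha_k$ lies in $R_1$. For $k=0,1,\dots,n-1$ this holds by hypothesis, so only the range $k\ge n$ requires work. The key observation is that $\alpha_k = h^{(k)}(0)$, so it suffices to establish a uniform structural description of the higher derivatives $h^{(k)}(z)$ for $k\ge n$, and then evaluate at $z=0$.

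More precisely, the main intermediate claim I would formulate is that for every $k\ge n$ there exists a polynomial $F_k$ in $n$ variables with coefficients in $R_1$ such that
\[
h^{(k)}(z) = F_k\bigl(h(z),\,h'(z),\,\dots,\,h^{(n-1)}(z)\bigr).
\]
The base case $k=n$ is exactly the hypothesis, with $F_n = F$. For the inductive step I would differentiate both sides with respect to $z$ and apply the chain rule, obtaining
\[
h^{(k+1)}(z) = \sum_{i=0}^{n-2}\bigl(\partial_i F_k\bigr)\bigl(h,h',\dots,h^{(n-1)}\bigr)\cdot h^{(i+1)}(z) + \bigl(\partial_{n-1} F_k\bigr)\bigl(h,h',\dots,h^{(n-1)}\bigr)\cdot h^{(n)}(z).
\]
The terms with $i<n-1$ already involve only $h^{(1)},\dots,h^{(n-1)}$, so they are polynomials over $R_1$ in $(h,\dots,h^{(n-1)})$. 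The final term contains $h^{(n)}$, but the hypothesis $h^{(n)} = F(h,\dots,h^{(n-1)})$ eliminates it in favor of the allowed variables. Since $F$ and all partial derivatives $\partial_i F_k$ have coefficients in $R_1$, the resulting expression defines the desired polynomial $F_{k+1}\in R_1[x_0,\dots,x_{n-1}]$.

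With the claim established, evaluation at $z=0$ yields
\[
\alpha_k = h^{(k)}(0) = F_k(\alpha_0,\alpha_1,\dots,\alpha_{n-1}),
\]
and the right-hand side lies in $R_1$ because $F_k$ has coefficients in $R_1$ and $\alpha_0,\dots,\alpha_{n-1}\in R_1$ by hypothesis. This closes the outer induction and gives $h(z)\in R_1\langle\langle z\rangle\rangle$.

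I do not expect any serious obstacle in this argument: the entire content is the iterated application of the chain rule together with the algebraic reduction afforded by the differential equation, and no analytic or arithmetic subtlety is needed (in particular the characteristic $0$ hypothesis is used only implicitly, through the fact that the normalisation $\alpha_k = h^{(k)}(0)$ in $R_2\langle\langle z\rangle\rangle$ is well-defined). The one point that deserves care is making sure the recursion for $F_{k+1}$ is carried out uniformly; that is, treating $h^{(n)}$ via the substitution $F(h,\dots,h^{(n-1)})$ rather than leaving it as a new variable, so that the polynomial $F_k$ stays in $n$ variables and the induction actually propagates.
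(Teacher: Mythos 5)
Your argument is correct: the induction showing that every $h^{(k)}$, $k\ge n$, equals $F_k(h,h',\dots,h^{(n-1)})$ for a polynomial $F_k$ over $R_1$ (using the chain rule and substituting $F$ for $h^{(n)}$), followed by taking constant terms to get $\alpha_k=F_k(\alpha_0,\dots,\alpha_{n-1})\in R_1$, is exactly the classical Hurwitz argument that the cited sources use; the paper itself gives no proof, only the references \cite{H}, \cite{O}. The only point worth making explicit is that "evaluation at $z=0$" is the constant-term ring homomorphism on $R_2\langle\langle z\rangle\rangle$ and that $\partial_iF_k$ again has coefficients in $R_1$ because $R_1$ has characteristic $0$ and is closed under integer multiples, both of which you have implicitly.
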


\begin{lemma}(\cite{H}, \cite{O})\label{4}
Let $R$ be an integral domain with characteristic $0$ and
\[h(z)=z+O(z^2)\in R\langle\langle z\rangle\rangle. \]
Then for any positive integer $m$,
\[\frac{h(z)^m}{m!}\]
also belongs to $R\langle\langle z\rangle\rangle$.
\end{lemma}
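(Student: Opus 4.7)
The plan is to proceed by induction on $m$. The base case $m=1$ is immediate since $h(z)^1/1!=h(z)$ lies in $R\langle\langle z\rangle\rangle$ by hypothesis. For the induction step, I would use the differential identity
\[
\frac{d}{dz}\!\left(\frac{h(z)^{m+1}}{(m+1)!}\right) = h'(z)\cdot\frac{h(z)^m}{m!},
\]
combined with the vanishing $h(0)=0$, which forces $h(z)^{m+1}/(m+1)!$ to vanish at $z=0$ so that it is recovered from its derivative by definite integration from $0$ to $z$.

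The key preparatory step is to verify that the Hurwitz ring $R\langle\langle z\rangle\rangle$ is stable under three operations: multiplication, differentiation, and integration from $0$. The multiplicative closure follows from
\[
\left(\sum_i a_i\frac{z^i}{i!}\right)\!\left(\sum_j b_j\frac{z^j}{j!}\right) = \sum_n\left(\sum_{i+j=n}\binom{n}{i}a_ib_j\right)\frac{z^n}{n!},
\]
together with $\binom{n}{i}\in\mathbb{Z}$. Differentiation sends $\sum_n a_n z^n/n!$ to $\sum_n a_{n+1}z^n/n!$, and integration from $0$ sends it to $\sum_{n\ge 1}a_{n-1}z^n/n!$; both clearly preserve Hurwitz integrality.

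Granting these three closure properties, the induction step becomes transparent: by the inductive hypothesis $h(z)^m/m!\in R\langle\langle z\rangle\rangle$, and by the differentiation closure $h'(z)\in R\langle\langle z\rangle\rangle$, so their product $h'(z)\cdot h(z)^m/m!$ belongs to $R\langle\langle z\rangle\rangle$. The integration closure then yields $h(z)^{m+1}/(m+1)!\in R\langle\langle z\rangle\rangle$, completing the induction.

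There is essentially no hard step here; the only subtlety is confirming the three closure properties of $R\langle\langle z\rangle\rangle$, each of which is verified by inspection of the explicit coefficients. The hypothesis $h(z)=z+O(z^2)$ is invoked only through its consequence $h(0)=0$, which is what permits the integration from $0$ to be carried out without introducing a constant of integration outside $R$.
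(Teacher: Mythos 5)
Your proof is correct and complete. Note that the paper itself gives no proof of this lemma, merely citing \cite{H} and \cite{O}, so there is nothing to compare against; your argument --- induction on $m$ via the identity $\frac{d}{dz}\bigl(h^{m+1}/(m+1)!\bigr)=h'\cdot h^m/m!$ together with closure of $R\langle\langle z\rangle\rangle$ under multiplication, differentiation, and integration from $0$ --- is a clean, self-contained justification, and you correctly isolate $h(0)=0$ as the only part of the hypothesis $h(z)=z+O(z^2)$ that is actually needed.
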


\begin{lemma}(\cite{H}, \cite{O}, \cite{silverman})\label{invhut}
Let $R$ be an integral domain with characteristic $0$ and
\[w(z)=z+O(z^2)\in R\langle\langle z\rangle\rangle. \]
Then, the formal inverse series $z(w)=w+O(w^2)$
belongs to $R\langle\langle w\rangle\rangle$.
\end{lemma}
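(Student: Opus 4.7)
The plan is to prove Hurwitz integrality of $z(w)$ by converting the inversion identity into a differential equation satisfied by $z(w)$ over a Hurwitz-integral coefficient ring, and then extracting integrality of the Hurwitz coefficients of $z(w)$ by induction via Fa\`a di Bruno's formula.

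First I would verify that $\tilde G(z) := 1/w'(z)$ lies in $R\langle\langle z\rangle\rangle$. Writing $w(z) = \sum_{n\ge 0}\beta_n z^n/n!$ with $\beta_0=0$, $\beta_1=1$, $\beta_n\in R$, the derivative is $w'(z) = \sum_{k\ge 0}\beta_{k+1}z^k/k! \in R\langle\langle z\rangle\rangle$ with $w'(0)=1$. Expanding $\tilde G(z) = \sum_{n\ge 0}\gamma_n z^n/n!$ and matching Hurwitz coefficients in $w'(z)\tilde G(z)=1$ yields the recursion
\[
\gamma_0 = 1,\qquad \gamma_n = -\sum_{k=1}^n \binom{n}{k}\beta_{k+1}\gamma_{n-k}\quad (n\ge 1),
\]
so $\gamma_n\in R$ for all $n$ by induction on $n$.

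Next, set $h(w):=z(w)$ and write $h(w) = \sum_{n\ge 0}c_n w^n/n!$. Differentiating the defining identity $w(h(w))=w$ and using the chain rule gives $w'(h(w))h'(w)=1$, hence $h'(w) = \tilde G(h(w))$. I would then prove $c_n\in R$ for all $n$ by induction on $n$. The base case $c_0 = h(0) = 0$ and $c_1 = h'(0) = \tilde G(0) = \gamma_0 = 1$ is immediate. For the inductive step, assuming $c_0,\dots,c_n\in R$, I would use the relation $h^{(n+1)}(w) = (\tilde G\circ h)^{(n)}(w)$ together with Fa\`a di Bruno's formula in its set-partition form, evaluated at $w=0$ where $h(0)=0$, to obtain
\[
c_{n+1} \;=\; \sum_{\pi\in\mathcal{P}(n)} \gamma_{|\pi|}\prod_{B\in\pi} c_{|B|},
\]
where $\mathcal{P}(n)$ is the set of partitions of $\{1,\dots,n\}$ into nonempty blocks. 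Each $|B|\le n$, so $c_{|B|}\in R$ by inductive hypothesis; each $\gamma_{|\pi|}\in R$ by the first step; and the multiplicity of any given partition shape in $\mathcal{P}(n)$ is a positive integer. Therefore $c_{n+1}\in R$, completing the induction.

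The main obstacle is organizing the combinatorics so that integrality is transparent. A naive approach expanding the identity $w(h(w)) = w$ term by term produces expressions of the form $\tfrac{1}{k!}\binom{n}{j_1,\dots,j_k}$, which are not manifestly in $\mathbb{Z}$; the key to avoiding this is to use the chain rule in the set-partition form of Fa\`a di Bruno, whose coefficients are nonnegative integers counting partitions of $\{1,\dots,n\}$ of a prescribed block-size profile. An alternative would be to invoke the Lagrange inversion formula and argue via the identity $n!\,[w^n]h(w) = (n-1)!\,[z^{n-1}](z/w(z))^n$, reducing the claim to Hurwitz integrality of an appropriate power of $z/w(z)$; this is the route suggested by the reference \cite{silverman} for inverses in a formal group.
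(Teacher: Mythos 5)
Your main argument is correct. The paper itself gives no proof of Lemma \ref{invhut}: it is stated with citations to Hurwitz, \^Onishi and Silverman only, so there is no in-paper argument to compare against. Your route --- first showing $\tilde G=1/w'\in R\langle\langle z\rangle\rangle$ by the product rule for Hurwitz series, then deriving $h'(w)=\tilde G(h(w))$ for $h=z(w)$ and extracting $c_{n+1}=\sum_{\pi\in\mathcal{P}(n)}\gamma_{|\pi|}\prod_{B\in\pi}c_{|B|}$ from the set-partition form of Fa\`a di Bruno --- is a clean, self-contained proof, and it is consistent in spirit with the differential-equation technique the paper does record as Lemma \ref{a} (where $h^{(n)}$ is expressed in earlier derivatives to propagate integrality). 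The one point I would flag is your closing aside: the Lagrange-inversion alternative is not as immediate as you suggest, since $n!\,[w^n]h(w)=(n-1)!\,[z^{n-1}](z/w(z))^n$ reduces the claim to Hurwitz integrality of $(z/w(z))^n$, and $w(z)/z$ has Hurwitz coefficients $\beta_{n+1}/(n+1)$, which need not lie in $R$; some extra work would be required there. Since that paragraph is explicitly an alternative and your induction via Fa\`a di Bruno stands on its own, the proof as given is complete.
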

\vspace{1ex}

We set $\deg u=-1$.
\begin{prop}\label{5}
We have $z(u)=u+O(u^2)\in \mathbb{Z}[\l_4,\l_6,\l_8,\l_{10}]\langle\langle u\rangle\rangle$ and $z(u)$ is homogeneous of degree $-1$ with respect to $u,\l_4,\l_6,\l_8,\l_{10}$.
\end{prop}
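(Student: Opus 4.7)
The plan is to derive a clean second-order ODE for $z(u)$ with polynomial coefficients in $\mathbb{Z}[\lambda_4,\lambda_6,\lambda_8,\lambda_{10}]$, and then feed it into Lemma \ref{a}. The point is that a direct appeal to Lemma \ref{invhut} would only give $z(u) \in \mathbb{Z}[\tfrac12,\lambda_4,\lambda_6,\lambda_8,\lambda_{10}]\langle\langle u\rangle\rangle$, since the coefficients $f_n$ of $u(z)$ in Proposition \ref{a1} a priori live in that ring. To eliminate the $\tfrac12$, one must exploit the fact that the halves arose from extracting a square root in Proposition \ref{exz}, and are eliminated as soon as one squares again.

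First I would compute $du/dz$ explicitly. From $x = 1/z^2$, $dx/dz = -2/z^3$, and the identity $(1+\sum_{n\ge 4} a_n z^n)^2 = 1+\lambda_4 z^4+\lambda_6 z^6+\lambda_8 z^8+\lambda_{10}z^{10}$ coming from Proposition \ref{exz}, one gets
\[
\frac{du}{dz} \;=\; -\frac{x}{2y}\frac{dx}{dz} \;=\; \frac{1}{z^5 y} \;=\; \bigl(1+\lambda_4 z^4+\lambda_6 z^6+\lambda_8 z^8+\lambda_{10} z^{10}\bigr)^{-1/2}.
\]
Inverting and squaring gives the first integral
\[
\left(\frac{dz}{du}\right)^{\!2} \;=\; 1+\lambda_4 z^4+\lambda_6 z^6+\lambda_8 z^8+\lambda_{10} z^{10},
\]
which already has coefficients in $\mathbb{Z}[\lambda_4,\lambda_6,\lambda_8,\lambda_{10}]$. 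Differentiating once with respect to $u$ and dividing by $2\,dz/du$ (whose constant term is $1$, hence invertible in the formal power series ring) yields
\[
\frac{d^{2}z}{du^{2}} \;=\; 2\lambda_4 z^3+3\lambda_6 z^5+4\lambda_8 z^7+5\lambda_{10} z^9.
\]

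Next, I would apply Lemma \ref{a} with $R_1=\mathbb{Z}[\lambda_4,\lambda_6,\lambda_8,\lambda_{10}]$ and $R_2=\mathbb{Q}[\lambda_4,\lambda_6,\lambda_8,\lambda_{10}]$, in the form $h''(u)=F(h(u))$ with $F(X)=2\lambda_4 X^3+3\lambda_6 X^5+4\lambda_8 X^7+5\lambda_{10} X^9 \in R_1[X]$. The initial data $z(0)=0$ and $z'(0)=1$ both lie in $R_1$; writing $z(u)=\sum_{n\ge 0}\alpha_n u^n/n!$, the ODE and its repeated formal differentiation express each $\alpha_{n+2}$ polynomially over $R_1$ in $\alpha_0,\ldots,\alpha_{n+1}$ (with Lemma \ref{4} handling the powers $z^k$ if one prefers an explicit recursion). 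Induction then gives $z(u)\in R_1\langle\langle u\rangle\rangle$.

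Finally, the homogeneity claim is transparent from the ODE once one assigns $\deg u=-1$, $\deg z=-1$ and $\deg\lambda_{2j}=2j$: each derivative $d/du$ raises the grading by $1$, so both sides of $z''=2\lambda_4 z^3+\cdots+5\lambda_{10}z^9$ are homogeneous of degree $1$. Since the initial terms $\alpha_0=0$, $\alpha_1=1$ are compatible with $\deg z=-1$, the recursion preserves homogeneity, and $z(u)$ is homogeneous of degree $-1$. I expect the only step requiring real care is the derivation of the first integral, specifically verifying the sign of $y$ and the constant of integration so that the half-integer exponents combine to produce an identity with integer polynomial coefficients; once that is in hand the rest is a clean application of Lemma \ref{a}.
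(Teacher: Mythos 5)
Your proposal is correct and follows essentially the same route as the paper: both derive the first integral $(z')^2=1+\lambda_4z^4+\lambda_6z^6+\lambda_8z^8+\lambda_{10}z^{10}$ (the paper via $y(u)=z^{-5}z'$ from Lemma \ref{l1} and the curve equation, you via $du/dz=(z^5y)^{-1}$ from Proposition \ref{exz} — the same computation), differentiate to get $z''=2\lambda_4z^3+3\lambda_6z^5+4\lambda_8z^7+5\lambda_{10}z^9$, and apply Lemma \ref{a} with $z(0)=0$, $z'(0)=1$. Your explicit remarks on why Lemma \ref{invhut} alone would not suffice and on the homogeneity grading are correct and in fact slightly more complete than the paper's own proof, which leaves the homogeneity claim implicit.
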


\begin{proof}
We have $x(u)=z(u)^{-2}$.
Therefore we have $x'(u)=-2z(u)^{-3}z'(u)$. From Lemma \ref{l1}, we obtain $y(u)=z(u)^{-5}z'(u)$.
From (\ref{defhypg2}), we obtain
\[(z')^2=1+\l_4z^4+\l_6z^6+\l_8z^8+\l_{10}z^{10}. \]
By differentiating the both sides of the above equation with respect to $u$ and dividing by $2z'$, we obtain
\begin{equation}
z''=2\l_4z^3+3\l_6z^5+4\l_8z^7+5\l_{10}z^9.\label{r}
\end{equation}
We define the polynomial $F(Z_1,Z_2)$ over $\mathbb{Z}[\l_4,\l_6,\l_8,\l_{10}]$ by
\[F(Z_1,Z_2)=2\l_4Z_1^3+3\l_6Z_1^5+4\l_8Z_1^7+5\l_{10}Z_1^9.\]
From (\ref{r}), we have $z''=F(z,z')$.
Since the function $z(u)$ is expanded around $u=0$ as
\[z(u)=u+O(u^2),\]
we have $z(0)=0$ and $z'(0)=1$.
From Lemma \ref{a}, we have $z(u)\in \mathbb{Z}[\l_4,\l_6,\l_8,\l_{10}]\langle\langle u\rangle\rangle$.
\end{proof}

\begin{lemma}\label{lem6} For $n\ge4$, we have the following relations.

\vspace{1ex}

(i) $\displaystyle{\frac{C_n^{(2)}}{(n)_2}+\frac{C_n^{(1)}}{n}\in \mathbb{Z}[1/2,\l_4,\l_6,\l_8,\l_{10}]}$

\vspace{1ex}

(ii) $\displaystyle{2\frac{C_n^{(3)}}{(n)_3}+\frac{C_n^{(2)}}{(n)_2}\in \mathbb{Z}[1/2,\l_4,\l_6,\l_8,\l_{10}]}$

\vspace{1ex}

(iii) $\displaystyle{3\frac{C_n^{(4)}}{(n)_4}+\frac{C_n^{(3)}}{(n)_3}\in 3\;\mathbb{Z}[1/2,\l_4,\l_6,\l_8,\l_{10}]}$

\vspace{1ex}

(iv) $\displaystyle{\frac{C_n^{(1)}}{n}+6\frac{C_n^{(4)}}{(n)_4}\in\mathbb{Z}[1/2,\l_4,\l_6,\l_8,\l_{10}]}$
\end{lemma}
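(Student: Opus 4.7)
Set $R := \mathbb{Z}[1/2, \l_4, \l_6, \l_8, \l_{10}]$. The plan is to read each of the four claimed memberships off Lemma \ref{t1}, after establishing that for every $k \in \{1,2,3\}$,
\[
\sum_{m=4}^{\infty} f_m\,\frac{z(u)^{m-k}}{m-k} \in R\langle\langle u\rangle\rangle;
\]
part (iv) will then follow from a linear combination of the first three.

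First I would establish the Hurwitz integrality displayed above. By Proposition \ref{5}, $z(u) = u + O(u^2)$ with $z(u) \in \mathbb{Z}[\l_4,\l_6,\l_8,\l_{10}]\langle\langle u\rangle\rangle$, so Lemma \ref{4} gives $z(u)^{m-k}/(m-k)! \in \mathbb{Z}[\l_4,\l_6,\l_8,\l_{10}]\langle\langle u\rangle\rangle$ whenever $m > k$. Rewriting
\[
\frac{z(u)^{m-k}}{m-k} = (m-k-1)!\,\frac{z(u)^{m-k}}{(m-k)!}
\]
exhibits this as a Hurwitz series in $u$ with coefficients in $\mathbb{Z}[\l_4,\l_6,\l_8,\l_{10}]$. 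Multiplying by $f_m \in R$ (Proposition \ref{a1}) and summing over $m \ge 4$, which is a finite sum in each Hurwitz coefficient because $z(u)^{m-k}/(m-k)!$ has $u$-valuation $m-k$, yields the claim.

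Next, comparing the coefficient of $u^{n-k}/(n-k)!$ on the two sides of Lemma \ref{t1} gives, for $n \ge 4$ and $k \in \{1,2,3\}$,
\[
k\,\frac{C_n^{(k+1)}}{(n)_{k+1}} + \frac{C_n^{(k)}}{(n)_{k}} \in k\,R.
\]
The cases $k=1,2,3$ yield (i), (ii), (iii) directly, the factor $3$ in (iii) being precisely the $k$-prefactor on the right of Lemma \ref{t1}. For (iv) I would use the linear identity
\[
\frac{C_n^{(1)}}{n} + 6\,\frac{C_n^{(4)}}{(n)_{4}} = \left(\frac{C_n^{(2)}}{(n)_{2}}+\frac{C_n^{(1)}}{n}\right) - \left(2\,\frac{C_n^{(3)}}{(n)_{3}}+\frac{C_n^{(2)}}{(n)_{2}}\right) + 2\left(3\,\frac{C_n^{(4)}}{(n)_{4}}+\frac{C_n^{(3)}}{(n)_{3}}\right),
\]
in which the first two brackets lie in $R$ by (i)--(ii) and twice the third lies in $6R \subseteq R$ by (iii).

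I do not foresee a genuine obstacle: once Lemma \ref{t1}, Proposition \ref{5} and Lemma \ref{4} are in hand, the argument is pure bookkeeping. The one spot that needs care is tracking the factor $(m-k-1)!$ that appears when passing between $z^{m-k}/(m-k)$ and its Hurwitz-normalized form, and observing that the prefactor $k$ on the right of Lemma \ref{t1} is precisely what forces the factor of $3$ in part (iii).
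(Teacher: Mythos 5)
Your proposal is correct and follows essentially the same route as the paper: apply Lemma \ref{t1} for $k=1,2,3$, use Proposition \ref{5} and Lemma \ref{4} to see that $z(u)^{n-k}/(n-k)!$ is Hurwitz integral over $\mathbb{Z}[\l_4,\l_6,\l_8,\l_{10}]$, and compare coefficients of $u^{n-k}/(n-k)!$. The only difference is that you make the linear combination $(\mathrm{i})-(\mathrm{ii})+2\cdot(\mathrm{iii})$ for part (iv) explicit, which the paper leaves as an unspecified consequence of (i)--(iii); your combination checks out.
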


\begin{proof}
In Lemma \ref{t1}, we set $k=1$. Then we have
\begin{equation}
\sum_{n=4}^{\infty}\frac{C_n^{(2)}}{(n)_{2}}\frac{u^{n-1}}{(n-1)!}+\sum_{n=4}^{\infty}\frac{C_n^{(1)}}{n}\frac{u^{n-1}}{(n-1)!}=\sum_{n=4}^{\infty}f_n(n-2)!\frac{z^{n-1}}{(n-1)!}.\label{61}
\end{equation}
From Lemma \ref{4} and Proposition \ref{5}, we have
\[\frac{z^{n-1}}{(n-1)!}\in\mathbb{Z}[\l_4,\l_6,\l_8,\l_{10}]\langle\langle u\rangle\rangle.\]
By comparing the coefficient of $\frac{u^{n-1}}{(n-1)!}$ in (\ref{61}) and using $f_n\in\mathbb{Z}[\frac{1}{2},\l_4,\l_6,\l_8,\l_{10}]$, we obtain
\[\frac{C_n^{(2)}}{(n)_2}+\frac{C_n^{(1)}}{n}\in \mathbb{Z}[1/2,\l_4,\l_6,\l_8,\l_{10}].\]
In Lemma \ref{t1}, we set $k=2$. Then we have
\begin{equation}
2\sum_{n=4}^{\infty}\frac{C_n^{(3)}}{(n)_{3}}\frac{u^{n-2}}{(n-2)!}+\sum_{n=4}^{\infty}\frac{C_n^{(2)}}{(n)_2}\frac{u^{n-2}}{(n-2)!}=2\sum_{n=4}^{\infty}f_n(n-3)!\frac{z^{n-2}}{(n-2)!}.\label{62}
\end{equation}
By comparing the coefficient of $\frac{u^{n-2}}{(n-2)!}$ in (\ref{62}) and using $f_n\in\mathbb{Z}[\frac{1}{2},\l_4,\l_6,\l_8,\l_{10}]$, we obtain
\[2\frac{C_n^{(3)}}{(n)_3}+\frac{C_n^{(2)}}{(n)_2}\in \mathbb{Z}[1/2,\l_4,\l_6,\l_8,\l_{10}].\]
In Lemma \ref{t1}, we set $k=3$. Then we have
\begin{equation}
3\sum_{n=4}^{\infty}\frac{C_n^{(4)}}{(n)_{4}}\frac{u^{n-3}}{(n-3)!}+\sum_{n=4}^{\infty}\frac{C_n^{(3)}}{(n)_3}\frac{u^{n-3}}{(n-3)!}=3\sum_{n=4}^{\infty}f_n(n-4)!\frac{z^{n-3}}{(n-3)!}.\label{63}
\end{equation}
By comparing the coefficient of $\frac{u^{n-3}}{(n-3)!}$ in (\ref{63}) and using $f_n\in\mathbb{Z}[\frac{1}{2},\l_4,\l_6,\l_8,\l_{10}]$, we obtain
\[3\frac{C_n^{(4)}}{(n)_4}+\frac{C_n^{(3)}}{(n)_3}\in 3\;\mathbb{Z}[1/2,\l_4,\l_6,\l_8,\l_{10}].\]
From (i), (ii), and (iii), we obtain (iv).
\end{proof}

\vspace{2ex}



\begin{lemma}
It is possible to take a local parameter $s$ of $V$ around $\infty$ such that
\[x=\frac{1}{s^2}(1+\sum_{n=1}^{\infty}\alpha_ns^n),\;\;\;y=\frac{1}{s^5},\]
where $\alpha_n$ is a homogeneous polynomial in $\mathbb{Z}[\frac{1}{5},\l_4,\l_6,\l_8,\l_{10}]$ of degree $n$ if $\alpha_n\neq0$.
\end{lemma}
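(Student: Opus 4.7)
The plan is to define $s$ directly by the condition $y = 1/s^5$, verify it is a local parameter at $\infty$, and then extract the expansion of $x$ in $s$ from the defining equation of $V$ by a formal power series argument whose only denominator is a power of $5$.

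First, since $y$ has a pole of order $5$ at $\infty$, choosing a fifth root of $y^{-1}$ (in the local completion at $\infty$) in the branch with $sy^{1/5}\to 1$ gives a function $s=y^{-1/5}$ with a simple zero at $\infty$, hence a local parameter. It remains to show that $x$, regarded as a Laurent series in $s$, has the form stated with coefficients in $\mathbb{Z}[\tfrac{1}{5},\l_4,\l_6,\l_8,\l_{10}]$. Introduce the auxiliary series $w=s^2 x$. From $y^2=s^{-10}$ together with (\ref{defhypg2}), multiplying through by $s^{10}$ gives the clean polynomial identity
\[
w^5 \;=\; 1 - \l_4 s^4 w^3 - \l_6 s^6 w^2 - \l_8 s^8 w - \l_{10} s^{10}.
\]
At $s=0$ this forces $w^5=1$, and we choose the branch $w(0)=1$ so that $x=w/s^2=s^{-2}+\cdots$. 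The implicit function theorem (applied to $F(s,w)=w^5-1+\l_4 s^4 w^3+\cdots$, for which $\partial_w F(0,1)=5\ne 0$) produces the formal solution $w=1+\sum_{n\ge 1}\alpha_n s^n$.

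Next I would derive the recursion for $\alpha_n$. Expanding the left-hand side as $w^5=(1+\sum_{n\ge1}\alpha_n s^n)^5$ and collecting the coefficient of $s^n$, the linear part contributes exactly $5\alpha_n$, while all other contributions are integer polynomials in $\alpha_1,\dots,\alpha_{n-1}$; the right-hand side contributes an integer polynomial in $\l_4,\l_6,\l_8,\l_{10}$ and in $\alpha_1,\dots,\alpha_{n-4}$. This yields
\[
5\alpha_n \;=\; P_n(\alpha_1,\dots,\alpha_{n-1};\l_4,\l_6,\l_8,\l_{10})
\]
with $P_n$ having coefficients in $\mathbb{Z}$. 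By induction on $n$, starting from $\alpha_n=0$ for $n<4$ and $\alpha_4=-\l_4/5$, every $\alpha_n$ lies in $\mathbb{Z}[\tfrac{1}{5},\l_4,\l_6,\l_8,\l_{10}]$. The homogeneity statement follows by assigning $\deg s=-1$ and $\deg\l_{2i}=2i$: then each summand of the displayed identity for $w^5$ is homogeneous of degree $0$, the recursion preserves this grading, and $\alpha_n s^n$ being of degree $0$ forces $\deg\alpha_n=n$ whenever $\alpha_n\ne 0$.

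The only nontrivial point is that the fifth-root extraction introduces denominators at most a power of $5$ and no other primes — in particular no $\tfrac12$ (in contrast to Proposition \ref{exz}, where the square-root normalization of $y$ forced $\tfrac12$). This is exactly the content of the observation that the coefficient of $\alpha_n$ in $[s^n]w^5$ equals $5$; once this linear-algebraic fact is noted, the rest is a clean induction. Everything else is a routine coefficient comparison, so I do not expect any further obstacle.
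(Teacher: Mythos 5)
Your proposal is correct and follows essentially the same route as the paper: normalize $y=1/s^5$, adjust the choice of fifth root so that $s^2x\to 1$ (the paper does this explicitly via $\beta$ with $\beta^5=1$, $\beta^2=\alpha$, which is what your ``choose the branch $w(0)=1$'' amounts to, using that $\zeta\mapsto\zeta^2$ permutes the fifth roots of unity), and then read off the $\alpha_n$ recursively from the identity $1=(1+\sum\alpha_ns^n)^5+\l_4s^4(\cdots)^3+\cdots$, where the only denominator introduced is the $5$ multiplying $\alpha_n$ in the linear term. Your write-up just makes the recursion $5\alpha_n=P_n$ and the grading argument explicit where the paper says ``recursively.''
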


\begin{proof}
It is possible to take a local parameter $s_1$ of $V$ around $\infty$ such that
\[y=\frac{1}{s_1^5}.\]
The expansion of $x$ around $\infty$ with respect to $s_1$ takes the following form
\[x=\frac{\alpha}{s_1^2}(1+O(s_1)),\;\;\;\alpha\in\mathbb{C}.\]
By substituting the above expansions into (\ref{defhypg2}), multiplying the both sides by $s_1^{10}$, and comparing the coefficient of $s_1^0$, we obtain
\[1=\alpha^5.\]
There exists $\beta\in\mathbb{C}$ such that $\beta^5=1$ and $\beta^2=\alpha$.
Let $s=\beta^{-1} s_1$. Then we have
\[x=\frac{1}{s^2}(1+\sum_{n=1}^{\infty}\alpha_ns^n),\;\;\;y=\frac{1}{s^5},\]
where $\alpha_n\in\mathbb{C}$.
By substituting the above expressions into (\ref{defhypg2}) and multiplying the both sides by $s^{10}$, we obtain
\[1=(1+\sum_{n=1}^{\infty}\alpha_ns^n)^5+\l_4s^4(1+\sum_{n=1}^{\infty}\alpha_ns^n)^3+\l_6s^6(1+\sum_{n=1}^{\infty}\alpha_ns^n)^2+\l_8s^8(1+\sum_{n=1}^{\infty}\alpha_ns^n)+\l_{10}s^{10}.\]
From the above equation, we can find that $\alpha_n$ is a homogeneous polynomial in $\mathbb{Z}[\frac{1}{5},\l_4,\l_6,\l_8,\l_{10}]$ of degree $n$ if $\alpha_n\neq0$ recursively.

\end{proof}
We can regard $u(s)$ as a function defined around $s=0$.

\vspace{2ex}

\begin{lemma}\label{a2}
The function $u(s)$ is expanded around $s=0$ as
\begin{equation}
u(s)=s+\sum_{n=1}^{\infty}g_n\frac{s^{n+1}}{n+1},\label{r2}
\end{equation}
where $g_1=g_2=g_3=g_4=0$ and $g_n$ is a homogeneous polynomial in $\mathbb{Z}[\frac{1}{5},\l_4,\l_6,\l_8,\l_{10}]$ of degree $n$ if $g_n\neq0$.
\end{lemma}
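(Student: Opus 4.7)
The plan is to compute $du/ds$ explicitly from the expansions of $x$ and $y$ in $s$ given by the preceding lemma, verify that its power-series coefficients all lie in $\mathbb{Z}[\frac{1}{5},\l_4,\l_6,\l_8,\l_{10}]$, and then integrate term by term, concluding with a small explicit check for the vanishing of $g_1,g_2,g_3,g_4$. Since $du = -\frac{x}{2y}dx$ and $y = s^{-5}$, the factor $-x/(2y)$ equals $-\frac{1}{2}s^3\bigl(1+\sum_{n\ge 1}\alpha_n s^n\bigr)$, and differentiating $x = s^{-2}\bigl(1+\sum_{n\ge 1}\alpha_n s^n\bigr)$ with respect to $s$ yields $dx/ds = -2s^{-3}+\sum_{n\ge 1}(n-2)\alpha_n s^{n-3}$. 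Multiplying, the $s^3$ and $s^{-3}$ cancel and one obtains
\[
\frac{du}{ds} = \Bigl(1+\sum_{n\ge 1}\alpha_n s^n\Bigr)\Bigl(1 - \tfrac{1}{2}\sum_{n\ge 1}(n-2)\alpha_n s^n\Bigr).
\]

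The apparent obstacle is the factor $\tfrac{1}{2}$, which would naively put the coefficients in $\mathbb{Z}[\frac{1}{10},\l_4,\ldots,\l_{10}]$ rather than $\mathbb{Z}[\frac{1}{5},\l_4,\ldots,\l_{10}]$. The resolution is that, by the preceding lemma, $\alpha_n$ is homogeneous of degree $n$ in $\l_4,\l_6,\l_8,\l_{10}$; since each $\l_{2i}$ has even weight, no monomial of odd weighted degree exists, and therefore $\alpha_n=0$ for every odd $n$. For $n$ even, $(n-2)/2\in\mathbb{Z}$, so $\tfrac{1}{2}(n-2)\alpha_n$ always lies in $\mathbb{Z}[\frac{1}{5},\l_4,\ldots,\l_{10}]$. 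Expanding the displayed product therefore produces $du/ds = 1 + \sum_{n\ge 1} g_n s^n$ with each $g_n$ a homogeneous polynomial of degree $n$ in $\mathbb{Z}[\frac{1}{5},\l_4,\l_6,\l_8,\l_{10}]$; integrating termwise from $0$ to $s$ then gives the stated form $u(s) = s+\sum_{n\ge 1}g_n \tfrac{s^{n+1}}{n+1}$.

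It remains only to verify $g_1=g_2=g_3=g_4=0$. For $n\le 3$ we have $\alpha_n=0$ by the preceding lemma, and no cross-term $\alpha_i\alpha_j$ with $i+j\le 3$ and $i,j\ge 1$ is nonzero, so $g_1=g_2=g_3=0$. For $n=4$ the diagonal contribution equals $\alpha_4\bigl(1-\tfrac{4-2}{2}\bigr)=0$, while every cross-term $\alpha_i\alpha_j$ with $i+j=4$ and $i,j\ge 1$ contains a factor $\alpha_1$, $\alpha_2$, or $\alpha_3$ and therefore vanishes; hence $g_4=0$. The whole argument is a direct computation, and the only non-routine input is the parity observation that forces $\alpha_n=0$ for odd $n$, which is precisely what prevents $\frac{1}{2}$ from appearing in the final ring of coefficients.
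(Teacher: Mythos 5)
Your proof is correct and follows essentially the same route as the paper: compute $du/ds$ from the expansions of $x$ and $y$ in $s$, use the homogeneity of $\alpha_n$ (degree $n$ in variables of even degree) to conclude $\alpha_n=0$ for odd $n$ and thereby absorb the factor $\tfrac12$, then integrate and check the vanishing of the low-order coefficients directly. The only cosmetic difference is that the paper keeps the product in Laurent form and reads off $g_4=0$ as the vanishing of the coefficient of $s^{-1}$, which is the same cancellation $\alpha_4-\alpha_4=0$ you exhibit.
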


\begin{proof}
We have
\[x(s)=s^{-2}+\sum_{n=4}^{\infty}\alpha_ns^{n-2},\;\;\;x'(s)=-2s^{-3}+\sum_{n=4}^{\infty}(n-2)\alpha_ns^{n-3}.\]
Since $\alpha_n\in\mathbb{Z}[\frac{1}{5},\l_4,\l_6,\l_8,\l_{10}]$ is homogeneous of degree $n$ if $\alpha_n\neq0$, we have $\alpha_n=0$ if $n$ is odd.
Therefore, all the coefficients of the expansion of $x'(s)$ are included in $2\;\mathbb{Z}[\frac{1}{5},\l_4,\l_6,\l_8,\l_{10}]$.
We have
\[u(s)=\int_0^s-\frac{(s^{-2}+\sum_{n=4}^{\infty}\alpha_ns^{n-2})(-2s^{-3}+\sum_{n=4}^{\infty}(n-2)\alpha_ns^{n-3})}{2s^{-5}}ds\]
\begin{equation}
=\int_0^ss^5(s^{-2}+\sum_{n=4}^{\infty}\alpha_ns^{n-2})(s^{-3}-\sum_{n=4}^{\infty}\frac{n-2}{2}\alpha_ns^{n-3})ds=s+\sum_{n=1}^{\infty}g_n\frac{s^{n+1}}{n+1},\label{us}
\end{equation}
where $g_1=g_2=g_3=0$ and $g_n$ is a homogeneous polynomial in $\mathbb{Z}[\frac{1}{5},\l_4,\l_6,\l_8,\l_{10}]$ of degree $n$ if $g_n\neq0$.
We find that the coefficient of $s^{-1}$ in
\[(s^{-2}+\sum_{n=4}^{\infty}\alpha_ns^{n-2})(s^{-3}-\sum_{n=4}^{\infty}\frac{n-2}{2}\alpha_ns^{n-3})\]
is equal to $0$.
From (\ref{us}), we have $g_4=0$.
\end{proof}

\vspace{2ex}

We consider the inverse mapping $s(u)$ of $u(s)$.

\begin{prop}
We have $s(u)\in\mathbb{Z}[\frac{1}{5},\l_4,\l_6,\l_8,\l_{10}]\langle\langle u\rangle\rangle$ and $s(u)$ is homogeneous of degree $-1$ with respect to $u,\l_4,\l_6,\l_8,\l_{10}$.
\end{prop}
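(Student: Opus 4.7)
The plan is to deduce the proposition from Lemma \ref{a2} via the formal inversion result Lemma \ref{invhut}, with $R = \mathbb{Z}[\tfrac{1}{5}, \l_4, \l_6, \l_8, \l_{10}]$. This mirrors how Proposition \ref{a1} could have been combined with Lemma \ref{invhut} to establish Proposition \ref{5}; the only difference is that $s$ here plays the role $z$ played there.

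First I would rewrite the expansion
\[
u(s) = s + \sum_{n \ge 1} g_n \frac{s^{n+1}}{n+1}
\]
from Lemma \ref{a2} in Hurwitz form $u(s) = \sum_{m \ge 1} a_m \frac{s^m}{m!}$. Comparing coefficients gives $a_1 = 1$ and $a_m = (m-1)!\,g_{m-1}$ for $m \ge 2$; since each $g_{m-1}$ lies in $R$ and $(m-1)! \in \mathbb{Z}$, every $a_m$ lies in $R$. Thus $u(s) \in R\langle\langle s\rangle\rangle$ with $u(s) = s + O(s^2)$, and Lemma \ref{invhut} then yields at once that the formal inverse $s(u) = u + O(u^2)$ belongs to $R\langle\langle u\rangle\rangle$. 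This is the Hurwitz integrality assertion.

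For the homogeneity claim, I would assign $\deg s = \deg u = -1$, which is compatible with $y = s^{-5}$, $\deg y = 5$, and the paper's convention $\deg u_1 = -1$, and recall from Lemma \ref{a2} that $g_n$ is homogeneous of degree $n$. Each summand $g_n \frac{s^{n+1}}{n+1}$ of $u(s)$ then has degree $n - (n+1) = -1$, so $u(s)$ is homogeneous of degree $-1$. Writing the inverse as $s(u) = u + \sum_{n \ge 1} b_n \frac{u^{n+1}}{n+1}$ and expanding $u(s(u)) = u$ order by order expresses each $b_n$ as a polynomial in $g_1,\ldots,g_n$ with integer coefficients, and a straightforward induction on $n$ gives $\deg b_n = n$. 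Hence $s(u)$ is homogeneous of degree $-1$, as required.

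I do not anticipate any substantive obstacle here: the real content, namely the construction of the local parameter $s$ and the integrality of the coefficients $g_n$, is already established in Lemma \ref{a2}, and the formal inversion step is packaged by Lemma \ref{invhut}. The only minor bookkeeping is the Hurwitz reformulation of $u(s)$, which works precisely because the factor $(m-1)!$ clears the denominator $m$ introduced by the explicit form $s^{n+1}/(n+1)$.
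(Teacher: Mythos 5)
Your proposal is correct and follows essentially the same route as the paper: Hurwitz integrality of $u(s)$ from Lemma \ref{a2}, formal inversion via Lemma \ref{invhut}, and homogeneity read off from the degrees of the $g_n$. You merely make explicit the bookkeeping (the factor $(m-1)!$ clearing the denominator $n+1$, and the induction for $\deg b_n$) that the paper leaves implicit.
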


\begin{proof}
From Lemma \ref{a2}, we have $u(s)=s+O(s^2)\in\mathbb{Z}[\frac{1}{5},\l_4,\l_6,\l_8,\l_{10}]\langle\langle s\rangle\rangle$.
Therefore, from Lemma \ref{invhut}, we have $s(u)=u+O(u^2)\in\mathbb{Z}[\frac{1}{5},\l_4,\l_6,\l_8,\l_{10}]\langle\langle u\rangle\rangle$.
From Lemma \ref{a2}, we can find that $s(u)$ is homogeneous of degree $-1$ with respect to $u,\l_4,\l_6,\l_8,\l_{10}$.
\end{proof}


\vspace{2ex}

Therefore the expansions of $s^{-k}$, where $k=1,2,3,4,5$, have the following forms :
\[\frac{1}{s^k}=\frac{1}{u^k}+\sum_{n=6}^{\infty}\frac{D_n^{(k)}}{(n)_k}\frac{u^{n-k}}{(n-k)!},\;\;\;\;k=1,2,3,4,5,\]
where $D_n^{(k)}$ is a homogeneous polynomial in $\mathbb{Q}[\l_4,\l_6,\l_8,\l_{10}]$ of degree $n$ if $D_n^{(k)}\neq0$.
From Theorem \ref{CLARKE}, we obtain
\begin{equation}
\frac{D_n^{(1)}}{n}=\sum_{n=a(p-1),\;p\ge7\;:\;prime}\frac{z_2(p,n)}{p^{1+ord_pa}}g_{p-1}^a+\widetilde{g}_n,\label{dn1}
\end{equation}
where $z_2(p,n)\in\mathbb{Z}$ and $\widetilde{g}_n\in\mathbb{Z}[\frac{1}{5},\l_4,\l_6,\l_8,\l_{10}]$.

\begin{lemma}\label{ord23y}
For any integer $n\ge6$, we have
\[\mbox{ord}_2\left(\frac{D_n^{(1)}}{n}\right)\ge\left\lfloor \frac{n-1}{4} \right\rfloor. \]
\end{lemma}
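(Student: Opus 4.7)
The plan is to recognize $D_n^{(1)}/n$ as a universal Bernoulli number evaluated at $f_i = g_i$, and then exploit the vanishing $g_1 = g_2 = g_3 = g_4 = 0$ together with Lemma \ref{onishilem}. Concretely, since $s(u)$ is the formal inverse of the series $u(s) = s + \sum_{n \geq 1} g_n \frac{s^{n+1}}{n+1}$ from Lemma \ref{a2}, the defining equation of the universal Bernoulli numbers gives
\[
\frac{u}{s(u)} = \sum_{n \geq 0} \hat{B}_n(g_1, g_2, \dots) \frac{u^n}{n!}.
\]
Comparing this with the expansion $\frac{1}{s(u)} = \frac{1}{u} + \sum_{n \geq 6} \frac{D_n^{(1)}}{(n)_1} \frac{u^{n-1}}{(n-1)!}$, one reads off
\[
\frac{D_n^{(1)}}{n} = \frac{\hat{B}_n}{n}\bigg|_{f_i = g_i}.
\]

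Next, I would invoke Proposition \ref{on1} to write
\[
\frac{D_n^{(1)}}{n} = \sum_{w(U) = n} \tau_U\, g^U,
\]
and then restrict the range of $U$. Since $g_i$ is a homogeneous polynomial in $\lambda_4, \lambda_6, \lambda_8, \lambda_{10}$ of degree $i$, and since $g_1 = g_2 = g_3 = g_4 = 0$ by Lemma \ref{a2} (combined with the fact that $g_i$ vanishes for all odd $i$ on degree grounds), the only sequences $U = (U_1, U_2, \dots)$ that contribute are those with $U_i = 0$ for every odd $i$ and for $i = 2, 4$. Any such $U$ with $w(U) = n \geq 6$ automatically satisfies $d(U) \geq 1$, so in particular $d(U) \neq 0$.

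For those $U$, the hypotheses of Lemma \ref{onishilem} ($U_i = 0$ for odd $i$, $U_2 = 0$, $d(U) \neq 0$) are met, so
\[
\mathrm{ord}_2(\tau_U) \;\geq\; \left\lfloor \frac{w(U) + d(U) - 2}{4} \right\rfloor \;\geq\; \left\lfloor \frac{n - 1}{4} \right\rfloor,
\]
using $w(U) = n$ and $d(U) \geq 1$. On the other hand, $g_i \in \mathbb{Z}[\tfrac{1}{5}, \lambda_4, \lambda_6, \lambda_8, \lambda_{10}]$, so $g^U$ has $2$-adic order at least $0$. Combining these two bounds termwise and summing over the finitely many admissible $U$ yields
\[
\mathrm{ord}_2\!\left( \frac{D_n^{(1)}}{n} \right) \geq \left\lfloor \frac{n-1}{4} \right\rfloor,
\]
as required.

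The only mildly delicate point is the identification in the first paragraph: one must be careful that the universal Bernoulli generating function is precisely $u/s(u)$, so that the coefficient $\hat{B}_n/n$ really corresponds to $D_n^{(1)}/n$ (not, say, with a shift in index). Beyond that, the argument is a clean application of Lemma \ref{onishilem} together with the vanishing of $g_1, g_2, g_3, g_4$ and the absence of any factor of $2$ in the denominators of the $g_i$.
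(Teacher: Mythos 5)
Your proposal is correct and follows essentially the same route as the paper: identify $D_n^{(1)}/n$ with the universal Bernoulli number $\hat{B}_n/n$ evaluated at $f_i=g_i$, expand via Proposition \ref{on1} as $\sum_{w(U)=n}\tau_U g^U$, discard the terms killed by $g_2=0$ and $g_i=0$ for odd $i$, and apply Lemma \ref{onishilem} together with $\mathrm{ord}_2(g^U)\ge 0$. The paper's proof is just a terser version of the same argument; your extra care about the index alignment in $u/s(u)$ and the vanishing of $g_4$ is harmless and fills in details the paper leaves implicit.
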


\begin{proof}
From Proposition \ref{on1}, we have
\[\frac{D_n^{(1)}}{n}=\sum_{w(U)=n}\tau_Ug^U.\]
Note that $g_2=0$ and $g_i=0$ for any odd integer $i$.
Therefore, from Lemma \ref{onishilem}, we obtain the statement of the lemma.
\end{proof}

\begin{lemma}
For $k=1,2,3,4$, we have
\[k\sum_{n=6}^{\infty}\frac{D_n^{(k+1)}}{(n)_{k+1}}\frac{u^{n-k}}{(n-k)!}+\sum_{n=6}^{\infty}\frac{D_n^{(k)}}{(n)_{k}}\frac{u^{n-k}}{(n-k)!}=k\sum_{n=6}^{\infty}g_n\frac{s^{n-k}}{n-k}.\]
\end{lemma}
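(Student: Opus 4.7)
The plan is to mimic the proof of Lemma \ref{t1} with the substitutions $z\mapsto s$, $f_n\mapsto g_n$, $C_n^{(k)}\mapsto D_n^{(k)}$, and starting index $4\mapsto 6$. Fix $k\in\{1,2,3,4\}$ and evaluate the antiderivative $\int_0^u\!\bigl(s^{-k-1}-u^{-k-1}\bigr)\,du$ in two different ways, then equate.

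First, termwise integration of the given expansion $1/s^{k+1} = 1/u^{k+1}+\sum_{n\ge 6} D_n^{(k+1)}\,u^{n-k-1}/((n)_{k+1}(n-k-1)!)$ immediately produces the sum $\sum_{n\ge 6} D_n^{(k+1)}\,u^{n-k}/((n)_{k+1}(n-k)!)$, which is the first term on the left of the target (after multiplying by $k$). For the second evaluation, I differentiate $u=s+\sum_{n\ge 6} g_n s^{n+1}/(n+1)$ with respect to $u$ to obtain $1=(ds/du)\bigl(1+\sum_{n\ge 6} g_n s^n\bigr)$, divide both sides by $s^{k+1}$, and integrate termwise from $0$ to $u$. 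The leading piece $s^{-k-1}(ds/du)$ integrates to $-\tfrac{1}{k}s^{-k}$, while each $g_n s^{n-k-1}(ds/du)$ integrates to $g_n s^{n-k}/(n-k)$; these denominators $n-k$ are all nonzero because $k\le 4$ and $n\ge 6$. Substituting $s^{-k}-u^{-k}=\sum_{n\ge 6} D_n^{(k)}\,u^{n-k}/((n)_{k}(n-k)!)$ into this second evaluation, equating it with the first, and multiplying through by $k$ produces exactly the claimed identity.

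The only subtlety is confirming that the sum on the right-hand side really starts at $n=6$ rather than $n=5$. Lemma \ref{a2} formally writes $u(s)=s+\sum_{n\ge 1} g_n s^{n+1}/(n+1)$ with $g_1=g_2=g_3=g_4=0$ stated explicitly, but the coefficient $g_5$ must also vanish: since $g_n$ is homogeneous of degree $n$ in the generators $\lambda_4,\lambda_6,\lambda_8,\lambda_{10}$, all of which have even degree, any monomial in $g_5$ would have even total degree, forcing $g_5=0$. Consequently the relation $1=(ds/du)(1+\sum_{n\ge 6}g_n s^n)$ is indexed from $n=6$, and the integration contributes no stray $s^{5-k}/(5-k)$ term. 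Apart from this bookkeeping check, the argument is a direct parallel of Lemma \ref{t1} and I do not anticipate any further obstacle.
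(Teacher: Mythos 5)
Your proof is correct and follows essentially the same route as the paper, which simply proves this lemma ``in the same way as Lemma \ref{t1}'' (integrate $s^{-k-1}-u^{-k-1}$ two ways and compare). Your extra verification that $g_5=0$ by the parity of the degrees of $\lambda_4,\lambda_6,\lambda_8,\lambda_{10}$ is a sound bookkeeping point that the paper leaves implicit, and it correctly justifies starting the right-hand sum at $n=6$.
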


\begin{proof}
We can prove this lemma in the same way as Lemma \ref{t1}.
\end{proof}

\begin{lemma}\label{lem61} For $n\ge6$, we have the following relations.

\vspace{1ex}

(i) $\displaystyle{\frac{D_n^{(2)}}{(n)_2}+\frac{D_n^{(1)}}{n}\in 2^3\cdot 3\;\mathbb{Z}[1/5,\l_4,\l_6,\l_8,\l_{10}]}$

\vspace{1ex}

(ii) $\displaystyle{2\frac{D_n^{(3)}}{(n)_3}+\frac{D_n^{(2)}}{(n)_2}\in 2^2\cdot 3\;\mathbb{Z}[1/5,\l_4,\l_6,\l_8,\l_{10}]}$

\vspace{1ex}

(iii) $\displaystyle{3\frac{D_n^{(4)}}{(n)_4}+\frac{D_n^{(3)}}{(n)_3}\in 2\cdot 3\;\mathbb{Z}[1/5,\l_4,\l_6,\l_8,\l_{10}]}$

\vspace{1ex}

(iv) $\displaystyle{4\frac{D_n^{(5)}}{(n)_5}+\frac{D_n^{(4)}}{(n)_4}\in 2^2\;\mathbb{Z}[1/5,\l_4,\l_6,\l_8,\l_{10}]}$

\vspace{1ex}

(v) $\displaystyle{\frac{D_n^{(1)}}{n}-24\frac{D_n^{(5)}}{(n)_5}\in 2^2\cdot 3\;\mathbb{Z}[1/5,\l_4,\l_6,\l_8,\l_{10}]}$
\end{lemma}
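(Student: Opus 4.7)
The proof parallels the argument for $C_n^{(k)}$ in Lemma \ref{lem6}. For each $k \in \{1,2,3,4\}$, I will compare the Hurwitz coefficient of $u^{n-k}/(n-k)!$ on both sides of the displayed identity immediately preceding this lemma. The left-hand-side coefficient equals $k\,D_n^{(k+1)}/(n)_{k+1} + D_n^{(k)}/(n)_k$. On the right, only terms with $6 \le n' \le n$ contribute to the coefficient of $u^{n-k}/(n-k)!$. Rewriting $g_{n'}\,s^{n'-k}/(n'-k) = g_{n'}\,(n'-k-1)!\cdot s^{n'-k}/(n'-k)!$ and invoking Lemma \ref{4} on the power $s(u)^{n'-k}/(n'-k)!$ — which is legitimate since the preceding proposition gives $s(u) = u + O(u^2) \in \mathbb{Z}[1/5,\l_4,\l_6,\l_8,\l_{10}]\langle\langle u\rangle\rangle$ — together with $g_{n'} \in \mathbb{Z}[1/5,\l_4,\l_6,\l_8,\l_{10}]$ from Lemma \ref{a2}, each summand lies in $k\,(n'-k-1)!\,\mathbb{Z}[1/5,\l_4,\l_6,\l_8,\l_{10}]$.

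The minimum factorial factor occurs at $n' = 6$, giving $k\cdot(5-k)! = 24,\ 12,\ 6,\ 4$ for $k=1,2,3,4$ respectively; for $n' > 6$, the factor $k\,(n'-k-1)!$ is a larger factorial and remains divisible by $k\cdot(5-k)!$. Matching these numbers with the constants in (i)--(iv) proves those four statements simultaneously.

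For (v), I will telescope the four identities. Solving (i)--(iv) successively for $D_n^{(\ell)}/(n)_\ell$ in terms of $D_n^{(\ell+1)}/(n)_{\ell+1}$ gives
\[
\frac{D_n^{(1)}}{n} - 24\,\frac{D_n^{(5)}}{(n)_5} \;=\; \beta_1 - \beta_2 + 2\beta_3 - 6\beta_4,
\]
where $\beta_j$ denotes the right-hand side of the $j$-th identity, so $\beta_1 \in 24\,\mathbb{Z}[1/5,\l_4,\dots]$, $\beta_2 \in 12\,\mathbb{Z}[1/5,\l_4,\dots]$, $2\beta_3 \in 12\,\mathbb{Z}[1/5,\l_4,\dots]$, and $6\beta_4 \in 24\,\mathbb{Z}[1/5,\l_4,\dots]$. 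Each term is divisible by $12 = 2^2\cdot 3$, so the sum lies in $12\,\mathbb{Z}[1/5,\l_4,\dots]$, which is (v).

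The main obstacle, if one may call it that, is the factorial bookkeeping required to extract the \emph{exact} constants $2^3\cdot3,\ 2^2\cdot3,\ 2\cdot3,\ 2^2$ rather than a weaker divisibility. However, because the tightest summand $n'=6$ produces precisely $k\cdot(5-k)!$ and this matches each claimed constant on the nose, the accounting closes without slack; no further number-theoretic input beyond the properties of $s(u)$ and $g_n$ already established is needed.
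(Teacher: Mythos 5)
Your proposal is correct and follows essentially the same route as the paper, which proves this lemma ``in the same way as Lemma \ref{lem6}'': extract the coefficient of $u^{n-k}/(n-k)!$ from the identity $k\sum_{n'\ge 6}\frac{D_{n'}^{(k+1)}}{(n')_{k+1}}\frac{u^{n'-k}}{(n'-k)!}+\sum_{n'\ge 6}\frac{D_{n'}^{(k)}}{(n')_{k}}\frac{u^{n'-k}}{(n'-k)!}=k\sum_{n'\ge 6}g_{n'}(n'-k-1)!\frac{s^{n'-k}}{(n'-k)!}$, using Lemma \ref{4} together with $s(u)\in\mathbb{Z}[1/5,\l_4,\l_6,\l_8,\l_{10}]\langle\langle u\rangle\rangle$ and $g_{n'}\in\mathbb{Z}[1/5,\l_4,\l_6,\l_8,\l_{10}]$, the minimal factor $k\cdot(5-k)!$ at $n'=6$ giving exactly the stated constants. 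Your linear combination $\frac{D_n^{(1)}}{n}-24\frac{D_n^{(5)}}{(n)_5}=\beta_1-\beta_2+2\beta_3-6\beta_4$ for (v) is also the intended analogue of how Lemma \ref{lem6}(iv) is deduced from (i)--(iii).
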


\begin{proof}
We can prove this lemma in the same way as Lemma \ref{lem6}.

\end{proof}

\begin{lemma}\label{two}
The function $x(u)$ satisfies the following differential equation
\[x''=6x^2+2\l_4-2\l_8x^{-2}-4\l_{10}x^{-3}.\]
\end{lemma}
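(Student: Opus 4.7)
The plan is to derive the ODE directly from the defining equation of the curve, using the explicit description of $y$ in terms of $x$ and $x'$ that already appeared above.

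First, I would combine Proposition \ref{main3} with Lemma \ref{l1}. Proposition \ref{main3} gives $x(u) = G(u)$ and $y(u) = -G(u)G'(u)/2$, and Lemma \ref{l1} can be rewritten as $y = -xx'/2$. Since $(x(u), y(u))$ lies on $V$, the defining equation (\ref{defhypg2}) gives
\[
\frac{x^2 (x')^2}{4} = x^5 + \lambda_4 x^3 + \lambda_6 x^2 + \lambda_8 x + \lambda_{10}.
\]
Dividing by $x^2$ (which is legitimate as a relation of meromorphic functions near $u=0$, since $x$ has a pole there) yields
\[
(x')^2 = 4 x^3 + 4\lambda_4 x + 4\lambda_6 + 4\lambda_8 x^{-1} + 4\lambda_{10} x^{-2}.
\]

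Next, I would differentiate both sides of this identity with respect to $u$. The left-hand side becomes $2 x' x''$, and the right-hand side becomes
\[
\bigl(12 x^2 + 4\lambda_4 - 4\lambda_8 x^{-2} - 8 \lambda_{10} x^{-3}\bigr) x'.
\]
Dividing by $2 x'$ (which is not identically zero, as $x'$ has a pole of order $3$ at $u=0$) produces exactly the claimed equation
\[
x'' = 6 x^2 + 2 \lambda_4 - 2\lambda_8 x^{-2} - 4\lambda_{10} x^{-3}.
\]

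There is essentially no obstacle here: the only subtle point is justifying the division by $x^2$ and by $x'$, and this is taken care of by working in the field of meromorphic functions on a neighbourhood of $u = 0$ (where both $x$ and $x'$ are nonzero meromorphic functions with known pole orders by Propositions \ref{dwp} and \ref{main3}). Alternatively, the entire manipulation can be carried out at the level of formal Laurent series in $u$ with coefficients in $\mathbb{Q}[\lambda_4,\lambda_6,\lambda_8,\lambda_{10}]$, where the same cancellations apply.
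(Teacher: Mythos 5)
Your proof is correct and is essentially identical to the paper's own argument: substitute $y=-xx'/2$ into the curve equation, divide by $x^2$, differentiate, and cancel $x'$. The extra remarks justifying the division (working with meromorphic functions or formal Laurent series near $u=0$) are a harmless elaboration of a step the paper leaves implicit.
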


\begin{proof}
From $y(u)=-x(u)x'(u)/2$ and (\ref{defhypg2}), we have
\[\left(-\frac{xx'}{2}\right)^2=x^5+\l_4x^3+\l_6x^2+\l_8x+\l_{10}.\]
By multiplying the above equation by $x^{-2}$ and differentiating this equation with respect to $u$, we obtain the statement of the lemma.
\end{proof}

\begin{lemma}\label{cndnexm}
The first terms of $C_n/n$ and $D_n/n$ are as follows :
\[\frac{C_4}{4}=-\frac{2}{5}\l_4,\;\;\frac{C_6}{6}=-\frac{2^3\cdot 3}{7}\l_6,\;\;\frac{C_8}{8}=\frac{2^4\cdot 3}{5}\l_4^2-2^4\cdot 5\l_8,\]
\[\frac{C_{10}}{10}=\frac{2^7\cdot 3^3}{11}\l_4\l_6-\frac{2^7\cdot 3^2\cdot 5\cdot 7}{11}\l_{10},\]
\[\frac{D_6}{6}=\frac{1}{7}\l_6,\;\;\frac{D_8}{8}=\frac{2^2}{3}\l_8-\frac{2}{5}\l_4^2,\;\;\frac{D_{10}}{10}=\frac{2^3\cdot 3^2\cdot 5}{11}\l_{10}-\frac{2^4\cdot 3^2}{11}\l_4\l_6.\]
\end{lemma}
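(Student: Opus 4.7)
The plan is to derive the listed values by direct coefficient comparison, using two inputs that are already available: the explicit series for $G(u)$ in Proposition \ref{dwp}, which equals $x(u)$ by Proposition \ref{main3}, and the relation $y(u)=-x(u)x'(u)/2$ given in Lemma \ref{l1}. Proposition \ref{dwp} writes $x(u)$ out in ordinary powers of $u$ through the $u^8$ term, which is exactly enough to read off $C_4,C_6,C_8,C_{10}$, and once $x(u)$ is known to that order, $y(u)$ is determined through order $u^5$ by differentiation and one multiplication, yielding $D_6,D_8,D_{10}$.

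For the first set of identities I will equate
\[
\frac{1}{u^2}+\sum_{n=4}^{\infty}\frac{C_n}{n}\frac{u^{n-2}}{(n-2)!} \;=\; \frac{1}{u^2}-\frac{\l_4}{5}u^2-\frac{\l_6}{7}u^4+\left(\frac{\l_4^2}{75}-\frac{\l_8}{9}\right)u^6+\left(\frac{3}{385}\l_4\l_6-\frac{\l_{10}}{11}\right)u^8+\cdots
\]
and multiply through by the appropriate factorial on each side. Thus $C_4/4=2!\cdot(-\l_4/5)$, $C_6/6=4!\cdot(-\l_6/7)$, $C_8/8=6!\cdot(\l_4^2/75-\l_8/9)$, and $C_{10}/10=8!\cdot(3\l_4\l_6/385-\l_{10}/11)$. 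Combining the factorizations $4!=2^3\cdot3$, $6!=2^4\cdot 3^2\cdot 5$, $8!=2^7\cdot 3^2\cdot 5\cdot 7$ with the denominators $5,\,7,\,75=3\cdot 5^2,\,9=3^2,\,385=5\cdot 7\cdot 11,\,11$ produces exactly the prime factorizations claimed.

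For the $D_n$ part I will compute $-x(u)x'(u)/2$ using the expansion of $x(u)$ just obtained. Differentiating term by term gives $x'(u)=-2u^{-3}-\tfrac{2\l_4}{5}u-\tfrac{4\l_6}{7}u^3+6(\l_4^2/75-\l_8/9)u^5+8(3\l_4\l_6/385-\l_{10}/11)u^7+\cdots$, and the product $x(u)x'(u)$ is assembled one order at a time through $u^5$. The $u^{-1}$ coefficient cancels automatically (reflecting that $D_n$ skips $n=4$ and that $y$ has no $u^{-4}$ tail), and the remaining coefficients of $-xx'/2$ at orders $u$, $u^3$, $u^5$ are equated to $(D_6/6)\,u/1!$, $(D_8/8)\,u^3/3!$, $(D_{10}/10)\,u^5/5!$ respectively. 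The three arithmetical reductions $-\l_6/7\mapsto \l_6/7$ at $u$; $-\l_4^2/15+2\l_8/9$ times $3!$ at $u^3$; and $-6\l_4\l_6/55+3\l_{10}/11$ times $5!$ at $u^5$ then give precisely $\tfrac{1}{7}\l_6$, $-\tfrac{2}{5}\l_4^2+\tfrac{2^2}{3}\l_8$, and $\tfrac{2^3\cdot 3^2\cdot 5}{11}\l_{10}-\tfrac{2^4\cdot 3^2}{11}\l_4\l_6$.

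There is no conceptual obstacle; the only real work is bookkeeping. The subtle point is the double conversion between Hurwitz-style coefficients $C_n/n$ and $D_n/n$, the ordinary power-series coefficients written in Proposition \ref{dwp}, and the factorial weights $(n-2)!$ and $(n-5)!$. Consistently tracking these, and then simplifying fractions such as $8!\cdot 3/385=3456/11$ and $120\cdot 6/55=144/11$ to expose the powers of $2,3,5,7$ explicitly in the final expressions, is the step most prone to arithmetical error but offers no difficulty of principle.
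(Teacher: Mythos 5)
Your proposal is correct and is exactly the paper's argument: the paper's proof of this lemma is the one-line statement that it follows from Proposition \ref{dwp} and Lemma \ref{l1}, i.e.\ reading off $x(u)=G(u)$ and computing $y=-xx'/2$. Your arithmetic (including the intermediate values $-\l_4^2/15+2\l_8/9$ and $-6\l_4\l_6/55+3\l_{10}/11$ and the final prime factorizations) checks out.
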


\begin{proof}
From Proposition \ref{dwp} and Lemma \ref{l1}, we obtain the statement of the lemma.
\end{proof}

\begin{theorem}\label{theoremcd}
(i) For any $n\ge4$, we have
\[\mbox{ord}_2\left(\frac{C_n}{n}\right)\ge1,\;\;\;\mbox{ord}_3\left(\frac{C_n}{n}\right)\ge0.\]

(ii) For any $n\ge6$, we have
\[\mbox{ord}_2\left(\frac{D_n}{n}\right)\ge-1,\;\;\;\mbox{ord}_3\left(\frac{D_n}{n}\right)\ge-1.\]

Let $p\ge5$ be a prime.

\vspace{1ex}

(iii)
If $p-1\nmid n$, then we have
\[\mbox{ord}_p\left(\frac{C_n}{n}\right)\ge0,\;\;\;\;\mbox{ord}_p\left(\frac{D_n}{n}\right)\ge0.\]

(iv)
If $p-1\mid n$, then we have
\[\mbox{ord}_p\left(\frac{C_n}{n}\right)\ge-1-\mbox{ord}_pa,\;\;\;\;\mbox{ord}_p\left(\frac{D_n}{n}\right)\ge-1-\mbox{ord}_pa.\]
where $n=a(p-1)$.

\end{theorem}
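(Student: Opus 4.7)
The plan is to combine the universal Bernoulli machinery of Theorem \ref{CLARKE} with the transfer relations between $C_n^{(k)}/(n)_k$ (Lemma \ref{lem6}) and between $D_n^{(k)}/(n)_k$ (Lemma \ref{lem61}). Note that $C_n/n = C_n^{(2)}/(n)_2$ by comparing (\ref{xd4}) with (\ref{1234}), and $D_n/n = D_n^{(5)}/(n)_5$ by its $s$-analogue; moreover $C_n^{(1)}$ and $D_n^{(1)}$ are precisely the universal Bernoulli numbers $\hat B_n$ associated with the series $u(z)$ from Proposition \ref{a1} and $u(s)$ from Lemma \ref{a2} respectively, because $1/z(u) = 1/u+\sum_{n\ge 4}(C_n^{(1)}/n)u^{n-1}/(n-1)!$ matches the defining equation $u/z(u) = \sum \hat B_n u^n/n!$ term-by-term. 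I will treat three regimes: primes $p\ge 5$, the prime $p=3$, and the prime $p=2$.

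\textbf{For $p\ge 5$ (parts (iii) and (iv)).} The refined Clarke formula (\ref{cl}) expresses $C_n^{(1)}/n$ as a sum of terms $z_1(p,n)f_{p-1}^a/p^{1+\mbox{ord}_p a}$ over primes $p\ge 5$ with $n=a(p-1)$, plus an element of $\mathbb{Z}[1/2,\l_4,\ldots,\l_{10}]$. Because $1/2\in\mathbb{Z}_p$ for $p\ge 5$ and $f_{p-1}\in\mathbb{Z}[1/2,\l_4,\ldots]\subset\mathbb{Z}_p[\l_4,\ldots]$, the stated bounds hold for $C_n^{(1)}/n$. Lemma \ref{lem6}(i) transfers them to $C_n/n$, since the error lies in $\mathbb{Z}[1/2,\l_4,\ldots]\subset\mathbb{Z}_p[\l_4,\ldots]$. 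Parallel reasoning using (\ref{dn1}) and Lemma \ref{lem61}(v) handles $D_n/n$: the error $\mathbb{Z}[1/5,\l_4,\ldots]$ again lies in $\mathbb{Z}_p[\l_4,\ldots]$ for $p\ge 5$.

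\textbf{For $p=3$ (the 3-adic parts of (i) and (ii)).} The decisive input is the vanishings $f_2=0$ (Proposition \ref{a1}) and $g_2=0$ (Lemma \ref{a2}). In Theorem \ref{CLARKE}(ii)--(iii) the $p=3$ contribution to $\hat B_n/n$ appears only through $f_{p-1}^a=f_2^a$ (respectively $g_2^a$), and hence vanishes. Since $1/2,1/5\in\mathbb{Z}_3$, this yields $\mbox{ord}_3(C_n^{(1)}/n)\ge 0$ and $\mbox{ord}_3(D_n^{(1)}/n)\ge 0$. Lemma \ref{lem6}(i) preserves the bound and gives $\mbox{ord}_3(C_n/n)\ge 0$. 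Rewriting Lemma \ref{lem61}(v) as $24\,D_n/n = D_n^{(1)}/n - 12 r$ with $r\in\mathbb{Z}[1/5,\l_4,\ldots]$ and dividing by $24=8\cdot 3$ introduces exactly one factor of $3$ in the denominator, giving $\mbox{ord}_3(D_n/n)\ge -1$.

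\textbf{For $p=2$ (the 2-adic parts of (i) and (ii)).} For $D_n/n$, the key input is Lemma \ref{ord23y}: $\mbox{ord}_2(D_n^{(1)}/n)\ge\lfloor(n-1)/4\rfloor$, which is $\ge 2$ once $n\ge 9$. Together with $24\,D_n/n=D_n^{(1)}/n-12r$ and $\mbox{ord}_2(24)=3$, this gives $\mbox{ord}_2(D_n/n)\ge -1$ for $n\ge 9$; the remaining cases $n\in\{6,8\}$ are verified directly from Lemma \ref{cndnexm}. The hardest case is the bound $\mbox{ord}_2(C_n/n)\ge 1$, which is the main obstacle: the $z$-parametrization is inadequate because $f_n$ has unbounded $1/2$-denominators, so I plan to pivot to the $s$-parametrization. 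The identity $1/z^2-1/s^2=\sum_{n\ge 4}\alpha_n\,s(u)^{n-2}$ with $\alpha_n\in\mathbb{Z}[1/5,\l_4,\ldots]$ and $s(u)\in\mathbb{Z}[1/5,\l_4,\ldots]\langle\langle u\rangle\rangle$ (via Lemma \ref{4}) gives $C_n/n\equiv D_n^{(2)}/(n)_2$ modulo $\mathbb{Z}[1/5,\l_4,\ldots]$; combining Lemma \ref{lem61}(i) with Lemma \ref{ord23y} yields $\mbox{ord}_2(D_n^{(2)}/(n)_2)\ge 1$, which gives $\mbox{ord}_2(C_n/n)\ge 0$ for free. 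To upgrade to $\ge 1$, I will induct on $n$ using the ODE of Lemma \ref{two} in the form $x^3 x''=6y^2-4\l_4 x^3-6\l_6 x^2-8\l_8 x-10\l_{10}$ obtained by clearing denominators and using $y^2=x^5+\l_4 x^3+\l_6 x^2+\l_8 x+\l_{10}$: every coefficient on the right is divisible by $2$, and $y^2=1/s^{10}$ is $2$-adically integral in the $s$-parametrization, so matching coefficients of $u^{n-4}/(n-4)!$ on both sides produces a recursion in which the inductive hypothesis on smaller $C_m/m$ propagates one factor of $2$ to $C_n/n$. The base cases $n\in\{4,6,8,10\}$ are checked from Lemma \ref{cndnexm}.
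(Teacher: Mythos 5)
Your overall architecture (identify $C_n^{(1)}$, $D_n^{(1)}$ with universal Bernoulli numbers, then transfer to $C_n/n=C_n^{(2)}/(n)_2$ and $D_n/n=D_n^{(5)}/(n)_5$ via Lemmas \ref{lem6} and \ref{lem61}) matches the paper, and your treatment of $p=3$, of $p\ge5$ for $C_n$, and of $\mbox{ord}_2(D_n/n)\ge-1$ is essentially the paper's proof. But there are two genuine gaps. First, your argument for $D_n/n$ at $p=5$ fails: you claim the error terms in $\mathbb{Z}[1/5,\l_4,\dots]$ lie in $\mathbb{Z}_p[\l_4,\dots]$ for all $p\ge5$, but $\mathbb{Z}[1/5]\not\subset\mathbb{Z}_5$, and in (\ref{dn1}) both $\widetilde g_n$ and the $g_{p-1}$ carry uncontrolled powers of $5$ in the denominator (the sum there runs only over $p\ge7$ precisely because the $5$-part has been absorbed into $\widetilde g_n$). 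The paper avoids this by never leaving the $z$-parametrization for the $p\ge5$ statements: it uses $y=-\tfrac14(x^2)'$ to get $D_n/n=-\tfrac14\,C_n^{(4)}/(n)_4$ (equation (\ref{dncn4})) and then Lemma \ref{lem6}(iv), where the error lives in $\mathbb{Z}[1/2,\l_4,\dots]$, which \emph{is} $5$-integral.

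Second, your route to $\mbox{ord}_2(C_n/n)\ge1$ does not work as described. The identity $x^3x''=6y^2-4\l_4x^3-6\l_6x^2-8\l_8x-10\l_{10}$ is correct, but extracting a usable recursion from it requires multiplying the unknown Hurwitz tail of $x$ by principal parts such as $u^{-4}$ and $u^{-6}$ (inside $x^3$ and $x^5=y^2-\cdots$), and $\frac{1}{u^4}\cdot\frac{u^{m-2}}{(m-2)!}=\frac{(m-6)!}{(m-2)!}\cdot\frac{u^{m-6}}{(m-6)!}$ destroys $2$-integrality of Hurwitz coefficients; likewise the assertion that "$y^2=1/s^{10}$ is $2$-adically integral" is unjustified, since the coefficients of $1/s^5$ only satisfy $\mbox{ord}_2\ge-1$ and squaring introduces binomial convolutions. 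The paper's trick, which you missed, is to use Lemma \ref{two} in the form $x''=6x^2+2\l_4-2\l_8x^{-2}-4\l_{10}x^{-3}$ with \emph{negative} powers of $x$: here $x^{-2}=z^4$ and $x^{-3}=z^6$ are genuinely Hurwitz integral over $\mathbb{Z}[\l_4,\dots,\l_{10}]$ by Proposition \ref{5} and Lemma \ref{4}, the term $6x^2=6/z^4$ contributes $6\,C_n^{(4)}/(n)_4$ with $\mbox{ord}_2\ge2$ (already known from $\mbox{ord}_2(D_n/n)\ge-1$ and (\ref{dncn4})), every remaining term on the right is visibly divisible by $2$, and the left side reads off $C_n/n$ directly with no induction and no products of unknowns. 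You should replace both of these steps accordingly.
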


\begin{proof}
For $n\ge4$, we have
\[\frac{C_n}{n}=\frac{C_n^{(2)}}{(n)_2}.\]
From (\ref{cl}) and Lemma \ref{lem6} (i), we obtain
\[\mbox{ord}_3\left(\frac{C_n}{n}\right)\ge0.\]
From (\ref{cl}) and Lemma \ref{lem6} (i),
we find that if $p\ge5$ and $p-1\nmid n$, then we have
\[\mbox{ord}_p\left(\frac{C_n}{n}\right)\ge0,\]
if $p\ge5$ and $p-1\mid n$, then we have
\[\mbox{ord}_p\left(\frac{C_n}{n}\right)\ge-1-\mbox{ord}_pa,\]
where $n=a(p-1)$.
From $x^2=1/z^4$ and Lemma \ref{l1}, for $n\ge6$, we have
\begin{equation}
\frac{D_n}{n}=-\frac{1}{4}\frac{C_n^{(4)}}{(n)_4}.\label{dncn4}
\end{equation}
From Lemma \ref{lem6} (iv), (\ref{cl}), and (\ref{dncn4}), we obtain
if $p\ge5$ and $p-1\nmid n$,
\[\mbox{ord}_p\left(\frac{D_n}{n}\right)\ge0,\]
if $p\ge5$ and $p-1\mid n$,
\[\mbox{ord}_p\left(\frac{D_n}{n}\right)\ge-1-\mbox{ord}_pa,\]
where $n=a(p-1)$.
Since $y=1/s^5$, we have
\begin{equation}
\frac{D_n}{n}=\frac{D_n^{(5)}}{(n)_5}.\label{dn54}
\end{equation}
From Lemma \ref{lem61} (v), (\ref{dn1}), and (\ref{dn54}), we obtain
\[\mbox{ord}_3\left(\frac{D_n}{n}\right)\ge-1.\]
From Lemma \ref{ord23y}, we have
\[\mbox{ord}_2\left(\frac{D_n^{(1)}}{n}\right)\ge2,\;\;\;\;\mbox{for\;$n\ge10$}.\]
Therefore, from Lemma \ref{lem61} (v), (\ref{dn54}), and Lemma \ref{cndnexm}, we have
\[\mbox{ord}_2\left(\frac{D_n}{n}\right)\ge-1,\;\;\;\;\mbox{for}\;n\ge6.\]
From (\ref{dncn4}), we have
\[\mbox{ord}_2\left(\frac{C_n^{(4)}}{(n)_4}\right)\ge1,\;\;\;\;\mbox{for}\;n\ge6.\]
From the above equation, $x^{-1}=z^2\in\mathbb{Z}[\l_4,\l_6,\l_8,\l_{10}]\langle\langle u\rangle\rangle$, Lemma \ref{two}, and Lemma \ref{cndnexm}, we obtain
\[\mbox{ord}_2\left(\frac{C_n}{n}\right)\ge1,\;\;\;\;\mbox{for}\;n\ge4.\]
\end{proof}




\vspace{2ex}

\begin{rem}
Theorem \ref{theoremcd} gives the precise information on the series expansion of the solution of the inversion problem of the ultra-elliptic integrals given in Proposition \ref{dwp}.
\end{rem}

\vspace{2ex}

We assume $\l_4=\l_6=\l_8=0$ and consider the curve defined by $y^2=x^5+\l_{10}$.

\begin{lemma}\label{10mcd}
For any integer $m\ge1$, we have
\[\mbox{ord}_3\left(\frac{C_{10m}^{(1)}}{10m}\right)\ge\left\lfloor \frac{10m-1}{6} \right\rfloor,\;\;\mbox{ord}_5\left(\frac{C_{10m}^{(1)}}{10m}\right)\ge\left\lfloor \frac{10m-1}{10} \right\rfloor,\;\;\mbox{ord}_7\left(\frac{C_{10m}^{(1)}}{10m}\right)\ge\left\lfloor \frac{10m-1}{14} \right\rfloor. \]
\end{lemma}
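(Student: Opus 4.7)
The plan is to combine Proposition \ref{on1} (the formula for $C_n^{(1)}/n$ in terms of universal Bernoulli-type sums) with Lemma \ref{O} (\^Onishi's $p$-adic bound on $\tau_U$), exploiting the extra sparseness that the specialization $\lambda_4 = \lambda_6 = \lambda_8 = 0$ imposes on the coefficients $f_n$.

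First I would recall from Proposition \ref{on1} that
\[
\frac{C_{10m}^{(1)}}{10m} \;=\; \sum_{w(U) = 10m}\tau_U\, f^U,
\qquad \tau_U = (-1)^{d(U)-1}\frac{(w(U)+d(U)-2)!}{\gamma_U}.
\]
Next I would note that by Proposition \ref{a1}, each $f_n$ is a homogeneous polynomial in $\mathbb{Z}[1/2,\lambda_4,\lambda_6,\lambda_8,\lambda_{10}]$ of degree $n$ (with $\deg\lambda_{2i}=2i$). Setting $\lambda_4=\lambda_6=\lambda_8=0$ leaves $f_n$ as a homogeneous polynomial in $\mathbb{Z}[1/2,\lambda_{10}]$ of degree $n$; since $\deg\lambda_{10}=10$, we get $f_n=0$ unless $10\mid n$. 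Consequently only indices $U$ with $U_j=0$ whenever $10\nmid j$ contribute to the sum.

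For such a $U$ (with $d(U)\geq 1$, since $w(U)=10m\geq 10$) the hypotheses of Lemma \ref{O} are automatically satisfied for each prime $p\in\{3,5,7\}$: we require $U_1=U_2=U_{p-1}=U_{2p-1}=0$, and for $p=3,5,7$ the indices $\{1,2,5,8\}$, $\{1,2,4,9\}$, $\{1,2,6,13\}$ respectively are all non-multiples of $10$. Therefore Lemma \ref{O} yields, for each such $U$,
\[
\mathrm{ord}_p(\tau_U)\;\geq\;\left\lfloor\frac{w(U)+d(U)-2}{2p}\right\rfloor\;\geq\;\left\lfloor\frac{10m-1}{2p}\right\rfloor,
\]
since $d(U)\geq 1$. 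Moreover $f^U\in\mathbb{Z}[1/2,\lambda_{10}]$, so $\mathrm{ord}_p(f^U)\geq 0$ for every odd prime $p$. Hence the same bound holds for $\tau_U f^U$ termwise, and thus for the whole sum, giving exactly the three inequalities claimed in the lemma.

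There is no serious obstacle here: the conceptual step is just to recognize that the specialization $\lambda_4=\lambda_6=\lambda_8=0$ forces the support of admissible $U$ into the multiples of $10$, which is precisely the sparseness needed to activate the hypotheses of Lemma \ref{O} simultaneously for $p=3,5,7$. The only mild care required is to observe that potential 2-adic denominators in $f_n$ are irrelevant because the bounds being proved concern odd primes.
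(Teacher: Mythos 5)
Your proof is correct and is essentially the paper's own argument: the paper proves this lemma by citing exactly Proposition \ref{on1} and Lemma \ref{O}, and you have supplied the (correct) missing details — namely that homogeneity forces $f_n=0$ unless $10\mid n$ once $\lambda_4=\lambda_6=\lambda_8=0$, so every contributing $U$ satisfies the hypotheses of Lemma \ref{O} simultaneously for $p=3,5,7$, and $d(U)\ge 1$ gives the stated floors. No gaps.
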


\begin{proof}
From Proposition \ref{on1} and Lemma \ref{O}, we obtain the statement of the lemma.
\end{proof}

\begin{lemma}\label{lem6lambda10}(\cite{O}) For $m\ge1$, we have the following relations.

\vspace{1ex}

(i) $\displaystyle{\frac{C_{10m}^{(2)}}{(10m)_2}+\frac{C_{10m}^{(1)}}{10m}\in 3^2\cdot5\cdot7\;\mathbb{Z}[1/2,\l_{10}]}$

\vspace{1ex}

(ii) $\displaystyle{2\frac{C_{10m}^{(3)}}{(10m)_3}+\frac{C_{10m}^{(2)}}{(10m)_2}\in 3^2\cdot5\cdot7\;\mathbb{Z}[1/2,\l_{10}]}$

\vspace{1ex}

(iii) $\displaystyle{3\frac{C_{10m}^{(4)}}{(10m)_4}+\frac{C_{10m}^{(3)}}{(10m)_3}\in 3^3\cdot5\;\mathbb{Z}[1/2,\l_{10}]}$

\vspace{1ex}

(iv) $\displaystyle{6\frac{C_{10m}^{(4)}}{(10m)_4}+\frac{C_{10m}^{(1)}}{10m}\in 3^2\cdot5\;\mathbb{Z}[1/2,\l_{10}]}$
\end{lemma}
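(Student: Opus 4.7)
The plan is to exploit the sparsity of the series $u(z)$ that is induced by the specialization $\l_4=\l_6=\l_8=0$. First I would observe that under this specialization the curve $y^2=x^5+\l_{10}$ together with Proposition~\ref{exz} yields the explicit expansion $y=z^{-5}(1+\l_{10}z^{10})^{1/2}$, so the coefficients $a_n$ vanish unless $10\mid n$. Consequently the integrand in the formula for $u(z)$ derived in the proof of Proposition~\ref{a1} is a power series in $z^{10}$, forcing $f_n=0$ whenever $10\nmid n$. By the homogeneity assertion of Proposition~\ref{a1}, each nonzero coefficient then has the form $f_{10j}=c_j\l_{10}^{j}$ with $c_j\in\mathbb{Z}[1/2]$.

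Next, I would apply Lemma~\ref{t1} for $k=1,2,3$ and compare the coefficients of $u^{10m-k}/(10m-k)!$ on both sides. On the left-hand side this produces exactly the three linear combinations of $C_{10m}^{(j)}/(10m)_j$ appearing in statements (i), (ii), (iii). On the right-hand side, after rewriting $z^{n-k}/(n-k)=(n-k-1)!\,z^{n-k}/(n-k)!$ and using the vanishing of $f_n$ for $10\nmid n$, only the indices $j=1,\dots,m$ survive, giving
\[
k\sum_{j=1}^{m} f_{10j}\,(10j-k-1)!\,h^{(j,k)}_{10m-k},
\]
where $h^{(j,k)}_{\ell}\in\mathbb{Z}[\l_{10}]$ is the coefficient of $u^{\ell}/\ell!$ in the integral expansion of $z(u)^{10j-k}/(10j-k)!$ provided by Lemma~\ref{4} and Proposition~\ref{5}, and $h^{(m,k)}_{10m-k}=1$.

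The heart of the argument then reduces to a small arithmetic check. Because $c_j\in\mathbb{Z}[1/2]$ and $2$ is invertible in $\mathbb{Z}[1/2]$, I only need that $k\cdot(10j-k-1)!$ is divisible by the relevant odd modulus, namely $315=3^2\cdot5\cdot7$ for $k=1,2$ and $135=3^3\cdot5$ for $k=3$. The tightest case is $j=1$, which is verified by the factorizations $8!=2^7\cdot 315$, $2\cdot 7!=2^5\cdot 315$ and $3\cdot 6!=2^4\cdot 135$. For $j\ge 2$ one has $(10j-k-1)!\ge 16!$, which carries $\mathrm{ord}_3=6$, $\mathrm{ord}_5=3$, $\mathrm{ord}_7=2$, so the divisibility is abundant. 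This yields (i), (ii) and (iii).

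Finally, (iv) is a formal corollary, via the identity
\[
\frac{C_{10m}^{(1)}}{10m}+6\frac{C_{10m}^{(4)}}{(10m)_4}
=\left(\frac{C_{10m}^{(2)}}{(10m)_2}+\frac{C_{10m}^{(1)}}{10m}\right)
-\left(2\frac{C_{10m}^{(3)}}{(10m)_3}+\frac{C_{10m}^{(2)}}{(10m)_2}\right)
+2\left(3\frac{C_{10m}^{(4)}}{(10m)_4}+\frac{C_{10m}^{(3)}}{(10m)_3}\right),
\]
in which each parenthesised summand, multiplied by its coefficient $\pm 1$ or $2$, already lies in $3^2\cdot 5\cdot\mathbb{Z}[1/2,\l_{10}]$ (the factor of $2$ in front of the last summand is harmless since $2$ is a unit). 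The only delicate point, and the main obstacle, is the bookkeeping around invertibility of $2$ and the extraction of odd parts of factorials; that bookkeeping is precisely what sharpens Lemma~\ref{lem6} into the refined integral multiples $315$ and $135$ of the present statement.
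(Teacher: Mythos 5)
Your proof is correct and takes essentially the same route as the paper: the paper's own proof just observes that $f_n=0$ for $1\le n\le 9$ under the specialization and then says ``same way as Lemma~\ref{lem6}'', and what you write out is exactly that argument --- Lemma~\ref{t1} for $k=1,2,3$, Hurwitz integrality of $z(u)^N/N!$ from Lemma~\ref{4} and Proposition~\ref{5}, the odd parts of $8!$, $2\cdot 7!$, $3\cdot 6!$ giving the constants $315$ and $135$, and (iv) obtained as an integral linear combination of (i)--(iii).
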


\begin{proof}
Since $\l_4=\l_6=\l_8=0$, we have $f_n=0$ for $1\le n\le 9$ in (\ref{a111}).
We can prove this lemma in the same way as Lemma \ref{lem6}.
\end{proof}

\begin{lemma}\label{lem61lambda10d}(\cite{O}) For $m\ge1$, we have the following relations.

\vspace{1ex}

(i) $\displaystyle{\frac{D_{10m}^{(2)}}{(10m)_2}+\frac{D_{10m}^{(1)}}{10m}\in 2^7\cdot3^2\cdot7\;\mathbb{Z}[1/5,\l_{10}]}$

\vspace{1ex}

(ii) $\displaystyle{2\frac{D_{10m}^{(3)}}{(10m)_3}+\frac{D_{10m}^{(2)}}{(10m)_2}\in 2^5\cdot 3^2\cdot7\;\mathbb{Z}[1/5,\l_{10}]}$

\vspace{1ex}

(iii) $\displaystyle{3\frac{D_{10m}^{(4)}}{(10m)_4}+\frac{D_{10m}^{(3)}}{(10m)_3}\in 2^4\cdot 3^3\;\mathbb{Z}[1/5,\l_{10}]}$

\vspace{1ex}

(iv) $\displaystyle{4\frac{D_{10m}^{(5)}}{(10m)_5}+\frac{D_{10m}^{(4)}}{(10m)_4}\in 2^5\cdot3\;\mathbb{Z}[1/5,\l_{10}]}$

\vspace{1ex}

(v) $\displaystyle{-24\frac{D_{10m}^{(5)}}{(10m)_5}+\frac{D_{10m}^{(1)}}{10m}\in 2^5\cdot3^2\;\mathbb{Z}[1/5,\l_{10}]}$
\end{lemma}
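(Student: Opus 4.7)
The plan is to follow the pattern of the proof of Lemma~\ref{lem61} but exploit the stronger vanishing of the $g_n$ in the restricted setting $\lambda_4=\lambda_6=\lambda_8=0$. Recall that the expansion $u(s)=s+\sum_n g_n s^{n+1}/(n+1)$ from Lemma~\ref{a2} has $g_n$ homogeneous of degree $n$ in $\lambda_4,\lambda_6,\lambda_8,\lambda_{10}$ with weights $4,6,8,10$; specialising to $y^2=x^5+\lambda_{10}$ therefore kills every $g_n$ with $10\nmid n$. Consequently the intermediate identity
\[
k\sum_{n\ge 6}\frac{D_n^{(k+1)}}{(n)_{k+1}}\frac{u^{n-k}}{(n-k)!}+\sum_{n\ge 6}\frac{D_n^{(k)}}{(n)_{k}}\frac{u^{n-k}}{(n-k)!}=k\sum_{n\ge 6}g_n\frac{s^{n-k}}{n-k},
\]
established just before Lemma~\ref{lem61}, collapses to $k\sum_{\ell\ge 1}g_{10\ell}\,s^{10\ell-k}/(10\ell-k)$ for each $k=1,2,3,4$.

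To prove (i)--(iv) I would compare the coefficient of $u^{10m-k}/(10m-k)!$ on both sides. The left-hand coefficient is exactly the combination displayed in each of parts (i)--(iv). For the right-hand side, rewrite $s^{10\ell-k}/(10\ell-k)=(10\ell-k-1)!\cdot s^{10\ell-k}/(10\ell-k)!$. Since $s(u)\in\mathbb{Z}[1/5,\lambda_{10}]\langle\langle u\rangle\rangle$ in this specialisation, Lemma~\ref{4} gives $s^{10\ell-k}/(10\ell-k)!\in\mathbb{Z}[1/5,\lambda_{10}]\langle\langle u\rangle\rangle$, and clearly $g_{10\ell}\in\mathbb{Z}[1/5,\lambda_{10}]$ as well. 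Hence the right-hand coefficient is a $\mathbb{Z}[1/5,\lambda_{10}]$-linear combination of the integers $k\cdot(10\ell-k-1)!$ for $\ell\ge 1$. Because $(10-k-1)!$ divides $(10\ell-k-1)!$ for every $\ell\ge 1$, the relevant common divisor is $k\cdot(10-k-1)!$, whose factorisations
\[
1\cdot 8!=2^{7}\cdot 3^{2}\cdot 5\cdot 7,\quad 2\cdot 7!=2^{5}\cdot 3^{2}\cdot 5\cdot 7,\quad 3\cdot 6!=2^{4}\cdot 3^{3}\cdot 5,\quad 4\cdot 5!=2^{5}\cdot 3\cdot 5
\]
reproduce (i)--(iv) once the unit $5$ is absorbed into $\mathbb{Z}[1/5,\lambda_{10}]$.

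Part (v) is a purely algebraic consequence of (i)--(iv): the telescoping
\[
(\mathrm{i})-(\mathrm{ii})+2\,(\mathrm{iii})-6\,(\mathrm{iv})=\frac{D_{10m}^{(1)}}{10m}-24\,\frac{D_{10m}^{(5)}}{(10m)_{5}}
\]
cancels the $D_{10m}^{(2)},D_{10m}^{(3)},D_{10m}^{(4)}$ contributions, and the greatest common divisor $\gcd(2^{7}\cdot 3^{2}\cdot 7,\,2^{5}\cdot 3^{2}\cdot 7,\,2\cdot 2^{4}\cdot 3^{3},\,6\cdot 2^{5}\cdot 3)=2^{5}\cdot 3^{2}$ yields the ideal claimed in (v). I do not expect any genuine obstacle: once the intermediate identity is invoked and the vanishing of $g_n$ for $10\nmid n$ is observed, the verification reduces to factorial bookkeeping. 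The only point demanding care is that the factor $7$ must drop out of the gcd in (v), since neither $2\cdot 2^{4}\cdot 3^{3}$ nor $6\cdot 2^{5}\cdot 3$ is divisible by $7$; this is what prevents (v) from being strengthened to include a factor of $7$.
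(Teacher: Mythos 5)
Your proposal is correct and follows essentially the same route as the paper: the paper's proof simply notes that $g_n=0$ for $1\le n\le 9$ when $\lambda_4=\lambda_6=\lambda_8=0$ and then repeats the coefficient-comparison argument of Lemma \ref{lem6}, which is exactly what you carry out, with the factorial factors $k\cdot(10-k-1)!$ and the telescoping combination for (v) checking out.
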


\begin{proof}
Since $\l_4=\l_6=\l_8=0$, we have $g_n=0$ for $1\le n\le 9$ in (\ref{r2}).
We can prove this lemma in the same way as Lemma \ref{lem6}.
\end{proof}

\vspace{2ex}

For the curve $y^2=x^5+\l_{10}$, we obtain the following theorem.

\begin{theorem}
For $m\ge1$, we have
\[\mbox{ord}_2\left(\frac{C_{10m}}{10m}\right)\ge2,\;\;\;\mbox{ord}_3\left(\frac{C_{10m}}{10m}\right)\ge2,\;\;\;\mbox{ord}_5\left(\frac{C_{10m}}{10m}\right)\ge1,\;\;\;\mbox{ord}_7\left(\frac{C_{10m}}{10m}\right)\ge1.\]
\[\mbox{ord}_2\left(\frac{D_{10m}}{10m}\right)\ge2,\;\;\;\mbox{ord}_3\left(\frac{D_{10m}}{10m}\right)\ge1,\;\;\;\mbox{ord}_5\left(\frac{D_{10m}}{10m}\right)\ge1,\;\;\;\mbox{ord}_7\left(\frac{D_{10m}}{10m}\right)\ge0.\]
\end{theorem}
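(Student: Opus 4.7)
The plan is to follow the strategy used to prove Theorem~\ref{theoremcd}, exploiting the additional vanishing $f_n=g_n=0$ for $10\nmid n$ that holds under the specialization $\lambda_4=\lambda_6=\lambda_8=0$. This follows by homogeneity: by Lemma~\ref{a1} and Lemma~\ref{a2}, $f_n$ and $g_n$ are homogeneous polynomials of degree $n$ in the $\lambda$ variables, so after setting $\lambda_4=\lambda_6=\lambda_8=0$ they must be scalar multiples of $\lambda_{10}^{n/10}$, vanishing unless $10\mid n$. Consequently only indices $U$ supported on positive multiples of $10$ contribute to the universal-Bernoulli expansions
\[
\frac{C_n^{(1)}}{n}=\sum_{w(U)=n}\tau_U\,f^U,\qquad \frac{D_n^{(1)}}{n}=\sum_{w(U)=n}\tau_U\,g^U,
\]
and for every such $U$ with $d(U)\geq 1$ the hypotheses of Lemma~\ref{O} (for $p\in\{3,5,7\}$) and Lemma~\ref{onishilem} (for $p=2$) hold automatically.

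The first step is to package this into concrete $p$-adic lower bounds on $C_{10m}^{(1)}/10m$ and $D_{10m}^{(1)}/10m$. For $p\in\{3,5,7\}$ these are exactly the content of Lemma~\ref{10mcd}. For $p=2$, Lemma~\ref{onishilem} yields $\mbox{ord}_2(\tau_U)\geq \lfloor(10m-1)/4\rfloor$, and this is supplemented by the explicit $2$-adic orders of $f_{10k}$ and $g_{10k}$ determined by the specialized parametrizations $(1+\sum a_n z^n)^2=1+\lambda_{10}z^{10}$ and $(1+\sum\alpha_n s^n)^5=1-\lambda_{10}s^{10}$ (the binomial expansions of $(1+\lambda_{10}z^{10})^{1/2}$ and $(1-\lambda_{10}s^{10})^{1/5}$ give the $f_{10k}$ and $g_{10k}$ in closed form). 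The outcome is a $2$-adic lower bound on $C_{10m}^{(1)}/10m$ and $D_{10m}^{(1)}/10m$ that grows linearly in $m$.

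The second step is to transfer these bounds to $C_{10m}/10m=C_{10m}^{(2)}/(10m)_2$ and $D_{10m}/10m=D_{10m}^{(5)}/(10m)_5$ via the linking identities of Lemmas~\ref{lem6lambda10} and \ref{lem61lambda10d}. The keystone relations are
\[
\frac{C_{10m}}{10m}+\frac{C_{10m}^{(1)}}{10m}\in 3^2\cdot 5\cdot 7\cdot\mathbb{Z}[1/2,\lambda_{10}],\qquad \frac{D_{10m}^{(1)}}{10m}-24\,\frac{D_{10m}}{10m}\in 2^5\cdot 3^2\cdot\mathbb{Z}[1/5,\lambda_{10}],
\]
coming from Lemma~\ref{lem6lambda10}(i) and Lemma~\ref{lem61lambda10d}(v) respectively. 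These immediately yield the $C$-series bounds for $p\in\{3,5,7\}$ and the $D$-series bounds for $p\in\{2,3,7\}$. The remaining two cases --- $p=2$ for $C_{10m}/10m$ and $p=5$ for $D_{10m}/10m$ --- are handled by the cross-identity $D_n/n=-C_n^{(4)}/(4(n)_4)$ (a consequence of $x^2=1/z^4$ and Lemma~\ref{l1}); together with Lemma~\ref{lem6lambda10}(iv) this gives
\[
\frac{C_{10m}^{(1)}}{10m}-24\,\frac{D_{10m}}{10m}\in 3^2\cdot 5\cdot\mathbb{Z}[1/2,\lambda_{10}],
\]
which converts the $2$-adic bound on $C_{10m}^{(1)}/10m$ into one on $D_{10m}/10m$ and, read in the other direction, the $5$-adic bound on $D_{10m}/10m$ into one on $C_{10m}/10m$.

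The main obstacle is that the universal-Bernoulli floor bounds $\lfloor(10m+d(U)-2)/(2p)\rfloor$ and $\lfloor(10m-1)/4\rfloor$ fall short in the first one or two cases: essentially always at $m=1$, and at $m=2$ for $p=7$ in the $D$-series. These boundary cases must be handled by direct computation. Substituting $\lambda_4=\lambda_6=0$ into Lemma~\ref{cndnexm} gives
\[
\frac{C_{10}}{10}=-\frac{2^7\cdot 3^2\cdot 5\cdot 7}{11}\,\lambda_{10},\qquad \frac{D_{10}}{10}=\frac{2^3\cdot 3^2\cdot 5}{11}\,\lambda_{10},
\]
from which all eight inequalities for $m=1$ can be read off by inspection. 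The isolated case $m=2$, $p=7$ for $D_{10m}/10m$ is settled either by one further step of the expansion of Lemma~\ref{cndnexm}, or by noting that only the two indices $U=(0,\ldots,U_{10}=2)$ and $U=(0,\ldots,U_{20}=1)$ contribute to $D_{20}^{(1)}/20$, so that $\tau_U\,g^U$ can be evaluated in closed form before applying Lemma~\ref{lem61lambda10d}(v).
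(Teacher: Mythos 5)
Most of your plan coincides with the paper's proof: Lemma \ref{10mcd} combined with the linking identity of Lemma \ref{lem6lambda10}~(i) handles $C_{10m}/10m$ at $p=3,5,7$; Lemma \ref{lem6lambda10}~(iv) together with $D_n/n=-C_n^{(4)}/(4(n)_4)$ (equation (\ref{dncn4})) handles $D_{10m}/10m$ at $p=5$ (and also at $3,7$); and Lemma \ref{ord23y} plus Lemma \ref{lem61lambda10d}~(v) handles $D_{10m}/10m$ at $p=2$, with the low cases settled by the explicit values of $D_{10}/10$ and $D_{20}/20$. Two small slips there: the $m=2$ boundary computation is forced by $p=2$ (since $\lfloor 19/4\rfloor=4<5$), not by $p=7$, for which $\mbox{ord}_7(\tau_U)\ge\lfloor(w(U)+d(U)-2)/14\rfloor\ge0$ already suffices for all $m$; and your closing sentence describes the two transfers in directions opposite to the ones you actually need.

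The genuine gap is the bound $\mbox{ord}_2(C_{10m}/10m)\ge2$, which you list as one of the two remaining cases but never resolve. Every linking identity you invoke --- Lemma \ref{lem6lambda10}~(i)--(iv) and your displayed cross-identity $C^{(1)}_{10m}/10m-24\,D_{10m}/10m\in3^2\cdot5\;\mathbb{Z}[1/2,\lambda_{10}]$ --- asserts membership in a $\mathbb{Z}[1/2,\lambda_{10}]$-module, because the series $u(z)$ of Proposition \ref{a1} is only defined over $\mathbb{Z}[\frac12,\lambda_{10}]$. Such a statement carries no $2$-adic information at all, so it cannot ``convert'' a $2$-adic bound in either direction; in particular it says nothing about $C_{10m}/10m=C^{(2)}_{10m}/(10m)_2$ at $p=2$. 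The paper closes this case by a mechanism absent from your proposal: the differential equation of Lemma \ref{two}, which under $\lambda_4=\lambda_6=\lambda_8=0$ reads $x''=6x^2-4\lambda_{10}x^{-3}$. Comparing Hurwitz coefficients of $u^{n-4}/(n-4)!$, one gets $C_n/n=6C_n^{(4)}/(n)_4-4\lambda_{10}\cdot(\text{coefficient of }z^6)$; the first term equals $-24\,D_n/n$ and has $\mbox{ord}_2\ge5$ once $\mbox{ord}_2(D_n/n)\ge2$ is established, while the second lies in $4\cdot6!\;\mathbb{Z}[\lambda_{10}]$ by Proposition \ref{5} and Lemma \ref{4}, so both contributions have $\mbox{ord}_2\ge2$. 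You need this identity (or some substitute with genuinely $2$-integral coefficients relating $C^{(2)}$ to quantities you already control $2$-adically) to complete the proof.
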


\begin{proof}
From Lemma \ref{10mcd}, Lemma \ref{lem6lambda10} (i),  and Lemma \ref{cndnexm}, for $m\ge1$, we have
\[\mbox{ord}_3\left(\frac{C_{10m}}{10m}\right)\ge2,\;\;\;\mbox{ord}_5\left(\frac{C_{10m}}{10m}\right)\ge1,\;\;\;\mbox{ord}_7\left(\frac{C_{10m}}{10m}\right)\ge1.\]
From Lemma \ref{lem6lambda10} (iv), for $m\ge1$, we have
\[\mbox{ord}_3\left(\frac{C_{10m}^{(4)}}{(10m)_4}\right)\ge1,\;\;\;\mbox{ord}_5\left(\frac{C_{10m}^{(4)}}{(10m)_4}\right)\ge1,\;\;\;\mbox{ord}_7\left(\frac{C_{10m}^{(4)}}{(10m)_4}\right)\ge0.\]
From (\ref{dncn4}), we have
\[\mbox{ord}_3\left(\frac{D_{10m}}{10m}\right)\ge1,\;\;\;\mbox{ord}_5\left(\frac{D_{10m}}{10m}\right)\ge1,\;\;\;\mbox{ord}_7\left(\frac{D_{10m}}{10m}\right)\ge0.\]
From Lemma \ref{ord23y}, for $m\ge3$, we have
\[\mbox{ord}_2\left(\frac{D_{10m}^{(1)}}{10m}\right)\ge7.\]
From Lemma \ref{lem61lambda10d} (v), Lemma \ref{cndnexm}, and $D_{20}/20=-2^{13}\cdot 3^6\cdot 5^3\cdot 7\cdot 13\cdot\l_{10}^2/11$, for $m\ge1$, we have
\[\mbox{ord}_2\left(\frac{D_{10m}}{10m}\right)\ge2.\]
From (\ref{dncn4}), we have
\[\mbox{ord}_2\left(\frac{C_{10m}^{(4)}}{(10m)_4}\right)\ge4.\]
From Lemma \ref{two}, we have
\[\mbox{ord}_2\left(\frac{C_{10m}}{10m}\right)\ge2.\]
\end{proof}

{\bf Acknowledgements.} This work was (partly) supported by Osaka City University Advanced
Mathematical Institute (MEXT Joint Usage/Research Center on Mathematics
and Theoretical Physics).

\end{document}